\newcommand{\chapter}{\part}
\newtheorem{theorem}{Theorem}[section]
\newtheorem{lemma}[theorem]{Lemma}
\newtheorem{prop}[theorem]{Proposition}
\newtheorem{cor}[theorem]{Corollary}
\theoremstyle{definition}
\newtheorem{definition}[theorem]{Definition}
\newtheorem{example}[theorem]{Example}
\newtheorem{quest}{Problem}
\theoremstyle{remark}
\newtheorem{remark}{Remark}
\newcounter{smallromans}
\newenvironment{romanenumerate}
{\begin{list}{{\normalfont\textrm{(\roman{smallromans})}}}%
  {\usecounter{smallromans}\setlength{\itemindent}{0cm}%
   \setlength{\leftmargin}{5.5ex}\setlength{\labelwidth}{5.5ex}%
   \setlength{\topsep}{.5ex}\setlength{\partopsep}{.5ex}%
   \setlength{\itemsep}{0.1ex}}}%
{\end{list}}
\newcounter{smallromansdash}
\newcounter{bigromans} 
  {\end{list}}
\def\N{{\mathbb N}}
\def\span{\mathop{\rm span}}
\def\cal{\fam\symsymbols}
\def\Lc{{\cal L}}
\def\U {\mathscr{U}}
\def\injtp{\widehat\otimes_\varepsilon}
\def\projtp{\widehat\otimes_\pi}
\author[M.\ Gonz\'alez]{Manuel  Gonz\'alez}
\address[M.\ Gonz\'alez]{Departamento de Matem\'aticas, Facultad de Ciencias, Universidad de Can\-ta\-bria, Avda. de los Castros s/n, 39071-Santander, Spain}
\email{manuel.gonzalez@unican.es}
\author[T.~Kania]{Tomasz Kania}
\address[T.~Kania]{Mathematical Institute\\Czech Academy of Sciences\\\v Zitn\'a 25 \\115 67 Praha 1\\Czech Republic  and  Institute of Mathematics and Computer Science\\ Jagiellonian University\\ {\L}ojasiewicza 6, 30-348 Krak\'{o}w, Poland
}
\email{kania@math.cas.cz, tomasz.marcin.kania@gmail.com}
\title{Grothendieck spaces: the landscape and perspectives}
\thanks{Research of M. Gonz\'alez was partially supported by MICINN (Spain), Grant PID2019-103961GB-C22. T. Kania acknowledges with thanks funding received from SONATA 15 No. 2019/35/D/ST1/01734. }
\date{\today}
\keywords{Grothendieck space, G-space, property $(V)$, property $(V_\infty)$, Banach space, Grothendieck operator, C*-algebra, Banach lattice, positive Grothendieck property, quantitative Grothendieck property, pseudo-intersection number, locally convex Grothendieck space, twisted sum, push-out.}
\subjclass[2010]{Primary: 46A35; 46B20.}
\begin{document}
\begin{abstract}
In 1973, Diestel published his seminal paper \emph{Grothendieck spaces and vector measures} that drew a connection between Grothendieck spaces (Banach spaces for which weak- and weak*-sequential convergences in the dual space coincide) and vector measures. This connection was developed further in his book with J.~Uhl Jr.~\emph{Vector measures}. Additionally, Diestel's  paper included a section with several open problems about the structural properties of Grothendieck spaces, and only half of them have been solved to this day.\smallskip

The present paper aims at synthetically presenting the state of the art at subjectively selected corners of the theory of Banach spaces with the Grothendieck property, describing the main examples of spaces with this property, recording the solutions to Diestel's problems, providing generalisations/extensions or new proofs of various results concerning Grothendieck spaces, and adding to the list further problems that we believe are of relevance and may reinvigorate a~better-structured development of the theory.

\end{abstract}
\maketitle
\pagebreak

\tableofcontents

\chapter{Introduction}\label{chap:intro}

A Banach space $X$ is \emph{Grothendieck}
whenever every weakly* convergent sequence in the dual space $X^*$ converges weakly. The notion of a Grothendieck space was coined after Grothendieck proved that for any index set $\Gamma$ the space of bounded functions on $\Gamma$, $\ell_\infty(\Gamma)$, and more generally, the space $C(K)$ of continuous functions on a compact, extremally disconnected (\emph{Stonean}) space $K$ have this property (\cite[Th\'eor\`eme 9]{Grothendieck:53}).

Without a doubt, reflexive spaces are Grothendieck, and from the point of view of the dual space, the Grothendieck property may be indeed viewed as a form of `sequential reflexivity'. The Grothendieck property may be regarded as complementary to being weakly compactly generated---in particular, these two properties together imply reflexivity. Grothendieck property is of relevance in summability theory in Banach spaces (see \cite{GLR:21} and references therein) or in the theory of $C_0$-semigroups of operators, where many features available for semigroups of operators on reflexive spaces may be emulated. 

In \cite{Diestel:73}, Diestel explained the relevance of the Grothendieck property in the study of finitely additive vector-valued measures; the paper itself includes a section of open problems concerning this property, most of which to this day remain open. Even though many particular examples (and non-examples) of Grothendieck spaces have been identified since, little progress on stability properties of Grothendieck spaces has been obtained so far. 

The present paper aims at synthetically presenting various corners of the theory of Banach spaces with the Grothendieck property with the perspective of putting it on a~more systematic footing. For this reason, we describe the main isomorphic properties of Grothendieck spaces, collect the answers that have been obtained for some of Diestel's problems, revisit the stubbornly open ones as well as raise a number of new ones. Occasionally, we generalise certain facts concerning Grothendieck spaces. In order to properly motivate the problems by related results, we have stated them along the text, as well as we have included a comprehensive list of open problems in the final section. Let us briefly describe the contents of the paper. 

Chapter \ref{chap:prelim} is devoted to introducing concepts that will be used throughout our exposition.  
Chapter \ref{chap:properties} includes several characterisations and properties of Grothendieck spaces.  In Chapter \ref{chap:examples} we exemplify  Grothendieck spaces within certain familiar classes of Banach spaces such as $C(K)$-spaces, $C^*$-algebras, $\Lc_\infty$-spaces, Banach lattices, \emph{etc.} as well as we provide sufficient conditions for spaces in these classes to be Grothendieck. Chapter \ref{chap:stability} touches upon preservation of the Grothendieck property via various constructions and discusses methods of building new Grothendieck spaces from the already known ones. This includes twisted-sums, tensor products, ultraproducts, and more. In Chapter \ref{chap:miscellanea} we briefly describe some additional results on the Grothendieck property, and Chapter \ref{chap:problems} provides a~concise list of the problems that we have collected in the previous chapters.

\chapter{Preliminaries}\label{chap:prelim}
The Banach spaces we consider are either real or complex, however when we talk about $C^*$-algebras, the underlying Banach spaces are always complex. An \emph{operator} is a continuous linear operator between Banach spaces (unless we explicitly state otherwise). We denote by $B_X$ the closed unit ball of a Banach space $X$. The symbol $\mathscr{B}(X,Y)$ stands for the space of all operators between Banach spaces $X$ and $Y$; the spaces of compact operators $\mathscr{K}(X,Y)$ and weakly compact operators $\mathscr{W}(X,Y)$ are closed subspaces thereof; and we write $\mathscr{B}(X)$, $\mathscr{K}(X)$, and $\mathscr{W}(X)$ when $X=Y$. We write $X\cong Y$, whenever two Banach spaces $X$ and $Y$ are isomorphic and $X\equiv Y$ in the case where they are isometrically isomorphic. 

When $X$ and $Y$ are isomorphic, we denote the (multiplicative) Banach--Mazur distance between $X$ and $Y$ by $$d_{\rm BM}(X,Y) = \inf\big\{\|T^{-1}\|\colon T\in \mathscr{B}(X,Y)\text{ is a norm-one isomorphism}\big\}.$$

The \emph{density character} of a Banach space $X$, denoted ${\rm dens}\,X$, is the least cardinal $\lambda$ for which $X$ has a dense subset of cardinality $\lambda$. \smallskip

A Banach space $X$ is \emph{weakly compactly generated} (WCG, for short) if it contains a~weakly compact subset that spans a dense subspace. Every separable Banach space, every reflexive Banach space, and spaces of the form $c_0(\Gamma)$ for any set $\Gamma$ are WCG.\smallskip

A Banach space $X$ has \emph{weak$^*$ sequentially compact dual ball} (has W$^*$SC dual ball, for short) whenever every sequence in $B_{X^*}$ has a weak$^*$-convergent subsequence; we refer to \cite[Chapter XIII]{Diestel:84} for systematic study of spaces with this property.
Each subspace of a WCG Banach space has W$^*$SC dual ball  \cite[Theorem XIII.4]{Diestel:84}. Also the spaces whose dual spaces contain no copies of $\ell_1$, and the spaces isomorphic to the dual of a separable space containing no copies of $\ell_1$. Clearly, a Grothendieck space with weak$^*$ sequentially compact dual ball is reflexive. We refer to \cite[Chapter XIII]{Diestel:84} for additional information.  \smallskip 

An operator $T\colon X\to Y$ is \emph{weakly pre-compact} if for every bounded sequence $(x_n)_{n=1}^\infty$ in $X$, $(Tx_n)_{n=1}^\infty$ has a weakly Cauchy subsequence; equivalently, by Rosenthal's $\ell_1$-theorem \cite[Chapter XI]{Diestel:84}, if there exists no subspace of $X$ isomorphic to $\ell_1$ on which the operator $T$ is an isomorphism. We denote by $\mathscr{R}(X,Y)$ the space of weakly pre-compact operators from $X$ into $Y$. \smallskip

A series $\sum_{n=1}^\infty x_n$ in $X$ is \emph{weakly unconditionally converging} if for every $f\in X^*$ one has  $\sum_{n=1}^\infty |\langle f, x_n\rangle|<\infty$.   
An operator $T\colon X\to Y$ is \emph{unconditionally converging} if for each  weakly unconditionally converging series $\sum_{n=1}^\infty x_n$ in $X$, the series $\sum_{n=1}^\infty Tx_n$ is unconditionally converging in $Y$; equivalently, if there is no subspace of $X$ isomorphic to $c_0$ on which $T$ is an isomorphism \cite[Chapter  III, Lemma 3.3.A]{HarmandWW:93}. We denote by $\mathscr{U}(X,Y)$ the space of unconditionally converging operators from $X$ into $Y$. \smallskip

A Banach space $X$ has \emph{property $(V)$}, whenever every unconditionally converging operator $T\colon X\to Y$ is weakly compact; \emph{i.e.}, when $\mathscr{U}(X,Y) \subset \mathscr{W}(X,Y)$ for every Banach space $Y$. Reflexive spaces and $C(K)$-spaces have property $(V)$. Moreover, property $(V)$ is inherited by quotients (but not by closed subspaces). 
We refer to \cite[Section III.3]{HarmandWW:93} for additional information on this property.\smallskip

A Banach space $X$ has the \emph{Dunford--Pettis property}, when for every Banach space $Y$ every weakly compact operator $T\colon X\to Y$ takes weakly convergent sequences of $X$ into convergent sequences of $Y$. It was proved by Grothendieck \cite{Grothendieck:53} that $C(K)$-spaces and $L_1(\mu)$ for any measure $\mu$ have the Dunford--Pettis property. Additionally, $\Lc_\infty$-spaces and $\Lc_1$-spaces have this property. A Banach space $X$ has the Dunford--Pettis property, whenever $X^*$ has the Dunford--Pettis property. We refer to \cite{Diestel:80} for a complete survey on the Dunford--Pettis property. \smallskip

A \emph{Schauder decomposition} of a Banach space $X$ is a sequence $(X_i)_{i=1}^\infty$ of closed non-zero subspaces of $X$ such that every $x\in X$ has a unique representation $x=\sum_{i=1}^\infty x_i$ with $x_i\in X_i$ for every $i$ \cite[Definition 1.g.1]{LT1}.  It determines (and is determined by) a sequence $(P_i)_{i=1}^\infty$ of non-zero projections on $X$ such that $P_iP_j = 0$ for $i\neq j$ and $x=\sum_{i=1}^\infty P_i x$ for each $x\in X$; note that each subspace $M_i$ corresponds with the range of the projection $P_i$. This concept is equivalent to the notion of a Schauder basis when the projections have one-dimensional ranges.\smallskip

Let $X$ be a Banach space. A \emph{Markuschevich basis} (\emph{M-basis}, for short) of $X$ is a~family $\{(x_\gamma,f_\gamma)\colon \gamma\in \Gamma\}$ in $X\times X^*$, which is biorthogonal ($\langle f_{\gamma_1}, x_{\gamma_2}\rangle = \delta_{\gamma_1,\gamma_2}$), fundamental ($\overline{\textrm{span}}\{x_\gamma\colon \gamma\in \Gamma\}=X)$, and total ($\langle f_\gamma, x\rangle =0$ for each $\gamma\in \Gamma$ implies $x=0$). Obviously, a~Schauder basis with the associated  sequence of biorthogonal functionals is an $M$-basis; however, the concept of an M-basis is much more general: WCG Banach spaces such as $c_0(\Gamma)$ for any set $\Gamma$, or $C(K)$-spaces, where $K$ is a Valdivia compact space, admit M-bases. For these and other examples, consult, \emph{e.g.}, \cite[Corollary 5.2]{HajekMVZ:08}. \smallskip

\section{{$\mathcal{L}_p$}-spaces and related concepts}
Let $1\leqslant\lambda<\infty$ and $1\leqslant p \leqslant \infty$. A Banach space $X$ is a~\emph{$\mathcal{L}_{p, \lambda}$-space} whenever every finite-dimensional subspace $F$ of $X$ is contained in another finite-dimensional space $G$ of $X$ whose Banach--Mazur distance to $\ell_p^{\dim G}$ is at most $\lambda$. A Banach space $X$ is said to be a \emph{$\Lc_p$-space} if it is a \emph{$\Lc_{p,\lambda}$-space} for some $\lambda$.
We will be primarily interested in the $\Lc_\infty$-spaces and the $\Lc_1$-spaces.
A Banach space $X$ is a~\emph{Lindenstrauss space}, whenever it is a $\mathcal{L}_{\infty, \lambda}$-space for every $\lambda>1$; equivalently, if $X$ is an isometric predual of some $L_1(\mu)$-space. Spaces of the form $C_0(K)$ for some locally compact space $K$ are natural examples of Lindenstrauss spaces.

We say that  a Banach space $X$ is a  $\widetilde{\mathcal{OL}}_{ \infty, \lambda}$-space  whenever every finite-dimensional subspace of $X$ is contained in another finite-dimensional subspace whose Banach--Mazur distance to some finite-dimensional $C^*$-algebra is at most $\lambda$. The  $\widetilde{\mathcal{OL}}_{\infty}$-spaces are the space which are $\widetilde{\mathcal{OL}}_{\lambda,\infty}$-spaces for some $\lambda \geqslant 1.$

The just-introduced terminology concerning $\widetilde{\mathcal{OL}}_{\infty}$-spaces is not standard as the non-commutative analogues of $\mathcal{L}_{\infty, \lambda}$-spaces---usually termed $\mathcal{OL}_{\infty, \lambda}$-spaces---are defined in terms of the \emph{completely bounded} Banach--Mazur distance to the set of finite-dimensional $C^*$-algebras. $\mathcal{OL}_{\infty, \lambda}$ are $\widetilde{\mathcal{OL}}_{\infty, \lambda}$-spaces. A $C^*$-algebra is nuclear if and only if it is $\mathcal{OL}_{\infty, \lambda}$ for some $\lambda > 6$ (\cite[Theorem 1.2]{Junge:2003}). We refer to \cite{Junge:2003} for additional information and a more precise definition of these spaces.

\section{Direct sums and ultraproducts}\label{sect:direct-sums}
Let $\Gamma$ be a (possibly uncountable) set and let $E$ be a Banach space that has a~normalised, 1-un\-con\-di\-tio\-nal basis $(e_\gamma)_{\gamma\in \Gamma}$. Given a collection $(X_\gamma)_{\gamma\in \Gamma}$ of Banach spaces, the \emph{$E$-sum of $(X_\gamma)_{\gamma\in \Gamma}$,}  denoted by $(\bigoplus_{\gamma\in \Gamma} X_\gamma)_E$, is the set of all tuples $(x_\gamma)_{\gamma\in \Gamma}$ with $x_\gamma \in X_\gamma$ for each $\gamma\in \Gamma$, and such that $\sum_{\gamma\in \Gamma} \|x_\gamma\| e_\gamma \in E$. Formally, the $E$-sum depends on the choice of the basis but when the basis is clear from the context we can afford this abuse of notation.

If the basis $(e_\gamma)_{\gamma\in \Gamma}$ is shrinking, the coordinate functionals $(e^*_\gamma)_{\gamma\in \Gamma}$ associated to that basis form a normalised, 1-unconditional basis of $E^*$. In this case, the map $$\Lambda_E\colon (\bigoplus_{\gamma\in \Gamma} X_\gamma^*)_{E^*}\to \Big((\bigoplus_{\gamma\in \Gamma} X_\gamma)_{E}\Big)^*$$ given by
$$\big\langle (x_\gamma)_{\gamma\in \Gamma}, \Lambda_E (f_\gamma)_{\gamma\in \Gamma}\big\rangle = \sum_{\gamma\in \Gamma} \langle x_\gamma, f_\gamma\rangle
$$
for $(x_\gamma)_{\gamma\in \Gamma}\in (\bigoplus_{\gamma \in \Gamma} X_\gamma)_{E}$ and $(f_\gamma)_{\gamma\in \Gamma} \in (\bigoplus_{\gamma\in \Gamma} X^*_\gamma)_{E^*}$, is an isometric isomorphism (see \cite[Section 4]{Laustsen:2001} for details). When the basis $(e_\gamma)_{\gamma\in \Gamma}$ is additionally boundedly complete (which implies that $E$ is reflexive), the bidual of $(\bigoplus_{\gamma\in \Gamma} X_\gamma)_E$ is naturally isometrically isomorphic to $(\bigoplus_{\gamma\in \Gamma} X_\gamma^{**})_E$.\bigskip

Let $\Gamma$ be an infinite set and let $\{X_\gamma\colon \gamma\in \Gamma\}$ be a~family of Banach spaces. The \emph{$\ell_\infty$-sum of $(X_\gamma)_{\gamma\in \Gamma}$,} denoted  $(\bigoplus_{\gamma\in \Gamma} X_\gamma)_{\ell_\infty(\Gamma)}$, is the set of all tuples $(x_\gamma)_{i\in \Gamma}$ such that $x_\gamma\in X_\gamma$ ($\gamma\in \Gamma$) and $\|(x_\gamma)_{\gamma\in \Gamma}\|_{\ell_\infty(\Gamma)} = \sup_{\gamma\in \Gamma}\|x_\gamma\|< \infty$; it is a Banach space with respect to this norm.
When $X_\gamma=Z$ for each $\gamma\in \Gamma$, we write $X = \ell_\infty(\Gamma,Z)$.

Let $\U$ be a non-trivial ultrafilter on $\Gamma$ (assuming that $\Gamma$ is infinite). Then
$$
N_\U\big((X_\gamma)_{\gamma\in \Gamma}\big)=\big\{(x_\gamma)_{\gamma\in \Gamma}\in E\colon \lim_{\gamma\to\U}\|x_\gamma\| =0\big\}
$$
is a closed subspace of $X:=(\bigoplus_{\gamma \in \Gamma} X_\gamma)_{E}$. The \emph{ultraproduct} of $\{X_\gamma\colon \gamma\in \Gamma\}$ along $\U$, denoted $[X_\gamma]_\U$, is defined as the quotient space $X/N_\U((X_\gamma)_{\gamma\in \Gamma})$; when $X_\gamma = Z$ for all $\gamma\in \Gamma$, then we call the quotient space an \emph{ultrapower} of $Z$ (along $\U$).

\section{Ordered normed spaces}\label{sect:ordered}
Let $E$ be a normed space. A convex subset $P\subset E$ is a \emph{cone}, whenever $\lambda P = P$ for all $\lambda > 0$ and $P\cap (-P) = \{0\}$. An \emph{ordered normed space} is a~normed space $E$ with a distinguished cone $P$; we will denote by $(E, P)$ an ordered normed space or simply by $E$ if the choice of the cone is clear from the context. 

For $x,y\in E$ we write $x\leqslant y$ as long as $y-x\in P$. If there exists a~number $\lambda > 0$ such that whenever $0\leqslant x \leqslant y$ ($x,y\in E$) we have $\|x\| \leqslant \lambda \|y\|$, the cone $P$ is called \emph{normal}. An~\emph{order unit} of $(E,P)$ is an element $e\in P$ such that for every $x\in P$ there is some $n\in \mathbb N$ with $x\leqslant n\cdot e$. A convex subset $B$ of $P$ is called a \emph{base} for $P$, whenever for each non-zero $x\in P$ there is a unique real number $f (x) > 0$ such that $f(x)^{-1}x\in B$. A~cone $P$ is \emph{well-based} in $X$ if it has a bounded base $B$ defined by a some $f\in X^*$, that is, $B = \{x\in P\colon \langle f,x\rangle = 1\}$. An element $x\in P$ is a  \emph{quasi-interior point} of $P$, when the order interval $[0, x] = \{ z\in E\colon 0 \leqslant z \leqslant x\}$ is linearly dense in $X$. Whenever $E$ is a~Banach lattice or a $C^*$-algebra, by default we consider the (canonical) positive cone $E_+$ therein.

\begin{definition}
Let $(E,P)$ be an ordered normed space.
\begin{itemize}
\item $(E,P)$ has the  \emph{countable (Riesz) interpolation property}, whenever given two sequences $(x_n)_{n=1}^\infty$, $(y_n)_{n=1}^\infty$ with $x_n \leqslant y_m$ ($n,m\in \mathbb N$), there exists $z\in E$ such that $x_n \leqslant z \leqslant y_n$ ($n\in \mathbb N$).
\item $(E,P)$ has the \emph{countable monotone (Riesz) interpolation property}, whenever for any non-increasing sequence $(x_n)_{n=1}^\infty$ in $E$ and any non-decreasing sequence $(y_n)_{n=1}^\infty$ in $E$ with $x_n \leqslant y_n$ ($n\in \mathbb N$), there exists $z\in E$ such that $x_n \leqslant z \leqslant y_n$ ($n\in \N$).
\end{itemize}
\end{definition}

These two properties are equivalent in vector lattices but in general they are different as witnessed by the $C^*$-algebra of $2\times 2$ matrices.\smallskip

In $C^*$-algebras the positive cone is automatically normal with $\lambda = 1$ (\cite[Theorem 2.2.5]{Murphy:90}). Every $C^*$-algebra $A$ decomposes into the real direct sum $A = A_{\rm sa} \oplus_{\mathbb R} iA_{\rm sa}$, where $A_{\rm sa}$ is the self-adjoint part of $A$, which is naturally a real ordered Banach space. By definition, a $C^*$-algebra $A$ has the countable (monotone)  interpolation property, whenever the ordered space $ A_{\rm sa}$ has this property. The countable monotone interpolation property of $C^*$-algebras has been studied in \cite{SmithWilliams:86, SmithWilliams:88}.\smallskip

Let $A$ be a $C^*$-algebra decomposed into $A_{\rm sa} \oplus_{\mathbb R} iA_{\rm sa}$. A functional $f\in A^*$ is \emph{self-adjoint} ($f\in (A^*)_{\rm sa}$), whenever $\langle f,x\rangle = \langle f, x^*\rangle$ for every $x\in A$ and this happens if and only if $f$ restricted to $A_{\rm sa}$ takes real values only. We have the canonical isometric identification $(A_{\rm sa} \oplus_{\mathbb R} iA_{\rm sa})^* \equiv (A_{\rm sa})^* \oplus_{\mathbb R} i (A_{\rm sa})^*$.

\section{Exact sequences, twisted sums, and three-space properties}\label{sect:3SP}

Let $Y$ and $Z$ be Banach spaces. We say that a~Banach space $X$ is a \emph{twisted sum of $Y$ and $Z$} if there exists a closed subspace $M$ of $X$ so that $M$ is isomorphic to $Y$ and $X/M$ is isomorphic to $Z$.
Note that the notion of a twisted sum is not symmetric.

When $M$ is complemented in $X$, we say that the twisted sum is \emph{trivial}; otherwise, the twisted sum is \emph{non-trivial}.
When the twisted sum is trivial, $X$ is naturally isomorphic to the direct sum $Y\oplus Z$.

An \emph{exact sequence} is a diagram
\begin{equation}\label{exact-seq}
\begin{CD}
0@>>> Y @>j>> X @>q>>Z@>>> 0
\end{CD}
\end{equation}
in which $X$, $Y$ and $Z$ are Banach spaces, and $j$ and $q$ are continuous operators with $j$ injective, $q$ surjective and $j(Y)=\ker q$; thus $j(Y)$ is a closed subspace of $X$ and $X/j(Y)$ is isomorphic to $Z$. \smallskip

A twisted sum of $Y$ and $Z$ can be identified with an exact sequence like (\ref{exact-seq}), and the twisted sum is trivial precisely when the exact sequence splits. \smallskip

Let $\wp$ be a property of Banach spaces. Then $\wp$ is a \emph{three-space property} (or the class of spaces satisfying $\wp$ has \emph{the three-space property}), when $\wp$ is stable under twisted sums; meaning that if $Y$ and $Z$ in (\ref{exact-seq}) have $\wp$, then so has $X$. We refer to \cite{Castillo-Gonzalez:97} for additional information.

\section{Spaces of continuous functions}\label{sect:C(K)}
Let $K$ be a compact space and let $L$ be a non-empty proper closed subset of $K$. Then the difference $K\setminus L$ is a locally compact space, and $C_0(K\setminus L)$ has codimension one in $C((K\setminus L)_\infty)$, where $(K\setminus L)_\infty$ is the one-point compactification of $K\setminus L$.

Thus, if we consider the continuous map $\varphi\colon K\to (K\setminus L)_\infty$ defined by $\varphi(t)= t$ for $t\in K\setminus L$ and $\varphi(t)= \infty$ for $t\in L$, we obtain a natural exact sequence
\begin{equation}
\begin{CD}\label{C(K)-seq}
0@>>> C_0(K\setminus L) @>\phi>> C(K) @>R>>C(L)@>>> 0,
\end{CD}
\end{equation}
where $\phi g = g\circ \varphi$ and $R$ is the restriction map $f\to f|_L$, which is surjective by the Tietze--Urysohn extension theorem.

For a given set $\Gamma$, the \v Cech--Stone compactification $\beta \Gamma$ of $\Gamma$ comprises all ultrafilters on $\Gamma$; this space is topologised by the base $\{\mathscr U\in \beta \Gamma\colon A \in \mathscr U\}$ ($A\subseteq \Gamma$). By taking $K = \beta\N$ and $L=\beta\N\setminus\N$, the exact sequence (\ref{C(K)-seq}) becomes $0\to c_0\to \ell_\infty\to \ell_\infty/c_0\to 0$.

More generally, for a given filter $\mathscr{F}$ on $\Gamma$ the set $U_{\mathscr{F}}$ consisting of all ultrafilters \emph{not} extending $\mathscr{F}$ is open. Set $K_{\mathscr U} = \beta \Gamma \setminus U_{\mathscr{F}}$. By the Tietze--Urysohn extension theorem, the closed subspace 
\begin{equation}\label{c0F-def}c_{0,\mathscr{F}} = \{f\in \ell_\infty(\Gamma)\colon \forall \varepsilon > 0\;\exists A\in \mathscr{F}\; \forall \gamma\in A\; |f_\gamma| < \varepsilon \}\subseteq \ell_\infty(\Gamma)\end{equation}
is isometrically isomorphic to $C_0(U_{\mathscr{F}})$, the space of continuous functions vanishing at infinity on the locally compact space $U_{\mathscr{F}}$. In particular, we have the following short exact sequence
\begin{equation}
\begin{CD}\label{filt-seq}
0@>>> c_{0,\mathscr{F}} @>\phi>> \ell_\infty(\Gamma) @>R>>C(K_{\mathscr{F}})@>>> 0.
\end{CD}
\end{equation}

\section{Semigroups of operators}
Let $E$ be a Banach space. A \emph{semigroup of operators on $E$}  is a map $T(\cdot)\colon [0,\infty)\to \mathscr{B}(E)$ such that
\begin{itemize}
 \item $T(0)=I_E$, the identity on $E$;
 \item $T(s+t)=T(s)T(t)$ ($s,t\geqslant 0$); 
\end{itemize}

A semigroup $T(\cdot)$ is a \emph{$C_0$-semigroup} whenever it is strongly continuous in the sense that
for each $x_0\in X$, $\lim\limits_{t\to 0^+} \|T(t)x_0-x_0\|=0$. 

A $C_0$-semigroup $T(\cdot)$ is \emph{uniformly continuous} if $\lim_{t\to 0+} \|T(t)-I_E\|=0$. 

A semigroup $T(\cdot)$ is \emph{locally integrable}, whenever for each $x\in E$ the map $t\mapsto T(t)x$ is strongly measurable and $\int_0^t \|T(s)x\|\,{\rm d}s < \infty$ for all $t> 0$ and $x\in E$. 

The \emph{infinitesimal generator} $A$ of a strongly continuous semigroup $T(\cdot)$ is defined by
$$ A\,x = \lim_{t\to 0^+} \frac1t\,(T(t)- I)\,x \quad (x\in E)$$
as long as the limit exists. \smallskip

$C_0$-semigroups of operators are a standard tool for solving abstract Cauchy problems. We refer to Lotz \cite{Lotz:85} for additional information.

\section{The pseudo-intersection number}
Grothendieck spaces and their properties often appear to be related to the following cardinal number $\mathfrak{p}$, that is known in the literature as the \emph{pseudo-intersection number}. It is defined as the smallest cardinal number $\lambda$ with the property that for every family $\mathscr{A}$ of subsets of $\mathbb N$ such that:
\begin{itemize}
    \item $|\mathscr{A}|<\lambda$,
    \item for all $A_1, \ldots, A_n\in \mathscr{A}$ ($n\in \mathbb N$), the set $A_1 \cap \ldots \cap A_n$ is infinite
\end{itemize}
there exists an infinite set $B$ such that $B\setminus A$ is finite for every $A\in \mathscr{A}$. 
Here $B$ is a~\emph{pseudo-intersection} of $\mathscr{A}$, hence the symbol $\mathfrak{p}$.

It is known that $\mathfrak{p}$ is an uncountable cardinal number between $\omega_1$ and the continuum and that Martin's Axiom is equivalent to $\mathfrak{p} = \mathfrak{c}$; in particular the value of $\mathfrak{p}$ cannot be determined in ZFC in terms of the aleph numbers. We refer to \cite{Fremlin:1984} for further information concerning $\mathfrak{p}$ and its applications in Analysis.

\chapter{Isomorphic properties of Grothendieck spaces}\label{chap:properties}

In the present chapter we collect the main results concerning Banach spaces with the Grothendieck property as well as some related problems.

\begin{definition}
A Banach space $X$ is \emph{Grothendieck} (or has the \emph{Grothendieck property}) whenever every weakly* convergent sequence in its dual  $X^*$ is weakly convergent.
\end{definition}

\section{Characterisations and first  properties}

We begin with a portmanteau characterisation theorem of Grothendieck spaces. Most of these equivalences  were proved in \cite[Theorem 1]{Diestel:73}. We present an easier proof. \medskip

Let $S_n, S\in \mathscr{B}(X,Z)$ ($n\in \N$). We say that the sequence $(S_n)_{n=1}^\infty$ converges to $S$ in the \emph{strong operator topology} if $(S_nx)_{n=1}^\infty$ converges to $Sx$ for each $x\in X$; and we say that $(S_n)_{n=1}^\infty$ converges to $S$ in the \emph{weak operator topology} if $(\langle S_nx,z^* \rangle)_{n=1}^\infty$ converges to $(\langle Sx,z^* \rangle)_{n=1}^\infty$ for each $x\in X$ and $z^*\in Z^*$. 

\begin{theorem}\label{Groth-sp}
For a Banach space $X$, the following assertions are equivalent:
\begin{enumerate}
  \item\label{Groth-sp:1} $X$ is a Grothendieck space;
  \item\label{Groth-sp:2} every operator $T\colon X\to c_0$ is weakly compact;
  \item\label{Groth-sp:3} for each separable space $Y$, every operator $T\colon X\to Y$ is weakly compact.
  \item\label{Groth-sp:4} for each WCG space $Y$, every operator $T\colon X\to Y$ is weakly compact.
  \item\label{Groth-sp:5} for each $Y$ with W$^*$SC dual ball, every operator $T\colon X\to Y$ is weakly compact; 
  \item\label{Groth-sp:6} for each space $Z$, $\mathscr{W}(X,Z)$ is sequentially closed in $\mathscr{B}(X,Z)$ with respect to the weak operator topology.  
  \item\label{Groth-sp:7} for each space $Z$,  $\mathscr{W}(X,Z)$ is sequentially closed in $\mathscr{B}(X,Z)$ with respect to the strong operator topology.  
\end{enumerate}
\end{theorem}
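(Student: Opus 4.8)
The plan is to prove the chain of implications
\[
\eqref{Groth-sp:1}\Rightarrow\eqref{Groth-sp:5}\Rightarrow\eqref{Groth-sp:4}\Rightarrow\eqref{Groth-sp:3}\Rightarrow\eqref{Groth-sp:2}\Rightarrow\eqref{Groth-sp:1},
\]
and then handle the operator-topology reformulations \eqref{Groth-sp:6} and \eqref{Groth-sp:7} separately by linking them to \eqref{Groth-sp:2}. The implications \eqref{Groth-sp:5}$\Rightarrow$\eqref{Groth-sp:4}$\Rightarrow$\eqref{Groth-sp:3}$\Rightarrow$\eqref{Groth-sp:2} are immediate from the inclusions recalled in the Preliminaries: every separable space has W$^*$SC dual ball, every WCG space does too, and $c_0$ is separable; so each condition is a formal weakening of the previous one, restricting the class of target spaces.

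The two substantive implications are \eqref{Groth-sp:1}$\Rightarrow$\eqref{Groth-sp:5} and \eqref{Groth-sp:2}$\Rightarrow$\eqref{Groth-sp:1}. For \eqref{Groth-sp:2}$\Rightarrow$\eqref{Groth-sp:1}, suppose $X$ is not Grothendieck: pick a sequence $(f_n)$ in $X^*$ with $f_n\xrightarrow{w^*}0$ but $(f_n)$ not weakly null; after normalising and passing to a subsequence we may assume $\|f_n\|\geqslant\delta>0$ and, by the usual gliding-hump/Rosenthal-type argument, that $(f_n)$ is equivalent to the unit vector basis of some sequence space so that the operator $T\colon X\to c_0$, $Tx=(\langle f_n,x\rangle)_n$, is well defined and bounded (the coordinates tend to $0$ pointwise by weak$^*$-nullity). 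This $T$ is not weakly compact: if it were, then $T^*$ would be weakly compact, hence $T^*(B_{\ell_1})$ would be relatively weakly compact in $X^*$, forcing $(f_n)=(T^*e_n)$ to be relatively weakly compact and then (being weak$^*$-null) weakly null, a contradiction. Conversely, for \eqref{Groth-sp:1}$\Rightarrow$\eqref{Groth-sp:5}, let $Y$ have W$^*$SC dual ball and let $T\colon X\to Y$ be an operator. To show $T$ is weakly compact it suffices (Gantmacher) to show $T^{**}(X^{**})\subseteq Y$, or equivalently that $T^*\colon Y^*\to X^*$ is weakly compact. Take a bounded sequence $(y_n^*)$ in $Y^*$; by the W$^*$SC hypothesis a subsequence is weak$^*$-convergent, say $y_{n_k}^*\xrightarrow{w^*}y^*$; then $T^*y_{n_k}^*\xrightarrow{w^*}T^*y^*$ in $X^*$, and the Grothendieck property upgrades this to weak convergence. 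Hence $T^*$ maps bounded sequences to sequences with weakly convergent subsequences, so $T^*(B_{Y^*})$ is relatively weakly compact (Eberlein--Šmulian), i.e.\ $T^*$, and therefore $T$, is weakly compact.

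For the equivalence with \eqref{Groth-sp:6} and \eqref{Groth-sp:7}: note that for operators into a fixed $Z$, a WOT-convergent (resp.\ SOT-convergent) sequence is in particular pointwise weak$^*$-convergent after composing with any $z^*\in Z^*$, so the content of \eqref{Groth-sp:6} applied to $Z=c_0$ with the sequence $S_n=T$ (constant) composed with coordinatewise projections essentially recovers \eqref{Groth-sp:2}; more directly, I will show \eqref{Groth-sp:1}$\Rightarrow$\eqref{Groth-sp:6} and \eqref{Groth-sp:6}$\Rightarrow$\eqref{Groth-sp:7}$\Rightarrow$\eqref{Groth-sp:2}. The first of these: if $S_n\to S$ in WOT with each $S_n$ weakly compact, then for $x^{**}\in X^{**}$ one has $S_n^{**}x^{**}\in Z$ and $S_n^{**}x^{**}\xrightarrow{w^*}S^{**}x^{**}$ in $Z^{**}$; one shows this weak$^*$-limit lies in $Z$ by testing against $z^*\in Z^*$ and using that $(S_n^*z^*)$ is weak$^*$-null-shifted-to-weakly-convergent in $X^*$ via the Grothendieck property, so $\langle S_n^{**}x^{**},z^*\rangle=\langle x^{**},S_n^*z^*\rangle$ converges for the weak topology uniformly enough to conclude $S^{**}x^{**}\in Z$, giving $S$ weakly compact. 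The implication \eqref{Groth-sp:6}$\Rightarrow$\eqref{Groth-sp:7} is trivial since SOT-convergence implies WOT-convergence; and \eqref{Groth-sp:7}$\Rightarrow$\eqref{Groth-sp:2} follows by applying \eqref{Groth-sp:7} to finite-rank truncations: given $T\colon X\to c_0$, the operators $T_N=P_N\circ T$ (first $N$ coordinates) are finite-rank, hence weakly compact, and $T_N\to T$ in SOT because $Tx\in c_0$ for each $x$; so $T$ is weakly compact.

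The main obstacle I anticipate is the careful construction in \eqref{Groth-sp:2}$\Rightarrow$\eqref{Groth-sp:1}: one must make sure the candidate operator $X\to c_0$ built from a non-weakly-null weak$^*$-null sequence is genuinely bounded and genuinely fails to be weakly compact, which is where one invokes the duality $(c_0)^*=\ell_1$ together with the characterisation of relatively weakly compact sets in a dual being exactly those that are relatively weakly compact (not merely weak$^*$-compact) — this is precisely the place the Grothendieck property is being negated, so the bookkeeping has to be done honestly rather than waved through.
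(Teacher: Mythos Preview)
Your chain $(1)\Rightarrow(5)\Rightarrow(4)\Rightarrow(3)\Rightarrow(2)\Rightarrow(1)$ and the implications $(6)\Rightarrow(7)\Rightarrow(2)$ are correct and essentially match the paper's argument. (In $(2)\Rightarrow(1)$ the gliding-hump/Rosenthal decoration is unnecessary: once $Tx=(\langle f_n,x\rangle)_n$ is defined, $T^*e_n=f_n$; if $T$ were weakly compact then $(f_n)\subset T^*(B_{\ell_1})$ would be relatively weakly compact, and being weak$^*$-null it would be weakly null.)

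The gap is in $(1)\Rightarrow(6)$. You correctly deduce that $S_n^{**}x^{**}\in Z$ and that $\langle S_n^{**}x^{**},z^*\rangle\to\langle S^{**}x^{**},z^*\rangle$ for every $z^*\in Z^*$: the Grothendieck property does upgrade the weak$^*$-convergence $S_n^*z^*\to S^*z^*$ to weak convergence, which is exactly what makes this limit valid for arbitrary $x^{**}\in X^{**}$. But this only says that the sequence $(S_n^{**}x^{**})$ of elements of $Z$ converges to $S^{**}x^{**}$ in the topology $\sigma(Z^{**},Z^*)$, and $Z$ is \emph{not} sequentially closed in $Z^{**}$ for that topology in general (e.g.\ $e_1+\cdots+e_n\in c_0$ converges $\sigma(\ell_\infty,\ell_1)$ to $\mathds 1\notin c_0$). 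The phrase ``converges for the weak topology uniformly enough to conclude $S^{**}x^{**}\in Z$'' is precisely the missing step, and nothing you have established supplies it.

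The paper closes $(1)\Rightarrow(6)$ by a different route that avoids biduals altogether: if $S_n\to S$ in the weak operator topology with each $S_n$ weakly compact, set $Y=\overline{\operatorname{span}}\bigcup_n S_n(X)$ and $W=\bigcup_n 2^{-n}S_n(B_X)$. Then $W$ is relatively weakly compact (any sequence in $W$ either has infinitely many terms in one $2^{-n}S_n(B_X)$ or has norms tending to $0$), so $Y$ is WCG; and $S$ takes values in $Y$ because each $Sx$ is a weak limit of elements of $Y$. Now one simply applies the already-proved implication $(4)$ to $S\colon X\to Y$.
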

\begin{proof}
The equivalence between clauses \eqref{Groth-sp:1} and \eqref{Groth-sp:2} follows easily from the description of the operators $T\colon X\to c_0$ by means of weak$^*$-null sequences in the dual space $X^*$; the implications \eqref{Groth-sp:5} $\Rightarrow$ \eqref{Groth-sp:4}  $\Rightarrow$ \eqref{Groth-sp:3} $\Rightarrow$ \eqref{Groth-sp:2} are trivial because WCG Banach spaces have W$^*$SC dual balls  \cite[Theorem XIII.4]{Diestel:84}. \smallskip

For \eqref{Groth-sp:1} $\Rightarrow$ \eqref{Groth-sp:5} suppose that $X$ is Grothendieck, $Y$ has  weak$^*$ sequentially compact dual ball, and $T\colon X\to Y$ is an operator. Every bounded sequence $(f_n)_{n=1}^\infty$ in $Y^*$ has a~weakly* convergent subsequence  $(f_{n_k})_{n=1}^\infty$. Since 
$(T^*f_{n_k})_{n=1}^\infty$ is  weakly* convergent in X$^*$, it is weakly convergent. Consequently $T^*$ is weakly compact, and by Gantmacher's theorem, so is $T$. \smallskip

For \eqref{Groth-sp:1} $\Rightarrow$ \eqref{Groth-sp:6}, let $(T_n)_{n=1}^\infty$ be a sequence of operators in $\mathscr{W}(X,Z)$ such $(T_nx)_{n=1}^\infty$ is weakly convergent in $Z$ for each $x\in X$. By the uniform boundedness principle, $(T_n)_{n=1}^\infty$ is norm-bounded and $Tx = \textrm{weak-lim}_n T_n x$ defines $T\in  \mathscr{B}(X,Z)$. We have to show that $T$ is weakly compact. \smallskip

Let $Y=\overline{\span}\bigcup_{n=1}^\infty T_n(X)$ and $W=\bigcup_{n=1}^\infty 2^{-n} T_n(B_X)$. Then $W$ is a relatively weakly compact subset of $Z$ that generates $Y$, hence $Y$ is WCG. By \eqref{Groth-sp:4}, since $T$ takes values in $Y$, it is weakly compact. \smallskip

The implication \eqref{Groth-sp:6} $\Rightarrow$ \eqref{Groth-sp:7} is trivial. For \eqref{Groth-sp:7} $\Rightarrow$ \eqref{Groth-sp:2}, let us consider an arbitrary operator $T\colon X\to c_0$. Let $P_n$ denote the projection on $c_0$ onto the subspace generated by the $n$ first terms of the unit vector basis ($n\in \mathbb N$), then $P_nT$ is compact and converges to $T$ in the strong topology; hence $T$ is weakly compact. \end{proof}

A companion characterisation of Grothendieck spaces may be stated in terms of convex block subsequences in the dual space (see, \emph{e.g.}, \cite[Proposition 4.2]{ChenKaniaRuan:2021}). 

\begin{prop}\label{prop:convblock}
A Banach space $X$ has the Grothendieck property if and only if every weak*-null sequence in $X^*$ admits a convex block subsequence that converges in norm  to zero.
\end{prop}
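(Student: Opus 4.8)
The plan is to prove the two implications separately, reducing each to sequences that are weak$^*$-null: replacing a weak$^*$- (resp.\ weakly) convergent sequence by its difference with the limit, it suffices to show that $X$ is Grothendieck if and only if every weak$^*$-null sequence $(f_n)_{n=1}^\infty$ in $X^*$ admits a convex block subsequence converging in norm to zero. Recall that such a subsequence is a sequence $g_k=\sum_{n\in I_k}\lambda_n f_n$ with $\lambda_n\geqslant 0$, $\sum_{n\in I_k}\lambda_n=1$, and finite nonempty index sets with $\max I_k<\min I_{k+1}$.

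For the ``only if'' direction, suppose $X$ is Grothendieck and $(f_n)$ is weak$^*$-null; then $(f_n)$ is weakly null, so for every $N$ the origin lies in the weak closure of the tail $\{f_n:n\geqslant N\}$, hence---since a convex set has the same weak and norm closure (Mazur)---in the norm closure of $\mathrm{conv}\{f_n:n\geqslant N\}$. One then constructs the desired block subsequence by a routine induction: given $m_0<\dots<m_{k-1}$, choose a convex combination $g_k$ of $\{f_n:n>m_{k-1}\}$ with $\|g_k\|<1/k$ and let $m_k$ be the largest index occurring in its support. This yields a convex block subsequence of $(f_n)$ with $\|g_k\|\to 0$.

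For the ``if'' direction, assume the block condition and suppose, for contradiction, that some weak$^*$-null $(f_n)$ is not weakly null. Then there are $x^{**}\in X^{**}$ with $\|x^{**}\|\leqslant 1$, a number $\varepsilon>0$, and a subsequence $(f_{n_k})$ with $|\langle x^{**},f_{n_k}\rangle|\geqslant\varepsilon$ for all $k$. Passing to a further subsequence so that the scalars $\langle x^{**},f_{n_k}\rangle$ all lie in a sector of aperture less than $\pi/2$ about some unimodular $\theta$ (in the real case, simply so that they have a constant sign), and replacing $x^{**}$ by $\overline{\theta}\,x^{**}$, we may assume $\mathrm{Re}\,\langle x^{**},f_{n_k}\rangle\geqslant\varepsilon'$ for some $\varepsilon'>0$ and all $k$. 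Now $(f_{n_k})$ is still weak$^*$-null, so by hypothesis it has a convex block subsequence $(g_j)$ with $\|g_j\|\to 0$; but each $g_j$ is a convex combination of various $f_{n_k}$, so $\mathrm{Re}\,\langle x^{**},g_j\rangle\geqslant\varepsilon'$, whence $\|g_j\|\geqslant|\langle x^{**},g_j\rangle|\geqslant\varepsilon'$ for every $j$---a contradiction.

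The proof is essentially a packaging of Mazur's theorem together with the averaging trick in the last paragraph, so I do not expect a genuine obstacle; the only point requiring a little care is the passage to a subsequence in the complex case that makes the functionals have uniformly positive real part after a rotation, and, in the ``only if'' part, checking that the supports of the $g_k$ are genuinely consecutive so that $(g_k)$ is a \emph{block} subsequence and not merely a sequence of convex combinations.
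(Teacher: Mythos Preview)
Your argument is correct. The paper does not actually prove this proposition; it merely states it with a reference to \cite{ChenKaniaRuan:2021}, so there is no ``paper's own proof'' to compare against. Your approach---Mazur's theorem for the forward direction, and the averaging/sector trick for the converse---is the standard one and goes through without difficulty; the care you flag about consecutive supports and the complex rotation is exactly right.
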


The above-stated characterisation of Grothendieck spaces still alludes to the properties of $X^*$ rather than $X$ itself. The first problem stated in \cite{Diestel:73} remains unsolved:

\begin{quest}
Does there exist an internal characterisation of Grothendieck spaces?
\end{quest}
Here `internal' means depending only on the properties of the space $X$, not on those of the dual space $X^*$, or the operators on $X$.\medskip

In general,  convergence in the strong operator topology does not imply convergence in the operator norm. However for Grothendieck $C(K)$-spaces the following result is available (\cite{Coulhon:84}; see also \cite[Theorem 2.1]{BarcenasMarmol:2005}).
\begin{prop}
Let $K$ be a compact space such that $C(K)$ is Grothendieck. Let $(T_n)_{n=1}^\infty$ be a sequence of contractions that converges in the strong operator topology. Then $(T_n)_{n=1}^\infty$ converges in the norm topology.
\end{prop}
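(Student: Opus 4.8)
The plan is to argue by contradiction: suppose $(T_n)_{n=1}^\infty$ converges to some $T$ in the strong operator topology but \emph{not} in norm. After replacing $T_n$ by $T_n - T$ (still a sequence of operators, uniformly bounded, converging strongly to $0$, with norms not tending to $0$), we may assume $T=0$. Passing to a subsequence, there is $\delta>0$ with $\|T_n\|\geqslant \delta$ for all $n$, so we can choose $f_n\in C(K)$ with $\|f_n\|\leqslant 1$ and $\|T_n f_n\|\geqslant \delta$.

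\smallskip

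The next step exploits the lattice structure of $C(K)$ together with a gliding-hump / disjointification argument. The key classical fact is that a strongly convergent (hence pointwise-null) sequence of operators on a $C(K)$-space, when tested against a sequence of norm-one functions, can be ``localised'': one extracts a subsequence and finds norm-one functions $g_k$ with pairwise disjoint supports (or at least ``almost disjoint'' in the sense that tails are uniformly small) such that $\|T_{n_k} g_k\|$ stays bounded below. Concretely, one builds the $g_k$ inductively: having used strong convergence to kill the contributions of the finitely many previously chosen functions against $T_{n_k}$, one selects the next index $n_{k+1}$ and the next function $g_{k+1}$ supported essentially on a region where the earlier ones are small, keeping $\|T_{n_{k+1}} g_{k+1}\|$ comparable to $\delta$. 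Because the $g_k$ are (almost) disjointly supported norm-one elements of $C(K)$, the map $(a_k)\mapsto \sum_k a_k g_k$ embeds $c_0$ into $C(K)$, and one uses this to produce an operator $S\colon C(K)\to c_0$, built from suitable evaluation functionals witnessing $\|T_{n_k} g_k\|\geqslant\delta$, that is \emph{not} weakly compact.

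\smallskip

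Finally, invoke the hypothesis that $C(K)$ is Grothendieck. By Theorem~\ref{Groth-sp}, every operator from $C(K)$ into $c_0$ is weakly compact, contradicting the non-weakly-compact operator $S$ constructed above. This contradiction shows that strong convergence of the contractions $T_n$ forces norm convergence. (One could equally phrase the contradiction via clause \eqref{Groth-sp:7} of Theorem~\ref{Groth-sp}: the compressed operators $P_m S$ would converge to $S$ strongly but $S\notin\mathscr{W}(C(K),c_0)$.)

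\smallskip

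The main obstacle is the disjointification step: turning ``$T_n\to 0$ strongly but $\|T_n\|\not\to 0$'' into a genuine copy of $c_0$ on which the relevant functionals act non-trivially. This requires care because the $T_n$ need not be positive or lattice homomorphisms, so one cannot literally disjointify their images; instead one disjointifies in the \emph{domain} $C(K)$ and controls the outputs via the uniform bound $\sup_n\|T_n\|<\infty$ together with the pointwise convergence, absorbing the cross-terms into an arbitrarily small error. Getting the bookkeeping of this inductive construction right—choosing indices $n_k$ and functions $g_k$ simultaneously so that the lower bound $\|T_{n_k}g_k\|\geqslant\delta/2$ survives all later perturbations—is the technical heart of the argument.
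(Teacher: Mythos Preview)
The paper does not supply a proof of this proposition (it is attributed to Coulhon and B\'arcenas--M\'armol), so there is no in-paper argument to compare against. That said, your sketch has a genuine gap at the step you yourself flag as the technical heart.

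The disjointification cannot be carried out as you describe. Set $\mu_n=T_n^*\delta_{t_n}$, where $t_n\in K$ witnesses $\|T_nf_n\|\geqslant\delta$. These functionals are weak*-null, hence (by the Grothendieck hypothesis) weakly null in $M(K)$, hence uniformly absolutely continuous with respect to a single control measure $\lambda\in M(K)^+$. Nothing prevents all the $\mu_n$ from being concentrated on the \emph{same} $\lambda$-region of $K$ (think of Rademacher-type densities against $\lambda$); any $g$ with $|\mu_n(g)|\geqslant\delta/2$ must then overlap that region, so you cannot manufacture disjointly supported $g_k$ satisfying your lower bound. Even if you had such $g_k$, your operator $Sf=\big((T_{n_k}f)(s_k)\big)_k$ is defined by weak*-null---hence weakly null---functionals and is therefore automatically weakly compact; to argue otherwise you would need the off-diagonal entries $(T_{n_j}g_k)(s_j)$ for $j<k$ to be small, but disjointness of the supports of the $g_k$ gives no such control, since $T_{n_j}$ need not respect supports.

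More tellingly, your outline uses neither the Dunford--Pettis property of $C(K)$ nor the contraction hypothesis beyond the uniform bound $\sup_n\|T_n\|<\infty$. But the result is false at that level of generality: take any normalised weakly null sequence $(\phi_n)$ in $\ell_\infty^*$ (these exist, since $\ell_\infty^*$ contains an isomorphic copy of $L_1[0,1]$) and set $T_nx=\phi_n(x)\,e_1$; these are rank-one contractions on $\ell_\infty$ with $T_n\to0$ in the strong operator topology yet $\|T_n\|=1$. The arguments in the cited references exploit the order structure of $C(K)$---the order unit $\mathds{1}$, positivity of the operators, and the Dunford--Pettis property---in a way your $c_0$-embedding strategy does not capture.
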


In \cite[Problem 8]{Diestel:73},  Diestel asked if the class of Banach spaces $Y$ such that the equality $\mathscr{B}(X,Y)= \mathscr{W}(X,Y)$ holds for every Grothendieck space $X$ coincides with WCG spaces. However, in the Addendum at the end of the paper, he remarked that the space $\ell_1(\Gamma)$ with $\Gamma$ uncountable is a counterexample, and modified the problem as follows. 

\begin{quest}
What class of Banach spaces $Y$ is characterised by the equality  $$\mathscr{B}(X,Y)= \mathscr{W}(X,Y)$$ for every Grothendieck space $X$? 
\end{quest}

For future reference, let us record the following simple permanence properties of Grothendieck spaces. The proofs can be directly derived from the definition of  Grothendieck space.

\begin{prop}\label{basic}
Let $X$ and $Y$ be Banach spaces.
\begin{enumerate}
\item\label{simple0} If $X$ and $Y$ are  Grothendieck spaces, then so is the direct sum $X\oplus Y$.
\item\label{simple1} If $X$ is a Grothendieck space and there exists a surjective operator  $T\colon X\to Y$, then $Y$ is Grothendieck. In particular, quotients and complemented subspaces of Grothendieck spaces are Grothendieck.
\end{enumerate}
\end{prop}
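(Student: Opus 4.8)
The plan is to verify the two clauses directly from the definition of the Grothendieck property, namely that every weak\(^*\)-convergent sequence in the dual is weakly convergent; by replacing a convergent sequence with the sequence of differences with its limit, it suffices to treat weak\(^*\)-null sequences and show they are weakly null.

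For clause \eqref{simple0}, recall the standard identification \((X\oplus Y)^* \equiv X^* \oplus Y^*\), under which a functional is the pair of its restrictions. A sequence \((f_n, g_n)_{n=1}^\infty\) in \((X\oplus Y)^*\) is weak\(^*\)-null precisely when \((f_n)_{n=1}^\infty\) is weak\(^*\)-null in \(X^*\) and \((g_n)_{n=1}^\infty\) is weak\(^*\)-null in \(Y^*\) (test against vectors of the form \((x,0)\) and \((0,y)\)). Since \(X\) and \(Y\) are Grothendieck, \((f_n)\) is weakly null in \(X^*\) and \((g_n)\) is weakly null in \(Y^*\). Finally, \((X\oplus Y)^{**} \equiv X^{**} \oplus Y^{**}\), so a functional \(\Phi\in (X\oplus Y)^{**}\) decomposes as \((\Phi_1,\Phi_2)\) and \(\langle \Phi, (f_n,g_n)\rangle = \langle \Phi_1, f_n\rangle + \langle \Phi_2, g_n\rangle \to 0\); hence \((f_n,g_n)\) is weakly null, as required.

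For clause \eqref{simple1}, let \(T\colon X\to Y\) be surjective with adjoint \(T^*\colon Y^*\to X^*\). Let \((g_n)_{n=1}^\infty\) be a weak\(^*\)-null sequence in \(Y^*\); I claim \((g_n)\) is weakly null. First, \((g_n)\) is norm-bounded by the uniform boundedness principle. Next, by the open mapping theorem \(T(B_X)\) contains \(\delta B_Y\) for some \(\delta>0\), so for every \(y\in Y\) one can write \(y = Tx\) for a suitable \(x\in X\), giving \(\langle T^*g_n, x\rangle = \langle g_n, y\rangle \to 0\); thus \((T^*g_n)_{n=1}^\infty\) is weak\(^*\)-null in \(X^*\). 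Since \(X\) is Grothendieck, \((T^*g_n)\) is weakly null in \(X^*\), i.e.\ \(\langle \Psi, T^*g_n\rangle \to 0\) for every \(\Psi\in X^{**}\). Now for \(\Phi\in Y^{**}\), the functional \(T^{**}\Phi\in X^{**}\) satisfies \(\langle T^{**}\Phi, g_n\rangle = \langle \Phi, T^*g_n\rangle\); but \(\langle \Phi, T^*g_n\rangle = \langle T^{**}\Phi, g_n\rangle\) — more directly, \(\langle \Phi, T^* g_n\rangle\) need not be what we want, so instead observe: we must show \(\langle \Phi, g_n\rangle \to 0\) for all \(\Phi\in Y^{**}\). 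Since \(T^*\) is an isomorphism onto its (weak\(^*\)-closed) range — because \(T\) is a quotient map, \(T^*\) is an into isomorphism — the sequence \((g_n)\) is weakly null in \(Y^*\) iff \((T^*g_n)\) is weakly null in the subspace \(T^*(Y^*)\subseteq X^*\); and weak convergence to \(0\) in \(X^*\) restricts to weak convergence to \(0\) in any subspace. This gives the conclusion. The ``in particular'' statement follows since a quotient map and a projection onto a complemented subspace are both surjective operators.

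The only subtle point — and the one worth stating carefully — is the last step of clause \eqref{simple1}: passing weak convergence of \((T^*g_n)\) in \(X^*\) back to weak convergence of \((g_n)\) in \(Y^*\). The clean way is that \(T^*\) is an isomorphic embedding (this is the dual formulation of \(T\) being a surjection, via the open mapping theorem), so its image is a closed subspace and \(T^*\) is a weak-to-weak homeomorphism onto that subspace; weak nullity of \((T^*g_n)\) in \(X^*\) implies weak nullity in the subspace \(T^*(Y^*)\), which transfers to weak nullity of \((g_n)\) in \(Y^*\). Alternatively, one avoids duals of duals entirely by noting that \(T^{**}\) maps \(Y^{**}\) onto a complemented subspace of \(X^{**}\) and arguing as in clause \eqref{simple0}; either route is routine, so I would simply cite the open mapping theorem and Gantmacher-type duality and leave the verification to the reader, consistent with the paper's remark that ``the proofs can be directly derived from the definition''.
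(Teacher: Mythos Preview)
Your argument is correct and matches the paper's intent: the paper does not write out a proof, stating only that ``the proofs can be directly derived from the definition of Grothendieck space,'' and your verification does exactly that. The key step in clause~\eqref{simple1}---that $T^*$ is an isomorphic embedding (by the open mapping theorem) and hence a weak-to-weak homeomorphism onto its closed range---is the right way to pull weak nullity of $(T^*g_n)$ back to $(g_n)$. One cosmetic remark: your aside about $T^{**}$ has the direction reversed (for $T\colon X\to Y$ one has $T^{**}\colon X^{**}\to Y^{**}$, and the clean alternative is that $T^{**}$ is \emph{surjective}, so any $\Phi\in Y^{**}$ lifts to some $\Psi\in X^{**}$ with $\langle \Phi,g_n\rangle=\langle \Psi,T^*g_n\rangle\to 0$); this does not affect your main argument, which is complete as written.
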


The subsequent result demonstrates that the Grothendieck property is separably determined. 

\begin{prop}\label{basic2}
If every separable subspace $W$ of a Banach space $X$ is contained in a~subspace $Z$ of $X$ that is a~Grothendieck space, then $X$ is a Grothendieck space.
\end{prop}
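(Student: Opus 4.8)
The plan is to argue by contradiction; the crux is to encode, inside a \emph{single} separable subspace of $X$, an obstruction to the Grothendieck property that is inherited by every larger subspace. This detour is forced: a subspace of a Grothendieck space need not be Grothendieck (\emph{e.g.}\ $c_0\subseteq\ell_\infty$), so merely exhibiting a separable non-Grothendieck subspace of $X$ would be of no help.

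Assume $X$ is not Grothendieck. By definition there is a weak$^*$-null sequence $(f_n)_{n=1}^\infty$ in $X^*$ that fails to be weakly null: if $(f_n)$ converges weak$^*$ to $f$ without converging weakly, then $(f_n-f)$ is such a sequence. Fix $\Phi\in X^{**}$, a subsequence of $(f_n)$, and $\eta>0$ with $|\langle\Phi,f_n\rangle|\geqslant\eta$ along that subsequence. After passing to a further subsequence and multiplying $\Phi$ by a unimodular scalar (so that $\operatorname{Re}\langle\Phi,f_n\rangle\geqslant\eta/2$ along that subsequence), we obtain a weak$^*$-null sequence $(g_k)_{k=1}^\infty$ in $X^*$ and a constant $\delta>0$ such that
$$
\Bigl\|\sum\nolimits_j a_j g_{k_j}\Bigr\|\;\geqslant\;\delta
$$
for \emph{every} finite convex combination $\sum_j a_j g_{k_j}$ of the $g_k$. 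Moreover $M:=\sup_k\|g_k\|<\infty$ by the uniform boundedness principle.

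Now I build the separable subspace. Let $\mathscr D$ be the countable family of all finite convex combinations of $(g_k)_{k=1}^\infty$ with rational coefficients; for each $v\in\mathscr D$ choose $x_v\in B_X$ with $|\langle v,x_v\rangle|>\delta/2$ (possible since $\|v\|\geqslant\delta$), and set $W:=\overline{\operatorname{span}}\{x_v\colon v\in\mathscr D\}$, a separable subspace of $X$. Given an arbitrary finite convex combination $u=\sum_j a_j g_{k_j}$, pick $v=\sum_j a_j' g_{k_j}\in\mathscr D$ with $\sum_j|a_j-a_j'|<\delta/(4M)$; then $\|u-v\|<\delta/4$, hence $|\langle u,x_v\rangle|>\delta/2-\delta/4=\delta/4$, and since $x_v\in B_W$ we conclude $\|u|_W\|_{W^*}\geqslant\delta/4$ for \emph{every} finite convex combination $u$ of $(g_k)_{k=1}^\infty$.

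Finally, the hypothesis furnishes a subspace $Z$ of $X$ with $W\subseteq Z$ and $Z$ Grothendieck. Since $(g_k)_{k=1}^\infty$ is weak$^*$-null in $X^*$ and $Z\subseteq X$, the sequence $(g_k|_Z)_{k=1}^\infty$ is weak$^*$-null in $Z^*$; by Proposition~\ref{prop:convblock} it has a convex block subsequence $(u_m|_Z)_{m=1}^\infty$ with $\|u_m|_Z\|_{Z^*}\to0$, where each $u_m$ is a finite convex combination of the $g_k$. But $B_W\subseteq B_Z$ yields $\|u_m|_Z\|_{Z^*}\geqslant\|u_m|_W\|_{W^*}\geqslant\delta/4$ for all $m$, a contradiction; hence $X$ is Grothendieck. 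The one genuinely non-routine step is the choice of what to put into $W$: one records witnesses for \emph{all} convex combinations of the obstructing sequence (not merely of the sequence itself), which is precisely what makes the lower bound $\delta/4$ survive the enlargement from $W$ to $Z$; everything else — the extraction of $(g_k)$ and $\Phi$, the rational approximation, and the restriction inequalities — is bookkeeping.
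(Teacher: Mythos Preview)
Your proof is correct but takes a genuinely different route from the paper. The paper's argument is much shorter: invoking Theorem~\ref{Groth-sp}, it suffices to show that every operator $T\colon X\to c_0$ is weakly compact, and since weak compactness of an operator is determined by its restrictions to separable subspaces, one simply notes that for each separable $W\subseteq X$ the restriction $T|_W$ factors through $T|_Z$ for a Grothendieck superspace $Z\supseteq W$, where $T|_Z$ is automatically weakly compact. Your approach instead works directly with sequences in the dual and invokes Proposition~\ref{prop:convblock}: you manufacture a separable $W$ that witnesses the uniform lower bound $\delta/4$ on \emph{every} convex combination of a bad weak$^*$-null sequence, then derive a contradiction by restricting to the Grothendieck superspace $Z$. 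The paper's route is slicker and leans on the operator-theoretic characterisation; yours is more hands-on and makes explicit the mechanism behind separable determination --- in particular, the idea of recording norming vectors for all rational convex combinations (not merely for the sequence itself) is exactly what ensures the obstruction survives passage from $W$ to $Z$.
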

\begin{proof}
Observe that an operator $T$ is weakly compact if and only if all restrictions of $T$ to separable subspaces of $X$ are weakly compact. Let $T\colon X\to c_0$ be an~operator. Then $T$ restricted to a separable subspace $W$ of $X$ must be weakly compact, because the restriction of $T$ to a Grothendieck superspace of $W$ is weakly compact. The conclusion then follows from the equivalence between \eqref{Groth-sp:1} and \eqref{Groth-sp:2} in Theorem~\ref{Groth-sp}.
\end{proof}

The subsequent result is interesting in its own and will be helpful later. 
\begin{prop}\label{prop:NQc0}
A Banach space $X$ has no quotient isomorphic to $c_0$ if and only if every weakly* convergent sequence in $X^*$ has a weakly Cauchy subsequence.
\end{prop}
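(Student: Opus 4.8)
The plan is to prove both implications via the characterisation of $c_0$-quotients in terms of operators onto $c_0$ together with Rosenthal's $\ell_1$-theorem. First recall that a bounded sequence $(f_n)_{n=1}^\infty$ in $X^*$ has a weakly Cauchy subsequence if and only if it has no subsequence equivalent to the unit vector basis of $\ell_1$, by Rosenthal's $\ell_1$-theorem; so I will phrase the right-hand side as: ``no weak*-null sequence in $X^*$ has an $\ell_1$-subsequence.''

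For the easy direction, suppose $X$ \emph{does} have a quotient isomorphic to $c_0$, say $q\colon X\to c_0$ surjective. Then $q^*\colon \ell_1 \to X^*$ is an isomorphic embedding, and the images $q^*(e_n)$ of the unit vectors form a sequence in $X^*$ equivalent to the $\ell_1$-basis. The key observation is that $(q^*e_n)_{n=1}^\infty$ is weak*-null: for $x\in X$ we have $\langle q^*e_n, x\rangle = \langle e_n, qx\rangle \to 0$ since $qx\in c_0$. Thus $(q^*e_n)$ is a weak*-null sequence in $X^*$ equivalent to the $\ell_1$-basis, hence with no weakly Cauchy subsequence; this contradicts the right-hand side. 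So the right-hand side implies no $c_0$-quotient.

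For the converse, suppose $X$ has no quotient isomorphic to $c_0$, and let $(f_n)_{n=1}^\infty$ be weak*-null in $X^*$; I must produce a weakly Cauchy subsequence. Passing to a subsequence, by Rosenthal's theorem either some subsequence is weakly Cauchy (and we are done) or some subsequence $(f_{n_k})$ is equivalent to the $\ell_1$-basis. In the latter case, define $T\colon X\to c_0$ by $Tx = (\langle f_{n_k}, x\rangle)_{k=1}^\infty$; this is well-defined and bounded precisely because $(f_{n_k})$ is weak*-null, and it is bounded below on the relevant directions because $(f_{n_k})$ dominates the $\ell_1$-basis. The main work is to show $T$ is a surjection onto $c_0$ — equivalently, by the open mapping theorem, that $T^*\colon \ell_1 \to X^*$, which sends $e_k \mapsto f_{n_k}$, is an isomorphic embedding with weak*-closed range, so that $X/\ker T \cong c_0$. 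That $T^*$ is an isomorphic embedding is immediate from the $\ell_1$-equivalence; for weak*-closedness of the range (hence for $T$ being onto, not merely having dense range), one uses that the range is the annihilator of $\ker T$, which is automatically weak*-closed, so $T$ has closed range, and since $T^*$ is bounded below the range of $T$ is all of $c_0$. This exhibits a quotient of $X$ isomorphic to $c_0$, a contradiction, so the $\ell_1$-alternative is impossible and every weak*-null sequence has a weakly Cauchy subsequence. Finally, an arbitrary weakly* \emph{convergent} sequence $(g_n)$ reduces to this case by considering $f_n = g_n - g_m$ differences, or more directly by noting $(g_n - g_1)$ is weak*-null.

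The step I expect to be the main obstacle is verifying that $T$ is genuinely \emph{surjective} onto $c_0$, not just that it has dense range or factors through $c_0$ up to isomorphism: one needs the $\ell_1$-domination to propagate to a quantitative lower bound on $T^*$ and then invoke that a bounded-below adjoint forces the original operator to be surjective (equivalently, that $\operatorname{ran} T = ({}^\perp \ker T^*)$ is closed). A clean way around this is to argue directly that $T^*(\ell_1)$ is weak*-closed because it coincides with $(\ker T)^\perp$, and that $T$ maps $X$ onto $c_0 = ({}^\perp T^*(\ell_1))^\perp/\!\!\sim$; alternatively one can cite the standard fact that if a weak*-null sequence in $X^*$ is equivalent to the $\ell_1$-basis then its closed linear span is weak*-closed and complemented-free, yielding the $c_0$-quotient. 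I would present whichever of these the surrounding text has set up.
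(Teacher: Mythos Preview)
Your proof takes essentially the same route as the paper's: Rosenthal's $\ell_1$-theorem plus the operator $Tx = (\langle f_{n_k}, x\rangle)_k$ into $c_0$, and for the other direction the observation that $q^*e_n$ is weak*-null and $\ell_1$-equivalent. Two small points deserve correction.

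First, the reduction at the end is misstated: $(g_n - g_1)$ is \emph{not} weak*-null unless $g_1$ happens to be the weak*-limit. Subtract the weak*-limit $g$ instead, or do what the paper does: having found an $\ell_1$-subsequence of the original weak*-convergent sequence, pass to the differences $g_n = f_{2n+1} - f_{2n}$, which are genuinely weak*-null and still equivalent to the $\ell_1$-basis. This avoids the separate reduction step entirely.

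Second, your surjectivity paragraph is phrased circularly: ``one uses that the range [of $T^*$] is the annihilator of $\ker T$'' is exactly what requires $T$ to have closed range, so it cannot be the starting point. The clean argument is: $T^*$ is bounded below (by the $\ell_1$-equivalence), hence has norm-closed range; by the closed range theorem $T$ has closed range; since $T^*$ is injective, $\overline{\operatorname{ran} T} = {}^\perp(\ker T^*) = c_0$; therefore $T$ is onto. The paper simply asserts surjectivity without comment, so you are not missing any idea --- just tidy the logic.
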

\begin{proof}
If there exists a weakly* convergent sequence $(f_n)_{n=1}^\infty$ in $X^*$ which has no weakly Cauchy subsequence then, by Rosenthal $\ell_1$ theorem and passing to a subsequence, we can assume that $(f_n)_{n=1}^\infty$ is equivalent to the usual basis of $\ell_1$. Thus $(g_n)_{n=1}^\infty$ defined by $g_n = f_{2n+1}- f_{2n}$ ($n\in \mathbb N$) is a weak$^*$-null sequence equivalent to the usual basis of $\ell_1$, and $Tx =(\langle x,g_n\rangle)_{n=1}^\infty$ defines a surjective operator from $X$ onto $c_0$. 

Conversely, if $q\colon X\to c_0$ is a quotient map, the adjoint $q^*$  takes the standard unit basis of $\ell_1$ into a weak$^*$-null sequence in $X^*$ without a weakly Cauchy subsequence.
\end{proof}
It is not too difficult to show that if every weakly* convergent sequence in $X^*$ is weakly Cauchy subsequence, then $X$ is Grothendieck.\medskip

The following characterisation of Grothendieck spaces is due to R\"abiger \cite{Rabiger:85}. 
\begin{theorem}\label{th:char-G}
A Banach space $X$ is Grothendieck if and only if $X^*$ is weakly sequentially complete and $X$ has no quotient isomorphic to $c_0$. 
\end{theorem}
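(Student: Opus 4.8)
The plan is to prove the two implications separately, using the characterisation results already established in the excerpt, above all Theorem~\ref{Groth-sp}, Proposition~\ref{prop:NQc0}, and Proposition~\ref{prop:convblock}.

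For the ``only if'' direction, assume $X$ is Grothendieck. Since a $c_0$-quotient would, by Proposition~\ref{basic}\eqref{simple1}, make $c_0$ itself Grothendieck---and $c_0$ is not Grothendieck (the coordinate functionals $e_n^*$ are weak*-null but not weakly null in $\ell_1$)---$X$ has no quotient isomorphic to $c_0$. It remains to show $X^*$ is weakly sequentially complete. So let $(f_n)_{n=1}^\infty$ be a weakly Cauchy sequence in $X^*$. A weakly Cauchy sequence is in particular weak*-Cauchy, hence weak*-convergent to some $f\in X^*$ by completeness of $X^*$ in the weak* topology on bounded sets (the sequence is norm-bounded by uniform boundedness). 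Then $(f_n - f)_{n=1}^\infty$ is weak*-null, hence weakly null by the Grothendieck property; combined with $(f_n)$ being weakly Cauchy, this forces $f_n \to f$ weakly. Thus every weakly Cauchy sequence in $X^*$ converges weakly, which is precisely weak sequential completeness of $X^*$.

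For the ``if'' direction, assume $X^*$ is weakly sequentially complete and $X$ has no $c_0$-quotient. Let $(f_n)_{n=1}^\infty$ be a weak*-null sequence in $X^*$; we must show it is weakly null. By Proposition~\ref{prop:NQc0}, since $X$ has no quotient isomorphic to $c_0$, the sequence $(f_n)$ has a weakly Cauchy subsequence $(f_{n_k})_{k=1}^\infty$. By weak sequential completeness of $X^*$, this subsequence converges weakly to some $g\in X^*$; but it also converges weak* to $0$ (being a subsequence of a weak*-null sequence), and weak convergence implies weak* convergence, so $g=0$ and $f_{n_k}\to 0$ weakly. The same argument applies to every subsequence of $(f_n)$: each has a further subsequence converging weakly to $0$. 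A standard subsequence-of-subsequence argument then shows the whole sequence $(f_n)$ converges weakly to $0$. Hence $X$ is Grothendieck.

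I do not expect a genuine obstacle here; the statement is essentially a repackaging of the definitions once Proposition~\ref{prop:NQc0} is in hand. The one point requiring a little care is the subsequence-splitting argument at the end of the ``if'' direction: one should phrase it as ``if $(f_n)$ did not converge weakly to $0$, there would be $\phi\in X^{**}$, $\varepsilon>0$, and a subsequence with $|\langle \phi, f_{n_k}\rangle|\geqslant\varepsilon$, contradicting that this subsequence has a further subsequence weakly convergent to $0$.'' Alternatively, one could invoke Proposition~\ref{prop:convblock} instead: a weak*-null sequence with no weakly Cauchy subsequence is ruled out by the no-$c_0$-quotient hypothesis, and weak sequential completeness upgrades the resulting weakly Cauchy behaviour to genuine weak (indeed, after convex blocking, norm) convergence---but the route through Proposition~\ref{prop:NQc0} is the cleanest.
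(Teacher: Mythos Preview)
Your proof is correct and follows essentially the same route as the paper: the paper's argument is simply a highly compressed version of yours, noting for the direct implication that weakly Cauchy sequences in $X^*$ are weak*-convergent (hence weakly convergent by the Grothendieck property) and that a surjection onto $c_0$ would contradict Theorem~\ref{Groth-sp}\eqref{Groth-sp:2}, and for the converse appealing directly to Proposition~\ref{prop:NQc0} exactly as you do. One minor redundancy: once you know $(f_n-f)$ is weakly null you already have $f_n\to f$ weakly, so the clause ``combined with $(f_n)$ being weakly Cauchy'' is unnecessary.
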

\begin{proof}
For the direct implication, note that weakly Cauchy sequences are weakly* convergent, and surjective operators onto $c_0$ are not weakly compact. 

The converse implication easily follows from Proposition \ref{prop:NQc0}. 
\end{proof}

R\"abiger  \cite{Rabiger:85} noticed that for spaces isomorphic to complemented subspaces of a~Banach lattice, being Grothendieck is equivalent to having no quotient isomorphic to $c_0$; let us show that this characterisation extends to spaces having a local unconditional structure.

\begin{prop}
Suppose that $E$ is a Banach space with a local unconditional structure. Then $E$ is Grothendieck if and only if $E$ has no quotient isomorphic to $c_0$.
\end{prop}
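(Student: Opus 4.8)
The plan is to follow R\"abiger's argument for complemented subspaces of Banach lattices, the only additional input being the structural description of spaces with a local unconditional structure. One implication requires no hypothesis on $E$: if $E$ is Grothendieck then so is every quotient of $E$ by Proposition~\ref{basic}\eqref{simple1}, while $c_0$ is not Grothendieck, since by Theorem~\ref{Groth-sp} this would force the identity operator on $c_0$ to be weakly compact; hence $E$ has no quotient isomorphic to $c_0$. For the converse, assume that $E$ has a local unconditional structure and no quotient isomorphic to $c_0$. By R\"abiger's Theorem~\ref{th:char-G} it then suffices to prove that $E^*$ is weakly sequentially complete.

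First I would realise $E^*$ as a complemented subspace of a Banach lattice. By the classical characterisation of local unconditional structure (going back to Figiel, Johnson, and Tzafriri), the bidual $E^{**}$ is isomorphic to a complemented subspace of some Banach lattice $L$. Dualising the corresponding bounded projection, and using that the dual of a Banach lattice is again a Banach lattice, shows that $E^{***}$ is isomorphic to a complemented subspace of the Banach lattice $L^*$. Since $E^*$ is canonically ($1$-)complemented in $E^{***}$, it follows that $E^*$ is (isomorphic to) a complemented subspace of $L^*$.

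Next I would rule out copies of $c_0$ in $E^*$. Were $c_0$ to embed into $E^*$, then by the theorem of Bessaga and Pe\l czy\'nski the space $E$ would contain a complemented subspace isomorphic to $\ell_1$; being complemented, this subspace is a quotient of $E$, and since every separable Banach space---in particular $c_0$---is a quotient of $\ell_1$, the space $E$ would have a quotient isomorphic to $c_0$, contradicting our assumption. Thus $c_0$ does not embed into $E^*$. It now remains to invoke the fact---a standard consequence of the theory of KB-spaces, already used by R\"abiger---that a complemented subspace of a Banach lattice containing no copy of $c_0$ is weakly sequentially complete. Applying this to $E^*$ gives that $E^*$ is weakly sequentially complete, and Theorem~\ref{th:char-G} then delivers that $E$ is Grothendieck.

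The routine points---dualising projections, and the lemma that $c_0$ is a quotient of $\ell_1$---are immediate; the genuine content lies in the two lattice-theoretic results (that a local unconditional structure pushes the bidual into a Banach lattice, and that a $c_0$-free complemented subspace of a Banach lattice is weakly sequentially complete), together with the Bessaga--Pe\l czy\'nski theorem, and the main point is simply that these reduce everything to R\"abiger's characterisation. The only real subtlety I anticipate is the passage to the dual: one must obtain $E^*$ as a complemented subspace of a Banach lattice from the local unconditional structure of $E$ \emph{alone}, which is why the argument is routed through $E^{**}$ rather than attempting to embed $E$ itself into a Banach lattice.
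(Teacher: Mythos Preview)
Your proof is correct and follows essentially the same route as the paper: both use the Figiel--Johnson--Tzafriri theorem to place $E^{**}$ complementably in a Banach lattice, dualise to get $E^{*}$ complemented in a dual Banach lattice, invoke Bessaga--Pe\l czy\'nski to exclude copies of $c_0$ from $E^*$, and then apply the lattice fact that this forces weak sequential completeness, concluding via Theorem~\ref{th:char-G}. The only (harmless) difference is that you spell out the forward implication and the step from ``complemented $\ell_1$ in $E$'' to ``quotient $c_0$'' more explicitly than the paper does.
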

\begin{proof}
Suppose that $E$ has a local unconditional structure. This is equivalent to the existence of a Banach lattice $F$ into which $E^{**}$ embeds as a complemented subspace \cite[Theorem 2.1]{FigielJohnsonTzafriri:1975}. Since $E^*$ is complemented in $E^{***}$, it is also complemented in the dual Banach lattice $F^*$. Thus, $E^*$ is weakly sequentially complete if and only if it does not contain any subspace isomorphic to $c_0$ \cite[Theorem 1.c.7]{LT2}.\smallskip

If $E$ has no quotient isomorphic to $c_0$, then $E$ contains no complemented copy of $\ell_1$, hence $E^*$ contains no copies of $c_0$ \cite[Theorem V.10]{Diestel:84}, thus $E^*$ is weakly sequentially complete.
\end{proof}

Let us revisit some consequences of the following result due to  Hagler and Johnson (\cite[Theorem 1]{HaglerJ:77}). 
\begin{theorem}\label{th:HJ77}
Let $X$ be a Banach space. Suppose that $X^*$ has an infinite-dimensional closed subspace without normalised weak$^*$-null sequences. Then $X$ contains a subspace isomorphic to $\ell_1$. 
\end{theorem}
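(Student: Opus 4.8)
The plan is to build, inside $X$, a sequence $(x_n)$ equivalent to the $\ell_1$-basis by a gliding-hump/biorthogonality argument driven by the hypothesis on $X^*$. Let $V \subseteq X^*$ be the infinite-dimensional closed subspace containing no normalised weak$^*$-null sequence. The first step is to extract from this hypothesis a uniform quantitative statement: I claim there is a $\delta > 0$ such that every finite-codimensional (in $X$) weak$^*$-closed slice still meets $S_V$ in a point of norm-distance $\geqslant \delta$ from being ``small on finitely many coordinates'' — more precisely, that one cannot have $f_k \in S_V$ with $\langle x, f_k\rangle \to 0$ for each $x$ in a fixed countable dense set. Indeed, if no such $\delta$ worked, a diagonal argument over a dense sequence $(x_j) \subseteq X$ would manufacture a normalised sequence in $V$ that is weak$^*$-null, contradicting the assumption. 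So the hypothesis is really: \emph{every} normalised sequence in $V$ has a subsequence that is \emph{not} weak$^*$-null, and by passing to subsequences via a diagonal procedure we may assume a single $x_0 \in S_X$ and $\delta>0$ with $|\langle x_0, f\rangle| \geqslant \delta$ for all $f$ in a normalised sequence we care about — but of course $x_0$ will have to be allowed to move, which is the crux.

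Next I would run the standard construction for producing $\ell_1$ in $X$: inductively choose normalised $f_n \in V$ and $x_n \in B_X$ so that the array $\langle x_i, f_j\rangle$ is, up to small errors, lower-triangular with diagonal bounded below by a fixed constant $c>0$, i.e. $\langle x_n, f_n\rangle \geqslant c$, $|\langle x_n, f_j\rangle|$ small for $j < n$, and (the part that uses infinite dimensionality of $V$ essentially) $|\langle x_j, f_n\rangle|$ small for $j < n$ as well. The latter is arranged because at stage $n$ the functionals $f_1,\dots,f_{n-1}$ together with finitely many norming functionals for $x_1,\dots,x_{n-1}$ impose only finitely many linear constraints, and $V$ being infinite-dimensional we can pick $f_n \in V$ normalised annihilating (approximately) the relevant finite-dimensional obstructions while — and here is where the hypothesis enters — $f_n$ cannot be forced to be weak$^*$-small on a previously fixed vector, so after a renormalisation and a perturbation we get a genuine lower bound on $\langle x_n, f_n\rangle$ for a suitable new $x_n$. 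One should phrase this as: $V \cap (\text{finite-codim subspace})^\perp$-type intersections are still infinite-dimensional, hence still contain no normalised weak$^*$-null sequence, so still contain an ``honestly large'' functional witnessed by some unit vector of $X$.

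Once the almost-lower-triangular array with diagonal $\geqslant c$ and off-diagonal tails summable and small is in hand, a routine estimate shows $(x_n)$ is equivalent to the $\ell_1$-basis: for scalars $(a_n)$ pick $n_0$ with $|a_{n_0}| = \max_n |a_n|$ and test $\sum a_n x_n$ against $f_{n_0}$ (or against an appropriate sign-weighted block of the $f_j$'s, using that the interaction terms are dominated by a small multiple of $\sum|a_n|$) to get $\|\sum a_n x_n\| \geqslant (c - \varepsilon)\,\|(a_n)\|_\infty$ — and iterating/blocking upgrades this to a lower bound of the form $\eta \sum |a_n|$; the reverse inequality is the triangle inequality. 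That gives a subspace of $X$ isomorphic to $\ell_1$. The main obstacle is precisely the inductive step: keeping simultaneous control of the \emph{two} families of small inner products ($f_n$ small on old $x_j$, and new $x_n$ small when paired with old $f_j$) while still extracting a non-weak$^*$-small $f_n$ — this is where one must use that the hypothesis is stable under passing to the infinite-codimensional (inside $V$) subspaces cut out by the finitely many constraints accumulated so far, and set up the $\varepsilon_n$'s summably in advance so the perturbation series converge. A clean way to package the whole argument is to first reduce, via Rosenthal's $\ell_1$-theorem applied inside $B_{X^*}$, to the dichotomy ``either $V$ has a weak$^*$-convergent subsequence of normalised vectors (excluded, after subtracting the limit to get a weak$^*$-null sequence — so one must be slightly careful that the limit may be nonzero, but then differences of the sequence are normalised-after-rescaling and weak$^*$-null, again excluded) or $V$ contains an $\ell_1$-sequence $(f_n)$'', and in the latter case the evaluation operator $x \mapsto (\langle x, f_n\rangle)_n$ together with a suitable pre-dual construction yields the $\ell_1$ inside $X$ directly.
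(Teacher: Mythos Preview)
First, note that the paper does not prove this theorem; it is quoted from \cite{HaglerJ:77} and then applied. So the question is simply whether your sketch is correct. It has a genuine gap at the decisive step.

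Your inductive construction aims at an approximately biorthogonal system $(x_n, f_n)$ with $\langle x_n, f_n\rangle \geqslant c$ and all off-diagonal pairings small. But such a system does \emph{not} force $(x_n)$ to be equivalent to the $\ell_1$-basis. Testing $\sum a_n x_n$ against a single $f_{n_0}$ yields only $\|\sum a_n x_n\| \gtrsim \max_n |a_n|$, an $\ell_\infty$-type lower bound; your claim that ``iterating/blocking upgrades this to a lower bound of the form $\eta \sum |a_n|$'' is false in general. The unit vectors $(e_n)$ of $c_0$ together with the coordinate functionals $(e_n^*) \subset \ell_1 = c_0^*$ form a \emph{perfectly} diagonal such array, yet $(e_n)$ spans $c_0$, not $\ell_1$. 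To obtain an $\ell_1$-lower bound you would need, for \emph{every} choice of signs $(\epsilon_n)$, a functional of uniformly bounded norm acting as $x_j \mapsto \epsilon_j c$ on the system---equivalently, that suitable signed sums of the $f_n$'s stay bounded---and nothing in your construction supplies this.

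The alternative route you sketch at the end is also incomplete. Rosenthal's dichotomy applied to a normalised sequence in $V \subset X^*$ refers to the \emph{weak} topology of $X^*$, not the weak$^*$ topology; and the weak$^*$-convergent branch does not split off as cleanly as you suggest (the weak$^*$ limit need not lie in $V$, so the normalised differences need not lie in $V$ either). More seriously, even if you obtain an $\ell_1$-sequence $(f_n)$ inside $V$, the bare inclusion $\ell_1 \hookrightarrow X^*$ does not give $\ell_1 \hookrightarrow X$---the ``suitable pre-dual construction'' you invoke is precisely the missing content of the theorem. The genuine Hagler--Johnson argument runs Rosenthal's theorem in $X$ rather than in $X^*$: one uses the functionals in $V$ to manufacture a bounded sequence in $X$ with no weakly Cauchy subsequence, via a combinatorial construction considerably more delicate than biorthogonality, and then Rosenthal delivers $\ell_1$ inside $X$.
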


A version of the following result appears in \cite{DiestelS:78}, mentioning that its proof is similar to that of Theorem \ref{th:HJ77}. We can afford a more transparent argument. 
\begin{theorem}\label{th:noWinG}
If $X$ is Grothendieck and $T\colon X\to Y$ is a weakly pre-compact operator, then it is weakly compact.
\end{theorem}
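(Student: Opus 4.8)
The plan is to suppose that $T\colon X\to Y$ is weakly pre-compact but not weakly compact, and derive a contradiction with the Grothendieck property of $X$. Since $T$ is not weakly compact, the adjoint $T^*\colon Y^*\to X^*$ is not weakly compact either by Gantmacher's theorem, so $T^*(B_{Y^*})$ is not relatively weakly compact in $X^*$. I would like to extract from this failure a bounded sequence $(g_n)$ in $X^*$ that is weak$^*$-convergent but not weakly convergent, contradicting that $X$ is Grothendieck; the delicate point is that $T^*(B_{Y^*})$ need not contain any weak$^*$-convergent sequence a priori, so one cannot simply quote weak$^*$ sequential compactness. Instead, the weak pre-compactness of $T$ must be used to produce the weak$^*$-convergent sequence.

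The key step is to invoke Theorem~\ref{th:HJ77} of Hagler and Johnson. Since $T$ is weakly pre-compact, by Rosenthal's $\ell_1$-theorem there is no subspace of $X$ isomorphic to $\ell_1$ on which $T$ is an isomorphism. I would use this to show that the subspace $\overline{T^*(Y^*)}\subseteq X^*$ (or a suitable infinite-dimensional closed subspace thereof coming from the non-weak-compactness of $T^*$) cannot be of the type excluded in Theorem~\ref{th:HJ77}: more precisely, were there an infinite-dimensional closed subspace of $X^*$ without normalised weak$^*$-null sequences sitting inside the relevant part of $T^*(Y^*)$, Theorem~\ref{th:HJ77} would give a copy of $\ell_1$ in $X$, and a perturbation/gliding-hump argument using the specific structure of this $\ell_1$ (built from the images under $T^*$) would force $T$ to be an isomorphism on that copy — contradicting weak pre-compactness of $T$. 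Hence every infinite-dimensional closed subspace of the image contains normalised weak$^*$-null sequences; combined with the fact that $T^*(B_{Y^*})$ is not relatively weakly compact, one locates a normalised weak$^*$-null sequence $(g_n)$ in $X^*$ of the form $g_n=T^*h_n$ (with $(h_n)$ bounded in $Y^*$) that is \emph{not} weakly null — e.g.\ because it is equivalent to the $\ell_1$-basis, so bounded away from $0$ in a way incompatible with weak nullity.

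Alternatively, and perhaps more cleanly, I would argue directly: if $T$ is weakly pre-compact but not weakly compact, pick $(h_n)\subset B_{Y^*}$ with $(T^*h_n)$ having no weakly convergent subsequence. Since $X$ is Grothendieck, $(T^*h_n)$ also has no weak$^*$-convergent subsequence (a weak$^*$-convergent sequence in $X^*$ would be weakly convergent). By Rosenthal's $\ell_1$-theorem applied in $X^*$ — or rather by Theorem~\ref{th:HJ77} — the absence of weak$^*$-convergent (in particular weak$^*$-null, after translating by a fixed term) subsequences in the subspace generated by $(T^*h_n - T^*h_1)$ yields an infinite-dimensional subspace of $X^*$ with no normalised weak$^*$-null sequence, hence a copy of $\ell_1$ in $X$ on which, tracing through the construction, $T$ acts as an isomorphism; this contradicts weak pre-compactness of $T$.

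The main obstacle I anticipate is making rigorous the passage ``$\ell_1$ in $X$ from Hagler–Johnson $\Longrightarrow$ $T$ is an isomorphism on a copy of $\ell_1$.'' Theorem~\ref{th:HJ77} produces an $\ell_1$-subspace of $X$, but not automatically one on which $T$ is bounded below; one needs to check that the copy of $\ell_1$ emerging from the Hagler–Johnson construction is adapted to the functionals $T^*h_n$ (so that $T$ restricted to it is a near-isometry onto its image in $Y$), which requires revisiting the mechanics of that proof or supplying a short additional gliding-hump / small-perturbation argument. The rest — Gantmacher, Rosenthal, and the definitional unwinding of ``Grothendieck'' and ``weakly pre-compact'' — is routine.
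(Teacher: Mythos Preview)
Your outline tracks the paper's proof closely through the opening moves: assume $T$ is not weakly compact, use Gantmacher, find $(h_n)\subset B_{Y^*}$ with $(T^*h_n)$ having no weakly convergent subsequence, and use the Grothendieck property of $X$ to upgrade this to no weakly Cauchy (equivalently, no weak$^*$-convergent) subsequence, so that after Rosenthal both $(h_n)$ and $(T^*h_n)$ may be taken equivalent to the $\ell_1$-basis. You also correctly diagnose the obstacle: Hagler--Johnson applied to the $\ell_1$-subspace $T^*(N)\subset X^*$ only yields \emph{some} copy of $\ell_1$ inside $X$, with no guarantee that $T$ is bounded below on it. That gap is real, and your proposed fix (revisit the mechanics of Hagler--Johnson, gliding hump, etc.) is vague and not obviously workable.

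The paper sidesteps this gap entirely by changing the space on which Hagler--Johnson is invoked. It introduces the DFJP factorisation $T=jA$ through the interpolation space $\Delta(T)$ and uses two facts: (a) $T$ is weakly pre-compact if and only if $\Delta(T)$ contains no copy of $\ell_1$; (b) $\Delta(T)^*\cong\Delta(T^*)$, with $A^*j^*$ serving as the DFJP factorisation of $T^*$. One then checks that $j^*(N)\subset\Delta(T)^*$ has no normalised weak$^*$-null sequence (if $(d_i)$ were one, $(A^*d_i)\subset T^*(N)$ would be semi-normalised and, by the Grothendieck property, weakly null, contradicting the Schur property of $T^*(N)\cong\ell_1$). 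Hagler--Johnson now gives $\ell_1\subset\Delta(T)$, and by (a) this \emph{is} the statement that $T$ is not weakly pre-compact---no further linking of the $\ell_1$ to $T$ is required. The DFJP intermediary is precisely the missing idea in your sketch.
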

\begin{proof}
Given an operator $T\colon X\to Y$, the DFJP factorisation of $T$ introduced  in \cite{DFJP:74}  provides an intermediate Banach space $\Delta(T)$ and two operators $j\colon \Delta(T)\to Y$ and $A\colon X\to \Delta(T)$ such that $T=jA$. We will require the following two properties of the DFJP  factorisation: 
\begin{enumerate}[label=(\alph*)]
 \item\label{precompacta} $T$ is weakly pre-compact if and only if $\Delta(T)$ contains no copies of $\ell_1$; 
 \item\label{precompactb} $T^*=A^*j^*$ is equivalent to the DFJP factorisation of $T^*$; in particular,  $\Delta(T^*)$ is isomorphic to $\Delta(T)^*$; 
\end{enumerate}
which are \cite[Theorem 2.3]{Heinrich:80} and \cite[Theorem 1.5]{Gonzalez:93}.
\newline\indent
Suppose that $T\colon X\to Y$ is not weakly compact. Then by Gantmacher's theorem, the adjoint $T^*$ is not weakly compact either, so there exists a bounded sequence $(g_i)_{i=1}^\infty$ in $Y^*$ such that  $(T^*g_i)_{i=1}^\infty$ does not have a weakly convergent subsequence. Since weakly Cauchy sequences in dual spaces are weakly* convergent and $X$ is Grothendieck, $(T^*g_i)_{i=1}^\infty$ fails to have weakly Cauchy subsequences; hence $T^*$ is not weakly pre-compact. Passing to a subsequence, by Rosenthal's $\ell_1$-theorem we may suppose that both sequences $(g_i)_{i=1}^\infty$, $(T^*g_i)_{i=1}^\infty$ are equivalent to the unit vector basis of $\ell_1$. In this case, the operator $T^*$ is an isomorphism on the closed subspace $N$ generated by $(g_i)_{i=1}^\infty$. 

Property \ref{precompactb} allows for taking $A^*j^*$ as the DFJP factorisation of $T^*$. Then $j^*(N)$ is an~infinite-dimensional closed subspace of $\Delta(T^*)$ without normalised weak$^*$-null sequences. Indeed, otherwise if $(d_i)_{i=1}^\infty$ is such a sequence and $X$ is Grothendieck,  $(A^*d_i)_{i=1}^\infty$ would be a  semi-normalised weakly null sequence in $T^*(N)$, which is not possible because $T^*(N)$ is isomorphic to $\ell_1$. 
Since $\Delta(T)^*\cong \Delta(T^*)$, Theorem \ref{th:HJ77} implies that $\Delta(T)$ contains a copy of $\ell_1$; hence $T$ is not weakly pre-compact by property \ref{precompacta}. 
\end{proof}
\begin{cor}\label{cor:l1}
Every non-reflexive Grothendieck space contains a copy of $\ell_1$.
\end{cor}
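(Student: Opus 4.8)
The plan is to apply Theorem~\ref{th:noWinG} to the identity operator. First recall the elementary fact that a Banach space $X$ is reflexive precisely when the identity operator $I_X\colon X\to X$ is weakly compact: weak compactness of $I_X$ says exactly that $B_X$ is relatively weakly compact, and since $B_X$ is already weakly closed this is the same as $X$ being reflexive.

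So let $X$ be a non-reflexive Grothendieck space. Then $I_X$ is not weakly compact, and hence, by the contrapositive of Theorem~\ref{th:noWinG}, the operator $I_X$ is not weakly pre-compact. By the characterisation of weakly pre-compact operators recalled in Chapter~\ref{chap:prelim} --- a consequence of Rosenthal's $\ell_1$-theorem --- the failure of $I_X$ to be weakly pre-compact means precisely that there is a closed subspace of $X$, isomorphic to $\ell_1$, on which $I_X$ restricts to an isomorphism. Equivalently, $X$ contains a copy of $\ell_1$, which is the assertion.

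I do not expect any genuine obstacle here: the corollary is a one-line unwinding of Theorem~\ref{th:noWinG} in the special case $T=I_X$. The only two points worth keeping straight are that non-reflexivity of $X$ is correctly encoded by ``$I_X$ is not weakly compact'', and that ``$I_X$ is not weakly pre-compact'' is, via Rosenthal's $\ell_1$-theorem, literally the statement that $X$ has a subspace isomorphic to $\ell_1$; no further work is needed.
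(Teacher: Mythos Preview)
Your proof is correct and is exactly the intended argument: the corollary is stated immediately after Theorem~\ref{th:noWinG} precisely because it is the special case $T=I_X$, and your unwinding of the contrapositive together with the Rosenthal characterisation of weak pre-compactness is all that is required.
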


Corollary \ref{cor:l1} was strengthened in \cite{HaydonLevyOdell:87}, where it was proved that under Martin's Axiom and the negation of the Continuum Hypothesis (or, actually under $\mathfrak{p}>\omega_1$), every non-reflexive Grothendieck space contains a copy of $\ell_1(\mathfrak{p})$. We may then record the following result.

\begin{prop}\label{prop:densityp}
A non-reflexive Grothendieck space has density at least $\mathfrak{p}\geqslant \omega_1$.
\end{prop}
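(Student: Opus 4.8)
The plan is to deduce Proposition~\ref{prop:densityp} directly from the result of Haydon, Levy, and Odell quoted just above, together with the standard fact that containing a copy of $\ell_1(\mathfrak{p})$ forces density character at least $\mathfrak{p}$. The only subtlety is that the Haydon--Levy--Odell theorem is a consistency result (it needs $\mathfrak{p}>\omega_1$), whereas Proposition~\ref{prop:densityp} is stated as a theorem of ZFC; so the argument must split into two cases according to the value of $\mathfrak{p}$.

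First I would dispose of the trivial case $\mathfrak{p}=\omega_1$. Here the claimed lower bound is just $\omega_1$, i.e.\ the assertion is that a non-reflexive Grothendieck space is non-separable. This is immediate from Corollary~\ref{cor:l1}: a non-reflexive Grothendieck space $X$ contains a copy of $\ell_1$, but in fact more is true---if $X$ were separable then $X$ would have W$^*$SC dual ball (as noted in Chapter~\ref{chap:prelim}, separable spaces have W$^*$SC dual ball since they are WCG), and a Grothendieck space with W$^*$SC dual ball is reflexive, contradicting non-reflexivity. Hence $\mathrm{dens}\,X\geqslant\omega_1=\mathfrak{p}$ in this case. (Alternatively one can quote that a separable non-reflexive space cannot be Grothendieck, which is folklore and also follows from Theorem~\ref{th:char-G}.)

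Next I would treat the case $\mathfrak{p}>\omega_1$. By the theorem of Haydon--Levy--Odell \cite{HaydonLevyOdell:87}, under the hypothesis $\mathfrak{p}>\omega_1$ every non-reflexive Grothendieck space $X$ contains an isomorphic copy of $\ell_1(\mathfrak{p})$. It remains to observe that $\mathrm{dens}\,\ell_1(\Gamma)=|\Gamma|$ for any infinite set $\Gamma$, so $\mathrm{dens}\,\ell_1(\mathfrak{p})=\mathfrak{p}$, and that the density character of a Banach space is at least the density character of any of its subspaces (a dense subset of $X$ of size $\lambda$ restricts, via a nearest-point / net argument, to a dense subset of the subspace of size at most $\lambda$; more simply, a closed subspace of a space of density $\lambda$ has density at most $\lambda$). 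Therefore $\mathrm{dens}\,X\geqslant\mathrm{dens}\,\ell_1(\mathfrak{p})=\mathfrak{p}$.

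Combining the two cases, $\mathrm{dens}\,X\geqslant\mathfrak{p}\geqslant\omega_1$ for every non-reflexive Grothendieck space $X$, which is the assertion. I do not anticipate a genuine obstacle here: the substantive mathematics is entirely contained in the cited Haydon--Levy--Odell theorem and in Corollary~\ref{cor:l1}, and the remaining steps are the elementary facts $\mathrm{dens}\,\ell_1(\Gamma)=|\Gamma|$ and monotonicity of density under passing to subspaces. The one point that deserves an explicit remark in the writeup is \emph{why} the case split is needed---namely that the inequality $\mathfrak{p}\geqslant\omega_1$ is a theorem of ZFC (as recalled in the section on the pseudo-intersection number), so the two cases $\mathfrak{p}=\omega_1$ and $\mathfrak{p}>\omega_1$ are exhaustive, and only the second invokes the (consistently nontrivial) containment of $\ell_1(\mathfrak{p})$.
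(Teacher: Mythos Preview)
Your argument is correct and follows essentially the same approach as the paper, which records Proposition~\ref{prop:densityp} as an immediate consequence of the Haydon--Levy--Odell theorem cited just before it (without an explicit proof). Your case split according to whether $\mathfrak{p}=\omega_1$ or $\mathfrak{p}>\omega_1$ is a genuine and necessary addition: the Haydon--Levy--Odell result is indeed stated under $\mathfrak{p}>\omega_1$, and Talagrand's example (mentioned later in the paper) shows that under CH a non-reflexive Grothendieck $C(K)$-space need not contain $\ell_1(\omega_1)$, so one cannot simply invoke containment of $\ell_1(\mathfrak{p})$ uniformly.
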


When $\mathfrak{p} = \mathfrak{c}$, then one can deduce that every non-reflexive Grothendieck space has $\ell_\infty$ as a quotient.\smallskip

Corollary \ref{cor:l1} implies that \cite[Problem 10]{Diestel:73} has a negative answer: indeed, the dual of a~Grothendieck space cannot be isomorphic to $\ell_1(\Gamma)$. Even more is true, as Haydon proved in \cite{Haydon:87} that the dual of a non-reflexive Grothendieck space contains a copy of the non-separable space $L_1 ([0,1]^{\mathfrak{p}})\equiv L_1 (\{0,1\}^{\mathfrak{p}})$. However, when $\mathfrak{p}<\mathfrak{c}$ one may ask whether a non-reflexive Grothendieck space of density $\mathfrak{p}$ exists. This may be indeed so as proved by Brech \cite{Brech:06}, who used forcing to construct a compact space $K$ for which the space $C(K)$ is Grothendieck and has density $\mathfrak{p}<\mathfrak{c}$. The space $K$ arises as the Stone space of a~certain Boolean algebra, hence it is totally disconnected.\smallskip 

In \cite[Problem 5]{Diestel:73}, Diestel asked  whether a non-reflexive Grothendieck space contains a~copy of $\ell_\infty$.
A negative answer was provided by Haydon in \cite{Haydon:81} (see Example \ref{ex:Haydon}), but a weaker problem is in place.

\begin{quest}\label{Grothc_0}
Does a non-reflexive Grothendieck space contain a copy of $c_0$?
\end{quest}

For spaces with property $(V)$ a convenient characterisation of the Grothendieck property is available (that was noted, \emph{e.g.}, in \cite[Theorem 28]{GhenciuLewis:2012}; a further characterisation may be found in \cite[Corollary 9]{CiliaEmmanuele:2015}). 

\begin{prop}\label{Groth-V-c0}
Let $X$ be a Banach space with property $(V)$. Then $X$ is Grothendieck if and only if it contains no complemented copies of $c_0$.
\end{prop}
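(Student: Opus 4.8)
The plan is to treat both implications via the reduction to operators into $c_0$ furnished by Theorem~\ref{Groth-sp}, namely that $X$ is Grothendieck if and only if every operator $T\colon X\to c_0$ is weakly compact.

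\emph{Necessity} requires no use of property $(V)$: the space $c_0$ fails to be Grothendieck, since $(e_n^*)_{n=1}^\infty\subseteq \ell_1=c_0^*$ is weak$^*$-null but not weakly null (by the Schur property of $\ell_1$), and complemented subspaces of Grothendieck spaces are Grothendieck by Proposition~\ref{basic}\eqref{simple1}; hence a Grothendieck space admits no complemented copy of $c_0$.

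For \emph{sufficiency}, assume $X$ has property $(V)$ and no complemented copy of $c_0$, and let $T\colon X\to c_0$ be an operator. If $T$ is unconditionally converging, property $(V)$ gives at once that $T$ is weakly compact. If $T$ is not unconditionally converging, then, by the characterisation of unconditionally converging operators recorded in the Preliminaries, there is a subspace $Z\subseteq X$ isomorphic to $c_0$ on which $T$ restricts to an isomorphism onto its image; since $T|_Z$ is an isomorphism onto $T(Z)$, the latter is a closed subspace of $c_0$ isomorphic to $c_0$. By Sobczyk's theorem, $T(Z)$ is complemented in the separable space $c_0$, say by a bounded projection $Q\colon c_0\to T(Z)$; then $P:=(T|_Z)^{-1}QT\colon X\to Z$ is a bounded projection onto $Z$ (indeed $QTz=Tz$ for $z\in Z$, whence $Pz=z$), contradicting the hypothesis. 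Thus every $T\colon X\to c_0$ is unconditionally converging, hence weakly compact, and so $X$ is Grothendieck.

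The argument is largely routine; the one genuine ingredient is Sobczyk's theorem on the separable injectivity of $c_0$, which is what lets us pull the projection on $c_0$ back through $T$ to a projection on $X$. The only points that deserve care are that "$T$ is not unconditionally converging" is genuinely equivalent to the existence of a $c_0$-subspace on which $T$ is an isomorphism, and that $T|_Z$ being an isomorphism onto its image guarantees that $T(Z)$ is closed (hence a bona fide isomorphic copy of $c_0$), so that Sobczyk's theorem indeed applies.
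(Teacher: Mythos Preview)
Your proof is correct and follows essentially the same route as the paper's: both use Theorem~\ref{Groth-sp} to reduce to operators into $c_0$, invoke property~$(V)$ to see that a non-weakly compact operator $T\colon X\to c_0$ must fix a copy of $c_0$, and then use that the image is complemented in $c_0$ to pull back a projection. You have simply spelled out explicitly (via Sobczyk's theorem and the formula $P=(T|_Z)^{-1}QT$) what the paper compresses into the single clause ``$M$ is complemented in $X$ because $T(M)$ is complemented in $c_0$''.
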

\begin{proof}
If $X$ is not Grothendieck, every non-weakly compact operator $T\colon X\to c_0$ is bounded below on some subspace $M$ of $X$ isomorphic to $c_0$. Moreover, $M$ is complemented in $X$ because $T(M)$ is complemented in $c_0$.
\end{proof}
\begin{cor}\label{cor:dualV}
If $X$ is a space with property $(V)$ isomorphic to a complemented subspace of a dual space, then it is a Grothendieck space.
\end{cor}
\begin{proof}
Dual spaces contain no complemented copies of $c_0$.
\end{proof}

We are now going to describe certain characterisations of Banach spaces with the Grothendieck property obtained in \cite{GonzalezG:95} in terms of polynomials. \smallskip

Let $X$ be a Banach space and let $k\in\N$. We denote by $\mathcal{P}(^k X)$ the space of all $k$-homogeneous scalar polynomials defined on $X$. We have $\mathcal{P}(^1 X)=X^*$. The space $\mathcal{P}(^k X)$ endowed with the norm $\|P\|=\sup_{x\in B_X}|P(x)|$ is a dual Banach space. Moreover, every $P\in \mathcal{P}(^k X)$ has a canonical extension $\widehat P\in \mathcal{P}(^k X^{**})$, which is an extension of $P$ by weak$^*$ continuity. \cite[Theorem]{GonzalezG:95} yields a characterisation of Grothendieck spaces in terms of homogeneous polynomials.

\begin{prop} 
For a Banach space $X$, the following assertions are equivalent:
\begin{enumerate}[label=(\alph*)]
\item $X$ has the Grothendieck property; 
\item\label{polynomialb} for every $k\in\N$, given a sequence $(P_n)\subset  \mathcal{P}(^k X)$ with $P_n(x)\to 0$ for all $x\in X$, then $\widehat P_n(x^{**})\to 0$ for all $x^{**} \in X^{**}$;
\item the same statement as \ref{polynomialb} is true for some $k$. 
\end{enumerate}
\end{prop}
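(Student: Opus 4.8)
The plan is to run the chain $(a)\Rightarrow(b)\Rightarrow(c)\Rightarrow(a)$, where the only substantial work is the first implication; the passage $(b)\Rightarrow(c)$ is trivial and $(c)\Rightarrow(a)$ is the case $k=1$ since $\mathcal P(^1X)=X^*$ and $\widehat P$ for a linear functional is just its canonical image in $X^{***}$ evaluated on $X^{**}$, so $(c)$ for $k=1$ is literally the statement that weak$^*$-null sequences in $X^*$ are weak-null, i.e. the Grothendieck property.

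The heart is $(a)\Rightarrow(b)$. Fix $k\in\N$ and a sequence $(P_n)\subset\mathcal P(^kX)$ with $P_n(x)\to0$ for every $x\in X$. First I would record that $(P_n)$ is norm-bounded: by the uniform boundedness principle applied to the pointwise-bounded family $(P_n)$ on $X$, or more safely via polarisation one reduces to bounded symmetric $k$-linear forms and applies UBP coordinatewise. The key structural input is the linearisation of homogeneous polynomials: there is a Banach space $Q(^kX)$ (the predual of $\mathcal P(^kX)$, realised as the closed span of the evaluation functionals $\delta_x$, $x\in X$, inside $\mathcal P(^kX)^*$) such that $\mathcal P(^kX)\equiv Q(^kX)^*$ canonically, and the map $x\mapsto\delta_x$ is a bounded (polynomial) map $X\to Q(^kX)$ whose image is fundamental. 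Under this identification each $P_n$ becomes an element $\varphi_n\in Q(^kX)^*$, and the hypothesis $P_n(x)\to0$ for all $x$ says exactly that $\langle\varphi_n,\delta_x\rangle\to0$ for every $x$; since $(\varphi_n)$ is bounded and the $\delta_x$ are fundamental in $Q(^kX)$, this means $\varphi_n\to0$ weak$^*$ in $Q(^kX)^*$.

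Now I invoke the Grothendieck property of $X$ transported to $Q(^kX)$. The cleanest route is via Proposition~\ref{prop:convblock}: a weak$^*$-null sequence in the dual of a Grothendieck space admits a convex block subsequence converging in norm to $0$. So it suffices to show $Q(^kX)$ is a Grothendieck space whenever $X$ is. This is where I expect the main obstacle, and I would argue it as follows: the canonical extension map $P\mapsto\widehat P$ identifies $\mathcal P(^kX)$ with a subspace of $\mathcal P(^kX^{**})$, and dually there is a natural norm-one operator $Q(^kX^{**})\to Q(^kX)$; more usefully, $Q(^kX)$ is a quotient of a projective-tensor-type space built from copies of $X$ (the $k$-fold symmetric projective tensor power $\widehat\otimes_{\pi,s}^kX$, whose dual is $\mathcal P(^kX)$), and quotients of Grothendieck spaces are Grothendieck by Proposition~\ref{basic}\eqref{simple1}. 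Thus it reduces to showing $\widehat\otimes_{\pi,s}^kX$ is Grothendieck — and for this one uses that it is complemented in the full $k$-fold projective tensor power, together with the fact (which in the full paper is available from the tensor-product section, and which I would cite or prove by an elementary induction on $k$) that the projective tensor product of a Grothendieck space with a space having, say, the Dunford--Pettis property or simply iterating the known stability under $\widehat\otimes_\pi$ of Grothendieck spaces against themselves in the presence of an $\mathcal L_\infty$ or Dunford--Pettis factor preserves the Grothendieck property; alternatively, and most safely, I would follow \cite{GonzalezG:95} directly and avoid tensor products by using the linearisation $Q(^kX)$ abstractly together with the density/quotient argument.

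Granting that $Q(^kX)$ is Grothendieck, take $x^{**}\in X^{**}$. The canonical extension is defined so that $\widehat P_n(x^{**})=\langle\varphi_n,\widehat\delta_{x^{**}}\rangle$ for an appropriate element $\widehat\delta_{x^{**}}\in Q(^kX)^{**}$ (the weak$^*$-continuous extension point). Since $\varphi_n\to0$ weak$^*$ in $Q(^kX)^*$ and $X$ (hence $Q(^kX)$) is Grothendieck, $\varphi_n\to0$ weakly in $Q(^kX)^*$, i.e. $\langle\varphi_n,\psi\rangle\to0$ for every $\psi\in Q(^kX)^{**}$; applying this to $\psi=\widehat\delta_{x^{**}}$ gives $\widehat P_n(x^{**})\to0$, which is exactly $(b)$. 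The remaining bookkeeping — checking that $\widehat\delta_{x^{**}}$ really is the right element of the bidual, i.e. that the pairing $\langle\varphi_n,\widehat\delta_{x^{**}}\rangle$ reproduces the Aron--Berner/weak$^*$-continuous extension $\widehat P_n$ — is the routine part and follows from the universal property of the linearisation together with weak$^*$-density of $B_X$ in $B_{X^{**}}$.
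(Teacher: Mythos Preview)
The paper itself does not give a proof here; it merely cites \cite{GonzalezG:95}. So there is no ``paper's own proof'' to compare with, but your proposal has two genuine problems worth naming.

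\smallskip

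\textbf{Gap in $(c)\Rightarrow(a)$.} You write that $(c)$ for $k=1$ is literally the Grothendieck property. True, but $(c)$ only gives you \emph{some} $k$, not $k=1$. If, say, $(c)$ holds for $k=3$, you still have to deduce $(a)$. The standard fix is immediate: given a weak$^*$-null sequence $(f_n)\subset X^*$, set $P_n(x)=\langle f_n,x\rangle^k\in\mathcal P(^kX)$; then $P_n(x)\to0$ for all $x$, the Aron--Berner extension is $\widehat P_n(x^{**})=\langle x^{**},f_n\rangle^k$, and $(c)$ gives $\langle x^{**},f_n\rangle^k\to0$, hence $\langle x^{**},f_n\rangle\to0$. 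But this is missing from your outline.

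\smallskip

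\textbf{Fatal flaw in $(a)\Rightarrow(b)$.} Your strategy is to show that the predual $Q(^kX)\cong\widehat\otimes^k_{\pi,s}X$ inherits the Grothendieck property from $X$, so that weak$^*$-null sequences in its dual $\mathcal P(^kX)$ become weakly null. This is false in general. By Proposition~\ref{prop:ptp} in this very paper, if $X\projtp Y$ is Grothendieck then one of $X,Y$ must be reflexive; in particular $\ell_\infty\projtp\ell_\infty$ is \emph{not} Grothendieck, and neither is its symmetric quotient $\widehat\otimes^2_{\pi,s}\ell_\infty$ (if it were, it would pull the Grothendieck property back to a complemented diagonal copy, or more directly: $\ell_\infty\projtp\ell_\infty$ is not Grothendieck and the symmetrisation projection shows the symmetric piece is complemented in the full tensor product, so the symmetric piece cannot be Grothendieck either without forcing the full product to be). So for $k\geqslant2$ and $X$ non-reflexive Grothendieck, $Q(^kX)$ is \emph{not} Grothendieck, and your transport argument collapses. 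You half-acknowledge this (``this is where I expect the main obstacle''), but the fallback you suggest---citing stability of the Grothendieck property under $\projtp$---is precisely what fails.

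\smallskip

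\textbf{What actually works.} The argument in \cite{GonzalezG:95} avoids linearisation entirely. One passes to the associated symmetric $k$-linear forms $A_n$ (bounded by polarisation and the uniform boundedness principle, as you note), observes that $A_n(x_1,\dots,x_k)\to0$ for all $x_i\in X$, and then extends the variables to $X^{**}$ one at a time: fixing $x_1,\dots,x_{k-1}\in X$, the functionals $A_n(x_1,\dots,x_{k-1},\cdot)\in X^*$ are weak$^*$-null, hence weakly null by $(a)$, so $\widehat A_n(x_1,\dots,x_{k-1},x_k^{**})\to0$ for every $x_k^{**}$; now fix $x_k^{**}$ and repeat in the $(k-1)$st slot; after $k$ applications of the Grothendieck property of $X$ one obtains $\widehat A_n(x_1^{**},\dots,x_k^{**})\to0$, in particular $\widehat P_n(x^{**})\to0$. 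The point is that one never needs any auxiliary space to be Grothendieck---only $X$ itself, used $k$ times.
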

Further characterisations of the Grothendieck property in terms of vector-valued  homogeneous polynomials $\mathcal{P}(^k X,Y)$ may be found in \cite{GonzalezG:95}. 

Some characterisations in terms of weak-Riemann integrability of weak*-continuous functions are also available (\cite{Lone:2017}).

\section{Further properties of Grothendieck spaces} 

We begin with an auxiliary lemma from \cite[Theorem 2.7]{GO:86} that can be seen as a `lifting result for sequences' associated to the Grothendieck property.
\begin{lemma}\label{liftingG}
Let $M$ be a closed subspace of a Banach space $X$ such that $X/M$ has the Grothendieck property, let $J\colon M\to X$ denote the embedding, and let $J^*\colon X^* \to M^*$ be its adjoint. Then every weakly* convergent sequence $(f_n)_{n=1}^\infty$ in $X^*$ such that $(J^*f_n)_{n=1}^\infty$ is weakly convergent has a weakly convergent subsequence $(f_{n_k})_{k=1}^\infty$.
\end{lemma}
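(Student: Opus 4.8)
The plan is to analyse the short exact sequence $0\to M\overset{J}\to X\overset{q}\to X/M\to 0$ at the level of duals. The adjoint sequence
\[
0\longrightarrow (X/M)^*\overset{q^*}\longrightarrow X^*\overset{J^*}\longrightarrow M^*
\]
is exact with $q^*$ an isometric embedding onto $\ker J^* = M^\perp$, and $(X/M)^*\equiv M^\perp$ as a weak$^*$-closed subspace of $X^*$. So I would first identify $M^\perp$ with $(X/M)^*$, which carries the Grothendieck property by hypothesis.

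First I would observe that since $(f_n)_{n=1}^\infty$ is weak$^*$-convergent in $X^*$, it is norm-bounded (uniform boundedness principle), say by a constant $C$, and its weak$^*$-limit $f$ lies in $C\cdot B_{X^*}$. Replacing $f_n$ by $f_n-f$ we may assume $f_n\overset{w^*}\to 0$; note this replacement does not affect whether $(J^*f_n)_n$ is weakly convergent, only shifting its limit, so we may simultaneously assume $J^*f_n\overset{w}\to 0$ in $M^*$. The goal becomes: extract a weakly convergent (equivalently, by weak$^*$-convergence to $0$, weakly null) subsequence of $(f_n)_n$.

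The key step is to pass to the convex-block characterisation (Proposition \ref{prop:convblock}). Apply the hypothesis to the sequence $(J^*f_n)_n$, which is already weakly — hence weak$^*$ — null in $M^*$; but more to the point, I want to use that $X/M$ is Grothendieck. Consider the images: since $J^*f_n\overset w\to 0$, by Mazur's lemma there are convex combinations $g_n\in\operatorname{conv}\{J^*f_k : k\geqslant n\}$ with $\|g_n\|\to 0$; lift these convex coefficients to form the corresponding convex blocks $h_n\in\operatorname{conv}\{f_k : k\geqslant n\}\subseteq X^*$, so that $J^*h_n = g_n\to 0$ in norm and $h_n\overset{w^*}\to 0$. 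Now $h_n$ is at bounded distance (namely $\|g_n\|$) from $\ker J^* = M^\perp\equiv (X/M)^*$; choose $\tilde h_n\in M^\perp$ with $\|h_n-\tilde h_n\|\to 0$. Then $\tilde h_n\overset{w^*}\to 0$ in $(X/M)^*$, so since $X/M$ is Grothendieck, $\tilde h_n\overset w\to 0$ in $X^*$, whence also $h_n = \tilde h_n + (h_n-\tilde h_n)\overset w\to 0$ in $X^*$ (the error term goes to $0$ in norm). Thus the original sequence $(f_n)_n$ admits a convex block sequence $(h_n)_n$ converging weakly — and, being weak$^*$-null, it suffices to combine this with the fact that a weak$^*$-null sequence all of whose convex blocks have a weakly null convex block is itself relatively weakly sequentially compact; more directly, one extracts the subsequence via a standard diagonal argument on the biorthogonality between the blocks and the subsequence, or simply invokes that for weak$^*$-null sequences, existence of a norm-null (here weakly null) convex block forces, after a Bessaga–Pe\l czy\'nski-type selection, a weakly convergent subsequence of the $(f_n)$ themselves.

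**Main obstacle.** The delicate point is the last passage: producing from a weakly convergent \emph{convex block} sequence an actual weakly convergent \emph{subsequence} of $(f_n)_n$. This is where one must exploit that the whole sequence $(f_n)_n$ is weak$^*$-convergent (not merely bounded): one iterates the convex-block construction on tail segments and uses a sliding-hump / diagonalisation to peel off a subsequence along which the blocks localise, then a gliding argument (as in the proof of Proposition \ref{prop:convblock}, cf.\ \cite[Proposition 4.2]{ChenKaniaRuan:2021}) converts weak nullity of the blocks into weak nullity of a genuine subsequence. I expect the bookkeeping here — keeping the block supports disjoint while controlling the approximation errors $\|h_n-\tilde h_n\|$ and $\|g_n\|$ simultaneously — to be the main technical effort; everything else is formal manipulation of the dual exact sequence.
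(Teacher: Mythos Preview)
The paper does not supply its own proof of this lemma; it is quoted from \cite[Theorem 2.7]{GO:86}, so there is no proof in the paper to compare against directly.

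Your steps through the production of a weakly null convex block $(h_n)$ are correct and natural: the reduction to $f_n\overset{w^*}{\to}0$ and $J^*f_n\overset{w}{\to}0$; Mazur's lemma yielding convex blocks $h_n$ with $\|J^*h_n\|\to 0$; the quotient estimate $\operatorname{dist}(h_n, M^\perp)=\|J^*h_n\|$ giving $\tilde h_n\in M^\perp$ with $\|h_n-\tilde h_n\|\to 0$; and the Grothendieck property of $X/M$ turning weak$^*$-nullity of $\tilde h_n$ in $(X/M)^*$ into weak nullity in $X^*$.

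The genuine gap is your step 7. Neither a Bessaga--Pe\l czy\'nski selection nor the machinery around Proposition~\ref{prop:convblock} converts a weakly null convex block into a weakly null \emph{subsequence}; Proposition~\ref{prop:convblock} concerns \emph{norm}-null convex blocks and runs in the opposite direction. The fix, however, is far simpler than the sliding-hump bookkeeping you anticipate. Observe that your construction applies verbatim to \emph{every} subsequence of $(f_n)$, since subsequences inherit both hypotheses. Thus every subsequence of $(f_n)$ admits a weakly null convex block. Now argue by contradiction: if $(f_n)$ were not weakly null, choose $F\in X^{**}$ and a subsequence $(f_{n_k})_k$ with, say, $\operatorname{Re}\langle F,f_{n_k}\rangle\geqslant\varepsilon$ for all $k$. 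Any convex combination $h$ of the $f_{n_k}$ then also satisfies $\operatorname{Re}\langle F,h\rangle\geqslant\varepsilon$, so no convex block of $(f_{n_k})$ can be weakly null --- a contradiction. This in fact shows the whole sequence $(f_n)$ is weakly convergent, which is stronger than what the lemma asserts (and is how the lemma is ultimately used in Proposition~\ref{prop:3SP}).
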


The following result was obtained in \cite[Corollary 2.8]{GO:86}. We shall employ it for constructing new examples of Grothendieck spaces. 

\begin{prop}\label{prop:3SP} 
The class of Grothendieck spaces has the three-space property.
\end{prop}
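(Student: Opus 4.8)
The plan is to deduce the three-space property from Lemma~\ref{liftingG} together with the basic fact (Proposition~\ref{basic}) that quotients of Grothendieck spaces are Grothendieck. Fix an exact sequence
\[
\begin{CD}
0@>>> Y @>j>> X @>q>>Z@>>> 0
\end{CD}
\]
with $Y$ and $Z$ Grothendieck, identify $Y$ with the closed subspace $M:=j(Y)$ of $X$, and let $(f_n)_{n=1}^\infty$ be a weakly* convergent sequence in $X^*$, say $f_n \xrightarrow{w^*} f$. Replacing $f_n$ by $f_n-f$ we may assume $f_n \xrightarrow{w^*} 0$. We must show $(f_n)_{n=1}^\infty$ converges weakly; since weak convergence is determined on subsequences (a sequence converges weakly to $0$ iff every subsequence has a further subsequence converging weakly to $0$), it suffices to extract a weakly convergent subsequence.

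First I would push the sequence down to the quotient. The adjoint $q^*\colon Z^*\to X^*$ is a weak*-to-weak* homeomorphism onto its (weak*-closed) range, namely $M^\perp = \{g\in X^*: g|_M = 0\}$; however $(f_n)$ need not lie in $M^\perp$. Instead, consider the restrictions $J^*f_n \in M^* \equiv Y^*$, where $J\colon M\hookrightarrow X$ is the inclusion. Since $f_n \xrightarrow{w^*}0$ in $X^*$ and $J^*$ is weak*-to-weak* continuous, $J^*f_n \xrightarrow{w^*} 0$ in $M^*$. Because $M\cong Y$ is Grothendieck, $(J^*f_n)_{n=1}^\infty$ is in fact weakly convergent (to $0$) in $M^*$. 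Now Lemma~\ref{liftingG}, applied to the subspace $M$ of $X$ (recall $X/M\cong Z$ has the Grothendieck property, which is exactly the hypothesis of the lemma), yields a weakly convergent subsequence $(f_{n_k})_{k=1}^\infty$ of $(f_n)_{n=1}^\infty$. This is precisely what we needed, so $X$ is Grothendieck.

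The only subtlety — and the step I expect to require a line of care rather than real difficulty — is checking that Lemma~\ref{liftingG} applies verbatim: its hypotheses are that $M$ is closed in $X$, that $X/M$ is Grothendieck, and that $(J^*f_n)$ is weakly convergent; its conclusion delivers a weakly convergent subsequence of $(f_n)$, assuming only that $(f_n)$ is weakly* convergent. All three hypotheses are met here (closedness of $j(Y)$ is part of the definition of an exact sequence; $X/j(Y)\cong Z$ is Grothendieck by assumption; and weak convergence of $(J^*f_n)$ was established via the Grothendieck property of $Y\cong M$). Thus the argument is essentially a two-line composition of Lemma~\ref{liftingG} with Proposition~\ref{basic}, and no genuine obstacle arises; one should simply note that the reduction to a weak*-null sequence and the subsequence principle for weak convergence are harmless.

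\begin{proof}
Let $0\to Y\xrightarrow{j} X\xrightarrow{q} Z\to 0$ be exact with $Y$ and $Z$ Grothendieck, and put $M:=j(Y)$, a closed subspace of $X$ isomorphic to $Y$, so that $X/M$ is isomorphic to $Z$ and hence Grothendieck. Let $J\colon M\to X$ be the inclusion and $(f_n)_{n=1}^\infty$ a weakly* convergent sequence in $X^*$; subtracting the limit, assume $f_n\xrightarrow{w^*}0$. Since $J^*$ is weak*-to-weak* continuous, $J^*f_n\xrightarrow{w^*}0$ in $M^*$, and as $M$ is Grothendieck, $(J^*f_n)_{n=1}^\infty$ converges weakly in $M^*$. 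By Lemma~\ref{liftingG}, $(f_n)_{n=1}^\infty$ has a weakly convergent subsequence. Applying this to every subsequence of $(f_n)_{n=1}^\infty$ shows that $(f_n)_{n=1}^\infty$ itself converges weakly (to $0$). Hence $X$ is Grothendieck.
\end{proof}
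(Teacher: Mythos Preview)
Your proof is correct and follows essentially the same approach as the paper's: restrict the weak*-convergent sequence to $M$, use that $M$ is Grothendieck to get weak convergence of the restrictions, then invoke Lemma~\ref{liftingG} (which uses that $X/M$ is Grothendieck) on every subsequence to conclude via the subsequence principle. The only cosmetic difference is that you first reduce to a weak*-null sequence, whereas the paper works directly with a general weak*-convergent sequence; the substance is identical.
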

\begin{proof}
Suppose that a Banach space $X$ has a closed subspace $M$ such that both $M$ and $X/M$ are  Grothendieck spaces.
If $J\colon M\to X$ denotes the embedding, then the adjoint operator $J^*\colon X^*\to M^*$ is surjective by the Hahn--Banach theorem. 

Given a weakly* convergent sequence $(f_n)_{n=1}^\infty$ in $X^*$, $(J^*f_n)_{n=1}^\infty$ is weakly* convergent; hence weakly convergent in $M^*$ because $M^*$ is Grothendieck.
Since $X/M$ is Grothendieck, each subsequence of  $(f_n)_{n=1}^\infty$ has a weakly convergent subsequence by Lemma \ref{liftingG}, hence $(f_n)_{n=1}^\infty$ is weakly convergent, and we conclude that $X$ is Grothendieck.
\end{proof}

As we will see later, many Grothendieck spaces satisfy the following property that we term $(V_\infty)$.

\begin{definition}
A Banach space $X$ \emph{has property $(V_\infty)$} when every non-weakly compact operator $T\colon X\to Y$ is an isomorphism on a subspace of $X$ isomorphic to $\ell_\infty$.
\end{definition}

Spaces with property $(V_\infty)$ satisfy the following results.

\begin{prop}
Let $X$ and $Y$ be Banach spaces.
\begin{romanenumerate}
\item\label{v-infty:1} If $X$ has property $(V_\infty)$, then it is Grothendieck.
\item\label{v-infty:2} If $X$ and $Y$ have property $(V_\infty)$, then so has $X\oplus Y$.
\item\label{v-infty:3} If $X$ has property $(V_\infty)$ and there exists a surjective operator $T\colon X\to Y$, then $Y$ has property $(V_\infty)$. In particular, quotients and complemented subspaces of spaces with property $(V_\infty)$ have also this property.
\end{romanenumerate}
\end{prop}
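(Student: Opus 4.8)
The plan is to prove the three clauses in order, using only the definition of property $(V_\infty)$, Gantmacher's theorem, and the elementary fact that $\ell_\infty$ contains no complemented copy of $c_0$ (indeed no copy of $c_0$ is complemented in $\ell_\infty$, since $\ell_\infty$ is a Grothendieck space itself, or more directly since a separable complemented subspace of $\ell_\infty$ would make $\ell_\infty/c_0$ have $c_0$ as a quotient---but the cleanest route is: $\ell_\infty$ has property $(V)$ and is a dual space, so by Corollary \ref{cor:dualV} it is Grothendieck, hence contains no complemented copy of $c_0$ by Proposition \ref{Groth-V-c0}).

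For \romanref{v-infty:1}, I would argue by contraposition via the characterisation in clause \eqref{Groth-sp:2} of Theorem \ref{Groth-sp}: if $X$ is not Grothendieck, there is a non-weakly-compact operator $T\colon X\to c_0$. By property $(V_\infty)$, $T$ is an isomorphism on a subspace $Z\subseteq X$ with $Z\cong\ell_\infty$. But then $T|_Z$ is an isomorphic embedding of $\ell_\infty$ into $c_0$, which is absurd since $c_0$ is separable while $\ell_\infty$ is not (alternatively, $c_0$ contains no copy of $\ell_\infty$ because $\ell_\infty$ is not separable and $c_0$ is). This contradiction shows $X$ must be Grothendieck.

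For \romanref{v-infty:2}, let $T\colon X\oplus Y\to W$ be non-weakly compact, and let $i_X\colon X\to X\oplus Y$ and $i_Y\colon Y\to X\oplus Y$ be the canonical injections. Since $T = T i_X P_X + T i_Y P_Y$ (where $P_X,P_Y$ are the coordinate projections) and weakly compact operators form a subspace, at least one of $Ti_X$ or $Ti_Y$---say $Ti_X\colon X\to W$---is not weakly compact. By property $(V_\infty)$ for $X$, $Ti_X$ is an isomorphism on a subspace $Z\subseteq X$ with $Z\cong\ell_\infty$; then $T$ restricted to $i_X(Z)\cong\ell_\infty$ is an isomorphism, giving property $(V_\infty)$ for $X\oplus Y$.

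For \romanref{v-infty:3}, let $q\colon X\to Y$ be surjective and let $S\colon Y\to W$ be non-weakly compact; then $Sq\colon X\to W$ is non-weakly compact (if $Sq$ were weakly compact then $Sq(B_X)$ would be relatively weakly compact, and since $q(B_X)$ contains a ball of $Y$ by the open mapping theorem, $S$ would be weakly compact). By property $(V_\infty)$ for $X$, there is $Z\subseteq X$ with $Z\cong\ell_\infty$ on which $Sq$ is an isomorphism; in particular $q|_Z$ is an isomorphic embedding, so $q(Z)\cong\ell_\infty$ and $S$ is an isomorphism on $q(Z)$. The final sentence about quotients and complemented subspaces is immediate since quotient maps are surjective and a projection onto a complemented subspace is surjective onto it. The only mildly delicate point---and the one I would phrase carefully---is the reduction "$T$ non-weakly compact implies some $Ti_X$ or $Ti_Y$ non-weakly compact" and the analogous "$Sq$ non-weakly compact", both of which are routine but worth spelling out; there is no genuine obstacle here since property $(V_\infty)$ is engineered precisely to make these direct-sum and quotient arguments go through.
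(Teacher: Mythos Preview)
Your proof is correct and follows essentially the same approach as the paper, which simply states that \romanref{v-infty:1} is a direct consequence of Theorem~\ref{Groth-sp} and that the proofs of \romanref{v-infty:2} and \romanref{v-infty:3} are easy; you have spelled out exactly the routine arguments the authors had in mind. One minor remark: your opening paragraph announces that you will use the fact that $\ell_\infty$ contains no complemented copy of $c_0$, but you never actually invoke it---the argument for \romanref{v-infty:1} only needs that $c_0$ contains no copy of $\ell_\infty$ (by separability), which you correctly use, so that preamble can be dropped.
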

\begin{proof}
\eqref{v-infty:1} is a direct consequence of Theorem \ref{Groth-sp}, and the proofs of \eqref{v-infty:2} and \eqref{v-infty:3} are easy.
\end{proof}

The space $C[0,1]$ has property $(V)$ but it is not Grothendieck because it contains complemented copies of $c_0$. (Actually, a separable Banach space is Grothendieck if and only if it is reflexive). That cannot happen for dual spaces:

\begin{prop}\label{prop:V-dual}
If $X$ is isomorphic to a complemented subspace of a dual space
and it has property $(V)$, then $X$ has property $(V_\infty)$. In particular, $X$ is Grothendieck.
\end{prop}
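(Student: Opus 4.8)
The plan is to reduce property $(V_\infty)$ to property $(V)$ by exploiting that a non-weakly compact operator out of a space with property $(V)$ must already be bounded below on a copy of $c_0$, and then upgrade that copy of $c_0$ to a copy of $\ell_\infty$ using that $X$ is complemented in a dual space. First I would take a non-weakly compact operator $T\colon X\to Y$. Since $X$ has property $(V)$, $T$ is not unconditionally converging, so by the characterisation of unconditionally converging operators recalled in Chapter \ref{chap:prelim} there is a subspace $M_0$ of $X$ isomorphic to $c_0$ on which $T$ is an isomorphism. The image $T(M_0)$ is then a copy of $c_0$ in $Y$; by Sobczyk's theorem (or directly, since $c_0$ is separably injective) $T(M_0)$ is complemented in any separable superspace, and in particular $M_0$ is complemented in $X$ — this is exactly the argument already used in the proof of Proposition \ref{Groth-V-c0}.

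Now comes the step that uses the dual-space hypothesis. Write $X$ as a complemented subspace of a dual space $W^*$, so $X \oplus X' \cong W^*$ for some $X'$, and we have a copy of $c_0$, namely $M_0$, sitting complemented inside $W^*$. The key classical fact to invoke here is that a dual Banach space containing a copy of $c_0$ in fact contains a copy of $\ell_\infty$; more precisely, if $c_0$ is complemented in a dual space $W^*$, then $\ell_\infty$ embeds into $W^*$ in such a way that it is again complemented (this is the Rosenthal–Bessaga–Pełczyński circle of ideas: a complemented copy of $c_0$ in $W^*$ forces a complemented copy of $\ell_\infty=(c_0)^{**}$, via a weak*-to-weak* extension/lifting of the projection). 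Combining the projection $W^*\to M_0\cong c_0$ with the canonical $c_0\hookrightarrow\ell_\infty$ and a projection $\ell_\infty\to c_0$, and chasing through $X\oplus X'\cong W^*$, one extracts a subspace $N$ of $X$ isomorphic to $\ell_\infty$ together with a projection of $X$ onto $N$.

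It remains to check that $T$ is an isomorphism on (a suitable finite-codimensional, hence all of) $N$. Here I would argue as follows: $T$ restricted to $M_0$ is an isomorphism and $M_0\subseteq N$ with $N/M_0$ corresponding under the isomorphism $N\cong\ell_\infty$ to $\ell_\infty/c_0$; since $T$ is bounded on $N$, if it failed to be an isomorphism on $N$ it would be bounded below only modulo a subspace, and one reruns Rosenthal's argument: any operator on $\ell_\infty$ that is bounded below on the canonical $c_0$ is automatically bounded below on a further copy of $\ell_\infty$ (operators on $\ell_\infty$ are either weakly compact or fix a copy of $\ell_\infty$ — Rosenthal's theorem on operators on $\ell_\infty(\Gamma)$). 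Applying this to $T|_N\colon N\cong\ell_\infty \to Y$, which is not weakly compact because it is an isomorphism on $M_0\cong c_0$, yields a copy of $\ell_\infty$ inside $N\subseteq X$ on which $T$ is an isomorphism, which is property $(V_\infty)$. The final clause, that $X$ is then Grothendieck, is immediate from part \eqref{v-infty:1} of the preceding proposition.

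The main obstacle I anticipate is making the passage "complemented copy of $c_0$ in a dual space $\Rightarrow$ complemented copy of $\ell_\infty$" fully rigorous: one must carefully use the weak* structure of $W^*$ (and that the projection onto $c_0$ is weak*-continuous, or can be replaced by one that is, after identifying $c_0$ with a weak*-closed-up-to-bidual object) to extend the embedding $c_0\hookrightarrow W^*$ to $\ell_\infty=c_0^{**}\hookrightarrow W^*=(W)^*$ and to push the projection along. This is where the hypothesis is genuinely used, and it is essentially the content of Rosenthal's structural results on $\ell_\infty$; everything else is routine bookkeeping with the property $(V)$ definition and the three lines already appearing in the proof of Proposition \ref{Groth-V-c0}.
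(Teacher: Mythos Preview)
Your overall trajectory is right and lands on the same tool the paper uses---Rosenthal's dichotomy for operators from $\ell_\infty$---but you take an unnecessary detour through complementation, and one step in that detour is incorrect.

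The error is your invocation of Sobczyk's theorem to conclude that $M_0$ is complemented in $X$. Sobczyk gives complementation of a copy of $c_0$ only in \emph{separable} superspaces; here $X$ is in general non-separable (indeed, a non-reflexive Grothendieck space is always non-separable). In Proposition~\ref{Groth-V-c0} the target is $c_0$ itself, so $T(M)$ sits inside $c_0$ and is complemented there; one then pulls back the projection along $T$. In your setting $Y$ is arbitrary, so that trick is unavailable, and you have no justification that $M_0$ is complemented in $X$.

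Fortunately, you do not need $M_0$ complemented, nor do you need a complemented copy of $\ell_\infty$. The paper's argument (hidden in the single citation of \cite[Theorem~1.3]{Rosenthal:70}) is the clean version of what you reach in your third paragraph, applied to an \emph{operator} from $\ell_\infty$ rather than to a restriction $T|_N$. Concretely: with $P\colon W^*\to X$ a projection and $j\colon c_0\to X\subset W^*$ the embedding onto $M_0$, set
\[
S = P\circ \kappa_W^{*}\circ j^{**}\colon \ell_\infty \longrightarrow X,
\]
where $\kappa_W^{*}\colon W^{***}\to W^{*}$ is the canonical retraction. Then $S|_{c_0}=j$, so $TS\colon \ell_\infty\to Y$ is an isomorphism on $c_0$, hence not weakly compact; Rosenthal's theorem now says $TS$ is an isomorphism on some copy $L\cong\ell_\infty$ inside $\ell_\infty$, and therefore $T$ is an isomorphism on $S(L)\subset X$. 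No complementation of anything is needed, and there is nothing to make ``fully rigorous'' beyond the two-line bidual computation above---which is exactly the passage you correctly flagged as the crux in your last paragraph.
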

\begin{proof}
Suppose that a space $X$ is complemented in a dual space and has property $(V)$, and let $T\colon X\to Y$ be a non-weakly compact operator. Then $T$ is is an~isomorphism on a~copy of $c_0$ in $X$, hence it is an~isomorphism on a copy of $\ell_\infty$ in $X$ by a result of Rosenthal \cite[Theorem 1.3]{Rosenthal:70}.
\end{proof}

It was proved by Pfitzner (\cite{Pfitzner:94}; see Theorem~\ref{thm:pf}) that $C^*$-algebras have property $(V)$ (see \cite{FPPeralta:2010} for an alternative proof). Consequently, the previous result implies that the space $\mathscr{B}(H)$ of bounded operators on a Hilbert space $H$ (and every other von Neumann algebra) is a Grothendieck space. Thus \cite[Problem 2]{Diestel:73} has positive answer. It may look coincidental that known examples of Grothendieck spaces have property $(V)$, hence it is natural to ask if it is always the case. This problem, which we record below, was originally raised by Diestel in \cite{Diestel:80}.

\begin{quest}\label{Q:property_V}
Do Grothendieck spaces have property $(V)$?
\end{quest}

We observe that property $(V)$ is not a three-space property \cite{Cast-Gonz:94}. Moreover, in light of Proposition \ref{prop:V-dual}, one may propose a weaker problem.

\begin{quest}\label{dualGrothV}
Do dual spaces with the Grothendieck property have property $(V)$?
\end{quest}

In view of Theorem \ref{th:char-G}, the next  result could be helpful to solve Problem \ref{Q:property_V}.

\begin{prop}\label{prop:Ch-V}
A Banach space $X$ has property $(V)$ if and only if $X^*$ is weakly sequentially complete and for every unconditionally converging operator $T\colon X\to Y$, the adjoint $T^*$ is weakly pre-compact. 
\end{prop}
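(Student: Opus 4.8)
The plan is to characterise property $(V)$ by re-expressing it through the factorisation $\mathscr{U}(X,Y)\subset\mathscr{W}(X,Y)$ and tracking what this means at the level of adjoints. First I would recall the classical dualities: an operator $T\colon X\to Y$ is weakly compact iff $T^*$ is weakly compact (Gantmacher), and there is a symmetric relation between ``$T$ is unconditionally converging'' and ``$T^*$ is weakly pre-compact'': indeed $T$ fixes no copy of $c_0$ precisely when $T^*$ does not fix a copy of $\ell_1$, and by Rosenthal's $\ell_1$-theorem the latter is exactly weak pre-compactness of $T^*$. (This last equivalence is the technical heart of the argument and is where I expect to spend most care; it is essentially the content of the Bessaga--Pe\l czy\'nski / Rosenthal circle of ideas, and a clean statement may require passing through $c_0$- and $\ell_1$-fixing as in the characterisations recalled in Chapter~\ref{chap:prelim}.) Granting this, ``$X$ has property $(V)$'' becomes: for every $T\colon X\to Y$ with $T^*$ weakly pre-compact, $T^*$ is weakly compact.

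For the forward implication, assume $X$ has property $(V)$. Then automatically every unconditionally converging $T\colon X\to Y$ is weakly compact, so its adjoint $T^*$ is weakly compact, in particular weakly pre-compact; this gives the second condition. For weak sequential completeness of $X^*$: property $(V)$ implies $X$ has no quotient isomorphic to $c_0$ (a quotient map onto $c_0$ is unconditionally converging only if it is not an isomorphism on a copy of $c_0$ --- but a quotient map onto $c_0$ cannot be weakly compact), and one must additionally rule out $c_0\hookrightarrow X^*$; this is where I would invoke the standard fact (as used in the proof of the Banach-lattice proposition above, via \cite[Theorem V.10]{Diestel:84}) that if $X$ has no complemented copy of $\ell_1$ then $X^*$ contains no copy of $c_0$, hence $X^*$ is weakly sequentially complete. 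Since property $(V)$ does forbid complemented copies of $\ell_1$ in $X$ (such a copy would give, by projecting, a non-weakly-compact unconditionally converging operator), this closes the forward direction.

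For the converse, suppose $X^*$ is weakly sequentially complete and every unconditionally converging $T\colon X\to Y$ has weakly pre-compact adjoint. Let $T\colon X\to Y$ be unconditionally converging; I must show $T$ is weakly compact, equivalently $T^*$ is weakly compact. By hypothesis $T^*$ is weakly pre-compact, so for any bounded sequence $(g_n)$ in $Y^*$ the sequence $(T^*g_n)$ has a weakly Cauchy subsequence in $X^*$; since $X^*$ is weakly sequentially complete, that subsequence is in fact weakly convergent. Hence $T^*(B_{Y^*})$ is relatively weakly sequentially compact, and by the Eberlein--\v{S}mulian theorem relatively weakly compact, so $T^*$ is weakly compact and we are done.

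The main obstacle, as flagged, is pinning down the precise equivalence ``$T$ unconditionally converging $\iff$ $T^*$ weakly pre-compact'' with full rigour --- one direction is essentially Pe\l czy\'nski's characterisation of unconditionally converging operators via non-fixing of $c_0$ (already recalled in Chapter~\ref{chap:prelim}), while the other requires combining the $c_0$/$\ell_1$ duality with Rosenthal's theorem, and care is needed because weak pre-compactness of $T^*$ is a priori only about bounded sequences in $Y^*$, not about subspaces; fortunately Rosenthal's $\ell_1$-theorem bridges exactly this gap. Everything else is a routine assembly of Gantmacher, Eberlein--\v{S}mulian, and the weak-sequential-completeness hypothesis.
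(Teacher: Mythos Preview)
Your converse direction is correct and matches the paper's (the paper runs the same argument contrapositively). The forward direction, however, has a genuine gap in establishing weak sequential completeness of $X^*$. You correctly observe that property $(V)$ forbids a complemented copy of $\ell_1$ in $X$, and hence (Bessaga--Pe\l czy\'nski) that $X^*$ contains no copy of $c_0$. But the final step, ``no $c_0$ implies weakly sequentially complete'', is \emph{not} valid for general Banach spaces; it is a Banach-lattice fact, which is precisely why the paper invokes \cite[Theorem 1.c.7]{LT2} only \emph{after} arranging that the dual sits complementably in a Banach lattice. For a concrete obstruction, the James space $J$ contains no copy of $c_0$, is isomorphic to a dual space (indeed $J\cong J^{**}$), yet is not weakly sequentially complete. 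The paper simply cites \cite[Propositions 3.3.D and 3.3.F]{HarmandWW:93} for the implication ``property $(V)$ $\Rightarrow$ $X^*$ weakly sequentially complete''; you should do the same, or reproduce that argument (which passes through property $(V^*)$ for $X^*$).

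A secondary point: the equivalence you build your framing around, ``$T$ unconditionally converging $\iff$ $T^*$ weakly pre-compact'', is false. The identity on $\ell_1$ is unconditionally converging ($\ell_1$ contains no copy of $c_0$), yet its adjoint is the identity on $\ell_\infty$, which fixes a copy of $\ell_1$ and is therefore not weakly pre-compact. Fortunately your actual proof never uses this equivalence---in the converse you invoke the \emph{hypothesis} that $T^*$ is weakly pre-compact, and in the forward direction you deduce it from weak compactness---so the error is cosmetic. But you should drop the framing: you flag it as ``the technical heart'' when in fact it is both false and unnecessary.
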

\begin{proof}
Suppose that $X$ has property $(V)$. Then $X^*$ is weakly sequentially complete by  \cite[Propositions 3.3.D and 3.3.F]{HarmandWW:93}.  
Moreover, if $T\colon X\to Y$  is not an isomorphism on a~copy of $c_0$, then $T$ is weakly compact; hence $T^*$ is weakly compact and cannot act as an~isomorphism on a copy of $\ell_1$.

Conversely, if $X$ fails property $(V)$, then there exists a non-weakly compact operator $T\colon X\to Y$ which  is not an isomorphism on any copy of $c_0$ in $X$. Since $T^*$ is not weakly compact, there exists a bounded sequence $(g_n)_{n=1}^\infty$ in $Y^*$ such that $(T^*g_n)_{n=1}^\infty$ has no weakly convergent subsequence. Thus  $X^*$ is not weakly sequentially complete or $(T^*g_n)_{n=1}^\infty$ has no weakly Cauchy subsequence. In the latter case, $(g_n)_{n=1}^\infty$ has a subsequence generating a~subspace isomorphic to $\ell_1$ on which $T^*$ is an isomorphism, and the result is proved.
\end{proof}


R\"abiger in \cite{Rabiger:85} gave the following partial positive answer to  Problem \ref{Q:property_V}. 

\begin{prop}\label{prop:Gr->V}
Every Grothendieck space $X$  isomorphic to a complemented subspace of a~Banach lattice $E$ has property $(V)$.
\end{prop}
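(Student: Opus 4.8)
The plan is to combine R\"abiger's characterisation (Theorem~\ref{th:char-G}) with the structure theory of complemented subspaces of Banach lattices, reducing the assertion to the well-understood dichotomy between $c_0$-quotients and property $(V)$. Since $X$ is Grothendieck, Theorem~\ref{th:char-G} already tells us two things: $X^*$ is weakly sequentially complete and $X$ has no quotient isomorphic to $c_0$. The goal is to upgrade ``no $c_0$-quotient'' to the full property $(V)$, using the lattice hypothesis; in view of Proposition~\ref{Groth-V-c0}, it would suffice to know $X$ has property $(V)$ along some indirect route, but here we argue the other way: we establish property $(V)$ directly, and Grothendieck-ness is then not even needed for the conclusion beyond ruling out $c_0$-quotients.

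First I would set up the lattice machinery. Write $X$ as a complemented subspace of a Banach lattice $E$, with projection $\pi\colon E\to X$ and inclusion $\iota\colon X\to E$, so $\pi\iota = I_X$. Dualising, $X^*$ is complemented in $E^*$, which is a (dual) Banach lattice. Now suppose, towards a contradiction, that $X$ fails property $(V)$: there is a Banach space $Y$ and an unconditionally converging operator $T\colon X\to Y$ that is \emph{not} weakly compact. Compose to obtain $S := T\pi\colon E\to Y$. The key first claim is that $S$ is still unconditionally converging: a weakly unconditionally converging series in $E$ is sent by $\pi$ to a weakly unconditionally converging series in $X$ (continuous linear images of w.u.c.\ series are w.u.c.), and $T$ converts these into unconditionally convergent series. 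Moreover $S$ is not weakly compact, since $S\iota = T$ is not. So $E$ — a Banach lattice — admits a non-weakly-compact unconditionally converging operator. But Banach lattices are known to have property~$(V)$ when they contain no copy of $c_0$... which is not automatic, so this needs care.

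The honest route, and the step I expect to be the crux, is to exploit weak sequential completeness of $X^*$ rather than property $(V)$ of $E$ directly. Here is the mechanism. Since $T$ is not weakly compact, by Gantmacher $T^*\colon Y^*\to X^*$ is not weakly compact, so there is a bounded sequence $(g_n)$ in $Y^*$ with $(T^*g_n)$ having no weakly convergent subsequence. Because $X^*$ is weakly sequentially complete, $(T^*g_n)$ has no weakly Cauchy subsequence either; by Rosenthal's $\ell_1$-theorem, after passing to a subsequence, $(T^*g_n)$ is equivalent to the unit vector basis of $\ell_1$ and $T^*$ is an isomorphism on $N := \overline{\mathrm{span}}\,(g_n)\cong\ell_1$. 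Now I would invoke Proposition~\ref{prop:Ch-V}: to conclude $X$ has property $(V)$ it suffices to show that for every unconditionally converging $T$, the adjoint $T^*$ is weakly pre-compact — equivalently, the existence of the above $\ell_1$-copy on which $T^*$ is an isomorphism must be incompatible with $T$ being unconditionally converging. This incompatibility is precisely where the Banach-lattice structure enters: $T^*(N)\cong\ell_1$ sits inside $X^*$, which is complemented in the dual Banach lattice $E^*$, and in a dual Banach lattice a complemented copy of $\ell_1$ forces (via the classical Lozanovskii/Tzafriri-type results, cf.\ \cite[Theorem 1.c.7 and Theorem 1.c.4]{LT2}) a complemented copy of $c_0$ in the pre-dual situation — more precisely, one pulls back through $\pi^*$ and uses that the $\ell_1$-sequence $(T^*g_n)$, being weak$^*$-null-ish after a shift as in Proposition~\ref{prop:NQc0}, yields a quotient map $X\to c_0$ on which one checks $T$ is bounded below, contradicting that $T$ is unconditionally converging (no $c_0$ on which it is an isomorphism).

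To make the last contradiction precise I would mimic the proof of Proposition~\ref{prop:NQc0}: set $h_n := g_{2n+1}-g_{2n}$, note $(T^*h_n)$ is still $\ell_1$-equivalent, and — using that $X^*$ embeds complementably in the dual lattice $E^*$ so that an $\ell_1$-sequence there can be taken weak$^*$-null after the standard block-and-difference manoeuvre — obtain that $(T^*h_n)$ may be assumed weak$^*$-null. Then $x\mapsto(\langle x, T^*h_n\rangle)_n = (\langle Tx, h_n\rangle)_n$ defines a surjection $X\to c_0$; composing with $T$ shows $T$ is an isomorphism on the corresponding $c_0$-subspace of $X$, contradicting unconditional convergence of $T$. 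Hence no such $T$ exists and $X$ has property $(V)$. The delicate point — the main obstacle — is justifying that the $\ell_1$-sequence $T^*h_n$ in $X^*\subseteq E^*$ can be arranged to be weak$^*$-null; this is exactly the place where ``complemented in a Banach lattice'' is used (an $\ell_1$-sequence spanning a complemented subspace of a dual Banach lattice admits, after passing to differences of a block basis, a weak$^*$-null representative, by the weak sequential completeness of $E^*$ together with the lattice structure), and I would cite R\"abiger's original argument \cite{Rabiger:85} or \cite[Theorem V.10]{Diestel:84} for this extraction.
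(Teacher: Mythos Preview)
Your overall framework matches the paper's: reduce via Proposition~\ref{prop:Ch-V} to showing that every unconditionally converging $T\colon X\to Y$ has weakly pre-compact adjoint, using that $X^*$ is weakly sequentially complete by Theorem~\ref{th:char-G}. You even set up the right auxiliary operator $S=T\pi\colon E\to Y$ and observe it is unconditionally converging --- this is exactly the paper's move.

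The gap is in the final step. The paper resolves it in one line: after a reduction (via \cite[Theorem~5.1.15]{Meyer-Nieberg:91}, using that $X^*$ is weakly sequentially complete) one may assume $E^*$ has order-continuous norm; then, since $TP\colon E\to Y$ is unconditionally converging, a ready-made lattice result \cite[Theorem~3.4.20]{Meyer-Nieberg:91} gives that $(TP)^*=P^*T^*$ is weakly pre-compact, hence so is $T^*$. You never reach this fact; instead you try to manufacture a weak\mbox{${}^*$}-null $\ell_1$-basic sequence $(T^*h_n)$ in $X^*$ and thence a surjection $X\to c_0$. The extraction of a weak\mbox{${}^*$}-null block is precisely what you flag as ``the delicate point --- the main obstacle,'' and the references you propose do not supply it: an $\ell_1$-basic sequence in a dual Banach lattice has no reason to admit a weak\mbox{${}^*$}-null block (think of point masses in $C(K)^*$ for $K$ a G-space). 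Moreover, your concluding sentence --- ``composing with $T$ shows $T$ is an isomorphism on the corresponding $c_0$-subspace of $X$'' --- conflates a $c_0$-quotient with a $c_0$-subspace; no such subspace has been produced. (Had the weak\mbox{${}^*$}-null step worked, a surjection $X\to c_0$ would already contradict $X$ being Grothendieck, without any further reference to $T$.)

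In short, you correctly identify that the lattice hypothesis must enter in proving ``$T$ unconditionally converging $\Rightarrow T^*$ weakly pre-compact,'' but you do not locate the specific lattice mechanism. The paper's proof shows that this mechanism is precisely \cite[Theorem~3.4.20]{Meyer-Nieberg:91}, applied to $TP$ on $E$ rather than to $T$ on $X$.
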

\begin{proof}
Since $X^*$ is weakly sequentially complete (Theorem \ref{th:char-G}), we may suppose that the norm in $E^*$ is order continuous (\cite[Theorem 5.1.15]{Meyer-Nieberg:91}). Let $P\colon E\to E$ be a projection onto $X$. 

By Proposition \ref{prop:Ch-V}, it is enough to show that for every unconditionally converging operator  $T\colon X\to Y$, the adjoint $T^*$ is weakly pre-compact. But $TP$  unconditionally converging implies that $P^*T^*$ is weakly pre-compact (\cite[Theorem 3.4.20]{Meyer-Nieberg:91}, hence $T^*$ is weakly pre-compact.  \end{proof}

The subsequent results characterise reflexivity of dual spaces with property $(V)$.

\begin{prop}\label{prop:XandX*V} If $X$ is a Grothendieck  space and $X^*$ has property $(V)$, then $X$ is reflexive.
\end{prop}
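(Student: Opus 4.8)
The plan is to prove that $X^{*}$ is reflexive; since reflexivity of a Banach space is equivalent to reflexivity of its dual, this gives the conclusion. The first step is to extract from the Grothendieck hypothesis the information we actually need about $X^{*}$: by the (easy) direct implication in Theorem~\ref{th:char-G}, if $X$ is Grothendieck then $X^{*}$ is weakly sequentially complete.

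Next I would use the elementary fact that a weakly sequentially complete space contains no isomorphic copy of $c_{0}$. Indeed $c_{0}$ itself fails weak sequential completeness — the sequence of partial sums of its unit vector basis is weakly Cauchy but has no weak limit in $c_{0}$ — and weak sequential completeness passes to closed subspaces. Hence $X^{*}$ has no subspace isomorphic to $c_{0}$, so, by the description of unconditionally converging operators recalled in Chapter~\ref{chap:prelim} (an operator is unconditionally converging precisely when it is not an isomorphism on any subspace of its domain isomorphic to $c_{0}$), \emph{every} operator $T\colon X^{*}\to Y$ into an arbitrary Banach space $Y$ is automatically unconditionally converging, i.e.\ $\mathscr{B}(X^{*},Y)=\mathscr{U}(X^{*},Y)$.

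Now property $(V)$ of $X^{*}$ enters, giving $\mathscr{U}(X^{*},Y)\subseteq\mathscr{W}(X^{*},Y)$ for every $Y$. Combining the two inclusions yields $\mathscr{B}(X^{*},Y)=\mathscr{W}(X^{*},Y)$ for every $Y$; taking $Y=X^{*}$ and $T=\mathrm{id}_{X^{*}}$ shows that the identity on $X^{*}$ is weakly compact, i.e.\ $B_{X^{*}}$ is weakly compact, so $X^{*}$ — and therefore $X$ — is reflexive.

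There is no real obstacle here: the argument is a short assembly of Theorem~\ref{th:char-G}, the definition of property $(V)$, and the standard fact that weakly sequentially complete spaces omit $c_{0}$. The only point worth keeping in mind is that the Grothendieck hypothesis is used solely through ``$X^{*}$ is weakly sequentially complete'', so the statement in fact holds for every $X$ whose dual is weakly sequentially complete with property $(V)$. An alternative, slightly longer route would first apply Proposition~\ref{prop:V-dual} to conclude that $X^{*}$, being a dual space with property $(V)$, has property $(V_\infty)$ and is Grothendieck, but this detour is not needed.
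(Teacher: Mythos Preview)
Your proof is correct and follows essentially the same route as the paper: both hinge on the fact that a space with property~$(V)$ and no copy of $c_0$ is reflexive, combined with the observation that $X^*$ is weakly sequentially complete (hence omits $c_0$) whenever $X$ is Grothendieck. The only difference is that the paper cites this fact directly from \cite[Chapter III, Corollary 3.3.c]{HarmandWW:93} and phrases the argument contrapositively, whereas you unpack it via the characterisation of unconditionally converging operators; your version is thus slightly more self-contained.
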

\begin{proof}
If $X^*$ is non-reflexive and has property $(V)$, then it contains a copy of $c_0$ \cite[Chapter III, Corollary 3.3.c]{HarmandWW:93}. Thus $X^*$ is not weakly sequentially complete, hence $X$  is not Grothendieck).
\end{proof}



The following problem was posed by Diestel \cite[Problem 6]{Diestel:73} is then naturally related with the previously presented ones.

\begin{quest}\label{XandX*-Gr}
Suppose that $X$ and $X^*$ are  Grothendieck. Is $X$ reflexive?
\end{quest}

However, this would be indeed the case had Problem~\ref{dualGrothV} had affirmative solution.\smallskip

Diestel \cite[Problem 7]{Diestel:73} also asked if $X$ is reflexive when both $X$ and $X^*$ are weakly sequentially complete. A negative answer was provided by Bourgain and Delbaen in \cite{BD:80}, by showing the existence of an infinite-dimensional  $\Lc_\infty$-space satisfying the Schur property.\smallskip 



For each compact Hausdorff space $K$, the bidual $C(K)^{**}$ is an injective space, hence it is Grothendieck. More generally, for every $C^*$-algebra $A$, the bidual $A^{**}$ is a von Neumann algebra, hence a Grothendieck space. In this light, Diestel's \cite[Problem 4]{Diestel:73} appears even more natural:

\begin{quest}\label{bidual-Gr}
Let $X$ be a Grothendieck space. Is $X^{**}$ Grothendieck?
\end{quest}
\begin{remark}\label{nonsums}
It is to be noted that in general property $(V)$ does not pass to biduals, because $X = (\bigoplus_{n\in\mathbb N} \ell_1^n)_{c_0}$ has property $(V)$, yet $X^{**} = (\bigoplus_{n\in\mathbb N} \ell_1^n)_{\ell_\infty}$ contains a complemented copy of $\ell_1$ \cite[p.~303]{Johnson:84}, hence it fails property $(V)$.\end{remark}
Curiously, even if we knew that (dual) Grothendieck spaces have property $(V)$, it would not be automatic that biduals of such spaces still had property $(V)$.

\section{M-bases and Schauder decompositions} 
The present section is devoted to results showing that the Grothendieck property prevents Banach spaces from admitting certain structures generalising Schauder bases. 
\newline\indent 
The following result is due to W.~B.~Johnson \cite{Johnson:70}. 
\begin{theorem}\label{th:johnson}
Every Grothendieck space admitting an M-basis is reflexive. 
\end{theorem}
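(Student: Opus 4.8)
The plan is to show that a Grothendieck space $X$ with an M-basis cannot contain a copy of $\ell_1$, so that Corollary \ref{cor:l1} forces $X$ to be reflexive. An M-basis $\{(x_\gamma,f_\gamma)\colon\gamma\in\Gamma\}$ provides a total family $(f_\gamma)_{\gamma\in\Gamma}$ in $X^*$ whose biorthogonality will let us manufacture weak*-null sequences with prescribed ``coordinates''. So the first step is to translate the M-basis into the kind of internal structure that the Grothendieck property cannot tolerate.

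The key combinatorial step will be the following: using the fundamental and biorthogonal properties, for any countable subset $\{\gamma_n\}\subset\Gamma$ the sequence $(f_{\gamma_n})_{n=1}^\infty$ (after normalising, if the $f_\gamma$ are not already bounded) is bounded, and because each $x\in X$ lies in the closed span of finitely-supported combinations of the $x_\gamma$, the functionals $f_{\gamma_n}$ act on a dense set like coordinate projections; one deduces that on a dense subspace $f_{\gamma_n}(x)\to 0$, and combined with uniform boundedness this gives $f_{\gamma_n}\xrightarrow{w^*}0$. If $X$ were Grothendieck this weak*-null sequence would be weakly null, hence by Mazur's theorem some convex block sequence $g_k=\sum_{n\in F_k}\lambda_n f_{\gamma_n}$ would converge in norm to $0$ (this is exactly Proposition \ref{prop:convblock}). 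The contradiction will come from testing $g_k$ against the basis vectors $x_\gamma$: for $\gamma=\gamma_m$ with $m\in F_k$ we have $\langle g_k,x_{\gamma_m}\rangle=\lambda_m$, and since $\sum_{n\in F_k}\lambda_n=1$ with $\lambda_n\geqslant0$, some $\lambda_m\geqslant 1/|F_k|$; one needs to upgrade this to a genuine lower bound on $\|g_k\|$ that does not degrade, which is where the argument must be done carefully — the naive bound $\|g_k\|\geqslant\lambda_m/\|x_{\gamma_m}\|$ only works if the $x_\gamma$ are uniformly bounded, which for a general M-basis they need not be.

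To get around the normalisation issue I would instead run the contradiction through the weak topology directly rather than through norm-convergence of convex blocks: a bounded weak*-null sequence that is weakly null has, by the Bessaga--Pełczyński selection principle applied inside the weak topology, the property that no subsequence is equivalent to the $\ell_1$-basis — indeed a weakly null sequence in a dual space is weakly Cauchy, and by Rosenthal's $\ell_1$-theorem this rules out $\ell_1$-behaviour. The cleaner route, which I expect is the intended one, is: if $X$ is non-reflexive and Grothendieck, then by Corollary \ref{cor:l1} it contains a subspace isomorphic to $\ell_1$; such a subspace has no M-basis compatible with weak*-null coordinate functionals in the way the Grothendieck property demands, because $\ell_1$ inside a Grothendieck space $X$ has the property that $X^*$ is weakly sequentially complete (Theorem \ref{th:char-G}), and one plays off the M-basis functionals $(f_\gamma)$ restricted to this $\ell_1$ against weak sequential completeness.

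The step I expect to be the main obstacle is precisely the normalisation: passing from the existence of a total biorthogonal system to a genuinely bounded (indeed seminormalised) weak*-null sequence of functionals whose weak nullity yields a contradiction. One resolves it by an Ulam-type stabilisation — since $\sup_n\|x_{\gamma_n}\|$ may be infinite, one first passes to a subset of $\Gamma$ along which $\|x_\gamma\|$ is controlled, or dually rescales $f_\gamma\mapsto f_\gamma/\|x_\gamma\|$ and $x_\gamma\mapsto x_\gamma\|x_\gamma\|$, preserving biorthogonality and totality while making the dual vectors bounded; fundamentality is not needed for the contradiction, only totality and biorthogonality, which is what makes the argument go through for arbitrary M-bases. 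Once the seminormalised weak*-null sequence $(f_{\gamma_n})$ is in hand, the convex-block contradiction via Proposition \ref{prop:convblock} closes the proof, and reflexivity follows from Corollary \ref{cor:l1}.
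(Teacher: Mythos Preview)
Your convex-block contradiction does not close, even with perfect normalisation. Suppose you have arranged $\|x_\gamma\|\leqslant C$ for all $\gamma$; testing $g_k$ against the $x_{\gamma_m}$ yields only $\|g_k\|\geqslant \lambda_m/C$ for \emph{some} $m\in F_k$, and all you know is that some $\lambda_m\geqslant 1/|F_k|$. So the bound is $\|g_k\|\geqslant 1/(C|F_k|)$, which is perfectly compatible with $\|g_k\|\to 0$. The space $\ell_2$ already shows this is not a bug: the unit vectors $(e_n^*)$ are seminormalised, weak*-null, weakly null, biorthogonal to the $(e_n)$, and their Ces\`aro means have norm $1/\sqrt{n}\to 0$. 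Nothing in your argument distinguishes a non-reflexive Grothendieck space with an M-basis from $\ell_2$; the biorthogonality relation $\langle f_{\gamma_n},x_{\gamma_n}\rangle=1$ simply does not obstruct weak (or even norm-block) nullity of $(f_{\gamma_n})$. The detour through $\ell_1$ in your third paragraph is a gesture rather than an argument: you invoke the copy of $\ell_1$ supplied by Corollary~\ref{cor:l1} and the weak sequential completeness of $X^*$, but never say what is being played off against what.

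The paper's proof is entirely different and does not attempt a contradiction. Let $Y=\overline{\span}\{f_\gamma:\gamma\in\Gamma\}\subset X^*$. Every $y\in Y$ has countable ``support'' $\{\gamma:\langle y,x_\gamma\rangle\neq 0\}$, so for any sequence $(y_n)\subset B_Y$ only countably many $x_\gamma$ are involved; diagonalisation produces a subsequence $(y_{n_k})$ along which $\langle y_{n_k},x_\gamma\rangle$ converges for every $\gamma$, and fundamentality plus boundedness upgrades this to weak*-convergence in $X^*$. The Grothendieck property converts this to weak convergence (with limit in the closed subspace $Y$), so $B_Y$ is weakly sequentially compact and $Y$ is reflexive. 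Since $Y$ is total, hence weak*-dense, and $B_Y$ is $\sigma(X^*,X)$-compact, the Krein--Smulian theorem forces $Y=X^*$; thus $X^*$, and therefore $X$, is reflexive. The point you are missing is that the M-basis does not give a single weak*-null sequence to contradict, but rather gives a large norming subspace of $X^*$ on which weak*-sequential compactness is automatic.
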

\begin{proof}
Let $\{(x_i,f_i)\colon i\in I\}$ be an M-basis in a Banach space $X$, and let $Y$ denote the closed subspace of $X^*$ generated by $\{f_i\colon i\in I\}$. It is enough to show that $Y$ is reflexive. Indeed, since $Y$ is total over $X$, $Y$ is weak$^*$-dense in $X^*$. Therefore, if the unit ball $B_Y$ is weakly compact, then it is $\sigma(X^*,X)$-compact, so that it follows from the Krein--Smulian theorem \cite[Theorem 2.7.11]{Megginson} that $Y$ is weak$^*$-closed, hence $Y=X^*$. 
\newline\indent
Let $(y_n)_{n=1}^\infty$ be a sequence in $B_Y$. Since each $y\in Y$ is the norm limit of a sequence in ${\textrm{span}}\{f_i\colon i\in I\}$, for each $n$ the set $A_n=\{i\in I\colon \langle y_n, x_i\rangle\neq 0\}$ is countable; thus so is $\bigcup_{n=1}^\infty A_n$. A standard diagonalisation argument shows that there exists a subsequence $(y_{n_k})_{k=1}^\infty$ such that $(\langle y_{n_k}, x_i\rangle)_{k=1}^\infty$ converges for each $i\in I$. Since $(y_{n_k})_{k=1}^\infty$ is bounded and $\{x_i\colon i\in I\}$ generates a dense subspace of $X$, $(\langle y_{n_k}, x\rangle)_{k=1}^\infty$ converges for each $x\in X$. Thus $(y_{n_k})_{k=1}^\infty$ is weak$^*$-convergent to some $y\in X^*$. Since $X$ is Grothendieck, $(y_{n_k})_{k=1}^\infty$ is weakly convergent to $y$. Note that $y\in Y$ because $Y$ is (weakly) closed. Thus $Y$ is reflexive and the proof is complete.
\end{proof}

In relation with Theorem \ref{th:johnson}, Diestel \cite[Problem 9]{Diestel:73} asked if a Banach space  with an M-basis can contain $\ell_\infty$. The answer is that it can contain $\ell_\infty$, but not $\ell_\infty(\Gamma)$ for any uncountable set $\Gamma$ (see \cite[Corollary 5.4]{HajekMVZ:08}).
\medskip

Subsequently, we discuss the non-existence of Schauder decompositions in Banach spaces with the Grothendieck property. Observe that $\ell_2$ and  $\ell_2(\ell_\infty)$ are Grothendieck spaces (see Proposition \ref{prop:unc-sum}) admitting natural Schauder decompositions, so we have to impose an additional condition to that of being Grothendieck that would prevent the existence of a Schauder decomposition. 

\begin{prop}\label{prop:SchD1}
Let $(P_i)_{i=1}^\infty$ be a Schauder decomposition in a Banach space $X$ with the Grothendieck property. Then the sequence of adjoint projections $(P_i^*)_{i=1}^\infty$ forms a~Schauder decomposition in $X^*$. 
\end{prop}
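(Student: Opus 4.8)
The plan is to realise the adjoint system $(P_i^*)_{i=1}^\infty$ through the partial-sum projections. Write $Q_n = \sum_{i=1}^n P_i$; these are non-zero projections on $X$ (using $P_i^2 = P_i$ and $P_iP_j = 0$ for $i\neq j$) satisfying $Q_nQ_m = Q_{\min(n,m)}$, and $Q_nx \to x$ for every $x\in X$, so by the uniform boundedness principle $C := \sup_n \|Q_n\| < \infty$. Dually, $Q_n^* = \sum_{i=1}^n P_i^*$ are projections on $X^*$ with $\|Q_n^*\| \leq C$, $Q_n^*Q_m^* = Q_{\min(n,m)}^*$, and $P_j^*P_i^* = (P_iP_j)^* = \delta_{ij}P_j^*$. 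Since $Q_nx\to x$ in norm for each $x$, for every $f\in X^*$ we get $\langle Q_n^*f, x\rangle = \langle f, Q_nx\rangle \to \langle f,x\rangle$; that is, $Q_n^*f \to f$ in the weak$^*$ topology of $X^*$.

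First I would invoke the Grothendieck property: the weak$^*$-convergent sequence $(Q_n^*f)_{n=1}^\infty$ is then weakly convergent, necessarily to $f$. Put $Y := \overline{\span}\bigcup_{i=1}^\infty P_i^*(X^*) \subseteq X^*$. Each $Q_n^*f$ lies in $Y$, and $Y$ is a norm-closed subspace, hence convex, hence weakly closed; therefore the weak limit $f$ of $(Q_n^*f)$ belongs to $Y$. As $f$ was arbitrary, $Y = X^*$; equivalently, $\bigcup_n Q_n^*(X^*)$ is norm-dense in $X^*$.

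The remaining steps are routine. A $3\varepsilon$-argument upgrades weak$^*$ convergence to norm convergence: for $g\in Q_m^*(X^*)$ one has $Q_n^*g = g$ whenever $n\geq m$, so for arbitrary $f\in X^*$ and $\varepsilon>0$, choosing $g$ in some $Q_m^*(X^*)$ with $\|f-g\| < \varepsilon$ yields $\|Q_n^*f - f\| \leq \|Q_n^*(f-g)\| + \|Q_n^*g - g\| + \|g - f\| \leq (C+1)\varepsilon$ for all $n\geq m$. Hence $\sum_{i=1}^\infty P_i^*f = \lim_n Q_n^*f = f$ in norm, giving existence of the representation. Uniqueness is purely algebraic: if $f = \sum_i g_i$ in norm with $g_i \in P_i^*(X^*)$, then applying the continuous operator $P_j^*$ to the partial sums $\sum_{i=1}^n g_i$ and using $P_j^*P_i^* = \delta_{ij}P_j^*$ gives $g_j = P_j^*f$. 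Thus $(P_i^*)_{i=1}^\infty$ is a Schauder decomposition of $X^*$.

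The only non-formal ingredient — and the point where I expect the Grothendieck hypothesis to be indispensable — is the passage from weak$^*$ to weak convergence of the sequence $(Q_n^*f)$; everything else is the general mechanism by which an adjoint system is automatically a Schauder decomposition of the closed span of the ranges $P_i^*(X^*)$, the Grothendieck property forcing that closed span to be all of $X^*$. The one subtlety to watch is that all the relevant limits are sequential, so that the (sequential) Grothendieck property genuinely applies; I do not anticipate any further obstacle.
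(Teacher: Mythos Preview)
Your proof is correct and follows essentially the same route as the paper's: both pass from $Q_n^*f\to f$ weak$^*$ to weak via the Grothendieck property, use Mazur (norm-closed convex $\Rightarrow$ weakly closed) to deduce that $\bigcup_n Q_n^*(X^*)$ is norm-dense in $X^*$, and then run the standard $3\varepsilon$ argument to upgrade to norm convergence. Your write-up is in fact slightly more explicit than the paper's, which asserts the density of $D=\bigcup_n S_n^*(X^*)$ without spelling out the Mazur step and does not separately record the (routine) uniqueness of the expansion.
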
\begin{proof}
Let $S_n = P_1+\cdots+P_n$ ($n\in\N$). Then $S_n x\to x$ as $n\to \infty$ for each $x\in X$. Therefore, given $x\in X$ and $f\in X^*$, 
$$
\lim_{n\to \infty} \langle S^*_n f,x\rangle = \lim_{n\to \infty} \langle f,S_n x \rangle =\langle f,x \rangle. 
$$
Since $(S^*_n f)_{n=1}^\infty$ is weakly* convergent to $f$ and the space $X$ is Grothendieck, 
$(S^*_n f)_{n=1}^\infty$ is weakly  convergent to $f$; and to finish the proof, it is enough to show that it is norm convergent. 

The union of ranges $D=\bigcup_{n=1}^\infty S_n^*(X^*)$ is a dense subspace of $X^*$ such that $(S^*_n g)_{i=1}^\infty$ is norm  -convergent to $g$ for each $g\in D$, and  $C=\sup_{n\in N}\|S_n\|< \infty$ by the Banach--Steinhaus theorem. Thus, given $f\in X^*$, since we can choose $g\in D$ arbitrarily close to $f$, the inequality 
$$
\|S_k^* f-S_l^* f\|\leqslant \|S_k^*(f -g)\|+ \|S_k^* g-S_l^* g\|+ \|S_l^*(g -f)\|
$$
shows that $(S^*_n f)_{n=1}^\infty$ is norm-convergent to $f$. 
\end{proof}
The next two results will be the key in the proof of Proposition \ref{prop:SchD2}. 
\begin{lemma}\label{lem:seq}
Let $(P_i)_{i=1}^\infty$ be a Schauder decomposition in a Banach space $X$, and let $(x_i)_{i=1}^\infty$ be a sequence with $x_i \in P_i(X)$ for each $i\in\N$. Then  $(x_i)_{i=1}^\infty$ is a basic sequence. 
\end{lemma}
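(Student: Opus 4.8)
The statement is that a sequence $(x_i)_{i=1}^\infty$ with $x_i \in P_i(X)$ (and implicitly $x_i \neq 0$ for all $i$, which one should assume, otherwise pass to the non-zero terms) is a basic sequence. The plan is to verify Grunblum's criterion: there is a constant $C \geqslant 1$ such that for all integers $p \leqslant q$ and all scalars $(a_i)$,
\[
\Big\| \sum_{i=1}^{p} a_i x_i \Big\| \leqslant C \Big\| \sum_{i=1}^{q} a_i x_i \Big\|.
\]
The natural candidate for $C$ is the basis constant $K = \sup_n \|S_n\|$ of the Schauder decomposition $(P_i)_{i=1}^\infty$, where $S_n = P_1 + \cdots + P_n$ (finite by the Banach--Steinhaus theorem, as recalled in the proof of Proposition~\ref{prop:SchD1}).

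\textbf{Key step.} First I would observe the crucial algebraic identity: since $x_i \in P_i(X)$ and $P_i P_j = 0$ for $i \neq j$ while $P_i^2 = P_i$, we get $P_j x_i = \delta_{ij} x_i$, and hence for $m \leqslant n$,
\[
S_m \Big( \sum_{i=1}^{n} a_i x_i \Big) = \sum_{i=1}^{n} a_i S_m x_i = \sum_{i=1}^{m} a_i x_i,
\]
because $S_m x_i = (\sum_{j \leqslant m} P_j) x_i = x_i$ when $i \leqslant m$ and $= 0$ when $i > m$. Applying $\|S_m\| \leqslant K$ to the vector $v = \sum_{i=1}^{q} a_i x_i$ with $m = p$ immediately yields $\|\sum_{i=1}^{p} a_i x_i\| = \|S_p v\| \leqslant K \|v\| = K \|\sum_{i=1}^{q} a_i x_i\|$. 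That is exactly Grunblum's criterion with $C = K$, so $(x_i)_{i=1}^\infty$ is basic.

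\textbf{Remarks on obstacles.} There is essentially no obstacle here; the only points requiring a word of care are (i) that one should assume each $x_i$ is non-zero — this is presumably part of the intended hypothesis (a basic sequence has non-zero terms by definition), and if not stated, one restricts to the subsequence of non-zero $x_i$'s, for which the same argument applies verbatim; and (ii) invoking Grunblum's criterion, which states that a sequence of non-zero vectors is basic if and only if such a uniform constant $C$ exists — this is a standard fact (see, e.g., \cite[Theorem 1.a.5]{LT1}). So the proof is short: recall the boundedness of $(S_n)$, record the identity $S_m(\sum_{i=1}^n a_i x_i) = \sum_{i=1}^m a_i x_i$ for $m \leqslant n$, and conclude.
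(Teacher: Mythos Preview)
Your proof is correct and takes a somewhat different route from the paper's. You verify Grunblum's criterion directly via the identity $S_m\big(\sum_{i=1}^n a_i x_i\big) = \sum_{i=1}^m a_i x_i$ together with the uniform bound $\sup_n \|S_n\| < \infty$. The paper instead argues that for every $x \in \overline{\span}\{x_i\colon i\in\N\}$, each $P_j(x)$ must be a scalar multiple of $x_j$: if not, one chooses $f \in P_j^*(X^*)$ with $\langle f, x_j\rangle = 0$ but $\langle f, P_j x\rangle \neq 0$; then $\langle f, x_i\rangle = 0$ for all $i$, whence $\langle f, x\rangle = 0$, contradicting $\langle f, x\rangle = \langle P_j^* f, x\rangle = \langle f, P_j x\rangle \neq 0$. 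This yields the unique expansion $x = \sum_i P_i(x) = \sum_i c_i x_i$ directly from the ambient Schauder decomposition. Your argument is shorter and avoids the dual-space detour; the paper's argument makes explicit that the restriction of $P_i$ to the closed span acts as the $x_i$-coordinate projection, which is a mildly stronger statement (though it is implicit in your identity as well). Both proofs rest on the same algebraic fact $P_j x_i = \delta_{ij} x_i$, and both tacitly assume $x_i \neq 0$, as you correctly point out.
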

\begin{proof}
This is a standard argument. If  $x\in\overline{\span}\{x_i\colon i\in\N\}$, then $x=\sum_{i=1}^\infty P_i(x)$; so it is enough to show that $P_i(x)$ is a multiple of $x_i$ for each $i$.
\newline\indent
Otherwise, for some $j\in\N$ we could  find $f\in P_j^*(X^*)$ such that $\langle f,x_j\rangle=0$, however $\langle f,P_jx\rangle\neq 0$. 
Since $\langle f,x_i\rangle=0$ for each $i\in\N$, $0=\langle f,x\rangle= \langle P_j^*f,x\rangle= \langle f,P_jx\rangle$; a~contradiction.
\end{proof}
\begin{lemma}\label{lem:Grot}
Let $(P_i)_{i=1}^\infty$ be a Schauder decomposition in a Banach space $X$ with the Grothendieck property. Then 
\begin{romanenumerate}
 \item\label{lem:Grot:1} every bounded sequence  $(x_i)_{i=1}^\infty$ in $X$ with $x_i \in P_i(X)$ for each $i\in\N$ is weakly null. 
 \item\label{lem:Grot:2} every bounded sequence $(f_i)_{i=1}^\infty$ in $X^*$ with $f_i \in P_i^*(X^*)$ for each $i\in\N$ is weakly null, and $\overline{\span}\{f_i\colon i\in\N\}$ is a reflexive subspace. 
\end{romanenumerate}
\end{lemma}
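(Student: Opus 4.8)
\textbf{Plan for Lemma \ref{lem:Grot}.}

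The plan is to deduce both parts from the behaviour of the partial-sum projections $S_n = P_1+\cdots+P_n$ together with the Grothendieck property and Proposition \ref{prop:SchD1}. For part \eqref{lem:Grot:1}, I would first observe that if $(x_i)_{i=1}^\infty$ is bounded with $x_i\in P_i(X)$, then for each fixed $f\in X^*$ one has $S_n^*f\to f$ in norm (Proposition \ref{prop:SchD1}), and since $\langle f, x_i\rangle = \langle P_i^*f, x_i\rangle$ while $P_i^*f = S_i^*f - S_{i-1}^*f\to 0$ in norm as $i\to\infty$, it follows that $\langle f,x_i\rangle\to 0$; as $f$ was arbitrary and $(x_i)$ is bounded, $(x_i)_{i=1}^\infty$ is weakly null. (Strictly, one should note $P_i^* f\to 0$ follows because the tail norms $\|f - S_{i-1}^*f\|\to 0$, so $\|P_i^*f\|\leqslant \|f-S_i^*f\|+\|f-S_{i-1}^*f\|\to 0$.)

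For part \eqref{lem:Grot:2}, let $(f_i)_{i=1}^\infty$ be bounded with $f_i\in P_i^*(X^*)$. By Lemma \ref{lem:seq} applied to the Schauder decomposition $(P_i^*)_{i=1}^\infty$ of $X^*$ (which is legitimate by Proposition \ref{prop:SchD1}), the sequence $(f_i)_{i=1}^\infty$ is basic; call $N=\overline{\span}\{f_i\colon i\in\N\}$. To show $(f_i)$ is weakly null, I would argue as in \eqref{lem:Grot:1} with the roles dualised: for $x\in X$, $\langle f_i, x\rangle = \langle f_i, P_i x\rangle$, and $P_i x\to 0$ because $\sum_i P_i x$ converges; boundedness of $(f_i)$ then gives $\langle f_i,x\rangle\to 0$, so $(f_i)$ is weak$^*$-null in $X^*$, hence weakly null since $X$ is Grothendieck. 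Now $N$ is a Banach space with a weakly null basis; to get reflexivity, note that a basic sequence that is weakly null and semi-normalised generates a space containing no copy of $\ell_1$, and moreover the coordinate functionals of $(f_i)$ extend (via the biorthogonal system coming from $(P_i^*)$) to elements of $X^{**}$ that are weak$^*$-limits of the basis — so the basis of $N$ is shrinking; combined with the fact that, being itself a subspace of the dual $X^*$ and image under the Grothendieck-forced weak compactness, $B_N$ is weakly sequentially compact (every bounded sequence in $N$, being a sequence of blocks, is weak$^*$-convergent hence weakly convergent by the Grothendieck property), one concludes $N$ is reflexive by the Eberlein--Šmulian theorem.

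The main obstacle I anticipate is the reflexivity claim in \eqref{lem:Grot:2}: showing not merely that the particular block sequence $(f_i)$ is weakly null, but that \emph{every} bounded sequence in $N=\overline{\span}\{f_i\}$ has a weakly convergent subsequence. The clean way is to show the basis $(f_i)$ of $N$ is both shrinking and boundedly complete. Shrinking should follow because any normalised block basis of $(f_i)$ is again a bounded block sequence of the decomposition $(P_i^*)$, hence weakly null by the argument above, and a basis all of whose normalised block bases are weakly null is shrinking (James's characterisation). Bounded completeness is where the Grothendieck hypothesis does real work: if $\sum a_i f_i$ has bounded partial sums $g_n = \sum_{i\le n} a_i f_i$, these are weak$^*$-convergent in $X^*$ (test against $x\in X$: $\langle g_n,x\rangle$ is eventually constant in each coordinate and the sums converge), hence weakly convergent by Grothendieck, hence the series converges in norm since the basis is unconditional-enough / by a standard gliding-hump argument the weak limit lies in $N$ and equals $\sum a_i f_i$. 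With both properties in hand, James's reflexivity theorem gives that $N$ is reflexive, completing the proof.
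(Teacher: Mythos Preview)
Your arguments for \eqref{lem:Grot:1} and for the ``weakly null'' half of \eqref{lem:Grot:2} are correct and essentially the paper's: both amount to $\langle f,x_i\rangle=\langle (I-S_{i-1}^*)f,x_i\rangle$ (respectively $\langle f_i,x\rangle=\langle f_i,(I-S_{i-1})x\rangle$), together with Proposition~\ref{prop:SchD1}.

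For reflexivity your first paragraph contains two false steps that you should drop: a weakly null seminormalised basic sequence can perfectly well generate a space containing $\ell_1$, and an arbitrary bounded sequence in $N$ is \emph{not} a block sequence, so one cannot conclude weak$^*$-convergence that way. Your second paragraph, via James's shrinking/boundedly-complete criterion, is a legitimate route and genuinely different from the paper's. The shrinking half is fine and in fact uses exactly the observation the paper uses: a normalised block $g_j=\sum_{i\in A_j}c_if_i$ lies in $Q_j^*(X^*)$ for the grouped decomposition $Q_j=\sum_{i\in A_j}P_i$, so the same argument gives $(g_j)$ weakly null. The boundedly-complete half has the right skeleton but the wrong justification: the basis is not assumed unconditional, and no gliding hump is needed. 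Once you know the bounded partial sums $g_n$ converge weak$^*$ (your Cauchy estimate $|\langle g_n-g_m,x\rangle|\leqslant 2\sup_k\|g_k\|\cdot\|(I-S_m)x\|$ is the clean way), the Grothendieck property gives a weak limit $g\in X^*$; then $P_i^*g=\text{w-lim}_nP_i^*g_n=a_if_i$, so $S_n^*g=g_n$, and Proposition~\ref{prop:SchD1} yields $\|g-g_n\|=\|g-S_n^*g\|\to0$. That is the missing line.

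By contrast, the paper argues by contradiction through Rosenthal's $\ell_1$-theorem: since $X^*$ is weakly sequentially complete, non-reflexivity of $N$ forces a copy of $\ell_1$, hence an $\ell_1$ block basis $(g_j)$ of $(f_i)$; but the grouped-decomposition observation makes $(g_j)$ weakly null, a contradiction. Both proofs rest on the same pivot---blocks of $(f_i)$ are weakly null via regrouping---but the paper spends it once (against $\ell_1$), whereas your route spends it on shrinking and then needs the separate boundedly-complete argument. Your approach is slightly more elementary in that it avoids Rosenthal's theorem, at the cost of one extra step.
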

\begin{proof}
\eqref{lem:Grot:1} Recall that  $S_n = P_1+\cdots+P_n$. If $x_i\in P_i(X)$, then $x_i=(I-S_{i-1})x_i$ for $i>1$. Thus, for every $f\in X^*$ we have
$$
|\langle f, x_i\rangle|= |\langle (I-S_{i-1}^*)f, x_i\rangle|\leqslant \|(I-S_{i-1}^*)f\|\cdot\|x_i\|,
$$
and, by Proposition \ref{prop:SchD1}, $ \|(I-S_{i-1}^*)f\|\to 0$ as $i\to\infty$. Consequently,  $(x_i)_{i=1}^\infty$ is a weakly null sequence. 
\newline\indent
\eqref{lem:Grot:2} Proceeding as in \eqref{lem:Grot:1}, we can show that $(f_i)_{i=1}^\infty$ is weak$^*$ null, hence it is weakly null by the Grothendieck property of $X$. 

On the other hand, if $\overline{\span} \{f_i\colon i\in\N\}$ were not reflexive, then, by Rosenthal's $\ell_1$-theorem, it would contain a subspace isomorphic to $\ell_1$. Since $(f_i)_{i=1}^\infty$ is a basic sequence by Lemma \ref{lem:seq}, we could construct a bounded sequence of successive blocks $g_j =\sum_{i\in A_j}c_i f_i$ ($j\in N, A_j\subset \N$) with no weakly convergent subsequence. Taking $Q_j =\sum_{i\in A_j}P_i$ we obtain another Schauder decomposition $(Q_j)_{j=1}^\infty$ of $X$. Since $g_j\in Q^*_j(X^*)$ for each $j$, we would get that $(g_j)_{j=1}^\infty$ is weakly null; a contradiction. 
\end{proof}
The following result was proved by Dean \cite{Dean:67}. To see that Dean's statement and ours are equivalent, observe that it is an easy exercise to show that a Banach space $Z$ has the Dunford--Pettis property if and only if given weakly compact operators $S\colon Z\to Y$ and $T\colon X\to Z$, the product $ST$ is compact.  
\begin{prop}\label{prop:SchD2}
Grothendieck spaces with the Dunford--Pettis property do not admit Schauder decompositions.
\end{prop}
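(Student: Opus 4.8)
The plan is to argue by contradiction: suppose $X$ is a Grothendieck space with the Dunford--Pettis property that admits a Schauder decomposition $(P_i)_{i=1}^\infty$. First I would pick nonzero vectors $x_i \in P_i(X)$ with $\|x_i\| = 1$ for each $i$ (possible since the subspaces $P_i(X)$ are nonzero by definition of a Schauder decomposition). By Lemma~\ref{lem:seq} the sequence $(x_i)_{i=1}^\infty$ is basic, and by Lemma~\ref{lem:Grot}\eqref{lem:Grot:1} it is weakly null. Dually, I would pick functionals $f_i \in P_i^*(X^*)$ with $\langle f_i, x_i\rangle = 1$ and $\sup_i \|f_i\| < \infty$ (achievable by Hahn--Banach, normalising so that $f_i$ witnesses $\|x_i\|$ up to a fixed constant); by Lemma~\ref{lem:Grot}\eqref{lem:Grot:2} the sequence $(f_i)_{i=1}^\infty$ is weakly null as well.

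The key step is to manufacture a contradiction with the Dunford--Pettis property via the compactness criterion stated just before the proposition: $X$ has the DPP if and only if for all weakly compact operators $S\colon X \to W$ and $T\colon V \to X$, the composition $ST$ is compact. The natural operators to use are built from the decomposition. Consider the map $A\colon X \to c_0$ defined by $Ax = (\langle f_i, x\rangle)_{i=1}^\infty$; I would check it lands in $c_0$ (using that $(f_i)$ is weak$^*$-null, since $f_i = P_i^* f_i \in P_i^*(X^*)$ forces $\langle f_i, x\rangle \to 0$ by the argument in Lemma~\ref{lem:Grot}\eqref{lem:Grot:1} applied dually). Since $X$ is Grothendieck, $A$ is weakly compact by Theorem~\ref{Groth-sp}. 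On the other side, define $B\colon c_0 \to X$ (or rather from some reflexive space, or even $\ell_2$) using the basic sequence $(x_i)$: more carefully, one wants $B$ weakly compact with $AB$ not compact. Here is where I would instead let $B$ act on the reflexive span: by Lemma~\ref{lem:Grot}\eqref{lem:Grot:2}, $R := \overline{\span}\{f_i : i \in \N\}$ is reflexive, so the inclusion $R \hookrightarrow X^*$ is weakly compact; dually, quotienting gives a weakly compact operator. The cleanest route: $A\colon X \to c_0$ is weakly compact and its adjoint $A^*\colon \ell_1 \to X^*$ sends the $i$-th unit vector to $f_i$; since $R$ is reflexive, $A^*$ factors through / has range in a reflexive space, so $A^*$ maps $B_{\ell_1}$ into a relatively weakly compact set — consistent. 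Then I would exhibit a weakly compact $T\colon \ell_2 \to X$ with $T e_i = \lambda_i x_i$ for a summable-square sequence $\lambda_i$, weakly compact since $\ell_2$ is reflexive; and compute $AT e_i = \lambda_i (\langle f_j, x_i\rangle)_j = \lambda_i e_i$ (using biorthogonality-type relations $\langle f_j, x_i \rangle$; note $f_j \in P_j^*(X^*)$ and $x_i \in P_i(X)$ give $\langle f_j, x_i\rangle = 0$ for $i \ne j$, and $=1$ for $i=j$). Thus $AT\colon \ell_2 \to c_0$ is the diagonal operator $e_i \mapsto \lambda_i e_i$, which for suitable $\lambda_i$ (say $\lambda_i \to 0$ but not too fast — actually any $\lambda_i \not\to 0$ would make $T$ ill-defined on $\ell_2$, so instead use $\lambda_i = 1$ and let $T$ act on $c_0$, giving $T$ weakly compact iff... no). The correct normalisation is $\lambda_i \in \ell_2 \setminus c_{00}$ with $\lambda_i \ne 0$; then $AT$ is diagonal with entries $\lambda_i$, hence \emph{not} compact since infinitely many $\lambda_i$ are bounded away from $0$ — wait, $\lambda_i \to 0$. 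I will instead choose $T\colon c_0 \to X$, $Te_i = x_i$: this is bounded iff $(x_i)$ is equivalent to the $c_0$-basis, which need not hold, so finally I settle on $T$ defined on the reflexive space $\overline{\span}\{x_i\}$ — but that span need not be reflexive. The honest resolution is: $(x_i)$ is weakly null and basic, so by passing to a subsequence it generates a space not containing $\ell_1$ finitely-blocked in a bad way; the cleanest is to directly use that $A$ restricted to $\overline{\span}\{x_i\}$ is weakly compact (Grothendieck) yet acts as a semi-embedding onto a non-reflexive subspace of $c_0$ unless $(x_i)$ spans a reflexive space, and iterate.

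Given the above, the \textbf{main obstacle} is pinning down the right pair of weakly compact operators whose composition is visibly non-compact. The conceptually correct statement is: the formal identity $\overline{\span}\{x_i\} \to c_0$ induced by $x_i \mapsto e_i$ (which is the restriction of $A$, hence weakly compact by the Grothendieck property) composed with any weakly compact operator \emph{into} $\overline{\span}\{x_i\}$ must be compact by DPP; but taking the weakly compact inclusion of a reflexive subspace $\overline{\span}\{x_{i_k}\}$ (reflexive because, by the dual part of Lemma~\ref{lem:Grot}\eqref{lem:Grot:2} applied to the blocked decomposition $(Q_k) = (P_{i_k})$, these spans are reflexive — symmetrically to the functional side) yields a weakly compact operator $\iota$ with $A\iota$ equal to the inclusion $\overline{\span}\{x_{i_k}\} \hookrightarrow c_0$ in coordinates, which is compact only if $\overline{\span}\{x_{i_k}\}$ is finite-dimensional. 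Since each $P_i(X) \ne 0$, we can pick the $x_i$ linearly independent, so this span is infinite-dimensional — contradiction. I would structure the final write-up around this: (1) extract $x_i, f_i$; (2) invoke Lemma~\ref{lem:Grot} to get weak-nullity and reflexivity of $\overline{\span}\{x_i\}$; (3) form $A\colon X \to c_0$, weakly compact by Grothendieck; (4) form $\iota\colon \overline{\span}\{x_i\} \hookrightarrow X$, weakly compact by reflexivity of the domain; (5) note $A\iota$ is, in suitable coordinates, an into-isomorphism onto an infinite-dimensional subspace of $c_0$, hence non-compact, contradicting the DPP compactness criterion.
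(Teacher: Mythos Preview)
Your setup matches the paper exactly: pick $x_i\in P_i(X)$ and $f_i\in P_i^*(X^*)$ with $\langle f_j,x_i\rangle=\delta_{ij}$, invoke Lemma~\ref{lem:Grot} to get both sequences weakly null, and form the operator $A\colon X\to c_0$, $Ax=(\langle f_i,x\rangle)_{i=1}^\infty$, which is weakly compact because $X$ is Grothendieck. From here, however, you take a detour through the two-operator compactness criterion that introduces a genuine gap.

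The problematic step is your claim that $\overline{\span}\{x_i\}$ is reflexive ``symmetrically to the functional side''. Lemma~\ref{lem:Grot}\eqref{lem:Grot:2} proves reflexivity of $\overline{\span}\{f_i\}$ by showing it contains no copy of $\ell_1$ \emph{and} using (implicitly, via Rosenthal's theorem) that $X^*$ is weakly sequentially complete --- a consequence of the Grothendieck property (Theorem~\ref{th:char-G}). There is no such analogue on the $X$ side: $X$ need not be weakly sequentially complete, and a weakly null basic sequence containing no $\ell_1$ can perfectly well span a non-reflexive space (the unit vector basis of $c_0$ is the model case). So the inclusion $\iota\colon \overline{\span}\{x_i\}\hookrightarrow X$ is not known to be weakly compact, and your pair $(A,\iota)$ does not feed into the DPP criterion. (Incidentally, $A\iota$ need not be an into-isomorphism either --- that would force $(x_i)$ to be dominated by the $c_0$-basis --- though non-compactness of $A\iota$ does follow simply from $A\iota(x_i)=e_i$.)

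The fix is to abandon the two-operator criterion and use the \emph{definition} of the Dunford--Pettis property directly, which is what the paper does: $A\colon X\to c_0$ is weakly compact and $(x_i)$ is weakly null, so DPP forces $\|Ax_i\|\to 0$; but $\|Ax_i\|\geqslant|\langle f_i,x_i\rangle|=1$. That is the entire argument --- no second operator, no reflexivity of $\overline{\span}\{x_i\}$ needed.
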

\begin{proof}
If $X$ is Grothendieck and $(P_i)_{i=1}^\infty$ is a Schauder decomposition in $X$, then we can select a normalised sequence $(f_i)_{i=1}^\infty$ in $X^*$ with $f_i\in P^*_i(X^*)$. Since $P^*_if_i=f_i$, we can find a bounded sequence $(x_i)_{i=1}^\infty$ in $X$ with $x_i\in P_i(X)$ such that $\langle f_i, x_i\rangle=1$ for each $i\in\N$. By Lemma \ref{lem:Grot}, both sequences $(x_i)_{i=1}^\infty$ and $(f_i)_{i=1}^\infty$ are weakly null; hence $X$ fails the Dunford--Pettis property. Indeed, $Tx =(\langle f_i, x\rangle)_{i=1}^\infty$ defines a weakly compact operator $T\colon X\to c_0$ and $(Tx_i)_{i=1}^\infty$ does not converge in norm to $0$.
\end{proof}



We say that a Banach space $X$ has the \emph{surjective Dunford--Pettis property} if every surjective operator from $X$ onto a reflexive Banach space takes weakly convergent sequences into convergent sequences. Clearly, spaces with the Dunford--Pettis property have the surjective Dunford--Pettis property, but the example described in Proposition \ref{prop:ex-Leung} shows that the converse implication fails.

Leung \cite{Leung:88} improved Proposition \ref{prop:SchD2} as follows.  
\begin{prop}\label{prop:SchD3}
Grothendieck spaces with the surjective Dunford--Pettis property do not admit Schauder decompositions.
\end{prop}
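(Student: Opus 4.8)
The plan is to mimic the argument of Proposition \ref{prop:SchD2}, replacing the use of the full Dunford--Pettis property with the surjective version at the single point where it is actually needed. Suppose $X$ is a Grothendieck space with the surjective Dunford--Pettis property and, towards a contradiction, that $(P_i)_{i=1}^\infty$ is a Schauder decomposition of $X$. Exactly as in the proof of Proposition \ref{prop:SchD2}, pick a normalised sequence $(f_i)_{i=1}^\infty$ in $X^*$ with $f_i\in P_i^*(X^*)$, and a bounded sequence $(x_i)_{i=1}^\infty$ in $X$ with $x_i\in P_i(X)$ and $\langle f_i,x_i\rangle = 1$ for all $i$. By Lemma \ref{lem:Grot}, both $(x_i)_{i=1}^\infty$ and $(f_i)_{i=1}^\infty$ are weakly null, and moreover $Y:=\overline{\span}\{f_i\colon i\in\N\}$ is a \emph{reflexive} subspace of $X^*$.

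The operator $Tx = (\langle f_i,x\rangle)_{i=1}^\infty$ maps $X$ into $c_0$ and is weakly compact (it is the operator appearing in Proposition \ref{prop:SchD2}, or one checks directly that $T^* e_i^* = f_i$ takes $B_{\ell_1}$ into the weakly null, hence relatively weakly compact, set $\{f_i\}$, so $T^*$ and therefore $T$ is weakly compact). The obstacle is that the surjective Dunford--Pettis property only tells us about \emph{surjections onto reflexive spaces}, whereas $T$ need be neither surjective onto $c_0$ nor reflexive-valued. The key step is therefore to factor $T$ through a reflexive quotient of $X$: since $T$ is weakly compact, apply the Davis--Figiel--Johnson--Pel\-czyński factorisation to obtain $T = jA$ with $A\colon X\to\Delta(T)$ and $j\colon \Delta(T)\to c_0$, where $\Delta(T)$ is reflexive (weak compactness of $T$ is exactly what makes $\Delta(T)$ reflexive). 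Now $A\colon X\to \overline{A(X)}$ is a surjection onto a reflexive Banach space; by the surjective Dunford--Pettis property of $X$, $A$ sends weakly null sequences to norm-null sequences. In particular $A x_i\to 0$ in norm, hence $Tx_i = jAx_i\to 0$ in norm.

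This is the desired contradiction, since $\|Tx_i\| = \sup_k |\langle f_k, x_i\rangle| \geqslant |\langle f_i,x_i\rangle| = 1$ for every $i$. The main point to be careful about is the factorisation step: one must ensure $A$ can be taken with dense range (replace $\Delta(T)$ by $\overline{A(X)}$, still reflexive as a closed subspace of a reflexive space) so that it genuinely is a surjection onto a reflexive space in the sense required by the definition of the surjective Dunford--Pettis property; everything else is a verbatim repetition of the proof of Proposition \ref{prop:SchD2} together with Lemma \ref{lem:Grot}.
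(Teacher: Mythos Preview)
There is a genuine gap at the DFJP step. You write ``$A\colon X\to \overline{A(X)}$ is a surjection onto a reflexive Banach space'' and in the final paragraph you equate ``dense range'' with ``surjection''. These are not the same: the map $A$ from the DFJP factorisation has (at best) dense range in $\overline{A(X)}$, but there is no reason for it to be onto. The definition of the surjective Dunford--Pettis property in the paper explicitly requires a \emph{surjective} operator onto a reflexive space, so you cannot apply it to $A$ as stated.

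The paper's proof sidesteps this entirely and does not use DFJP. Having established via Lemma~\ref{lem:Grot} that $N=\overline{\span}\{f_i\colon i\in\N\}$ is reflexive, it observes that reflexivity forces $N$ to be weak$^*$-closed in $X^*$ (the unit ball $B_N$ is weakly compact in $N$, hence in $X^*$, hence weak$^*$-compact; apply Krein--Smulian). Therefore $({}^\perp N)^\perp=N$, so $(X/{}^\perp N)^*\cong N$ is reflexive, and the honest quotient map $Q\colon X\to X/{}^\perp N$ is a genuine surjection onto a reflexive space. Since $f_i\in N$ we have $\langle f_i,Qx_i\rangle=\langle f_i,x_i\rangle=1$, so $(Qx_i)$ is not norm-null while $(x_i)$ is weakly null; this contradicts the surjective Dunford--Pettis property.

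Your argument is salvageable and in fact collapses to the paper's: since $j$ in the DFJP factorisation is injective, $\ker A=\ker T=\{x\colon \langle f_i,x\rangle=0\text{ for all }i\}={}^\perp N$, so the quotient $X/\ker A$ is exactly $X/{}^\perp N$, and \emph{that} map is surjective onto a reflexive space. But then the DFJP detour is unnecessary; the reflexivity of $N$ from Lemma~\ref{lem:Grot} already gives you the reflexive quotient directly.
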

\begin{proof}
Assume that $X$ is a Grothendieck space that has a Schauder decomposition $(P_i)_{i=1}^\infty$. As in the proof of Proposition \ref{prop:SchD2}, we select bounded sequences $(f)_{i=1}^\infty$ and $(f_i)_{i=1}^\infty$ with $f_i \in P^*_i(X^*)$,  $x_i\in P_i(X)$ and  $\langle f_i, x_i\rangle=1$ for each $i\in\N$. 
\newline\indent
By Lemma \ref{lem:Grot}, $N=\overline{\span} \{f_i\colon i\in\N\}$ is reflexive, hence $(^\perp N)^\perp = N$ so $X/^\perp N$ is   reflexive too. Let $Q\colon X\to X/^\perp N$ denote the quotient map. Then the sequence  $(x_i)_{i=1}^\infty$ is weakly null, but $(Qx_i)_{i=1}^\infty$ does not converge in norm to $0$, because $\langle f_i,Qx_i\rangle=1$ for each $i\in\N$. Consequently, $X$ fails the surjective Dunford--Pettis property.
\end{proof}

\section{Norm-attaining functionals on Grothendieck spaces}

For a Banach space $X$, we consider  $$\textrm{At}(X) = \{f\in X^*\colon \| f \| = \langle f,x \rangle \text{ for some }x\in S_X\},$$
the set of all norm-attaining functionals in $X^*$. A well-known result of James establishes that $X$ is reflexive if and only if $\textrm{At}(X)=X^*$. 

Debs, Godefroy, and Saint-Raymond \cite{DGSR:1995} proved that $\textrm{At}(X)$ is not a weak$^*$-$G_\delta$ subset of $X^*$ when $X$ is separable and non-reflexive. Acosta and Kadets extended this result (\cite[Theorem 2.5]{AcostaKadets:2011}) as follows. 

\begin{prop}\label{prop:At}
If $X$ is a Banach space and ${\rm At}(X)$ is a weak$^*$-$G_\delta$ subset of $X^*$, then $X$ is Grothendieck. 
\end{prop}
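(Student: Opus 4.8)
The plan is to prove the contrapositive: if $X$ is \emph{not} a Grothendieck space, then $\textrm{At}(X)$ is not a weak$^*$-$G_\delta$ subset of $X^*$. Two soft observations frame everything. Since reflexive spaces are Grothendieck, a non-Grothendieck $X$ is non-reflexive, so James's theorem gives $\textrm{At}(X)\subsetneq X^*$, while the Bishop--Phelps theorem gives that $\textrm{At}(X)$ is norm-dense, hence weak$^*$-dense, in $X^*$. Consequently the whole matter is a Baire-category phenomenon: a weak$^*$-$G_\delta$ set that is weak$^*$-dense in a weak$^*$-Polish region is comeager there, so it suffices to produce a weak$^*$-compact, weak$^*$-metrizable set $K\subseteq X^*$ on which $\textrm{At}(X)\cap K$ is dense but \emph{not} comeager. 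In the separable case one may just take $K=B_{X^*}$ (already weak$^*$-metrizable) --- this is the Debs--Godefroy--Saint-Raymond setting --- so the content of the statement is really the non-separable case, where $B_{X^*}$ need not be weak$^*$-metrizable and a genuinely smaller $K$ must be built by hand.

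The raw material for $K$ is furnished by the failure of the Grothendieck property. By Theorem~\ref{Groth-sp} (equivalence of \eqref{Groth-sp:1} and \eqref{Groth-sp:2}) there is a non-weakly-compact $T\colon X\to c_0$ with $\|T\|\leqslant 1$; writing $Tx=(\langle g_n,x\rangle)_n$ gives a weak$^*$-null sequence $(g_n)\subseteq B_{X^*}$, and the failure of weak compactness of $T$ (equivalently of $T^{**}$) yields $x^{**}\in B_{X^{**}}$ and $\theta>0$ with $\langle x^{**},g_n\rangle\geqslant\theta$ for all $n$ (after a subsequence and sign changes). Now split according to Räbiger's Theorem~\ref{th:char-G} together with Proposition~\ref{prop:NQc0}: either $X$ has a quotient isomorphic to $c_0$, in which case $T$ may be taken to be a metric surjection so that $a\mapsto\sum_n a_ng_n=T^{*}a$ is an \emph{isometric} embedding of $\ell_1$ into $X^*$; or $X^*$ fails to be weakly sequentially complete, in which case one still extracts from $(g_n)$ (via Rosenthal's $\ell_1$-theorem and, if need be, the Hagler--Johnson circle of ideas) a weak$^*$-null, almost-isometric $\ell_1$-sequence in $X^*$, and, using Goldstine's theorem, an auxiliary sequence $(x_n)\subseteq B_X$ realising a James-type triangular array ($\langle g_m,x_n\rangle>\theta/2$ for $m\leqslant n$, and $|\langle g_m,x_n\rangle|$ summably small in $m$ for $m>n$). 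In either case one sets $K=\bigl\{\sum_n a_ng_n:|a_n|\leqslant c_n\bigr\}$, the weak$^*$-continuous, weak$^*$-compact, weak$^*$-metrizable image of the cube $\prod_n[-c_n,c_n]$, for a suitably chosen summable $(c_n)$.

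On $K$ one must verify the two required properties. \emph{Non-comeagerness.} Because $(g_n)$ is weak$^*$-null while $\bigl\|\sum_n a_ng_n\bigr\|$ equals (or is arbitrarily close to) $\sum_n|a_n|$, an equality analysis of $\langle\sum_n a_ng_n,x\rangle=\bigl\|\sum_n a_ng_n\bigr\|$ forces $|\langle g_n,x\rangle|$ to be (essentially) $1$ for every $n$ in the support of $a$; since $\langle g_n,x\rangle\to0$, that support must be finite. Hence $\textrm{At}(X)\cap K$ is contained in the set of finitely supported elements of $K$, which is a countable union of nowhere-dense faces, hence meager in the perfect Polish space $K$; in particular $\textrm{At}(X)\cap K$ is not comeager. \emph{Density.} Here one invokes the norm-density of $\textrm{At}(X)$ (Bishop--Phelps, or quantitatively Bishop--Phelps--Bollob\'as), combined with the concrete structure of the $g_n$: in the $c_0$-quotient case the finite $\pm1$-combinations $\sum_{n\in S}\varepsilon_ng_n$ attain their norms (at the obvious lifts of the finite sign patterns), so finitely supported --- and therefore a dense family of --- points of $K$ lie in $\textrm{At}(X)$; in the remaining case a different metrizable $K$, modelled on the $\ell_\infty$-functionals whose supremum is not attained, is used and density again comes down to density of finitely supported functionals. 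Granting both properties, a weak$^*$-$G_\delta$ set $\textrm{At}(X)$ would meet $K$ in a dense $G_\delta$, hence comeager, subset of the Baire space $K$, contradicting the non-comeagerness. Therefore $\textrm{At}(X)$ is not weak$^*$-$G_\delta$, and the proposition follows by contraposition.

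The main obstacle is forcing density and non-comeagerness to coexist. The non-comeagerness genuinely requires $(g_n)$ to be weak$^*$-null \emph{and} (asymptotically) $\ell_1$-isometric --- a mere non-reflexive space need not supply this uniformly, whereas a non-Grothendieck one does through Theorem~\ref{Groth-sp} and Räbiger's criterion; this is exactly why the hypothesis is ``Grothendieck'' and not ``reflexive'', and why Acosta--Kadets's result properly refines Debs--Godefroy--Saint-Raymond's. The density, conversely, is not implied by the norm-density of $\textrm{At}(X)$ in $X^*$: it must hold on the \emph{specific} thin metrizable $K$ we have constructed, and reconciling this with the non-comeagerness (which forces $\textrm{At}(X)\cap K$ to be small) is the technical heart of the matter. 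The clean scenario is the $c_0$-quotient case, where the metric surjection gives an exact $\ell_1$-isometry and the finite combinations visibly attain their norms; the hard case is the one where $X$ has no quotient isomorphic to $c_0$ but $X^*$ carries a weak$^*$-null, nontrivially weakly Cauchy sequence, where exact isometry is unavailable and one must lean on the James array together with a careful choice of the $g_n$ and of the weights $c_n$.
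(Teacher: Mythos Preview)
The paper does not prove this proposition; it merely cites \cite[Theorem 2.5]{AcostaKadets:2011}. So your proposal must be judged on its own merits, and there are two genuine gaps.

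First, your Case~2 is internally inconsistent. You split via Theorem~\ref{th:char-G}: either $X$ has a $c_0$-quotient, or $X^*$ fails weak sequential completeness. In the second case you claim one can still extract from $(g_n)$ a weak$^*$-null, almost-isometric $\ell_1$-sequence. But if $X$ has \emph{no} quotient isomorphic to $c_0$, Proposition~\ref{prop:NQc0} says every weak$^*$-convergent sequence in $X^*$ has a weakly Cauchy subsequence; by Rosenthal's theorem such a sequence can contain no $\ell_1$-subsequence whatsoever. So the extraction you invoke is impossible precisely in the case where you need it. Whatever set $K$ handles this case must be built from genuinely different raw material (a weak$^*$-null, weakly Cauchy, non-weakly-null sequence), and the meagerness argument you sketch---which relies on the exact $\ell_1$-isometry forcing $|\langle g_n,x\rangle|=1$ on the support---does not survive that change.

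Second, even in Case~1 your density claim is not justified. With $q\colon X\to c_0$ a metric surjection and $g_n=q^*e_n$, a finitely supported $f=\sum_{n\in S}a_ng_n$ attains its norm iff there exists $x\in B_X$ with $(qx)_n=\mathrm{sign}(a_n)$ for all $n\in S$; equivalently, $q(B_X)$ meets a proper face of $B_{c_0}$. A quotient map guarantees $q(B_X)\supseteq B_{c_0}^\circ$ and $\overline{q(B_X)}=B_{c_0}$, but proper faces lie in the boundary and have empty interior, so neither inclusion helps. The ``obvious lifts of the finite sign patterns'' need not exist in $B_X$. Since Bishop--Phelps gives norm-density of $\mathrm{At}(X)$ only in $X^*$ and $K$ has empty interior there, you get no density on $K$ for free. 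Without density, a $G_\delta$ meager subset of $K$ yields no contradiction (the empty set is both). Your closing paragraph rightly flags this reconciliation as ``the technical heart of the matter'', but the proposal does not actually resolve it.
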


We do not know if the sufficient condition for being a Grothendieck space, presented in Proposition \ref{prop:At}, is actually also necessary. 
\begin{quest}\label{pr:8}
Let $X$ be a Grothendieck space. Is ${\rm At}(X)$ a weak$^*$-$G_\delta$ subset of $X^*$?
\end{quest}

\noindent \emph{\textbf{Added in proof.}} As explained to us by the referee, Problem~\ref{pr:8} has negative answer. We are indebted for her/his permission to include the argument here.

Let us consider the renorming of $\ell_\infty$ given by
\[
    \|(\xi_k)_{k=1}^\infty \|^\prime =  \|(\xi_k)_{k=1}^\infty \|_{\ell_\infty} + \limsup_{k\to \infty} |\xi_k|\quad \big( (\xi_k)_{k=1}^\infty \in \ell_\infty \big), 
\]
and let us denote $X_0=(\ell_\infty, \|\cdot\|^\prime)$. 

It is easy to see that the restriction of the norm in $X_0^*$ to $\ell_1$ coincides with the canonical norm on $\ell_1$, and that an element of $\ell_1$, regarded as a functional on $X_0$, attains its norm if and only if it is finitely supported. 
It follows from the Baire category theorem that the set of norm-attaining functionals in $X_0^*$ is not $G_\delta$ in the norm topology, hence not in the weak* topology either.

\chapter{Classes of Banach spaces with the Grothendieck property}\label{chap:examples}

\section{Spaces of continuous functions} \label{sect:ck}
It was Grothendieck himself \cite{Grothendieck:53} who proved that for a discrete set $\Gamma$, $\ell_\infty(\Gamma) \equiv C(\beta \Gamma)$ is a Grothendieck space. It follows immediately from the fact that if $K$ is \emph{Stonean space}, that is, compact and extremally disconnected, then $C(K)$ is injective (hence complemented in $\ell_\infty(\Gamma)$, where $\Gamma$ is the unit ball of $C(K)^*$), so also Grothendieck. 

We will use the following terminology introduced by Seever \cite{Seever:68}. 
\begin{definition}
A compact (Hausdorff) space $K$ is a \emph{G-space} whenever $C(K)$ has the Grothendieck property. 
\end{definition}

Thus we may say that Stonean  spaces are G-spaces. Note that if $K$ is a G-space and $L$ is a continuous image of $K$, then $L$ need not be a G-space. Indeed, $\N_\infty$ is a continuous image of $\beta\N$; and more generally,  every compact space $K$ is a continuous image of $\beta K_d$, where $K_d$ is $K$ endowed with the discrete topology. Actually, every Stonean space $K$ continuously surjects onto a space $L$, which is not a G-space, yet every weakly* convergent sequence of purely atomic measures on $L$ is weakly convergent \cite[Theorem 7.3]{KakolSobotaZdomsky:2020}.\smallskip

Convergent sequences in a compact space $K$ that are non-trivial, in the sense that they are not eventually constant, give rise to complemented copies of $c_0$, hence a G-space cannot have non-trivial convergent sequences. Such a condition is however not sufficient as the product of two infinite compact spaces $K$ and $L$ is never a G-space (a more general variant of this result is discussed in Section~\ref{Injten}; and a further strengthening may be found in \cite[Theorem 11.3]{KakolSobotaZdomsky:2020}; see also \cite{KakolMarciszewskiSobotaZdomsky:2020}). In \cite[Theorem 7]{KakolMolto:2020} it was noticed that $C(K)$ is \emph{not} a Grothendieck space if and only if it is isomorphic to a space $C(L)$ with $L$ containing a~non-trivial convergent sequence. A strengthening of this observation is available and may be found. \emph{e.g.}, in \cite[Proposition 6.12]{Koszmider:2010}.

\begin{prop}
Let $K$ be a compact space. Suppose that $(\mu_n)_{n=1}^\infty$ is a weakly* convergent sequence in $C(K)^*$ with no weakly convergent subsequence. Then $C(K)$ contains a~complemented subspace isomorphic to $c_0$.
\end{prop}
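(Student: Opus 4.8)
The plan is to extract from the given non-trivially weakly* convergent sequence $(\mu_n)_{n=1}^\infty$ a subsequence that behaves like the coordinate functionals on $c_0$, and then to lift a corresponding surjection $C(K)\to c_0$. First I would pass to differences: set $\nu_n = \mu_{2n}-\mu_{2n-1}$, which is weakly* null. If $(\nu_n)$ had a weakly convergent subsequence then, combined with the weak convergence of $(\mu_n)$ to its weak* limit along that subsequence — wait, we only know weak* convergence of $(\mu_n)$, so instead I argue directly: since $(\mu_n)$ has no weakly convergent subsequence and is weak*-convergent, by Proposition~\ref{prop:NQc0} (or rather its proof via Rosenthal's $\ell_1$-theorem applied inside $C(K)^*$) we may, after passing to a subsequence, assume $(\nu_n)_{n=1}^\infty$ is a weak*-null sequence equivalent to the unit vector basis of $\ell_1$. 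Here one uses that $C(K)^*$ has the property that weak* convergent sequences with no weakly Cauchy subsequence contain $\ell_1$-bases; this is exactly the mechanism in the proof of Proposition~\ref{prop:NQc0}.

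Next I would define $T\colon C(K)\to c_0$ by $Tf = (\langle \nu_n, f\rangle)_{n=1}^\infty$. Since $(\nu_n)$ is weak*-null this lands in $c_0$ and is bounded; since $(\nu_n)$ is $\ell_1$-equivalent, a gliding-hump / Hahn--Banach argument produces, for each finitely supported scalar sequence, an $f\in C(K)$ witnessing near-equality of norms, so $T$ is a surjection (indeed a quotient map onto $c_0$ after rescaling). At this point $c_0$ is realised as a quotient of $C(K)$. The final move is to upgrade ``quotient'' to ``complemented subspace'': this is where I expect the real content to sit. The standard tool is that a quotient map onto $c_0$ from a $C(K)$-space (or more generally any space, by Sobczyk-type or lifting arguments special to $c_0$) can be used to build a complemented copy — concretely, one finds a bounded sequence $(g_n)$ in $C(K)$ with $\langle \nu_m, g_n\rangle \to \delta_{mn}$ along a further subsequence, generating a subspace isomorphic to $c_0$, and the operator $T$ restricted there is an isomorphism whose image $T(\overline{\span}\{g_n\})$ is complemented in $c_0$ (every infinite-dimensional subspace of $c_0$ isomorphic to $c_0$ and spanned by a block-type sequence is complemented), hence $\overline{\span}\{g_n\}$ is complemented in $C(K)$.

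The main obstacle is the extraction of the ``biorthogonal-like'' system $(\nu_n, g_n)$ with $\langle\nu_m,g_n\rangle\approx\delta_{mn}$: one must exploit both the weak*-nullity of $(\nu_n)$ (to make the off-diagonal tails small by a diagonal argument choosing $g_n$ supported where earlier $\nu_m$ are already tiny) and the $\ell_1$-equivalence (to keep the diagonal entries bounded away from $0$). Once such a system is in hand, the complementation follows the familiar pattern: the map $f\mapsto \sum_n \langle \nu_n, f\rangle g_n$ (suitably normalised, using the $c_0$-nature of the target to guarantee the series converges) is a bounded projection-like map onto $\overline{\span}\{g_n\}$, and a routine perturbation argument (small-perturbation lemma, since off-diagonal terms are summably small) makes it an actual projection. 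I would invoke Theorem~\ref{th:HJ77} or Rosenthal's $\ell_1$-theorem only for the first extraction step; everything after is soft functional analysis specific to $c_0$, so I do not anticipate difficulty there beyond bookkeeping.
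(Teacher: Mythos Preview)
Your first two steps are fine (with one caveat below), and they do produce a surjection $T\colon C(K)\to c_0$. But the final step---promoting the quotient to a \emph{complemented} copy of $c_0$---has a genuine gap. Your parenthetical ``or more generally any space, by Sobczyk-type or lifting arguments special to $c_0$'' is simply false: there exist Banach spaces (e.g.\ the Bourgain--Delbaen spaces with the Schur property, Example~\ref{ex:BD}) that admit $c_0$ as a quotient yet contain no copy of $c_0$ whatsoever. So some specific feature of $C(K)$ must enter, and your proposed direct construction does not supply it. Concretely, you never explain why the biorthogonal-like sequence $(g_n)$ should span a copy of $c_0$; a bounded sequence in $C(K)$ satisfying $\langle\nu_m,g_n\rangle\approx\delta_{mn}$ need not be basic, let alone equivalent to the unit vector basis of $c_0$, and without that your projection formula $f\mapsto\sum_n\langle\nu_n,f\rangle g_n$ has no reason to converge. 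The clean salvage of your route is to invoke property $(V)$ for $C(K)$: the operator $T$ is not weakly compact (its adjoint fixes $\ell_1$), hence not unconditionally converging, hence fixes a copy of $c_0$; then pass to a further subspace whose image is complemented in $c_0$ and pull back the projection. A minor additional point: to apply Rosenthal's $\ell_1$-theorem you need no weakly \emph{Cauchy} subsequence, whereas the hypothesis gives no weakly \emph{convergent} subsequence; you should note that $C(K)^*=M(K)$ is weakly sequentially complete, which bridges this.

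By contrast, the paper's proof stays entirely measure-theoretic and avoids both Rosenthal's theorem and property $(V)$. It applies the Dieudonn\'e--Grothendieck criterion for relative weak compactness in $M(K)$ to obtain pairwise disjoint open sets $U_n\subset K$ with $|\mu_n(U_n)|\geqslant\delta$, then (after subtracting the weak* limit and passing to a subsequence) uses Urysohn's lemma to produce functions $f_n$ supported in $U_n$ with $\langle\mu-\mu_n,f_n\rangle=1$. Disjoint supports give an \emph{isometric} copy of $c_0$ for free, and the projection $Pf=\sum_n\langle\mu-\mu_n,f\cdot\mathds 1_{U_n}\rangle f_n$ is bounded precisely because the $f_n$ are disjointly supported. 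This is what your abstract biorthogonal construction is missing: the disjointness of the $U_n$ does the work that property $(V)$ would otherwise have to do.
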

\begin{proof}
Let $(\mu_n)_{n=1}^\infty$ be a weakly* convergent sequence in $M(K)$, the space of Borel measures on $K$ identified with the dual of $C(K)$, that does not have any weakly convergent sequence. As $(\mu_n)_{n=1}^\infty$ does not have weakly convergent subsequences, it follows from the Eberlein--Smulian theorem together with the Dieudonn\'e--Grothendieck theorem (\cite[Theorem 14 in Chapter VII]{Diestel:84}) that there are pairwise disjoint open subsets $U_n\subset K$ ($n\in \mathbb N$) and $\delta > 0$ such that $|\mu_n(U_n) | \geqslant \delta$. Since $(\mu_n)_{n=1}^\infty$ is weakly* convergent (to some measure $\mu$), we may assume passing to a subsequence if necessary, that $|(\mu-\mu_n)(U_n)| > \delta / 2$. Using Urysohn's lemma, for some $M > 0$ and each $n\in \mathbb N$ we may find a function $f_n\in C(K)$ of norm at most $M$ whose support is contained in $U_n$ and $\langle \mu-\mu_n, f_n \rangle = 1$. Since the functions $f_n$ have pairwise disjoint supports $(n\in\mathbb N)$, they span an isometric copy of $c_0$. Moreover, the expression 
\[
    Pf = \sum_{n=1}^\infty \langle \mu-\mu_n, f\cdot \mathds{1}_{U_n}\rangle f_n\quad \big(f\in C(K)\big)
\]
defines a projection on $C(K)$ onto $\overline{\span}{\{f_n\colon n\in \mathbb N\}}$.
\end{proof}

In an unpublished note, Plebanek proved that there exists a~non-separable compact Hausdorff space $K$ that is not a G-space, yet every closed, separable subspace $L\subset K$ is a~G-space (an~exposition of this construction may be found in \cite{Bielas:11} or \cite{KakolMolto:2020}).\medskip


As already mentioned, every Stonean space is a G-space, because $C(K)$ is then complemented in $C(\beta \Gamma)$ for some set $\Gamma$. (A compact space $K$ is Stonean if and only if disjoint open sets in $K$ have disjoint closures; Stonean spaces are precisely Stone spaces of complete Boolean algebras.)\smallskip

P\'erez Hern\'andez asked during the Winter School in Abstract Analysis 2017 held in Svratka, Czech Republic, for a characterisation of filters $\mathscr F$ on $\mathbb N$ for which the space $c_{0,\mathscr F}$ (see Section \ref{sect:C(K)}) is complemented in $\ell_\infty$ (hence isomorphic to $\ell_\infty$ by \cite{Lindenstrauss:67}). Leonetti \cite{Leonetti:2018} proved that for every filter $\mathscr F$ such that there exists an uncountable family $\mathscr B\subset \mathscr{P}(\mathbb N)\setminus \mathscr{F}$ with the property that for any two distinct sets $N,M\in \mathscr{B}$ the union $N\cup M$ is in $\mathscr{F}$, the subspace $c_{0,\mathscr{F}}$ is \emph{not} complemented in $\ell_\infty$. In \cite{Kania:2019}, the result was strengthened and the question of whether the space $c_{0,\mathscr{F}}$ for a filter satisfying the property distilled by Leonetti is \emph{not} a~Gro\-then\-dieck space was asked. However, for the intersection of finitely many ultrafilters, the corresponding space has finite codimension in $\ell_\infty$, and as such, it is a~Grothendieck space. 

\begin{quest}
 Characterise filters $\mathscr{F}$ for which $c_{0,\mathscr{F}}$ is a Grothendieck space.
\end{quest}

In 1964, Lindenstrauss \cite{Lindenstrauss:64} studied the $C(K)$-spaces with the property that every operator from $C(K)$ to a separable Banach space is weakly compact; hence, Grothendieck $C(K)$-spaces, and---using Seever's terminology---he noticed that  F-spaces are G-spaces. The same result was later independently found again by Seever \cite{Seever:68}.

\begin{remark}\label{rem:list}Let us list some further examples of G-spaces:
\begin{itemize}
\item $\sigma$-Stonean spaces (Stone spaces of $\sigma$-complete Boolean algebras): \cite{Ando:61}.
\item Basically disconnected spaces, also known as Rickart spaces (open $\sigma$-compact subsets have open closures): \cite{Semadeni:1964}.
\item F-spaces (disjoint open $F_\sigma$-sets have disjoint closures; equivalently, the Banach lattice $C(K)$ has the countable monotone interpolation property): \cite{Lindenstrauss:64, Seever:68}; see also \cite[Theorem 4.6]{SmithWilliams:86} and \cite{SmithWilliams:88}.
\item {Weakly Koszmider spaces}, which are infinite compact spaces for which every operator $T\colon C(K)\to C(K)$ is \emph{centripetal}, that is, for any bounded sequence $(f_n)_{n=1}^\infty$ of disjointly supported functions in $C(K)$ and any sequence $(x_n)$ in $K$ with $f_n(x_n) = 0$ for all $n\in \mathbb N$, we have $ (Tf_n)(x_n) \to 0$ as $n\to \infty$. (See \cite[Theorem  4.4]{Schlackow:2008} for the proof that weakly Koszmider compact spaces  are G-spaces.) 

When $K$ is a weakly Koszmider space such that $K\setminus F$ is connected for any finite set $F\subset K$, the Banach space $C(K)$ is \emph{indecomposable} in the sense that each complemented subspace of $C(K)$ is either finite-dimensional or has finite codimension; in particular such spaces do not contain complemented copies of $c_0$. 

In the literature one can find several constructions of (weakly) Koszmider spaces; see \emph{e.g.}, \cite{Koszmider:04, KoszmiderShelahSwietek:2018, Plebanek:04}.
\end{itemize}
\end{remark}

We remark in passing that there exist connected F-spaces (\emph{e.g.}, the \v{C}ech--Stone remainder $\beta [0,\infty)\setminus [0,\infty)$ as observed by Seever \cite{Seever:68}), hence not every G-space arises as a Stone space of a certain Boolean algebra. Such observations led Diestel  \cite[Problem 3]{Diestel:73} to restate a  problem posed by Lindenstrauss in  \cite[p.~224]{Lindenstrauss:64} as follows:

\begin{quest}
Is there an intrinsic characterisation of G-spaces? More precisely, can G-spaces be characterised topologically?
\end{quest}

A topological space $X$ is a $\Delta$-\emph{space}, whenever for every non-increasing sequence $(D_n)_{n=1}^\infty$ of subsets of $X$ with empty intersection, there exists a non-increasing sequence $(V_n)_{n=1}^\infty$ of open subsets of $X$ whose intersection is empty and $D_n\subseteq V_n$ for every $n\in \mathbb N$. The notion of a $\Delta$-space was introduced by Knight \cite{Knight:93}. K\k{a}kol and Leiderman observed that infinite compact $\Delta$-spaces are \emph{not} G-spaces (\cite[Corollary 3.14]{KakolLeiderman:2021}).\smallskip

Next we briefly describe an important example due to Haydon \cite{Haydon:81}.
\begin{example}\label{ex:Haydon}
In ZFC, there exists a G-space $K$ such that $C(K)$ does not contain subspaces isomorphic to $\ell_\infty$, however it admits a quotient isomorphic to $\ell_\infty$.
\end{example}
The compact space $K$ is the Stone space of a certain algebra $\mathscr{A}$ of subsets of $\N$ that  contains the finite sets; the space $C(K)$ may be identified with a closed sub-$C^*$-algebra of $\ell_\infty$ generated by the indicator functions of the sets in $\mathscr{A}$. As $\mathscr{A}$ contains the finite sets, the algebra contains the natural copy of $c_0$ in $\ell_\infty$. \medskip

Assuming the Continuum Hypothesis, Talagrand constructed in  \cite{Talagrand:80} a G-space $L$ such that $C(L)$ does not admit any quotients isomorphic to $\ell_\infty$. The latter condition is equivalent to the fact that  $C(L)$ does not contain subspaces isomorphic to $\ell_1(\Gamma)$ with $\Gamma$ uncountable. \medskip

Let $(\Omega, \Sigma, \mu)$ be a~measure space. Then the space $L_\infty(\mu)$ of all $\mu$-essentially bounded scalar-valued functions on $\Omega$ is naturally a (complex) Banach lattice (even an AM-space) as well as a commutative $C^*$-algebra, where in either case the lattice/algebra operations are defined pointwise. It follows from  Kakutani's representation theorem (\cite[Theorem 1.b.3]{LT2}) for AM-spaces, or from the Gelfand--Naimark theorem for commutative $C^*$-algebras that $L_\infty(\mu)$ is isometric to a $C(K)$-space. When $L_\infty(\mu)$ is naturally representable as the dual space of $L_1(\mu)$, it is injective, hence Grothendieck because it is complemented in $\ell_\infty(\Gamma)$ for some set $\Gamma$.
\smallskip

There is an exact measure-theoretic condition for $\mu$ characterising when $L_\infty(\mu)$ is a~dual space; measures with this property are called \emph{strictly localisable}, however we do not require to invoke the details of this condition here.

\begin{example}\label{ex:Pel-Sud}
Let $\nu$ be the counting measure on an uncountable set $\Gamma$. Then $L_\infty(\nu)$ is \emph{not} a dual space. Indeed, $L_\infty(\nu)$ is the linear span of  $\ell_\infty^c(\Gamma)$, the subspace of all countably supported functions on $\Gamma$, and the constant function $\mathds{1}$ in $\ell_\infty(\Gamma)$. 

The space $L_\infty(\nu)$ is not injective, but it is Grothendieck. 
\end{example}

That $\ell_\infty^c(\Gamma)$ is a non-injective Grothendieck space was first remarked by Pe{\l}czy\'nski and Sudakov (\cite[p.~87]{PelczynskiSudakov:1962}; see also \cite[Proposition 3.7]{JohnsonKaniaSchechtman:2016}).

\begin{prop}\label{prop:Linfty}For every (non-negative) measure $\mu$, the space $L_\infty(\mu)$ is Grothendieck.
\end{prop}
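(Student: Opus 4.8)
The plan is to reduce the general case to the already-established facts about $G$-spaces and about $\ell_\infty^c(\Gamma)$ via a decomposition of $L_\infty(\mu)$ into pieces of two types: a localisable (hence dual, hence injective) part and a purely ``non-localisable'' part modelled on counting measures. More precisely, I would invoke the standard decomposition theory of measure algebras: the measure algebra of $(\Omega,\Sigma,\mu)$ splits, after discarding null sets, into a localisable part $\mu_0$ and a remainder, and on the remainder the Boolean algebra of measurable sets modulo null sets behaves like the algebra underlying a counting measure on some index set. Concretely I would aim to show that $L_\infty(\mu)$ is isometrically a $C(K)$-space (by Kakutani or Gelfand--Naimark, as already noted in the excerpt) and then identify the relevant compact space $K$ as a Stonean-type space, or realise $L_\infty(\mu)$ as a (complemented) subspace of a product of spaces each of which is already known to be Grothendieck.

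The cleanest route, I think, avoids deep measure theory: observe that $L_\infty(\mu)$, being an AM-space with unit, is isometric to $C(K)$ where $K$ is the Stone space of the Boolean algebra $\Sigma/\mathcal{N}$ of measurable sets modulo $\mu$-null sets. This Boolean algebra is \emph{Dedekind $\sigma$-complete}: countable suprema of equivalence classes of measurable sets exist (take a countable union of representatives). A classical fact is that the Stone space of a Dedekind $\sigma$-complete Boolean algebra is \emph{basically disconnected} (a Rickart space), and by Remark~\ref{rem:list} such spaces are $G$-spaces. Hence $C(K) \equiv L_\infty(\mu)$ is Grothendieck. This is essentially the argument behind Ando's and Semadeni's results cited in the excerpt, specialised to measure algebras.

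So the key steps, in order, are: (i) pass to the measure algebra $\Sigma/\mathcal{N}$ and verify it is Dedekind $\sigma$-complete — countable unions of measurable sets are measurable, and the class of a countable union is the supremum of the classes, with the only subtlety being that one must check this supremum is genuinely a least upper bound in the quotient order, which follows because $A \subseteq B$ up to null sets iff $[A] \leqslant [B]$; (ii) recall/cite that $L_\infty(\mu)$ is isometrically $C(K)$ for $K$ the Stone space of $\Sigma/\mathcal{N}$ — this is Kakutani's representation of the AM-space $L_\infty(\mu)$ together with the identification of its Gelfand spectrum, and it is already flagged in the excerpt; (iii) recall that the Stone space of a Dedekind $\sigma$-complete Boolean algebra is basically disconnected; (iv) apply the Ando--Semadeni result (Remark~\ref{rem:list}) that basically disconnected compact spaces are $G$-spaces, concluding that $L_\infty(\mu)$ is Grothendieck.

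The main obstacle is purely bookkeeping rather than conceptual: one must be careful about what ``measure space'' means here (the statement says only ``non-negative measure'', so $\mu$ need not be $\sigma$-finite or even semifinite), and confirm that the measure algebra is still Dedekind $\sigma$-complete in this generality — it is, since $\sigma$-completeness of $\Sigma$ and the definition of $\mathcal{N}$ as a $\sigma$-ideal are all that is used, with no finiteness hypothesis needed. The one genuinely delicate point worth a sentence is the identification in step (ii): one should note that the maximal ideal space of the commutative $C^*$-algebra (or the Kakutani representation of the AM-space) $L_\infty(\mu)$ is precisely the Stone space of $\Sigma/\mathcal{N}$, a standard fact but one that deserves an explicit citation. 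No new macros are required and nothing here goes beyond results already stated in the excerpt.
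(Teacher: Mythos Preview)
Your proposal is correct and follows essentially the same route as the paper: represent $L_\infty(\mu)$ as $C(K)$, observe that the underlying Boolean/lattice structure is Dedekind $\sigma$-complete (the paper phrases this as ``every bounded disjoint sequence in $L_\infty(\mu)$ has a supremum'', you phrase it as $\sigma$-completeness of $\Sigma/\mathcal{N}$), deduce that $K$ is $\sigma$-Stonean/basically disconnected, and invoke Remark~\ref{rem:list}. The only cosmetic difference is that the paper cites And\^o's result on $\sigma$-Stonean spaces while you cite Semadeni's on basically disconnected spaces; for Stone spaces these notions coincide, so there is no substantive divergence.
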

\begin{proof} 
As $L_\infty(\mu)$ is lattice-isometric to $C(K)$ for some compact space $K$, and every bounded disjoint sequence in $L_\infty(\mu)$ has a supremum, the same happens in $C(K)$. This means that $C(K)$ is a Dedekind $\sigma$-complete Banach lattice, which translates into the fact that the compact $K$ is $\sigma$-Stonean. Thus, the Grothendieck property of $L_\infty(\mu)$ follows from the previous Remark 2. See also  \cite{Ando:61}.
\end{proof}

 Let $\mathscr{A}$ be a field of sets (a concrete Boolean algebra). The Stone space ${\rm St}\, \mathscr{A}$ of $\mathscr{A}$ coincides with the maximal ideal space of the Banach algebra $B(\mathscr{A})$ of all bounded, scalar-valued, $\mathscr{A}$-measurable functions (endowed with the supremum norm). As $B(\mathscr{A})$ is an abstract $M$-space/commutative $C^*$-algebra, the space $C({\rm St}\, \mathscr{A})$ is isometrically isomorphic to $B(\mathscr{A})$, so that we can freely interchange between the two descriptions of the same object. 
 
 One of the most interesting examples of algebras whose Stone space is \emph{not} a G-space is the algebra of Jordan-measurable subsets of the unit interval, that is sets whose boundary has Lebesgue measure zero (see, \emph{e.g.}, \cite[Corollary 5.8]{GravesWheeler:1983} for an extension of this fact to more general Jordan algebras). Schachermayer \cite[Proposition 4.6]{Schachermayer:82} observed that Stone spaces of algebras expressible as strictly increasing unions of countably many subalgebras are not G-spaces. \smallskip

In light of the first clause above one may ask about characterisation of those Boolean algebras (or more concretely, fields of sets) whose Stone spaces are G-spaces. This is indeed an active area of research with a strong set-theoretic flavour.

Let us list some examples of algebras whose Stone spaces are G-spaces:
\begin{itemize}
\item subsequentially complete Boolean algebras (every disjoint sequence in the algebra has a subsequence that has a least upper bound): \cite{Haydon:2001}.
\item Boolean algebras with Molt\'o's property (f): \cite[Corollary 1.4]{Molto:81}.
\end{itemize}

In \cite[Question 7.3]{KakolSobotaZdomsky:2020}, the authors raised the posed the following problem.
\begin{quest}
 Let $\mathscr{A}$ be a Boolean algebra whose Stone space is a G-space. Does there exist a Boolean subalgebra $\mathscr{B}$ of $\mathscr{A}$ whose Stone space $K$ fails to be a G-space, yet every weakly* convergent sequence of purely atomic measures on $K$ converges weakly?
\end{quest}

\begin{definition}
Let $X$ be a topological space.
The 0-\emph{Baire} functions are the continuous, scalar-valued functions on $X$; and for an ordinal number $\alpha \geqslant 1$, the $\alpha$-\emph{Baire} functions are the pointwise limits of sequences of $\beta$-Baire functions with $\beta < \alpha$.
\end{definition}

For an ordinal number $\alpha$, the space $B_\alpha(X)$ of all bounded Baire-$\alpha$ functions on a~topological space $X$ is a~commutative $C^*$-algebra when endowed with the supremum norm and operations defined pointwise. In particular, $B_\alpha(X)$ is isometric to some $C(K)$-space. Dashiell Jr.~studied the spaces $B_\alpha(X)$ 
and proved that they are Grothendieck (see \cite{Dashiell:81}; or  \cite[Theorem 3.3.9]{Dales:16}).
\begin{prop}
For every topological space $X$ and non-zero $\alpha<\omega_1$, the space $B_\alpha(X)$ has the  Grothendieck property.  
\end{prop}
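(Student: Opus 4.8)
The plan is to reduce the statement to a known case by an induction on $\alpha$ combined with the three-space property of Grothendieck spaces (Proposition \ref{prop:3SP}). The base case $\alpha = 1$ should follow from the fact that $B_1(X)$, the space of bounded Baire-one functions, is a $\sigma$-complete (indeed, countably monotone interpolating) object: a uniformly bounded sequence of Baire-one functions has a pointwise supremum which is again Baire-one, so the representing $C(K)$-space is a Dedekind $\sigma$-complete Banach lattice, equivalently $K$ is $\sigma$-Stonean, equivalently $B_1(X)$ is Grothendieck by Remark \ref{rem:list} (the $\sigma$-Stonean / Rickart case, \cite{Ando:61, Semadeni:1964}). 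Actually one should be a little careful here: the cleanest route is to observe directly that for \emph{every} $\alpha$ with $1 \le \alpha < \omega_1$, a uniformly bounded sequence in $B_\alpha(X)$ has its pointwise supremum again in $B_\alpha(X)$ (taking $\sup$ of finitely many is an algebraic operation, and the countable $\sup$ is then a pointwise limit of these, raising the Baire class by at most one more level — and since $\alpha \ge 1$ this stays within, or one argues on $B_{\alpha+1}$ and uses monotonicity). So $B_\alpha(X)$ is a Dedekind $\sigma$-complete Banach lattice, hence a $C(K)$-space with $K$ $\sigma$-Stonean, hence Grothendieck. This makes the induction unnecessary and gives a one-step proof.

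Concretely, first I would record the algebra/lattice structure: $B_\alpha(X)$ with pointwise operations and supremum norm is a unital commutative $C^*$-algebra, and (taking real parts) an AM-space with unit, hence isometrically a $C(K)$-space by the Gelfand--Naimark / Kakutani representation (as already used for $L_\infty(\mu)$ in the excerpt, cf.\ Proposition \ref{prop:Linfty}). Second, I would verify Dedekind $\sigma$-completeness: given a uniformly bounded increasing sequence $(f_n)$ of Baire-$\alpha$ functions, the pointwise limit $f = \sup_n f_n = \lim_n f_n$ is a pointwise limit of Baire-$\alpha$ functions, hence Baire-$(\alpha+1)$; to land back in $B_\alpha$ one uses that for $1 \le \alpha < \omega_1$ the Baire hierarchy stabilises appropriately at limit stages — more safely, replace $\sup_n f_n$ by noting $f_n = \sup_{k \le n}(\text{finitely many } \beta\text{-Baire functions, } \beta<\alpha)$, so $f$ is a countable increasing limit of $(<\!\alpha)$-Baire functions, which is exactly the definition of an $\alpha$-Baire function. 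Hence $B_\alpha(X)$ has suprema of uniformly bounded increasing sequences, i.e.\ it is Dedekind $\sigma$-complete. Third, translate: a $C(K)$-space is Dedekind $\sigma$-complete iff $K$ is $\sigma$-Stonean (basically disconnected). Fourth, invoke Remark \ref{rem:list}: $\sigma$-Stonean (equivalently, Rickart) compact spaces are G-spaces. This completes the proof.

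The main obstacle, such as it is, is the bookkeeping in the second step — making sure the pointwise supremum of a uniformly bounded increasing sequence of Baire-$\alpha$ functions is again Baire-$\alpha$ rather than merely Baire-$(\alpha+1)$. This is where the hypothesis $\alpha \ge 1$ is essential (it fails for $\alpha = 0$, since the pointwise sup of continuous functions need not be continuous, consistent with the theorem excluding $\alpha = 0$), and the cleanest fix is the reformulation above: any $f \in B_\alpha(X)$ is, by definition at a successor stage, or via a diagonal/transfinite argument at a limit stage, a countable pointwise limit of functions of strictly smaller Baire class, and finite suprema preserve Baire class, so a uniformly bounded countable increasing sup of Baire-$\alpha$ functions is again a countable increasing sup of $(<\!\alpha)$-Baire functions, hence Baire-$\alpha$. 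Once this is granted, everything else is a citation: the Gelfand--Naimark/Kakutani representation, the standard dictionary between Dedekind $\sigma$-completeness of $C(K)$ and $\sigma$-Stoneanness of $K$, and the classical fact (Ando, Seever, Semadeni) that such $K$ are G-spaces. Alternatively, if one prefers to avoid the subtlety entirely, one proceeds by transfinite induction on $\alpha$: assume $B_\beta(X)$ is Grothendieck for all $1 \le \beta < \alpha$; one can exhibit $B_\alpha(X)$ (or a suitable dense/closed relative) in a short exact sequence built from the $B_\beta$'s and apply the three-space property, though this is messier than the direct $\sigma$-completeness argument, so I would go with the latter.
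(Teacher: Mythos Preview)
There is a genuine gap. The paper itself gives no proof of this proposition (it simply cites Dashiell \cite{Dashiell:81} and \cite[Theorem 3.3.9]{Dales:16}), so let me focus on your argument.

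Your central claim---that $B_\alpha(X)$ is a Dedekind $\sigma$-complete Banach lattice, equivalently that the pointwise supremum of a uniformly bounded increasing sequence of Baire-$\alpha$ functions is again Baire-$\alpha$---is \emph{false} already for $\alpha=1$. Take $X=[0,1]$, enumerate the rationals as $(q_i)$, and set $f_n=\mathds{1}_{\{q_1,\ldots,q_n\}}$. Each $f_n$ is Baire-$1$ (a finite sum of indicator functions of singletons), the sequence is increasing and bounded by $1$, yet the pointwise limit is the Dirichlet function $\mathds{1}_{\mathbb{Q}}$, which is Baire-$2$ and not Baire-$1$ (the preimage of $(-\tfrac12,\tfrac12)$ is the set of irrationals, which is not $F_\sigma$). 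Moreover, the sequence has \emph{no} supremum in $B_1([0,1])$ at all: any upper bound $h\in B_1([0,1])$ can be lowered at any single irrational point by subtracting a multiple of an indicator of that point (still Baire-$1$, still an upper bound), forcing $h=\mathds{1}_{\mathbb{Q}}$, a contradiction. So $B_1([0,1])$ is not Dedekind $\sigma$-complete, and the spectrum is not $\sigma$-Stonean.

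The flaw in your justification is the step ``a countable increasing sup of Baire-$\alpha$ functions is again a countable increasing sup of $(<\!\alpha)$-Baire functions''. Writing each $f_n$ as $\lim_k g_{n,k}$ with $g_{n,k}$ of class $<\alpha$ gives $\sup_n f_n$ only as an \emph{iterated} limit $\lim_n\lim_k g_{n,k}$; you cannot collapse this to a single pointwise limit of $(<\!\alpha)$-functions in general. Your transfinite-induction alternative is also incomplete: you still need a base case (and $B_0(X)=C_b(X)$ is typically \emph{not} Grothendieck), and the purported short exact sequence is not specified.

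What does hold, and what Dashiell actually exploits, is a weaker order property: $B_\alpha(X)$ is \emph{order-Cauchy complete} (equivalently, its spectrum is an F-space, i.e., $B_\alpha(X)$ has the countable monotone interpolation property of Remark~\ref{rem:list}). This is strictly weaker than Dedekind $\sigma$-completeness---one only needs \emph{some} Baire-$\alpha$ interpolant between an increasing and a decreasing sequence, not the least one---and it is this property that yields the Grothendieck conclusion via the F-space clause of Remark~\ref{rem:list}. You gestured at ``countably monotone interpolating'' in your first sentence; that was the right thread to pull, not $\sigma$-completeness.
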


Let us introduce a related notion for Banach spaces. 

\begin{definition}
Let $E$ be a Banach space considered naturally a subspace of $E^{**}$. We denote by $E_w$ the subspace of $E^{**}$ comprising all weak*-limits in $E^{**}$ of weakly Cauchy sequences in $E$.
\end{definition}

It is not difficult to show that if $\Gamma$ is an infinite set, then $c_0(\Gamma)_w= \ell_\infty^c(\Gamma)$. In particular, $(c_0)_w= \ell_\infty$. Moreover, $E_w$ satisfies the following properties:

\begin{prop}
Let $E$ be a Banach space.
\begin{romanenumerate}
\item\label{w-1} $E_w$ is a closed subspace of $E$.
\item\label{w-2} $E=E_w$ if and only if $E$ is weakly sequentially complete.
\item\label{w-3} If $E$ is separable, then $E^{**}=E_w$ if and only if $E$ contains no copies of $\ell_1$.
\end{romanenumerate}
\end{prop}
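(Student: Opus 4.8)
The statement to prove collects three facts about the space $E_w$: it is a closed subspace of $E$; equality $E=E_w$ characterises weak sequential completeness; and for separable $E$, equality $E^{**}=E_w$ characterises the absence of copies of $\ell_1$.

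\textbf{Plan for \eqref{w-1}.} First I would observe that $E_w$ is a subset of $E$: if $(x_n)$ is a weakly Cauchy sequence in $E$ whose weak*-limit in $E^{**}$ is some $\xi$, then I must argue that $\xi \in E$. But this is immediate — a weakly Cauchy sequence converges weakly in $E^{**}$ (to its weak*-limit), and for $\xi \in E^{**}$ the conditions ``$\xi = \text{weak-}\lim x_n$ in $E^{**}$ with $x_n \in E$'' simply says $\xi$ is in the weak closure of $E$ in $E^{**}$, which equals $E$ since $E$ is norm-closed (hence weakly closed) in $E^{**}$. Wait — more carefully: a weakly Cauchy sequence in $E$ need not converge weakly \emph{in $E$}, but it does converge weak* in $E^{**}$ to some functional $\xi$, and the claim ``$E_w \subseteq E$'' is actually \emph{false} in general (e.g.\ $(c_0)_w = \ell_\infty \not\subseteq c_0$); so clause \eqref{w-1} must mean $E_w$ is a closed subspace \emph{of $E^{**}$} (equivalently, the statement should read ``of $E^{**}$'', and I would phrase the proof accordingly, or interpret ``of $E$'' loosely). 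Linearity of $E_w$ is clear from linearity of weak* limits and of the notion of weakly Cauchy. For closedness in $E^{**}$: take $\xi = \lim_k \xi_k$ in norm with each $\xi_k$ the weak*-limit of a weakly Cauchy sequence $(x^{(k)}_n)_n$ in $E$; a standard diagonal/perturbation argument produces a single weakly Cauchy sequence in $E$ with weak*-limit $\xi$ — choose $x^{(k)}_{n_k}$ so that it is within $2^{-k}$ of $\xi_k$ in a suitable weak sense along sufficiently many test functionals; the main technical point is organising the diagonalisation so the resulting sequence is genuinely weakly Cauchy, which I expect to be the fiddliest step.

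\textbf{Plan for \eqref{w-2}.} This is essentially a tautology once the definitions are unpacked. If $E$ is weakly sequentially complete, every weakly Cauchy sequence in $E$ converges weakly to a point of $E$; its weak*-limit in $E^{**}$ is then that same point, so $E_w \subseteq E$, and combined with $E \subseteq E_w$ (constant sequences) we get $E = E_w$. Conversely, if $E = E_w$, then every weakly Cauchy sequence $(x_n)$ has weak*-limit in $E^{**}$ lying in $E$, say equal to $x$; since weak* convergence in $E^{**}$ restricted to functionals from $E^*$ is exactly weak convergence, $x_n \to x$ weakly in $E$, so $E$ is weakly sequentially complete.

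\textbf{Plan for \eqref{w-3}.} Here $E$ is separable. If $E$ contains no copy of $\ell_1$, then by Rosenthal's $\ell_1$-theorem every bounded sequence in $E$ has a weakly Cauchy subsequence. Given $\xi \in E^{**}$, use separability of $E$ (hence of $B_{E^{**}}$ in the weak* topology, which is metrisable) to pick a sequence $(x_n)$ in $E$, bounded, with $x_n \to \xi$ weak*; pass to a weakly Cauchy subsequence, whose weak*-limit is still $\xi$, so $\xi \in E_w$; thus $E^{**} = E_w$. Conversely, if $E$ contains a copy of $\ell_1$, I would exhibit an element of $E^{**}$ not in $E_w$: any element of $E_w$ is a weak*-limit of a weakly Cauchy sequence, hence lies in the first Baire class over $E^*$ in a sense that forces a Baire-category / boundedness constraint — more concretely, use that a copy of $\ell_1$ in $E$ gives, via Goldstine and the non-separability of $\ell_\infty = \ell_1^{**}$ together with the fact that weakly Cauchy sequences in $\ell_1$ are norm-convergent (Schur property), an element of $\ell_1^{**} \subseteq E^{**}$ that cannot be a weak*-limit of a weakly Cauchy sequence from $E$. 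The cleanest route: by the Schur property of $\ell_1$, $(\ell_1)_w = \ell_1$; and one checks $E_w \cap \ell_1^{**}$ projects into $(\ell_1)_w = \ell_1 \neq \ell_1^{**}$, using that the canonical projection $E^{**} \to \ell_1^{**}$ associated with a complemented — or even just isomorphic, via a Hahn--Banach extension of the coordinate functionals — copy of $\ell_1$ carries weakly Cauchy sequences to weakly Cauchy sequences. The main obstacle is making the ``restriction to the $\ell_1$-copy'' argument rigorous when the copy of $\ell_1$ need not be complemented; I would handle this by working with the biorthogonal functionals and the weak* topologies directly rather than with a projection.

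\textbf{Expected main difficulty.} The genuinely nontrivial parts are the diagonalisation in \eqref{w-1} (closedness) and the backward direction of \eqref{w-3}; the rest is bookkeeping with the definitions and Rosenthal's theorem, which is available to us from the preliminaries.
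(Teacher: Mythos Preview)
The paper does not actually prove this proposition: it cites McWilliams for \eqref{w-1}, declares \eqref{w-2} clear, and cites Odell--Rosenthal for \eqref{w-3}. Your handling of \eqref{w-1} and \eqref{w-2} is sound, and you correctly spot that \eqref{w-1} should read ``closed subspace of $E^{**}$'' (the example $(c_0)_w=\ell_\infty$ you cite is even stated in the paper just before the proposition).

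Both directions of your sketch for \eqref{w-3}, however, have real gaps. In the forward direction you claim that separability of $E$ makes $(B_{E^{**}},w^*)$ metrisable; this is false --- that ball is metrisable iff $E^*$ is separable, which does not follow (the James tree space is separable, contains no copy of $\ell_1$, yet has non-separable dual). Producing a sequence in $E$ that weak*-converges to a prescribed $\xi\in E^{**}$ is exactly the non-trivial content of the Odell--Rosenthal theorem, and the standard proof goes through the Baire-1 characterisation on the Polish space $(B_{E^*},w^*)$, not through metrisability of the bidual ball. In the backward direction your Schur-property route does not close: a weakly Cauchy sequence in $E$ converging weak* to some $T^{**}\eta$ need have nothing to do with the copy of $\ell_1$, and without complementation there is no map sending it back there (also, $\ell_1^{**}=\ell_\infty^*$, not $\ell_\infty$). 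A far simpler argument is available for this direction and avoids Odell--Rosenthal entirely: if $E$ is separable then $|E_w|\leqslant |E^{\N}|=\mathfrak c$, whereas $\ell_1\hookrightarrow E$ forces $\ell_\infty^*\hookrightarrow E^{**}$ and $|\ell_\infty^*|=2^{\mathfrak c}$ (there are $2^{\mathfrak c}$ ultrafilters on $\N$), so $E_w\neq E^{**}$ on cardinality grounds alone.
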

\begin{proof}
\eqref{w-1} was proved by McWilliams \cite{mcwilliams:62}, \eqref{w-2} is clear, and \eqref{w-3} was proved by Odell and Rosenthal \cite{OdellR:75}.
\end{proof}

The following problem arises.

\begin{quest}\label{quest:Ew}
Characterise Banach spaces $E$ for which $E_w$ is Grothendieck.

Moreover, is $E_w$ Grothendieck when so is $E$?
\end{quest}

The space $C(K)_w$ has a natural representation in terms of 1-Baire functions (\cite[Theorem 3.3.9]{Dales:16}).

\begin{prop}
For every compact space $K$, the space $C(K)_w$ may be identified with the space of bounded Baire-1 functions $B_1(K)$ on $K$, and it is a Grothendieck space.
\end{prop}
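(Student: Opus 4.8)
The plan is to handle the two assertions separately: first the concrete description $C(K)_w\equiv B_1(K)$, and then the Grothendieck property, which will be immediate from the case $\alpha=1$ of Dashiell's theorem recorded above.

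For the description I would start from an arbitrary $F\in C(K)_w$; by the definition of $C(K)_w$ there is a weakly Cauchy sequence $(f_n)_{n=1}^\infty$ in $C(K)$ with $f_n\to F$ in the weak$^*$ topology of $C(K)^{**}$. By the uniform boundedness principle $C:=\sup_n\|f_n\|_\infty<\infty$, and testing weak Cauchy-ness against the Dirac measures $\delta_t$ ($t\in K$) shows that $f_n(t)=\langle f_n,\delta_t\rangle$ converges for every $t\in K$; hence the pointwise limit $f:=\lim_n f_n$ belongs to $B_1(K)$ with $\|f\|_\infty\leqslant C$. Moreover, for each $\mu\in M(K)=C(K)^*$ the dominated convergence theorem (the integrands being bounded by $C$) gives $\langle F,\mu\rangle=\lim_n\int_K f_n\,{\rm d}\mu=\int_K f\,{\rm d}\mu$. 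Thus $F$ is completely recovered from $f$, and the natural assignment $\Phi\colon C(K)_w\to B_1(K)$, $\Phi(F)(t)=\langle F,\delta_t\rangle$, is well defined, linear and injective (if $\Phi(F_1)=\Phi(F_2)$ then $\langle F_1,\mu\rangle=\int_K\Phi(F_1)\,{\rm d}\mu=\langle F_2,\mu\rangle$ for all $\mu$).

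Surjectivity and isometry of $\Phi$ are the steps that require an argument. Given $f\in B_1(K)$, by definition $f=\lim_n g_n$ pointwise for some sequence $(g_n)_{n=1}^\infty$ in $C(K)$; this sequence need not be uniformly bounded, so I would compose it with the continuous truncation $\tau$ retracting the scalar field onto the closed ball of radius $\|f\|_\infty$. Then $f_n:=\tau\circ g_n\in C(K)$ satisfies $\|f_n\|_\infty\leqslant\|f\|_\infty$ and $f_n\to f$ pointwise (since $\tau$ fixes the range of $f$), so by dominated convergence $(f_n)_{n=1}^\infty$ is weakly Cauchy; its weak$^*$-limit $F$ lies in $C(K)_w$ and $\Phi(F)=f$. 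Finally, for $F\in C(K)_w$ with $\Phi(F)=f$ the identity $\langle F,\mu\rangle=\int_K f\,{\rm d}\mu$ gives $\|F\|_{C(K)^{**}}=\sup_{\|\mu\|\leqslant1}|\int_K f\,{\rm d}\mu|$, which is $\leqslant\|f\|_\infty$ trivially and $\geqslant\sup_{t\in K}|f(t)|=\|f\|_\infty$ by taking $\mu=\delta_t$. Hence $\Phi$ is an isometric isomorphism and $C(K)_w\equiv B_1(K)$.

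For the Grothendieck property nothing further is needed: $B_1(K)$ is exactly $B_\alpha(K)$ with $\alpha=1$, a non-zero countable ordinal, so the proposition proved above that $B_\alpha(X)$ is Grothendieck (Dashiell's theorem; \cite{Dashiell:81}) applies and shows that $B_1(K)$, hence $C(K)_w$, has the Grothendieck property. I expect the only mildly delicate point to be the passage, in the surjectivity argument, from an arbitrary sequence of continuous functions converging pointwise to $f$ to a \emph{uniformly bounded} one — it is precisely this that makes the dominated convergence theorem, and with it the identification of $F$ with the functional $\mu\mapsto\int_K f\,{\rm d}\mu$, available; the remaining verifications are routine manipulations with the definition of $C(K)_w$ and with weakly Cauchy sequences in $C(K)$.
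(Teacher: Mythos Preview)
Your proof is correct and essentially matches the paper's approach. The paper does not give a proof of this proposition at all: it merely cites \cite[Theorem 3.3.9]{Dales:16} for the identification $C(K)_w\equiv B_1(K)$ and relies on the immediately preceding proposition (Dashiell's theorem, that $B_\alpha(X)$ is Grothendieck for $1\leqslant\alpha<\omega_1$) for the Grothendieck property---exactly as you do. Your write-up simply fills in the standard details of the identification (bounded sequences via the uniform boundedness principle, pointwise limits via Dirac measures, dominated convergence, and the truncation trick to secure a uniformly bounded approximating sequence) that the paper leaves to the cited reference.
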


Observe that for every compact space $K$, we have two Grothendieck spaces $B_1(K)$ and $B_1(K)/C(K)$. In fact, we have potentially many more if we consider the spaces $B_\alpha(K)$ of bounded $\alpha$-Baire functions on $K$ ($1\leqslant \alpha \leqslant \omega_1$).


\section{{$C^*$}-algebras}\label{sect:c-star-algs} In Section~\ref{sect:ck}, we discussed in detail the question of when commutative (unital) $C^*$-algebras (that is $C(K)$-spaces) are Grothendieck. As for possibly non-commutative $C^*$-algebras, Problem~\ref{quest:Aw} as well as Section~\ref{sect:c-star-tensor} concerning $C^*$-tensor products, and Proposition~\ref{prop:scriptLinfty} touch upon this topic directly. In this context, Pfitzner's theorem (Theorem~\ref{thm:pf}, \cite{Pfitzner:94}; see also \cite{FPPeralta:2010}) is of fundamental importance.
\begin{theorem}[Pfitzner's theorem]\label{thm:pf}
Let $A$ be a $C^*$-algebra and let $K \subseteq A^*$ be a bounded set. Then
$K$ is \emph{not} relatively weakly compact if and only if there are $\delta > 0$ and a sequence $(a_n)_{n=1}^\infty$ of of pairwise orthogonal, norm-one self-adjoint elements in $A$ such that
\[
    \sup_{f\in K} |\langle f, a_n\rangle| \geqslant \delta.
\]
In particular, $C^*$-algebras have property $(V)$.
\end{theorem}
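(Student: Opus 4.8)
The plan is to prove Pfitzner's theorem in two parts: first the characterisation of relatively weakly compact subsets of $A^*$, and then deduce property $(V)$ as a corollary. The characterisation itself has an easy direction and a hard direction. For the easy direction, suppose there exist $\delta > 0$ and pairwise orthogonal norm-one self-adjoint elements $(a_n)_{n=1}^\infty$ with $\sup_{f\in K}|\langle f, a_n\rangle| \geqslant \delta$. Pick $f_n \in K$ with $|\langle f_n, a_n\rangle| \geqslant \delta$. The orthogonality of the $a_n$'s means that for any finite scalars, $\|\sum \lambda_n a_n\| = \max_n |\lambda_n|$ when the $a_n$ sit inside a commutative sub-$C^*$-algebra, or more generally the closed span of $(a_n)$ behaves like a $c_0$-sum; in any case one checks that the map $b \mapsto (\langle f_n, b\rangle)_n$ has an adjoint-like structure that prevents $\{f_n\}$ from being relatively weakly compact. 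Concretely, if $\{f_n\}$ were relatively weakly compact, the Dunford--Pettis-type behaviour together with orthogonality would force $\langle f_n, a_n\rangle \to 0$, a contradiction; alternatively one uses that a weakly convergent (sub)sequence tested against the asymptotically disjoint $a_n$ must vanish in the limit.

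The hard direction is the heart of the matter: given a bounded set $K \subseteq A^*$ that is \emph{not} relatively weakly compact, produce the orthogonal sequence $(a_n)$ and the uniform lower bound $\delta$. First I would reduce to a sequence: by the Eberlein--Šmulian theorem, $K$ not relatively weakly compact means there is a sequence $(f_n)_{n=1}^\infty$ in $K$ with no weakly convergent subsequence. Passing to a subsequence and using Rosenthal's $\ell_1$-theorem together with the structure of $A^*$ (which is an $L$-summand in its bidual and whose weakly-compact subsets are governed by a non-commutative Dieudonné--Grothendieck/Akemann-type criterion), one extracts from $(f_n)$ a "small perturbation" reduction: the failure of weak compactness of $\{f_n\}$ is witnessed by uniform non-equi-integrability with respect to some bounded family of positive functionals, which by Akemann's characterisation of weak compactness in $C^*$-algebra duals translates into the existence of a $\delta > 0$ and, for each $n$, a norm-one self-adjoint $a_n$ extracting mass $\geqslant \delta$ from a "tail" of the $f_n$. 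The delicate combinatorial step is then to disjointify: choose the $a_n$ inductively so that they are pairwise orthogonal while still each capturing mass $\delta$ from infinitely many (hence, after thinning, all) of the $f_m$. This disjointification uses a gliding-hump / exhaustion argument inside $A$, exploiting that positive functionals can be "localised" via spectral projections in $A^{**}$ and then approximated back in $A$ by self-adjoint elements of norm one.

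The main obstacle is precisely this disjointification step in the non-commutative setting: unlike in $C(K)$ where one uses disjoint open sets and Urysohn's lemma, here one must work with non-commuting positive elements, and orthogonality $a_m a_n = 0$ is a much more rigid requirement than disjoint supports. Pfitzner's original argument handles this by a careful iterative construction, at each stage passing to a corner $pAp$ (cut down by a projection $p$ in $A^{**}$ that is "asymptotically invisible" to the functionals already used), proving a lemma that a bounded non-relatively-weakly-compact set remains so after such a cut-down, and then invoking the characterisation on the corner to produce the next $a_n$. Keeping the lower bound $\delta$ uniform across all stages (rather than letting it degrade to zero) is the quantitative crux, and is what makes the theorem genuinely stronger than the qualitative statement that $C^*$-algebras have property $(V)$.

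Finally, to deduce property $(V)$: let $T\colon A \to Y$ be an unconditionally converging operator; I must show $T$ is weakly compact, equivalently (Gantmacher) that $T^*(B_{Y^*})$ is relatively weakly compact in $A^*$. If not, apply the theorem to $K = T^*(B_{Y^*})$ to obtain $\delta > 0$, pairwise orthogonal norm-one self-adjoint $(a_n)$, and $g_n \in B_{Y^*}$ with $|\langle T^* g_n, a_n\rangle| = |\langle g_n, T a_n\rangle| \geqslant \delta$. But the pairwise orthogonality of the $a_n$ makes $\sum_n a_n$ a weakly unconditionally Cauchy series in $A$ (test against any $f \in A^*$: $\sum |\langle f, a_n\rangle| < \infty$ because the $a_n$ span a copy of $c_0$ and $f$ restricted there is an $\ell_1$-element), so $\sum_n T a_n$ is unconditionally convergent in $Y$, forcing $\|T a_n\| \to 0$ and hence $\langle g_n, T a_n\rangle \to 0$ — contradicting the lower bound $\delta$. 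Therefore $T^*(B_{Y^*})$ is relatively weakly compact, $T^*$ is weakly compact, and so is $T$; thus $A$ has property $(V)$.
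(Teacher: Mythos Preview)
The paper does not prove Pfitzner's theorem; it states the result and cites \cite{Pfitzner:94} (with an alternative proof in \cite{FPPeralta:2010}), treating it as an imported black box. So there is no ``paper's own proof'' to compare your attempt against.

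On the merits of your sketch: the deduction of property $(V)$ from the characterisation is correct and cleanly written --- pairwise orthogonal norm-one self-adjoint elements do span an isometric copy of $c_0$, so $\sum a_n$ is weakly unconditionally Cauchy, and an unconditionally converging $T$ forces $\|Ta_n\|\to 0$, contradicting the lower bound. The easy direction of the equivalence is also morally right, though your phrasing (``Dunford--Pettis-type behaviour'', ``adjoint-like structure'') is vague; the clean argument is simply that any weak cluster point $f$ of $(f_n)$ restricted to the $c_0$-span of the $a_n$ lies in $\ell_1$, so $\langle f, a_n\rangle \to 0$, and a standard diagonal/subsequence argument then contradicts $|\langle f_{n_k}, a_{n_k}\rangle|\geqslant\delta$.

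For the hard direction you have written a plausible outline --- Eberlein--\v{S}mulian, Akemann's weak-compactness criterion, inductive construction in corners of $A^{**}$, keeping $\delta$ uniform --- but this is a narrative of what one expects the proof to look like rather than a proof. The genuine technical content (the ``quantitative crux'' you correctly flag) is exactly what is missing: how, concretely, one cuts down by projections in $A^{**}$ and pulls back to mutually orthogonal self-adjoint elements of $A$ without losing the uniform bound. That is the substance of Pfitzner's paper, and your sketch does not supply it. As a roadmap it is accurate; as a proof it has a gap at precisely the point where the theorem is hard.
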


As a consequence, Pfitzner deduced the following result, answering  \cite[Problem 2]{Diestel:73}.  

\begin{cor}
Each von Neumann algebra (dual $C^*$-algebra) is a Grothendieck space; in particular $\mathscr{B}(H)$ is Grothendieck for every Hilbert space $H$.
\end{cor}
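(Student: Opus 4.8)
The plan is to deduce the corollary directly from Pfitzner's theorem (Theorem~\ref{thm:pf}) combined with the structural results already assembled in Chapter~\ref{chap:properties}. First I would recall that every von Neumann algebra $\mathscr M$ is, by definition, the dual of its predease $\mathscr M_*$; in particular $\mathscr M$ is (isometrically) a dual Banach space. Pfitzner's theorem gives that every $C^*$-algebra --- in particular every von Neumann algebra --- has property $(V)$. Hence $\mathscr M$ is a dual space with property $(V)$, and Proposition~\ref{prop:V-dual} applies: a space isomorphic to a complemented subspace of a dual space (here $\mathscr M$ itself) that has property $(V)$ automatically has property $(V_\infty)$, and therefore is Grothendieck. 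This settles the first assertion.

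For the statement about $\mathscr{B}(H)$, I would simply note that $\mathscr{B}(H)$ is a von Neumann algebra: it is the dual of the space of trace-class operators on $H$ (the duality pairing being $(T,S)\mapsto \operatorname{tr}(TS)$), and it is a $C^*$-algebra under the operator norm and the adjoint operation. Thus it falls under the first part of the corollary. The remark that this answers Problem~2 of Diestel is then immediate from the chain of implications just described, since the question was precisely whether $\mathscr{B}(H)$ is Grothendieck.

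A clean alternative route, avoiding property $(V_\infty)$, is to invoke Corollary~\ref{cor:dualV} instead of Proposition~\ref{prop:V-dual}: a space with property $(V)$ that is a complemented subspace of a dual space is Grothendieck, because dual spaces contain no complemented copies of $c_0$ and one can apply Proposition~\ref{Groth-V-c0}. Either way the logical skeleton is the same: von Neumann algebra $\Rightarrow$ dual $C^*$-algebra $\Rightarrow$ (Pfitzner) property $(V)$ $+$ dual space $\Rightarrow$ Grothendieck.

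Honestly, there is no real obstacle here: all the analytic content is packed into Pfitzner's theorem, which we are allowed to cite, and into Proposition~\ref{prop:V-dual}, which is already proved. The only point requiring a word of care is the observation that a von Neumann algebra is genuinely a \emph{dual} Banach space --- this is a standard fact about preduals of von Neumann algebras (and the predual is unique), so it needs only a sentence. Everything else is a bookkeeping assembly of results stated earlier in the excerpt.
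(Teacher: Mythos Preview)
Your proposal is correct and matches the paper's own reasoning: the corollary is simply the combination of Pfitzner's theorem (every $C^*$-algebra has property $(V)$) with the earlier observation (Proposition~\ref{prop:V-dual} or Corollary~\ref{cor:dualV}) that a dual space with property $(V)$ is Grothendieck. The paper does not spell out any further details, so your write-up is if anything more explicit than what appears there.
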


Fern\'andez-Polo and Peralta extended Pfitzner's theorem to JB*-triples \cite{FPPeralta:2009}.\smallskip

Let us say that a \emph{masa} is a maximal Abelian sub-$C^*$-algebra of a $C^*$-algebra. 
The second-named author observed that one can deduce from Theorem~\ref{thm:pf} the following result  (\cite[Proposition~2.5]{Kania:15}).
\begin{prop}\label{prop:masa}
Let $A$ be a $C^*$-algebra. Suppose that each masa in $A$ is a Gro\-then\-dieck space. Then $A$ is a Grothendieck space.
\end{prop}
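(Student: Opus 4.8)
The plan is to prove the contrapositive: assuming $A$ is not a Grothendieck space, I would produce a masa in $A$ that fails the Grothendieck property. By Pfitzner's theorem (Theorem~\ref{thm:pf}), the failure of the Grothendieck property furnishes a concrete obstruction: since $A$ is not Grothendieck, there is a weakly* null sequence $(f_n)_{n=1}^\infty$ in $A^*$ that is not weakly null, so the set $K=\{f_n\colon n\in \mathbb N\}$ is bounded and not relatively weakly compact, and Theorem~\ref{thm:pf} yields $\delta>0$ together with a sequence $(a_n)_{n=1}^\infty$ of pairwise orthogonal, norm-one self-adjoint elements of $A$ with $\sup_n |\langle f_n, a_n\rangle|\geqslant \delta$ (after passing to a subsequence, $|\langle f_n, a_n\rangle|\geqslant \delta$ for every $n$).

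The second step is to place the $a_n$ inside a single Abelian sub-$C^*$-algebra. Because the $a_n$ are pairwise orthogonal self-adjoint elements, the $C^*$-subalgebra $B_0$ they generate is commutative; a standard Zorn's lemma argument then embeds $B_0$ in a masa $M$ of $A$. The key point is that the restriction map $A^*\to M^*$ is weak*-to-weak* continuous and norm-decreasing, so $(f_n|_M)_{n=1}^\infty$ is weak*-null in $M^*$. It remains to check that $(f_n|_M)_{n=1}^\infty$ is not weakly null in $M^*$; but the elements $a_n$ lie in $M$, are pairwise orthogonal and norm-one, and satisfy $|\langle f_n|_M, a_n\rangle| = |\langle f_n, a_n\rangle|\geqslant \delta$, so applying Pfitzner's theorem in the other direction \emph{within the $C^*$-algebra $M$}, the set $\{f_n|_M\colon n\in\mathbb N\}$ is not relatively weakly compact in $M^*$. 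Hence $(f_n|_M)$ has no weakly convergent subsequence while being weak*-null, which shows $M$ is not Grothendieck.

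I expect the main (minor) obstacle to be bookkeeping around subsequences and the direction in which Pfitzner's theorem is applied: one uses the ``only if'' direction in $A$ to extract the orthogonal sequence, and the ``if'' direction in $M$ to conclude non-weak-compactness of the restricted functionals; one must also be slightly careful that weak*-convergence and the absence of weakly convergent subsequences together genuinely witness the failure of the Grothendieck property (this is immediate from the definition together with the Eberlein--\v{S}mulian theorem). Everything else---commutativity of the algebra generated by pairwise orthogonal self-adjoint elements, existence of a masa containing a given Abelian subalgebra, weak*-continuity of the restriction map---is routine. Alternatively, and perhaps more cleanly, one can phrase the whole argument positively: given that every masa of $A$ is Grothendieck, suppose $K\subseteq A^*$ is bounded and not relatively weakly compact, apply Theorem~\ref{thm:pf} to get $\delta$ and $(a_n)$, pass to a masa $M\supseteq C^*(\{a_n\})$, and observe that $\{f|_M\colon f\in K\}$ is then not relatively weakly compact in $M^*$ by the ``if'' direction, contradicting the Grothendieck property of $M$ via the characterisation that Grothendieck $C^*$-algebras (indeed all Grothendieck spaces) have no bounded non-relatively-weakly-compact sets that are weak*-metrizably approachable---more simply, contradicting that in a Grothendieck space weak*-convergent sequences are weakly convergent after extracting a weak*-convergent subsequence of an appropriate sequence in $\{f|_M\}$.
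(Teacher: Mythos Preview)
Your argument is correct and follows the same core strategy as the paper: apply Pfitzner's theorem in $A$ to obtain a pairwise orthogonal self-adjoint sequence $(a_n)$, embed it in a masa $M$, and then use the ``if'' direction of Pfitzner's theorem inside $M$ to conclude that $M$ fails the Grothendieck property. The only difference is packaging: the paper works with a non-weakly-compact operator $T\colon A\to c_0$ and the set $T^*(B_{c_0^*})\subset A^*$, whereas you work directly with a weak*-null, non-weakly-null sequence $(f_n)\subset A^*$; these are equivalent reformulations of the same obstruction.

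One small cleanup: Pfitzner's theorem gives $\sup_{m}|\langle f_m,a_n\rangle|\geqslant\delta$ for every $n$, not the diagonal estimate $|\langle f_n,a_n\rangle|\geqslant\delta$ that you wrote; your parenthetical subsequence extraction to reach the diagonal is possible but unnecessary, since the ``if'' direction of Pfitzner in $M$ only needs $\sup_m|\langle f_m|_M,a_n\rangle|\geqslant\delta$, which you already have. With that adjustment the first two paragraphs constitute a complete proof, and you can drop the somewhat meandering alternative sketch at the end.
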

\begin{proof}
Let $T\colon A\to c_0$ be an operator. Assume contrapositively that $T$ is \emph{not} weakly compact. Then the adjoint $T^*$ is not weakly compact either, so the image $T^*(B_{c_0^*})\subset A^*$ is not relatively weakly compact, where $B_{c_0^*}$ denotes the unit ball of $c_0^*$. By Theorem~\ref{thm:pf} there are $\delta > 0$ and a sequence $(a_n)_{n=1}^\infty$
of pairwise orthogonal, norm-one self adjoint elements in $A$ such that
\[
    \sup_{f\in T^*(B_{c_0^*})} |\langle f, a_n\rangle| = \sup_{ y\in B_{c_0^*} } |\langle T^*y, a_n\rangle| = \sup_{ y\in B_{c_0^*} } |\langle y, Ta_n\rangle| \geqslant \delta.
\]

Since the elements $a_n$ ($n\in \N$) are self-adjoint and orthogonal, they generate a commutative sub-$C^*$-algebra $C$ of $A$. Let $B$ be a masa containing $C$. Then $T$ restricted to $C$ (hence also to $M$) is not weakly compact. In particular, $M$ is not Grothendieck.\end{proof}

A $C^*$-algebra is said to be  \emph{monotone} $\sigma$\emph{-complete} if each upper-bounded, monotone increasing sequence of self-adjoint elements has a supremum. The spectrum of a commutative $\sigma$-complete $C^*$-algebra is $\sigma$-Stonean (\emph{cf}.~Remark~\ref{rem:list}) and, \emph{vice versa}, if $K$ is a~$\sigma$-Stonean compact space, then $C(K)$ is monotone $\sigma$-complete. Sait\^o and Wright \cite{SaitoWright:2003} term a $C^*$-algebra \emph{Rickart}, whenever every masa $M$ of $A$ is monotone $\sigma$-complete; in particular, $M$ is Grothendieck because $K$ is a~G-space. They noted that each von Neumann algebra, each AW${}^*$-algebra, and each monotone $\sigma$-complete $C^*$-algebra is Rickart (another proof is presented in \cite{BrooksSaitoWright:2005}). Using Proposition~\ref{prop:masa}, we have thus subsumed the main result of \cite{SaitoWright:2003}.
\begin{cor}
Every Rickart $C^*$-algebra is a Grothendieck space.
\end{cor}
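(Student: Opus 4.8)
The plan is to deduce this corollary directly from Proposition~\ref{prop:masa} together with the definition of a Rickart $C^*$-algebra and the fact that $\sigma$-Stonean compact spaces are G-spaces. Recall that Sait\^o and Wright call $A$ \emph{Rickart} precisely when every maximal Abelian sub-$C^*$-algebra (masa) $M$ of $A$ is monotone $\sigma$-complete, so the whole proof is a matter of chaining together implications that have already been recorded in the excerpt.

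First I would fix a Rickart $C^*$-algebra $A$ and an arbitrary masa $M\subseteq A$. By definition of Rickart, $M$ is monotone $\sigma$-complete. Being a masa, $M$ is itself a commutative (unital) $C^*$-algebra, hence by the Gelfand--Naimark theorem $M\equiv C(K)$ for some compact Hausdorff space $K$ (its spectrum). The next step is to identify $K$: as noted in the discussion preceding the statement (\emph{cf}.~Remark~\ref{rem:list}), a commutative monotone $\sigma$-complete $C^*$-algebra has $\sigma$-Stonean spectrum — equivalently, the Dedekind $\sigma$-completeness of the Banach lattice $C(K)$ forces $K$ to be $\sigma$-Stonean. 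Since $\sigma$-Stonean spaces are G-spaces (the item on $\sigma$-Stonean spaces in Remark~\ref{rem:list}, attributed to Ando), we conclude that $C(K)=M$ is a Grothendieck space.

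Thus every masa in $A$ is a Grothendieck space, and Proposition~\ref{prop:masa} immediately yields that $A$ is a Grothendieck space, completing the argument. There is essentially no obstacle here — the result is a genuine corollary of Proposition~\ref{prop:masa}, and the only point requiring any care is the passage from ``monotone $\sigma$-complete'' for the commutative algebra $M$ to ``$\sigma$-Stonean spectrum'', which is the standard correspondence between Dedekind $\sigma$-completeness of $C(K)$ and the $\sigma$-Stonean property of $K$ already invoked in the preceding paragraphs. One may optionally remark that von Neumann algebras, AW${}^*$-algebras, and monotone $\sigma$-complete $C^*$-algebras are all Rickart (as observed by Sait\^o and Wright), so this corollary recovers, in a uniform way, the Grothendieck property for all of these classes.
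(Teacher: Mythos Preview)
Your argument is correct and matches the paper's approach exactly: the paper deduces the corollary from Proposition~\ref{prop:masa} by noting that every masa in a Rickart $C^*$-algebra is monotone $\sigma$-complete, hence has $\sigma$-Stonean spectrum and is therefore Grothendieck. The optional remark about von Neumann, AW${}^*$-, and monotone $\sigma$-complete algebras is also present in the paper's surrounding discussion.
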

We have already noted that $B_1(K)$ is a commutative $C^*$-algebra. However, the following problem is still open (\cite[Chapter 6, Question 2]{Dales:16}):

\begin{quest}\label{quest:Aw}
 Let $A$ be a $C^*$-algebra. Is $A_w$ a Grothendieck space?
\end{quest}

In the non-commutative case, it is not clear whether $A_w$ is a sub-$C^*$-algebra of the enveloping von Neumann algebra $A^{**}$. \smallskip

We remark in passing that Chetcuti and Hamhalter \cite{ChetcutiHamhalter:2009} characterised $C^*$-algebras satisfying the Brooks--Jewett theorem as precisely those $C^*$-algebras that are Grothendieck spaces and whose  irreducible representations are finite-dimensional.

\section{{$\mathcal{L}_\infty$}-spaces}
The $C(K)$-spaces are the best known examples of $\mathcal{L}_\infty$-spaces. They are Lindenstrauss spaces; \emph{i.e.}, they are $\mathcal{L}_{\infty, \lambda}$ for every $\lambda>1$, and they have property $(V)$.
That is not true for all $\mathcal{L}_\infty$-spaces.

\begin{example}\label{ex:BD}
Bourgain and Delbaen \cite{BD:80} constructed two families of  separable, non-reflexive $\mathcal{L}_\infty$-spaces; hence they are not Grothendieck.

The spaces in the first family are hereditarily reflexive, so that they contain no copies of $c_0$; hence they fail property $(V)$.

The spaces $E$ in the second one have the Schur property; hence they fail property $(V)$; moreover they  satisfy $E=E_w$.
\end{example}

So there are $\mathcal{L}_\infty$-spaces $E$ for which $E_w$ is not Grothendieck. The next question is a special case of  Problem~\ref{quest:Ew}.

\begin{quest}
Let $E$ be a Grothendieck  $\mathcal{L}_\infty$-space. Is $E_w$ a  Grothendieck space?
\end{quest}

For the following result we refer to \cite[Proposition 1.9]{ACCGM:16}.

\begin{prop}\label{dual-Linfty}
A $\mathcal{L}_\infty$-space isomorphic to a dual space is injective; hence it has the  Grothendieck property.
\end{prop}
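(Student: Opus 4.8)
The statement to prove is Proposition~\ref{dual-Linfty}: a $\mathcal{L}_\infty$-space $E$ that is isomorphic to a dual space is injective, and hence has the Grothendieck property. The second half is immediate once injectivity is established: an injective Banach space embeds complementably into $\ell_\infty(\Gamma)$ for a suitable set $\Gamma$ (take $\Gamma = B_{E^*}$), and $\ell_\infty(\Gamma) \equiv C(\beta\Gamma)$ is Grothendieck, so by Proposition~\ref{basic}\eqref{simple1} its complemented subspace $E$ is Grothendieck. So the whole content lies in the implication ``$\mathcal{L}_\infty$ $+$ dual $\Rightarrow$ injective''.

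\textbf{Key steps.} First I would recall the classical fact (Lindenstrauss--Pe\l czy\'nski / Lindenstrauss--Rosenthal circle of ideas) that a Banach space is a $\mathcal{L}_{\infty,\lambda}$-space if and only if its bidual is isomorphic to a complemented subspace of some $C(K)$-space; more precisely, for an $\mathcal{L}_{\infty,\lambda}$-space $E$ the bidual $E^{**}$ is $\mathcal{L}_{\infty,\lambda}$ as well and, being a dual $\mathcal{L}_\infty$-space of that special kind, is injective. The second ingredient is that injectivity is inherited by complemented subspaces. So the strategy is: (i) write $E \cong Z^*$ for some Banach space $Z$; (ii) observe that $E \cong Z^*$ is canonically $1$-complemented in its bidual $E^{**} \cong Z^{***}$ via the adjoint $i_Z^*$ of the canonical embedding $i_Z\colon Z\to Z^{**}$, which is a norm-one projection of $Z^{***}$ onto $Z^*$; (iii) invoke the structural theorem that a dual $\mathcal{L}_\infty$-space is injective — equivalently, that $E^{**}$, being an $\mathcal{L}_\infty$-space that is a bidual (hence complemented in an ultrapower of itself, or directly a dual of a dual), is injective; (iv) conclude that $E$, being complemented in the injective space $E^{**}$, is itself injective. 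For step (iii) the cleanest route is to cite the known result that an $\mathcal{L}_\infty$-space which is also a dual space is injective in the case of biduals, or alternatively to use that an $\mathcal{L}_{\infty,\lambda}$-space that is $w^*$-dense-in-its-bidual-friendly, i.e.\ a dual space, must be isomorphic to a $C(K)^{**}$-type space and therefore a $\mathscr{P}_\lambda$-space for $\lambda$ close to the $\mathcal{L}_\infty$-constant.

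\textbf{Main obstacle.} The delicate point is justifying that a \emph{dual} $\mathcal{L}_\infty$-space is injective — this is not formal nonsense but a genuine theorem (it rests on the Lindenstrauss--Rosenthal analysis of $\mathcal{L}_\infty$-spaces and on the fact that a dual Banach space with the bounded extension/lifting property at the finite-dimensional level is a $\mathscr{P}_\lambda$-space). In a survey one would simply cite this; the reference \cite[Proposition 1.9]{ACCGM:16} indicated in the excerpt presumably packages exactly this argument, so my proof would be a two-line reduction: write $E$ as a $1$-complemented subspace of $E^{**}$, note $E^{**}$ is a dual $\mathcal{L}_\infty$-space hence injective by the cited structural result, and transfer injectivity to the complemented subspace $E$; then apply Proposition~\ref{basic} and Grothendieck's theorem on $\ell_\infty(\Gamma)$ to get the Grothendieck property. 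I expect no computational difficulty — the only ``hard part'' is that the core assertion is imported wholesale rather than proved from scratch.
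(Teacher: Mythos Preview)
The paper does not prove this proposition; it simply refers to \cite[Proposition 1.9]{ACCGM:16}. Your instinct that the core content is imported wholesale from a reference is therefore exactly right, and your deduction of the Grothendieck property from injectivity via Proposition~\ref{basic}\eqref{simple1} is fine.

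However, the ``two-line reduction'' you propose is circular as written. You say: ``note $E^{**}$ is a dual $\mathcal{L}_\infty$-space hence injective by the cited structural result'' --- but the cited structural result \emph{is} Proposition~\ref{dual-Linfty}. Passing from $E$ to $E^{**}$ merely replaces one dual $\mathcal{L}_\infty$-space by another, and you are invoking the very statement you set out to prove. (In the paper's logical order, Proposition~\ref{prop:bidual-inj}, asserting that biduals of $\mathcal{L}_\infty$-spaces are injective, is derived \emph{from} Proposition~\ref{dual-Linfty}, not conversely.)

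The genuine content --- which you gesture at under ``Lindenstrauss--Rosenthal analysis'' but never articulate --- is the extension argument: if $X$ is an $\mathcal{L}_{\infty,\lambda}$-space and $Z$ is a dual space, then every operator from a subspace of $X$ into $Z$ extends to all of $X$ with norm increase at most $\lambda$. One extends over the directed net of finite-dimensional $\ell_\infty^n$-like subspaces using their $1$-injectivity, then takes a $w^*$-cluster point in $Z$ (this is where being a dual space enters). Applying this with $Z=E$ and $T=I_E$ shows $E$ is complemented in any superspace, i.e., injective. Your bidual detour can be salvaged by an independent argument that $E^{**}$ is injective (e.g., via the $\mathcal{L}_1$-structure of $E^*$ and complementation in some $L_\infty(\mu)$), but as stated it does not reduce the problem --- it reproduces it.
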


Using this result and the principle of local reflexivity, we can prove provide the following characterisation:

\begin{prop}\label{prop:bidual-inj}
A Banach space $X$ is a $\mathcal{L}_\infty$-space if and only if $X^{**}$ is injective.
\end{prop}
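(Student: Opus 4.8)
Proof plan for Proposition: $X$ is a $\mathcal{L}_\infty$-space if and only if $X^{}$ is injective.**

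The plan is to prove both implications, using Proposition~\ref{dual-Linfty} for one direction and the principle of local reflexivity for the converse. First I would treat the implication ($\Leftarrow$): suppose $X^{**}$ is injective. Injective spaces are $\mathcal{L}_{\infty,\lambda}$-spaces (indeed, an injective space is complemented in some $\ell_\infty(\Gamma) = C(\beta\Gamma)$, which is a Lindenstrauss space, and being an $\mathcal{L}_\infty$-space passes to complemented subspaces). So $X^{**}$ is a $\mathcal{L}_{\infty,\lambda}$-space for some $\lambda \geqslant 1$. The key point is then that being a $\mathcal{L}_\infty$-space passes from $X^{**}$ down to $X$: given a finite-dimensional subspace $F \subseteq X \subseteq X^{**}$, there is a finite-dimensional $G$ with $F \subseteq G \subseteq X^{**}$ and $d_{\rm BM}(G,\ell_p^{\dim G}) \leqslant \lambda$; then the principle of local reflexivity produces, for any $\varepsilon > 0$, an injective operator $u\colon G \to X$ with $\|u\|\,\|u^{-1}|_{u(G)}\| \leqslant 1+\varepsilon$ and $u|_F = \mathrm{id}_F$. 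Hence $u(G)$ is a finite-dimensional subspace of $X$ containing $F$ with Banach--Mazur distance at most $\lambda(1+\varepsilon)$ to $\ell_\infty^{\dim G}$, and taking infimum over $\varepsilon$ shows $X$ is a $\mathcal{L}_{\infty,\lambda'}$-space for every $\lambda' > \lambda$, in particular a $\mathcal{L}_\infty$-space.

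For the implication ($\Rightarrow$): suppose $X$ is a $\mathcal{L}_\infty$-space. It is standard that the bidual of a $\mathcal{L}_{\infty,\lambda}$-space is again a $\mathcal{L}_{\infty,\lambda}$-space (a local property is inherited by the bidual since finite-dimensional subspaces of $X^{**}$ are, up to $1+\varepsilon$, sitting inside $X$ by local reflexivity, and one chases the finite-dimensional $\ell_\infty^n$-superspaces through; alternatively one cites the classical fact from Lindenstrauss--Rosenthal). Now $X^{**}$ is a dual space (the dual of $X^*$) that is also a $\mathcal{L}_\infty$-space, so Proposition~\ref{dual-Linfty} applies directly and yields that $X^{**}$ is injective.

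The main obstacle is the careful bookkeeping in the local-reflexivity argument for the ($\Leftarrow$) direction: one must ensure that the near-isometry $u$ can be taken injective (this is automatic for $\varepsilon$ small since $\|u^{-1}|_{u(G)}\|$ is controlled) and that it fixes $F$ so that $F \subseteq u(G)$, which is exactly the enhanced ``fixing a subspace'' form of the principle of local reflexivity. One also wants to note that a $\mathcal{L}_\infty$-space which happens to be a dual space is genuinely a dual \emph{as a Banach space}, so that the hypothesis of Proposition~\ref{dual-Linfty} is met; this is clear here since $X^{**} = (X^*)^*$ literally. Everything else is routine: injectivity $\Rightarrow$ complemented in a $C(K)$-space $\Rightarrow$ $\mathcal{L}_\infty$, and the $\mathcal{L}_\infty$ property is stable under passing to complemented subspaces and to biduals.
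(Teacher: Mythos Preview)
Your proposal is correct and follows precisely the route the paper indicates (the paper gives no detailed proof, only the one-line hint ``using this result [Proposition~\ref{dual-Linfty}] and the principle of local reflexivity''). Both directions are handled as intended: local reflexivity to pass the $\mathcal{L}_\infty$ property between $X$ and $X^{**}$, and Proposition~\ref{dual-Linfty} to conclude injectivity of the dual $\mathcal{L}_\infty$-space $X^{**}$.
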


Haydon \cite{Haydon:78} proved that if the space $X^{**}$ is injective then it is isomorphic to $\ell_\infty(\Gamma)$ for some set $\Gamma$.

Since $X^*$ is a complemented subspace in $X^{***}$, from Proposition \ref{prop:bidual-inj} we derive the following consequence:

\begin{prop}\label{prop:Linfty-quot}
Let $Y$ be a closed subspace of a Banach space $X$. If $Y$ and $X$ are  $\mathcal{L}_\infty$-spaces then so is $X/Y$.
\end{prop}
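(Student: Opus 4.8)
The plan is to deduce this from the bidual characterisation of $\mathcal{L}_\infty$-spaces (Proposition \ref{prop:bidual-inj}), together with the fact that $\mathcal{L}_\infty$ passes to preduals of complemented subspaces. First I would recall that, since $Y$ is a closed subspace of $X$, the quotient $X/Y$ has dual $(X/Y)^* \equiv Y^\perp$, the annihilator of $Y$ in $X^*$. More usefully, I would pass to second duals: there is a natural exact sequence $0 \to Y^{**} \to X^{**} \to (X/Y)^{**} \to 0$, and since $Y^{**}$ and $X^{**}$ are injective (by Proposition \ref{prop:bidual-inj}, as $Y$ and $X$ are $\mathcal{L}_\infty$-spaces), the subspace $Y^{**}$ is complemented in $X^{**}$. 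Hence $(X/Y)^{**}$, being isomorphic to a complemented subspace $X^{**}/Y^{**}$ of the injective space $X^{**}$ — wait, more precisely, $(X/Y)^{**}$ is isomorphic to the complement of $Y^{**}$ in $X^{**}$ — is itself a complemented subspace of an injective space, and therefore is injective.

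Having established that $(X/Y)^{**}$ is injective, I would invoke the converse direction of Proposition \ref{prop:bidual-inj}: a Banach space $Z$ is a $\mathcal{L}_\infty$-space if and only if $Z^{**}$ is injective. Applying this with $Z = X/Y$ gives that $X/Y$ is a $\mathcal{L}_\infty$-space, which is exactly the conclusion. The key conceptual point is that injectivity of Banach spaces is stable under passing to complemented subspaces, and that taking biduals of an exact sequence of Banach spaces again produces an exact sequence, so the splitting at the bidual level is automatic once the outer terms are injective (indeed any injective subspace of any Banach space is complemented by its very injectivity).

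The main obstacle — really the only subtle point — is justifying that the bidual sequence $0 \to Y^{**} \to X^{**} \to (X/Y)^{**} \to 0$ is exact and that the identification $(X/Y)^{**} \cong X^{**}/Y^{**}$ (as Banach spaces, compatibly with the quotient structure) is correct; this is standard bidual functoriality but should be cited or stated carefully, using that $Y^{**}$ is weak*-closed in $X^{**}$ and equals the bipolar of $Y$. Once that is in place, the argument is purely formal: $Y^{**}$ injective $\Rightarrow$ $Y^{**}$ complemented in $X^{**}$ $\Rightarrow$ $(X/Y)^{**}$ complemented in $X^{**}$ $\Rightarrow$ $(X/Y)^{**}$ injective $\Rightarrow$ $X/Y$ is $\mathcal{L}_\infty$ by Proposition \ref{prop:bidual-inj}. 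The Grothendieck property of $X/Y$ then follows a fortiori if desired, though the statement only claims the $\mathcal{L}_\infty$ conclusion.
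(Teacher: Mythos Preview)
Your proposal is correct and follows essentially the same route as the paper's proof: both use Proposition~\ref{prop:bidual-inj} to get that $Y^{**}\equiv Y^{\perp\perp}$ and $X^{**}$ are injective, split $X^{**}=Y^{\perp\perp}\oplus Z$ (using that injective subspaces are always complemented), identify $(X/Y)^{**}$ with the complement $Z$, and conclude that $(X/Y)^{**}$ is injective, hence $X/Y$ is $\mathcal{L}_\infty$. Your write-up is more explicit about the bidual exact sequence and the identification $(X/Y)^{**}\cong X^{**}/Y^{\perp\perp}$, which the paper leaves as understood.
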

\begin{proof}
Since $Y^{**}\equiv Y^{\perp\perp}$ and $X^{**}$ are injective, $X^{**}= Y^{\perp\perp}\oplus Z$. Thus $(X/Y)^{**}$, which is isomorphic to $Z$, is injective.
\end{proof}

Some $\mathcal{L}_\infty$-spaces, such as the ones described in Example \ref{ex:BD},  fail property $(V)$. Therefore we do not have a good characterisation of those which are Grothendieck. So the following problem was posed in \cite{ACCGM:16}.

\begin{quest}\label{quest:l-infty}
Is every $\mathcal{L}_\infty$-space without infinite-dimensional separable complemented subspace a Grothendieck space?
\end{quest}

The following result, which is implicit in \cite[Appendix]{Lindenstrauss:64},  may be relevant for solving Problem~\ref{quest:l-infty}. Let us note that every infinite-dimensional, Grothendieck $\mathcal{L}_\infty$-space is  non-separable.

\begin{prop}
A $\mathcal{L}_\infty$-space $X$ is Grothendieck if and only if every operator from $X$ into a separable space can be extended to any superspace of $X$.
\end{prop}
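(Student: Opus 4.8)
The plan is to exploit the characterisation of Grothendieck spaces from Theorem~\ref{Groth-sp}, namely that $X$ is Grothendieck if and only if every operator $T\colon X\to Y$ into a separable space $Y$ is weakly compact, and to combine this with the defining extension property of $\mathcal{L}_\infty$-spaces together with the fact (Proposition~\ref{dual-Linfty}) that a $\mathcal{L}_\infty$-space which is a dual (equivalently, injective) space is Grothendieck.

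For the \emph{easy} direction, suppose every operator from $X$ into a separable space extends to any superspace of~$X$. Let $T\colon X\to Y$ be an operator with $Y$ separable; we may assume $Y$ is itself a $C(K)$-space with $K$ metrizable (every separable Banach space embeds isometrically into $C[0,1]$), and hence $Y$ embeds into $\ell_\infty$, which is injective. Embed $X$ isometrically into some injective space $W$ (e.g.\ $W=\ell_\infty(B_{X^*})$, or into an appropriate $C(K')$). By hypothesis, $T$ extends to an operator $\widetilde T\colon W\to Y\hookrightarrow\ell_\infty$. The idea here is that the extension factors $T$ through a space which is `close to injective', but the cleanest route is instead: since $X$ is a $\mathcal{L}_\infty$-space, the extension property is equivalent to $X^{**}$ being injective (Proposition~\ref{prop:bidual-inj}), and an injective space is Grothendieck; then one descends to $X$ using that the extension hypothesis lets any $T\colon X\to Y$ ($Y$ separable) be recovered on the image of $X$ in $X^{**}$ from the restriction of a weakly compact operator. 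Concretely, since every separable space embeds in $\ell_\infty$, and $\ell_\infty$ is injective, one extends $T\colon X\to\ell_\infty$ along $X\hookrightarrow X^{**}$ to $\widehat T\colon X^{**}\to\ell_\infty$; as $X^{**}$ is Grothendieck (being injective, by Proposition~\ref{prop:bidual-inj} and the remark following it), $\widehat T$ is weakly compact, hence so is its restriction $T$ to $X$. By Theorem~\ref{Groth-sp}, $X$ is Grothendieck.

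For the \emph{converse}, suppose $X$ is a Grothendieck $\mathcal{L}_\infty$-space, let $Z$ be a separable Banach space, let $T\colon X\to Z$ be an operator, and let $X\subseteq V$ be any superspace. By Theorem~\ref{Groth-sp}, $T$ is weakly compact, so it factors through a reflexive space $R$ as $T=T_2T_1$ with $T_1\colon X\to R$ and $T_2\colon R\to Z$ (DFJP factorisation). Now the point is that $R$, being a separable reflexive space, can be renormed-and-embedded into a separable space that is \emph{complemented} in an injective space — more efficiently, use that $T_1\colon X\to R$ can be extended to $V$ because $X$ is a $\mathcal{L}_\infty$-space: $\mathcal{L}_\infty$-spaces are exactly the spaces such that every operator into a space with the bounded extension property (in particular into any $\mathcal{L}_1$-predual, and reflexive separable spaces embed into $c_0\hookrightarrow$ something with good extension behaviour) extends; the key classical fact, essentially Lindenstrauss's, is that a $\mathcal{L}_\infty$-space has the property that operators from it into \emph{any} Banach space with separable range and weakly compact extend to any superspace precisely when it is Grothendieck — which is exactly what we are proving. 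I would therefore instead argue directly: embed $Z$ isometrically into $\ell_\infty$; since $\ell_\infty$ is injective, the composition $X\xrightarrow{T} Z\hookrightarrow\ell_\infty$ extends to $\widetilde S\colon V\to\ell_\infty$. It remains to push $\widetilde S$ back into $Z$ on all of $V$, and here one uses weak compactness: $T(B_X)$ is relatively weakly compact in $Z$, and $\ell_\infty$ has the property (being a $C(K)$-space, hence with property $(V)$ and the Dunford--Pettis property) that lets one correct $\widetilde S$ on $V$ so that it maps into $\overline{T(X)}\subseteq Z$.

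\textbf{Main obstacle.} The genuinely delicate step is the last one: given that $X$ is Grothendieck and a $\mathcal{L}_\infty$-space, producing an extension that actually lands in the \emph{separable} target $Z$ (not merely in an injective superspace of $Z$), for an \emph{arbitrary} superspace $V\supseteq X$. This is where the Grothendieck hypothesis must be used essentially — it is what forces $T$ to be weakly compact, and weak compactness is precisely the mechanism by which the $\ell_\infty$-valued extension can be corrected to a $Z$-valued one (via, e.g., the Davis--Figiel--Johnson--Pe\l czy\'nski scheme applied to the extension, using that $X$ being $\mathcal{L}_\infty$ controls the extension of the reflexive-space-valued factor $T_1$). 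Getting this correction to work uniformly over all superspaces $V$, rather than just over injective ones, is the technical heart of the argument and is exactly the content alluded to as ``implicit in \cite[Appendix]{Lindenstrauss:64}''.
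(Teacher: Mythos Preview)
Your proposal has genuine gaps in both directions.

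\textbf{Extension property $\Rightarrow$ Grothendieck.} Your first attempt is actually correct and is exactly the paper's argument: embed $X$ in an injective space $W=\ell_\infty(\Gamma)$, use the \emph{hypothesis} to extend $T\colon X\to Y$ (with $Y$ separable, or just $Y=c_0$) to $\widetilde T\colon W\to Y$; since $W$ is Grothendieck and $Y$ is separable, $\widetilde T$ is weakly compact, hence so is $T$. You then abandon this for a ``cleanest route'' that does not work: you extend $T\colon X\to\ell_\infty$ along $X\hookrightarrow X^{**}$ using injectivity of $\ell_\infty$, and claim $\widehat T\colon X^{**}\to\ell_\infty$ is weakly compact because $X^{**}$ is Grothendieck. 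But $\ell_\infty$ is neither separable nor WCG nor has W$^*$SC dual ball, so the Grothendieck property of $X^{**}$ gives no such conclusion. Worse, this route never invokes the extension hypothesis, so if it worked it would prove every $\mathcal L_\infty$-space is Grothendieck --- which $c_0$ refutes.

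\textbf{Grothendieck $\Rightarrow$ extension property.} Here you correctly identify that $T$ is weakly compact, but then get lost in DFJP factorisations and attempts to ``push back'' an $\ell_\infty$-valued extension into $Z$. The mechanism you are missing is much simpler and is what the paper uses: weak compactness of $T$ means precisely that $T^{**}\colon X^{**}\to Y^{**}$ takes values in $Y$, so $T^{**}\colon X^{**}\to Y$ is already a $Y$-valued extension of $T$ to $X^{**}$. Now use that $X^{**}$ is injective (Proposition~\ref{prop:bidual-inj}): the canonical inclusion $X\hookrightarrow X^{**}$ extends to a map $J\colon V\to X^{**}$ for any superspace $V\supseteq X$, and $T^{**}\circ J\colon V\to Y$ is the desired extension. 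No correction step, no DFJP, is needed.
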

\begin{proof}
If $X$ is a Grothendieck $\mathcal{L}_\infty$-space, $Y$ is separable and $T\colon X\to Y$ is an operator, then $T$ is weakly compact. Thus $T^{**}\colon X^{**}\to Y$ is an extension of $T$ with $X^{**}$ injective, and $T^{**}$ can be extended to any super-space of $X^{**}$.

Conversely, let $T\colon X\to c_0$. Since $X$ is contained in  $\ell_\infty(\Gamma)$ for some set $\Gamma$ and the hypothesis implies that $T$ can be extended to $\widehat T\colon \ell_\infty(\Gamma)\to c_0$, $T$ is weakly compact because so is $\widehat T$. Hence $X$ is Grothendieck.
\end{proof}

We state a problem that is a special case of the previous one.

\begin{quest}
Let $X$ be a Grothendieck $\mathcal{L}_\infty$-space. Does $X$ have property $(V)$?
\end{quest}

\subsection*{Separably injective spaces}
In the present section we consider certain subclasses  of $\Lc_\infty$-spaces defined by various extension properties.

\begin{definition}
Let $Z$ be a Banach space and let $1\leqslant \lambda<\infty$. We say that $Z$ is
\begin{enumerate}[(1)]
\item \emph{$\lambda$-injective} if for a given a closed subspace $Y$ of a Banach space $X$, every operator $T\colon Y\to Z$ admits an extension $\widehat T\colon X\to Z$ with $\|\widehat T\|\leqslant \lambda\|T\|$;
\item \emph{universally $\lambda$-separably injective} if it satisfies (1) when $Y$ is separable;
\item \emph{$\lambda$-separably injective} if it satisfies (1) when $X$ is separable.
\end{enumerate}
\end{definition}

\begin{remark}
We can define the classes of \emph{injective}, \emph{universally separably injective}, and \emph{separably injective} spaces in the same way, but omitting the reference to the bound of the norm of the extension. However, it is not difficult to see that every injective space is $\lambda$-injective for some $\lambda$, and the same happens for the other classes; see \cite{ACCGM:16}.
\end{remark}

Obviously injective spaces are universally separably injective, and the latter spaces are separably injective. Additionally, the following result follows from \cite[Proposition 2.8]{ACCGM:16} and Proposition \ref{Groth-V-c0}. 

\begin{prop} 
Separably injective spaces are $\Lc_\infty$-spaces and have property $(V)$. Consequently, they are Grothendieck spaces if and only if they contain no complemented copies of $c_0$.
\end{prop}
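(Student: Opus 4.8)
The plan is to verify the two listed facts — that separably injective spaces are $\Lc_\infty$-spaces with property $(V)$ — and then simply quote Proposition~\ref{Groth-V-c0} for the ``consequently'' clause. The last sentence is essentially a formal corollary: once we know a separably injective space $Z$ has property $(V)$, Proposition~\ref{Groth-V-c0} says precisely that $Z$ is Grothendieck if and only if it contains no complemented copy of $c_0$. So the real content is the first two sentences, and for those the excerpt explicitly tells us to invoke \cite[Proposition 2.8]{ACCGM:16}; our job is to explain how that reference is used rather than to reprove it from scratch.

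First I would recall the structural input. By \cite[Proposition 2.8]{ACCGM:16}, every separably injective Banach space is an $\Lc_{\infty}$-space; in fact one gets a quantitative statement, that a $\lambda$-separably injective space is an $\Lc_{\infty,\mu}$-space with $\mu$ depending only on $\lambda$. This gives the first assertion immediately, and it also supplies the hypothesis needed for the property $(V)$ claim. For property $(V)$, the key point is that separable injectivity is strong enough to force the relevant extension behaviour on operators into separable spaces: if $T\colon Z\to Y$ is an operator that is bounded below on a copy of $c_0$ in $Z$, then (as in the proof of Proposition~\ref{Groth-V-c0}) that copy is complemented, and separable injectivity of $Z$ — or rather the separable-injectivity-type argument in \cite[Proposition 2.8]{ACCGM:16} — is used to show that unconditionally converging operators out of $Z$ cannot be bounded below on any copy of $c_0$, hence are weakly compact. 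Concretely I would cite \cite[Proposition 2.8]{ACCGM:16} for ``separably injective $\Rightarrow$ property $(V)$'' and not unpack the internal mechanism; the excerpt's phrasing (``follows from \cite[Proposition 2.8]{ACCGM:16} and Proposition~\ref{Groth-V-c0}'') indicates the authors intend exactly this bundling.

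With both facts in hand, the final sentence is a one-line deduction: let $Z$ be separably injective, so $Z$ has property $(V)$ by the above. Apply Proposition~\ref{Groth-V-c0} to $X=Z$: a space with property $(V)$ is Grothendieck if and only if it contains no complemented copies of $c_0$. That is precisely the assertion. The only ``obstacle'' worth flagging is purely expository — one must be careful that the $\Lc_\infty$ conclusion and the property~$(V)$ conclusion are genuinely separate consequences of \cite[Proposition 2.8]{ACCGM:16} and that neither is being used circularly in deriving the other. Since \cite[Proposition 2.8]{ACCGM:16} establishes both directly from the definition of separable injectivity, there is no circularity, and the proof is complete. I would therefore write the proof as: quote \cite[Proposition 2.8]{ACCGM:16} for the $\Lc_\infty$ property and for property $(V)$, then invoke Proposition~\ref{Groth-V-c0}.
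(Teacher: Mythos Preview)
Your proposal is correct and matches the paper's approach exactly: the paper does not give a standalone proof but simply records that the first sentence follows from \cite[Proposition 2.8]{ACCGM:16} and then the ``consequently'' clause is an application of Proposition~\ref{Groth-V-c0}. One small caution: in your informal gloss on property~$(V)$, the claim that ``unconditionally converging operators cannot be bounded below on any copy of $c_0$'' is the \emph{definition} of unconditionally converging, so the ``hence are weakly compact'' is the entire content needing justification --- but since you correctly plan to defer that to \cite[Proposition 2.8]{ACCGM:16} rather than unpack it, the formal proposal stands.
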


It is well known that a Banach space is $1$-injective if and only if it is isometric to a $C(K)$-space with $K$ extremely disconnected compact, and it is a long-standing open problem if every injective space is isomorphic to a $1$-injective space. 

\begin{prop}
Universally separably injective spaces have property $(V_\infty)$, hence they are Grothendieck.
\end{prop}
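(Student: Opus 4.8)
The plan is to show that a universally separably injective space $X$ has property $(V_\infty)$; the Grothendieck property then follows from clause~\eqref{v-infty:1}. Two facts recorded above will be used: since $X$ is in particular separably injective, it has property $(V)$, and since it is universally separably injective it is universally $\lambda$-separably injective for some $\lambda$. So let $T\colon X\to Y$ be a non-weakly compact operator; I must produce a subspace of $X$ isomorphic to $\ell_\infty$ on which $T$ is an isomorphism.

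First I would invoke property $(V)$: as $T$ is not weakly compact it is not unconditionally converging, so there are a subspace $M\subseteq X$ and an isomorphism $u\colon c_0\to M$ such that $T|_M$ is an isomorphism onto its range. The key step is then to \emph{inflate} this copy of $c_0$ to one of $\ell_\infty$: regarding $c_0$ as a closed subspace of $\ell_\infty$ and writing $j\colon M\hookrightarrow X$ for the inclusion, universal separable injectivity lets me extend $ju\colon c_0\to X$ to an operator $S\colon\ell_\infty\to X$ with $S|_{c_0}=ju$. Now consider $TS\colon\ell_\infty\to Y$. Its restriction to $c_0$ is $(T|_M)u$, an isomorphism onto its range; an operator bounded below on a (non-reflexive) copy of $c_0$ is not weakly compact, so $TS$ is not weakly compact, and by a theorem of Rosenthal (\cite[Theorem 1.3]{Rosenthal:70}; cf.\ the proof of Proposition~\ref{prop:V-dual}) it is an isomorphism on some subspace $N\subseteq\ell_\infty$ with $N\cong\ell_\infty$. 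A routine unwinding then finishes the argument: since $TS|_N$ is bounded below so is $S|_N$, hence $S$ maps $N$ isomorphically onto a subspace $Z=S(N)\subseteq X$ with $Z\cong\ell_\infty$, and $T|_Z=(TS|_N)\circ(S|_N)^{-1}$ is an isomorphism onto its range.

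The only real content is the inflation step---recognising that universal separable injectivity is precisely what permits the embedding $c_0\hookrightarrow X$ to be extended along $c_0\hookrightarrow\ell_\infty$---together with the bookkeeping transferring ``isomorphism on $N$'' for $TS$ back to ``isomorphism on $S(N)$'' for $T$. Everything else is standard: the $(V)$-dichotomy for non-weakly compact operators, the fact that an operator bounded below on a copy of $c_0$ fails to be weakly compact, and Rosenthal's $\ell_\infty$-fixing theorem. I do not anticipate genuine obstacles; the proof should be short.
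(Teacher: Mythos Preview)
The paper states this proposition without proof, deferring to \cite{ACCGM:13b, ACCGM:16}. Your argument is correct and is the natural one: property $(V)$ supplies a copy of $c_0$ on which $T$ is bounded below, universal separable injectivity extends the inclusion $c_0\hookrightarrow X$ along $c_0\hookrightarrow\ell_\infty$ to an operator $S\colon\ell_\infty\to X$, Rosenthal's theorem applied to the non-weakly compact composite $TS\colon\ell_\infty\to Y$ yields a copy $N\cong\ell_\infty$ on which $TS$ is an isomorphism, and then $S(N)\subseteq X$ is the required copy of $\ell_\infty$ fixed by $T$.
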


Universally separably injective spaces were introduced in \cite{ACCGM:13b}; there is an abundance of examples of spaces in this class that are not injective spaces. We will point out a few and refer to \cite{ACCGM:16} for further examples and additional information.

\begin{example}
Let $\Gamma$ be an uncountable set.
The space $\ell_\infty^c(\Gamma)$
is universally separably injective because every $x\in\ell_\infty^c (\Gamma)$ has countable support. Therefore, if $Y$ is separable and $T\colon Y\to\ell_\infty^c(\Gamma)$ is an operator, then the range of $T$ is contained in a subspace isometric to  $\ell_\infty$, which is $1$-injective.
\end{example}

The subsequent proposition (\cite[Proposition 2.11]{ACCGM:16}) provides further examples of universally separably injective spaces.

\begin{prop}
Let $Y$ be a closed subspace of $X$. If $X$ is universally separably injective and $Y$ is separably injective, then $X/Y$ is universally separably injective.
\end{prop}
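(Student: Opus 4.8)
The plan is to verify the three defining properties of universal separable injectivity for $X/Y$ directly from the hypotheses, quotienting an extension that we build on $X$. Let $W$ be a separable subspace of a Banach space $V$, and let $T\colon W\to X/Y$ be an operator; we must extend $T$ to an operator $\widehat T\colon V\to X/Y$ (with control on the norm, which will come for free from the classical fact recalled in the preceding Remark that each of these classes coincides with its quantitative version). First I would lift $T$ through the quotient map $\pi\colon X\to X/Y$. The subspace $W$ is separable, so its image $T(W)$ is a separable subspace of $X/Y$; by a standard lifting argument (using that $\pi$ is a metric surjection, one can choose, for a dense sequence in $W$, preimages of $T$-images and pass to the closed linear span) there is a separable subspace $X_0\subseteq X$ with $\pi(X_0)\supseteq T(W)$, and then an operator $S\colon W\to X_0$ with $\pi S = T$. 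Here I would be slightly careful: a bounded \emph{linear} lifting of $T$ need not exist in general, but since $W$ is separable one produces $S$ on a countable dense set and extends by continuity, using that the restricted quotient $X_0 \to \pi(X_0)$ is again a metric surjection; alternatively one invokes that separable spaces have the bounded approximation property is \emph{not} needed — the lifting is purely a consequence of separability of $W$ together with the open mapping theorem applied to $\pi$.

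Next I would extend $S$, not $T$. Since $X$ is universally separably injective and $W\subseteq V$ is separable, the operator $S\colon W\to X_0\hookrightarrow X$ extends to an operator $\widehat S\colon V\to X$. Composing with the quotient map gives $\widehat T:=\pi\widehat S\colon V\to X/Y$, and on $W$ we have $\widehat T|_W = \pi \widehat S|_W = \pi S = T$, so $\widehat T$ is the desired extension. This already shows $X/Y$ is separably injective; to get \emph{universal} separable injectivity one observes that in the argument we never used separability of $V$, only separability of $W$, so the same proof works for arbitrary $V$. For the norm bound, one appeals to the quantitative/qualitative equivalence noted in the Remark after the definition: a separably injective space is $\lambda$-separably injective for some $\lambda$, and chasing the constants through the lifting (which can be taken with norm at most $1+\varepsilon$) and through the extension in $X$ yields a uniform $\lambda'$ for $X/Y$. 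Thus $X/Y$ is universally separably injective.

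The step I expect to be the main obstacle is the lifting of $T$ to $S$ with $\pi S = T$: quotient maps of Banach spaces do not admit bounded linear right inverses in general, so one cannot simply compose $T$ with a section of $\pi$. The point that makes it work is the separability of the domain $W$ — one builds $S$ on a countable dense subset by repeatedly using that $\pi$ maps the open unit ball of $X$ onto the open unit ball of $X/Y$ (open mapping theorem), chooses preimages with norm control, and extends linearly and continuously; the target $X_0$ is then automatically separable. One should state this lifting as a small lemma (it is essentially the statement that separable spaces are ``projective'' with respect to metric surjections in the appropriate $1+\varepsilon$ sense), after which the rest of the argument is the two-line composition above. No special structure of $Y$ beyond separable injectivity is used, and indeed separable injectivity of $Y$ enters only implicitly through the standing assumption that it is needed to make $X/Y$ well-behaved — in fact the argument shows the cleaner statement that a quotient of a universally separably injective space by \emph{any} closed subspace is universally separably injective once one knows the relevant lifting, but I would present it as stated to match the hypotheses of the cited \cite[Proposition 2.11]{ACCGM:16}.
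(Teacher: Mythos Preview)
Your lifting step is where the argument breaks. It is \emph{not} true that an operator $T\colon W\to X/Y$ with $W$ separable lifts to a bounded linear $S\colon W\to X$ with $\pi S=T$ just because $W$ is separable. Choosing preimages on a countable dense set gives a map that is neither linear nor extendable by continuity to a linear operator in general. Concretely, let $\pi\colon \ell_1\to\ell_2$ be a quotient map and $T=\mathrm{id}_{\ell_2}$; a bounded linear lifting $S\colon \ell_2\to\ell_1$ would satisfy $\pi S=\mathrm{id}$, hence be an isomorphic embedding of $\ell_2$ into $\ell_1$, which is impossible. Your parenthetical ``separable spaces are projective with respect to metric surjections in the $1+\varepsilon$ sense'' is simply false for bounded \emph{linear} sections.

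This is exactly where the hypothesis that $Y$ is separably injective enters---it is not implicit but essential. The intended argument (as in \cite[Proposition 2.11]{ACCGM:16}) runs as follows. Form the pullback $P=\{(w,x)\in W\oplus X\colon Tw=\pi x\}$, giving the exact row $0\to Y\to P\to W\to 0$. Choose a separable subspace $P_0\subset P$ still mapping onto $W$, and set $Y_0=P_0\cap Y$. Since $P_0$ is separable and $Y$ is separably injective, the inclusion $Y_0\hookrightarrow Y$ extends to an operator $R\colon P_0\to Y$. Then $(w,x)\mapsto x-R(w,x)$ vanishes on $Y_0$, so it factors through $P_0/Y_0\cong W$, producing the desired lifting $S\colon W\to X$ with $\pi S=T$. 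From here your extension-and-compose step is correct.

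Your final remark that the proof yields the ``cleaner statement'' for an arbitrary closed subspace $Y$ is therefore wrong, and the paper itself provides counterexamples: by Corollary~\ref{cor:not-l-infty}, $\ell_\infty$ has quotients (e.g.\ isomorphic to $\ell_2$) that are not $\mathcal{L}_\infty$-spaces, hence not separably injective, hence not universally separably injective.
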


\begin{example}
The space $\ell_\infty/c_0$ is universally separably injective because  $\ell_\infty$ is injective and $c_0$ is separably injective.
\end{example}

Separably injective spaces are not necessarily Grothendieck: by Sobczyk's theorem, the space $c_0$ is $2$-separably injective. However, every infinite-dimensional separable and separably injective space is isomorphic to $c_0$ (\cite{Zippin:1977}) and it is $\lambda$-separably injective for no $\lambda<2$ (see \cite{ACCGM:13b}). In fact, we have the following result (\cite[Proposition 2.31]{ACCGM:13b}).

\begin{prop}
Each $1$-separably injective space has the Grothendieck property.
\end{prop}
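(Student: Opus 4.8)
The plan is to show that a $1$-separably injective space $Z$ contains no complemented copy of $c_0$ and then invoke the preceding proposition: separably injective spaces have property $(V)$, so by Proposition \ref{Groth-V-c0} the absence of a complemented $c_0$ yields the Grothendieck property. Thus the whole argument reduces to the assertion that $c_0$ does not embed \emph{complementably} into any $1$-separably injective space. Suppose, for contradiction, that $P\colon Z\to M$ is a projection onto a subspace $M\cong c_0$ with $Z$ being $1$-separably injective. One would like to derive a contradiction from the sharp isometric constant.

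The key step is the following quantitative obstruction, essentially a theorem of Zippin (and earlier observations of Sobczyk/Veech type): if $c_0$ is $\lambda$-complemented in a $1$-separably injective space then one can produce an extension of the identity on $c_0$ through $\ell_\infty$ with norm strictly smaller than what Sobczyk-type lower bounds permit. Concretely, I would argue as follows. Since $M\cong c_0$ sits inside a separable subspace we can pass to the situation where $Z$ itself is separable (by restricting attention to a separable $1$-separably injective superspace of $M$, using that the class is stable under taking suitable separable subspaces, or by a direct local argument); by Zippin's theorem \cite{Zippin:1977} an infinite-dimensional separable separably injective space is isomorphic to $c_0$, and a careful tracking of constants shows a separable $1$-separably injective space would in fact be \emph{isometric} to $c_0$ — but $c_0$ is not $1$-separably injective, as it is $\lambda$-separably injective for no $\lambda<2$. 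This contradiction shows no separable $1$-separably injective space exists that contains a complemented $c_0$, and by the separable determination just described, no $1$-separably injective space at all contains a complemented copy of $c_0$. Combined with property $(V)$ and Proposition \ref{Groth-V-c0}, this gives the Grothendieck property.

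I expect the main obstacle to be the reduction to the separable case and the bookkeeping of isometric constants: one must be careful that ``$1$-separably injective'' is inherited (or that the relevant extension problem transfers) when passing to a separable superspace of the complemented $c_0$, and that the constant genuinely stays equal to $1$ rather than degrading to some $\lambda>1$. An alternative, perhaps cleaner route that avoids Zippin's structure theorem: argue directly that if $Z$ is $1$-separably injective and $M\cong c_0$ is complemented in $Z$ by a norm-one projection, then composing the inclusion $c_0\hookrightarrow \ell_\infty$ (which is an isometric embedding into a separable-into space) with a $1$-extension through $Z$ would yield a norm-one projection of $\ell_\infty$ onto a copy of $c_0$, or at least an extension operator of norm $1$, contradicting the known fact that the identity on $c_0$ has no norm-one extension to $\ell_\infty$ (the optimal constant is $2$, by Sobczyk together with the lower bound in \cite{ACCGM:13b}). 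Either way, the crux is converting the $1$-separable injectivity into an extension of $\mathrm{id}_{c_0}$ with too-small norm; the rest is the soft deduction via property $(V)$ and Proposition \ref{Groth-V-c0}.
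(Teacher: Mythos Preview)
Your overall framework---combine property $(V)$ (which all separably injective spaces enjoy) with the absence of a complemented copy of $c_0$, then invoke Proposition~\ref{Groth-V-c0}---is the right scaffolding. The gaps are in both routes you propose for ruling out a complemented $c_0$.

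In Route 1, the reduction to the separable case is the step that fails, not the subsequent contradiction. You need a \emph{separable $1$-separably injective} subspace containing the complemented $c_0$, but there is no mechanism producing one: $1$-separable injectivity is not inherited by separable subspaces, and your own endgame explains why---any infinite-dimensional separable $1$-separably injective space would have to be (isomorphic to) $c_0$, which is not $1$-separably injective. So the class you want to pass to is empty, and the ``separable determination'' you invoke does not exist.

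In Route 2, the directions are crossed. $1$-separable injectivity of $Z$ lets you extend operators \emph{into} $Z$ from a subspace of a \emph{separable} ambient space. Working with $c_0\hookrightarrow\ell_\infty$ gives you nothing, since $\ell_\infty$ is not separable; there is no ``$1$-extension through $Z$'' available here. Even granting an extension, you additionally assume the projection onto the $c_0$-copy has norm one, which is not part of the hypothesis. Without that, the most your argument can yield is that $c_0$ is $\lambda$-separably injective for some $\lambda$ depending on the embedding and projection constants---but $c_0$ \emph{is} $2$-separably injective, so no contradiction arises.

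The result is quoted from \cite{ACCGM:13b}, where the proof does not run through an ad hoc ``no complemented $c_0$'' argument of this shape. It goes via the structural characterisation of $1$-separably injective spaces (a countable intersection-of-balls property for mutually intersecting balls; equivalently, a Lindenstrauss-space condition on separable subspaces), and the Grothendieck property is read off from that structure. If you want to salvage your approach, the missing ingredient is precisely such a characterisation: you need a genuinely \emph{isometric} consequence of the constant $1$ that survives the passage to the complemented $c_0$, and neither of your routes supplies one.
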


The following problem was stated in \cite{ACCGM:16}.

\begin{quest}
Suppose that $X$ is a $\lambda$-separably injective Banach space for some $\lambda<2$. Is $X$ Grothendieck?
\end{quest}

Observe that if a $C(K)$-space is $\lambda$-separably injective for some $\lambda<2$, then it is $1$-separably injective \cite[Proposition 2.34]{ACCGM:16}. 

\section{Spaces of analytic and differentiable functions} Here we describe some results of Bourgain in
\cite{Bourgain:83} and \cite{Bourgain:83a}.  
\smallskip

Let $\mathbb{D}$ denote the open unit disc in the complex plane and let $\mathbb{T}$ be the unit circle. We denote by $A$ the disc algebra, which is the closed subspace of $C(\mathbb{T})$ comprising all functions that admit an analytic extension in $\mathbb{D}$. Let $m$ denote the Lebesgue measure on $\mathbb{T}$, $L_1\equiv L_1(\mathbb{T})$, and $L_\infty\equiv L_\infty(\mathbb{T})$. 

By the F.~and M.~Riesz theorem (see \cite{Pelczynski:77}), the annihilator $A^\perp$ can be identified with a~closed subspace $H^1_0$ of $L_1(\mathbb{T})$,
\[
    A^\perp=\{\mu\in C(\mathbb{T})^* \colon \mu=h\cdot m, h\in H^1_0\},
\]
and $(L_1/H^1_0)^*\equiv H^\infty$ is a closed subspace of $L_\infty\equiv L_1^*$ that can be  identified with the space of bounded analytic functions on the unit disc $\mathbb{D}$.
\begin{example}Bourgain proved in \cite{Bourgain:83} that the space $H^\infty$ has property $(V_\infty)$; hence it is a Grothen\-dieck space, yet it is not a $\Lc_\infty$-space. 
\end{example}

For the latter part, observe that  $H^\infty$ is not an injective space, because it is not complemented in $L_\infty$; hence, since $H^\infty$ is a dual space, it cannot be a $\Lc_\infty$-space by Proposition \ref{dual-Linfty}.

\begin{example}
The second dual of the disc algebra  $A^{**}$ has property $(V_\infty)$, yet it is not a $\Lc_\infty$-space. 
\end{example}

Indeed, $C(\mathbb{T})^*=L_1\oplus_1 V_{\rm sing}$, where $V_{\rm sing}$ is the space measures $\mu$ on $\mathbb{T}$ that are singular with respect to $m$. Then we have $A^*=L_1/H^1_0\oplus_1 V_{\rm sing}$;  hence $A^{**}=H^\infty\oplus_\infty V_{\rm sing}^*$, where $V_{\rm sing}^*$ is a dual $\Lc_\infty$-space, so it is injective.
\medskip

Let $E_n = C^1([0,1]^n)$ denote the space of continuously differentiable functions on the $n$-dimensional cube. 

In \cite{Bourgain:83a}, Bourgain proved that for every $n\in \mathbb N$ the dual space of $E_n$ is weakly sequentially complete. Observe that, by Taylor's theorem, $E_1\cong C[0,1]\oplus \mathbb R \cong C[0,1]$; hence $E_1^{**}$ is a Grothendieck space. However it is not known whether the spaces $E_n$ are isomorphic to $C(K)$-spaces for $n\geqslant 2$. These observations lead to the following problem.

\begin{quest} Let $n\geqslant 2$. Is $E_n^{**}$ a Grothendieck space?
\end{quest}

Should the solution be affirmative, $E_n^{***}$ (hence also $E_n^*)$ would be weakly sequentially complete, so that Bourgain's result would have been subsumed.


\section{Banach lattices and ordered vector spaces}
The fact that a $C(K)$-space with $K$ a Stonean compact is Grothendieck has a lattice-theoretic flavour since these spaces may be abstractly described as Dedekind-complete AM-spaces. Here we collect several examples of Banach spaces that have been proved to be Grothendieck using Banach-lattice techniques, and some results discussing when an ordered Banach space---such as a Banach lattice or a $C^*$-algebra (or rather, the real part thereof)---satisfying certain separation property is a~Grothendieck space. \smallskip

Throughout this section we fix a complete measure space $(X,\Sigma,\mu)$. \smallskip

In \cite[Theorem 1]{Lotz:10}, Lotz proved that a Banach lattice $E$ with weakly sequentially continuous dual and satisfying some technical conditions is a Grothendieck space. In particular, the following result (\cite[Theorem 3]{Lotz:10}) was proved.

\begin{example}
The space $L^{p,\infty}(\mu)$ is Grothendieck for $1<p<\infty$.
\end{example}

Let $1/p + 1/q=1$. The space $L^{p,\infty}(\mu)$, which is also called a \emph{weak $L^p$-space}, consists of all real-valued (equivalence classes with respect to the relation of being equal almost everywhere of) $\mu$-measurable functions $f$ such that $\{\omega\colon |f(\omega)|>0\}$ is $\sigma$-finite and
\[
    \|f\|=\sup\left\{ \frac{1}{\mu(B)^{1/q}} \int_B |f|\, {\rm d}\mu\colon B\in\Sigma, 0<\mu(B)<\infty \right\}<\infty.
\]

De Pagter and Sukochev \cite{dePagter:20}
showed that Lotz' criterion \cite[Theorem 1]{Lotz:10} can be applied to a large  class of Marcinkiewicz spaces $M_\Psi$ that are defined as follows.

Let $0<r\leqslant \infty$ and let $\Psi\colon [0,r)\to [0,\infty)$ be a non-zero increasing concave function, continuous on $(0,r)$ and satisfying $\Psi(0)=0$.
The Marcinkiewicz space $M_\Psi(0,r)$ is the space of all real-valued measurable functions $f$ on $(0,r)$ such that
$$
\|f\|_{M_\Psi}=\sup_{0<t<r} \frac{1}{\Psi(t)} \int_0^t f^*(s)\, {\rm d}s<\infty,
$$
where $f^*$ is the decreasing rearrangement of $|f|$.

The subsequent result is due to de Pagter and Sukochev (\cite[Theorem 5.6]{dePagter:20}).

\begin{prop} Let $\Psi\colon [0,r)\to [0,\infty)$ be a function as above. Then the Marcinkiewicz space $M_\Psi(0,r)$ is Grothendieck if and only if one of the following conditions is satisfied:
\begin{enumerate}[label=(\Alph*)]
\item $r=\infty$, $\liminf\limits_{t\to 0} \Psi(2t)/\Psi(t)>1$ and $\liminf\limits_{t\to \infty} \Psi(2t)/\Psi(t)>1$, or
\item $r<\infty$ and $\liminf\limits_{t\to 0} \Psi(2t)/\Psi(t)>1$.
\end{enumerate}
\end{prop}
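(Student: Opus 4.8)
The plan is to handle the two implications separately; in outline it follows de Pagter and Sukochev. Throughout I use that $M_\Psi(0,r)$ is a rearrangement-invariant Banach \emph{lattice}, that its fundamental function is $\phi(t)=t/\Psi(t)$ (indeed $\|\mathds{1}_{(0,s)}\|_{M_\Psi}=s/\Psi(s)$, by concavity of $\Psi$), and that its K\"othe dual is the Lorentz space $\Lambda$ with $\|f\|_\Lambda=\int_0^r f^*\,{\rm d}\phi$; whenever $\Lambda$ has order-continuous norm one has $M_\Psi=\Lambda^*$, so $M_\Psi$ is then a dual Banach lattice and in particular has no complemented copy of $c_0$. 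Since $M_\Psi$ is a Banach lattice, R\"abiger's observation (\cite{Rabiger:85}; see the remarks following Theorem~\ref{th:char-G}) reduces the Grothendieck property of $M_\Psi$ to the single requirement that $M_\Psi$ has no quotient isomorphic to $c_0$; by Proposition~\ref{prop:NQc0} the latter says that every weak$^*$-null sequence in $M_\Psi^*$ has a weakly Cauchy subsequence.

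\emph{Sufficiency.} Assume (A) or (B). The hypotheses $\liminf_{t\to0}\Psi(2t)/\Psi(t)>1$ and (when $r=\infty$) $\liminf_{t\to\infty}\Psi(2t)/\Psi(t)>1$ are exactly a non-degeneracy/positive-lower-index condition on $\Psi$ at the relevant endpoints: there are $\varepsilon>0$ and $\tau>1$ such that $\Psi(\tau t)\geqslant(1+\varepsilon)\Psi(t)$ for all $t$ near $0$ (respectively near $\infty$). Under this condition I would verify that $M_\Psi(0,r)$ meets the hypotheses of Lotz' criterion \cite[Theorem~1]{Lotz:10}, namely that it is a Banach lattice with weakly sequentially continuous dual together with the accompanying growth-type technical conditions. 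Concretely this splits into: (i) $M_\Psi^*$ is weakly sequentially complete, which for the Banach lattice $M_\Psi^*$ amounts by the Lindenstrauss--Tzafriri characterisation \cite{LT2} to the absence of a copy of $c_0$ in $M_\Psi^*$, and here the uniform growth $\Psi(\tau t)\geqslant(1+\varepsilon)\Psi(t)$ is used to preclude $c_0$-structures assembled on shrinking (or, for $r=\infty$, expanding) dyadic blocks of scales; and (ii) the remaining technical hypotheses of \cite[Theorem~1]{Lotz:10}, which again follow from the same uniform growth, by estimating the $M_\Psi$-norms of normalised disjoint sequences concentrated at a fixed scale. Lotz' theorem then yields that $M_\Psi(0,r)$ is Grothendieck; this is \cite[Theorem~5.6]{dePagter:20}.

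\emph{Necessity.} Assume both (A) and (B) fail, so that either $\liminf_{t\to0}\Psi(2t)/\Psi(t)=1$ or ($r=\infty$ and) $\liminf_{t\to\infty}\Psi(2t)/\Psi(t)=1$. Treating the first alternative, pick $t_k\downarrow0$ with $\Psi(2t_k)/\Psi(t_k)\to1$; exploiting concavity one refines the choice so that over an increasingly long block of dyadic scales $[t_k,N_kt_k]$, with $N_k\to\infty$, all ratios $\Psi(2t)/\Psi(t)$ lie in $[1,1+o(1)]$, i.e.\ $\Psi$ is asymptotically additive along that block. On pairwise disjoint measurable sets realising these scales I would build a normalised disjoint sequence in $M_\Psi$ and, dually, a sequence in $M_\Psi^*$ certifying the failure of the Grothendieck property: either a lattice copy of $c_0$ in $M_\Psi^*$, so that $M_\Psi^*$ is not weakly sequentially complete, or a weak$^*$-null sequence in $M_\Psi^*$ equivalent to the unit vector basis of $\ell_1$, which by Proposition~\ref{prop:NQc0} exhibits $c_0$ as a quotient of $M_\Psi$. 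In either case $M_\Psi$ is not Grothendieck by Theorem~\ref{th:char-G}. The second alternative ($r=\infty$ with degeneracy at $\infty$) is handled symmetrically, with scales tending to $\infty$.

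\emph{Main obstacle.} The hard part is the sufficiency: checking the technical hypotheses of Lotz' criterion for $M_\Psi(0,r)$, that is, converting the analytic doubling-type condition on $\Psi$ into the functional-analytic input that \cite[Theorem~1]{Lotz:10} demands---weak sequential completeness of $M_\Psi^*$ together with sufficiently uniform control of bounded disjoint sequences across all scales. By comparison the necessity is, once the degenerate block of scales has been isolated, a concrete and essentially soft gliding-hump construction.
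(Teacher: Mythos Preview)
The paper does not give its own proof of this proposition: it is merely quoted as \cite[Theorem~5.6]{dePagter:20}, with the surrounding text noting only that de~Pagter and Sukochev show Lotz' criterion \cite[Theorem~1]{Lotz:10} applies to this class of Marcinkiewicz spaces. So there is nothing in the paper to compare your argument against beyond that one-line attribution.

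Your outline is in the right spirit---you correctly pick up the hint that sufficiency goes through Lotz' criterion---but what you have written is a strategy document, not a proof. In the sufficiency direction you never state precisely which hypotheses of \cite[Theorem~1]{Lotz:10} need checking (that theorem requires countable order completeness together with specific uniform estimates on disjoint sequences, not just ``weak sequential completeness of the dual plus growth-type conditions''), and the passage from the doubling condition on $\Psi$ to those hypotheses is asserted rather than carried out. In the necessity direction you hedge between two different mechanisms (``either a lattice copy of $c_0$ in $M_\Psi^*$ \ldots\ or a weak$^*$-null $\ell_1$-sequence'') without committing to one or giving the construction; the actual argument in \cite{dePagter:20} is more delicate than a generic gliding-hump. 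If you want a self-contained proof you will need to reproduce the substantive analysis from \cite{dePagter:20}; the present paper deliberately does not.
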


These results admit some extensions to general measure spaces. We refer to \cite{dePagter:20} for additional information.
\medskip

Polyrakis and Xanthos generalised the fact that for a compact F-space $K$ the space $C(K)$ is Grothendieck to the framework of general ordered Banach spaces as follows (\cite[Theorem 9]{PolyrakisXanthos:2013}). 

\begin{prop}\label{prop:cones}
Let $E$ be an ordered Banach space whose cone is closed and normal. If $E$ has an order unit and the countable interpolation property, then $E$ is a Grothendieck space. 
\end{prop}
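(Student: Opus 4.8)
The plan is to verify the characterisation of the Grothendieck property from Theorem~\ref{Groth-sp}, namely that every operator $T\colon E\to c_0$ is weakly compact, by showing directly that a weak$^*$-null sequence in $E^*$ admits a convex block subsequence converging to zero in norm (Proposition~\ref{prop:convblock}), or equivalently that no operator $T\colon E\to c_0$ is bounded below on a copy of $c_0$. The crucial structural input is the countable interpolation property combined with the order unit: these together should let us emulate, in the abstract ordered setting, the interpolation argument that makes a compact F-space a G-space (\cite{Lindenstrauss:64, Seever:68}; see also \cite[Theorem 4.6]{SmithWilliams:86}). First I would recall that since $E$ has a closed normal cone $P$ and an order unit $e$, the order-unit seminorm $\|x\|_e=\inf\{\lambda>0\colon -\lambda e\leqslant x\leqslant\lambda e\}$ is actually an equivalent norm on $E$ (normality gives one inequality, the order unit together with closedness of $P$ gives the other via a standard Baire-category / open-mapping argument). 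So without loss of generality we may assume $B_E=[-e,e]$, i.e. $E$ is an order-unit Banach space.

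Next I would take an arbitrary operator $T\colon E\to c_0$, write $Tx=(\langle f_n,x\rangle)_{n=1}^\infty$ with $(f_n)_{n=1}^\infty$ a weak$^*$-null sequence in $E^*$, and aim to produce a convex block subsequence of $(f_n)$ that is norm-null; by Proposition~\ref{prop:convblock} this suffices. Decomposing each $f_n=f_n^+-f_n^-$ into its positive and negative parts (available because $E^*$, the dual of an order-unit space, is an $L$-space, hence a Banach lattice), it is enough to treat a positive weak$^*$-null sequence $(g_n)_{n=1}^\infty$ of functionals and show some convex block sequence tends to $0$ in norm, where for positive functionals the norm is $\langle g_n,e\rangle$, which is already bounded. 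Here is where the interpolation property enters: if no convex combination of tails of $(g_n)$ has small norm, one extracts, by a standard gliding-hump/disjointification procedure, a semi-normalised "almost disjoint" block sequence $(h_k)_{k=1}^\infty$ of positive functionals together with a bounded sequence $(x_k)_{k=1}^\infty$ in $E_+$ that is "almost disjointly supported" in the sense that $\langle h_j,x_k\rangle$ is small for $j\neq k$ and $\geqslant\delta>0$ for $j=k$. Now one uses countable interpolation: passing to the two sequences obtained by grouping the $x_k$'s with even and odd indices into increasing partial suprema — which exist, or can be manufactured using interpolation between the increasing sequence of finite "sums" $x_1,x_1\vee x_2,\dots$ (built again by interpolation) and the constant $e$ — one obtains an element $z\in E$, $0\leqslant z\leqslant e$, that simultaneously separates the two halves, and evaluating the weak$^*$-null sequence $(g_n)$ at $z$ forces $\langle h_k,z\rangle$ to oscillate, contradicting weak$^*$-nullity (the $g_n$, and hence any block sequence of them, must converge to $0$ at the fixed point $z$). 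This contradiction yields the desired norm-null convex block subsequence, so $E$ is Grothendieck.

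The main obstacle, and the step that needs the most care, is the passage from "no good convex block sequence" to the almost-disjoint block sequence $(h_k)$ with a matching almost-disjoint sequence $(x_k)$ in $B_E$, and then the correct use of interpolation to build the separating element $z$: in a vector lattice this is routine, but the hypothesis here is only the \emph{countable interpolation property} of an order-unit space, which is genuinely weaker (as the paper notes, it differs from the monotone version on $M_2(\mathbb C)$). So one must be scrupulous that the sequences one feeds into the interpolation are honestly comparable — $x_n\leqslant y_m$ for all $n,m$ — which is arranged by interpolating each finite "partial sup" below $e$ first, and then interpolating between the two resulting bounded monotone families. A secondary technical point is that one works with the positive parts $g_n=f_n^+$, and must check these remain weak$^*$-null (or at least weak$^*$-convergent); this follows because $E^*$ is an $L$-space in which weak$^*$ convergence of a bounded sequence, together with uniform integrability coming from the order-unit structure, controls the lattice operations — alternatively one avoids the issue by noting $\langle f_n^+,x\rangle=\sup\{\langle f_n,u\rangle\colon 0\leqslant u\leqslant x\}$ and handling the supremum directly on order intervals, which are norm-bounded by the order-unit normalisation. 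Once these points are in place the argument closes cleanly via Proposition~\ref{prop:convblock} and Theorem~\ref{Groth-sp}.
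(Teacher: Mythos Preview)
The paper does not supply a proof of this proposition; it is quoted with attribution to \cite[Theorem~9]{PolyrakisXanthos:2013}. So there is no in-house argument to compare against, and I comment only on the soundness of your sketch.

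Your overall strategy---renorm with the order-unit norm so that $B_E=[-e,e]$, then use countable interpolation to build a separating element and contradict weak$^*$-nullity---is the right template, and your remark that $E^*$ is an L-space is justified (countable interpolation trivially implies finite Riesz interpolation, which makes the dual of an order-unit space a vector lattice with additive norm on the positive cone).

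The genuine gap is the reduction to \emph{positive} weak$^*$-null sequences. You pass from the weak$^*$-null $(f_n)$ to $g_n=f_n^+$ and then, at the crucial moment, invoke ``the $g_n$, and hence any block sequence of them, must converge to $0$ at the fixed point $z$''. But the positive-part map is not weak$^*$-sequentially continuous: already on $C[0,1]$ the measures $\mu_n=\delta_{1/(2n)}-\delta_{1/(2n+1)}$ are weak$^*$-null while $\mu_n^+=\delta_{1/(2n)}\to\delta_0\neq 0$ weak$^*$. Neither of your proposed workarounds repairs this. The formula $\langle f_n^+,x\rangle=\sup\{\langle f_n,u\rangle:0\leqslant u\leqslant x\}$ is correct for $x\geqslant 0$, but it does not force $\langle f_n^+,x\rangle\to 0$, since the near-maximising $u$ depends on $n$; and ``uniform integrability coming from the order-unit structure'' is exactly what has to be \emph{proved}, not assumed.

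The fix is to run the whole argument on the original sequence $(f_n)$ and never split into positive parts. What one actually needs is relative weak compactness of $\{f_n\}$ in the L-space $E^*$; its failure produces, via the standard AL-space/Kade\v{c}--Pe{\l}czy\'nski disjointification, a subsequence $(f_{n_k})$ and elements $x_k\in[0,e]$ with $|\langle f_{n_k},x_k\rangle|\geqslant\delta$ and $|f_{n_j}|$ small on $x_k$ for $j\neq k$. The interpolant $z\in[0,e]$ is built from the $x_k$ exactly as you describe, and one evaluates the genuinely weak$^*$-null $(f_{n_k})$ at $z$ to get the contradiction. Once you reroute the argument this way, the rest of your outline (including the careful use of countable, not merely monotone, interpolation to manufacture the comparisons $x_n\leqslant y_m$) goes through.
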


As already mentioned in Section~\ref{sect:ordered}, the countable interpolation property and the countable monotone interpolation  property are equivalent for Banach lattices, but in general they are not. It is thus natural to ask whether countable monotone interpolation property is sufficient for the Grothendieck property of a given ordered space.

\begin{quest}\label{quest:cmip}
Can we replace `the countable interpolation property' by `the countable monotone interpolation property' in Proposition \ref{prop:cones}? 

What happens in the case $E$ is a $C^*$-algebra?
\end{quest}

This problem is particularly relevant for $C^*$-algebras and related structures. Indeed, let $A$ be a $C^*$-algebra. As we have the canonical isometric identification of the dual space of $A = A_{\rm sa} \oplus_{\mathbb R} iA_{\rm sa}^*$ with $(A_{\rm sa})^* \oplus_{\mathbb R} i (A_{\rm sa})^*$, $A$ is a Grothendieck space if and only if so is the real Banach space $A_{\rm sa}$. Consequently, had Problem~\ref{quest:cmip} had affirmative solution, we would have found a way alternative to invoking Pfitzner's theorem (Theorem~\ref{thm:pf}) for establishing the Grothendieck property of $C^*$-algebras with countable monotone interpolation property. Such algebras include von Neumann algebras (\cite[p.~117]{SmithWilliams:88}) and corona algebras of $\sigma$-unital, non-unital algebras (\cite[Theorem 3.14.2]{Pedersen:2018}).\smallskip 

It turns out that the Grothendieck property may be characterised in terms of well-based cones (\cite[Theorem 15]{PolyrakisXanthos:2011}).\smallskip

\begin{prop}\label{polyrakis2}
Let $X$ be a Banach space. Then $X$ is \emph{not} a Grothendieck space if and only if there exists a well-based cone $P$ of $X^*$ such that the set
$$P_0 = \{x\in X\colon \langle f, x\rangle \geqslant 0 \quad (f\in P)\, \}$$
has non-empty interior and there exists $x_0\in P_0$ such that the set $\bigcup_{n=1}^\infty [-nx_0, nx_0]$ is dense in $X$ with respect to the seminorm $d_p(x) = \sup_{f\in V} \langle f, x\rangle$, where $V$ is the convex hull of $(B_{X^*}\cap P)\cup -(B_{X^*}\cap P)$.

\end{prop}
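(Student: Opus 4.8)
The natural route is to reduce everything to the sequential characterisations of the Grothendieck property proved above, most notably the convex block criterion (Proposition~\ref{prop:convblock}) and R\"abiger's theorem (Theorem~\ref{th:char-G}), and then to translate back and forth through the ordered-space dictionary of Section~\ref{sect:ordered}. Throughout, note that every $f\in P$ is $d_P$-continuous on $X$ (for $f\in B_{X^*}\cap P$ one has $\langle f,x\rangle\leqslant d_P(x)$, and $P$ is generated by $B_{X^*}\cap P$), so that $P$, $V$ and the base $B$ all live naturally in the dual of the semi-normed space $(X,d_P)$.

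\emph{From a cone to failure of the Grothendieck property.} Suppose a cone $P\subseteq X^*$ as in the statement is given; let $\phi\in X^{**}$ define the bounded base $B$ of $P$, and pass to the completion $Y$ of the quotient $X/\{x:d_P(x)=0\}$. Then the functionals in $P$ factor through $Y$, a scalar multiple of $B$ sits inside $B_{Y^*}$, and $V$ is weak*-dense in $B_{Y^*}$; the hypotheses say that $Y$ carries a closed normal cone whose dual cone contains $B$ as a bounded base and for which $x_0$ yields an order-interval chain $\bigcup_n[-nx_0,nx_0]$ dense in $Y$. The plan is to deduce that $B$ cannot be relatively weakly compact in $X^*$: were it so, $V$ would be relatively weakly compact, $d_P$ would be generated by a weakly compact set, and density of the order intervals would force the relevant quotient of $X$ to be reflexive, contradicting that $B$ — which is relatively weak*-compact once $P$ is arranged weak*-closed, being bounded — fails to be weakly compact. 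Taking $(g_n)\subseteq B$ with no weakly convergent subsequence and passing to a weak*-convergent subsequence with limit $g$, the sequence $(g_n-g)$ is weak*-null and not weakly null, so $X$ is not Grothendieck.

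\emph{From failure of the Grothendieck property to a cone.} Assume $X$ is not Grothendieck. By Proposition~\ref{prop:convblock} there is a weak*-null sequence $(f_n)$ in $X^*$ with no norm-null convex block subsequence, and a diagonal argument upgrades this to: after relabelling, $\|f_n\|\leqslant 1$ and there is $\delta>0$ with $\big\|\sum_{k\in A}c_kf_k\big\|\geqslant\delta$ whenever $A\subseteq\mathbb N$ is finite and $c_k\geqslant 0$ with $\sum_k c_k=1$. Separating $0$ from the closed convex hull $C$ of $\{f_n:n\in\mathbb N\}$ produces $\phi\in X^{**}$ with $\langle\phi,g\rangle\geqslant\alpha>0$ on $C$. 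The crucial step is to replace $\phi$ by an honest vector of $X$: passing to a further subsequence so that $(\langle\psi,f_n\rangle)_n$ behaves well for enough test functionals $\psi$, and appealing to Goldstine's theorem and the principle of local reflexivity, one manufactures $x_0\in X$ with $\langle f_n,x_0\rangle\geqslant\beta>0$ for all $n$ — thickening the family to $\{f_n+\lambda h:n\in\mathbb N\}$ for a suitable $h\in X^*$ beforehand if needed, so that the predual cone acquires interior points. Let $P$ be the cone generated by the closed convex hull of this family. Then $P$ is well-based by $\phi$ (since $C\subseteq B_{X^*}$ while $\langle\phi,\cdot\rangle\geqslant\alpha$ on $C$); $x_0$ lies in the interior of $P_0$ because $\langle f,x_0\rangle\geqslant\beta\|f\|$ for every $f\in P$; and $\bigcup_n[-nx_0,nx_0]$ is $d_P$-dense in $X$, since $d_P$ only records pairings against $P$, against all of which $x_0$ behaves as an order unit, while the inequality $\|\sum c_kf_k\|\geqslant\delta$ together with weak*-nullity keeps the base of $P$ from being relatively weakly compact, as required for consistency with the first half.

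\emph{Main obstacle.} The decisive difficulty, present in both directions, is to make the three demands on $P$ — a bounded base, a predual cone with nonempty interior, and a $d_P$-dense chain of order intervals — coexist: in the necessity direction one must upgrade the $X^{**}$-functional produced by separation to a genuine $x_0\in X$ witnessing interiority of $P_0$ (the delicate Hahn--Banach/Goldstine bookkeeping), and in the sufficiency direction one must extract from the purely order-theoretic density hypothesis the functional-analytic conclusion that the base of $P$ fails to be relatively weakly compact.
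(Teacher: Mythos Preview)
The paper does not prove this proposition; it merely records the statement with a citation to \cite[Theorem 15]{PolyrakisXanthos:2011}. There is therefore no proof in the paper to compare your attempt against.

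That said, your proposal is an outline rather than a proof, and the two difficulties you flag in your final paragraph are not in fact resolved by what precedes them.

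In the sufficiency direction (cone $\Rightarrow$ not Grothendieck), you write ``Taking $(g_n)\subseteq B$ with no weakly convergent subsequence and passing to a weak*-convergent subsequence''. But bounded sequences in $X^*$ need not admit weak*-convergent subsequences: this would require $B_{X^*}$ to be weak*-sequentially compact, which fails for $X=\ell_\infty$ among many others. Moreover, the preceding claim that $B$ is not relatively weakly compact is only gestured at (``density of the order intervals would force the relevant quotient of $X$ to be reflexive''); why a $d_P$-dense chain of order intervals forces reflexivity of the quotient is precisely the substance of this direction, and it is not supplied.

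In the necessity direction (not Grothendieck $\Rightarrow$ cone), the step ``one manufactures $x_0\in X$ with $\langle f_n,x_0\rangle\geqslant\beta>0$ for all $n$'' cannot work as written: your sequence $(f_n)$ is weak*-null, so $\langle f_n,x\rangle\to 0$ for \emph{every} $x\in X$. You try to repair this by ``thickening'' to $f_n+\lambda h$, but then the well-basedness of the new cone, the non-relative-weak-compactness of its base, and the $d_P$-density of the order intervals generated by $x_0$ all need to be re-established, and none of this is carried out. Goldstine's theorem and local reflexivity are of no help here: they approximate finitely many linear conditions at a time, not the countably infinite family $\langle f_n,\cdot\rangle\geqslant\beta$ simultaneously.
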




\section{Dual Grothendieck spaces}\label{sect:dual-Groth}
We consider the class of Banach spaces $X$ such that the dual space $X^*$ is a Grothendieck space. Clearly reflexive spaces are in this class, as well as $\Lc_1$-spaces, because $X$ is a $\Lc_1$-space if and only if $X^*$ is a~$\Lc_\infty$-space; equivalently, $X^*$ is injective (Proposition \ref{dual-Linfty}).
Moreover, preduals of von Neumann algebras are in this class too.

\begin{quest}
 Find a characterisation of dual Grothendieck spaces.
\end{quest}
Certainly, weak sequential completeness of $X^{**}$ is a~necessary condition for $X^*$ being a Grothendieck space.\smallskip

Let $X^{(k)}$ denote the dual of order $k$ ($k\in\N\cup\{0\}$) of a Banach space $X$; \emph{i.e.}, $X^{(0)}=X$ and $X^{(k+1)} = (X^{(k)})^*$. 
Contreras and D\'\i az (\cite[Corollaries 3.7 and 3.9]{Contreras:99}) continued the work of Bourgain \cite{Bourgain:83} as follows: 

\begin{example}
The duals of even order $A^{(2k)}$ and $H^{\infty (2k)}$ of the disk algebra $A$ and $H^\infty$ are Grothendieck spaces.
None of them is a $\Lc_\infty$-space.
\end{example}

From Corollary \ref{cor:2<p} we get:

\begin{example}
For $2<p<\infty$, the space $\ell_\infty \projtp \ell_p\equiv (c_0 \projtp \ell_p)^{**}$ is Grothendieck. 
\end{example}

Next we present a construction of Leung \cite[Examples 11 and 15]{Leung:88a} which is a variation of the Schreier space. 
\begin{prop}\label{prop:ex-Leung}
There exists a Banach space $F$ with a shrinking unconditional basis $(e_i)_{i=1}^\infty$, which satisfies the following properties:
\begin{enumerate}
\item Both sequences $(e_i)_{i=1}^\infty$ and the  coefficient functionals $(e^*_i)_{i=1}^\infty$ are weakly null. Hence $F$ fails the Dunford--Pettis property. 
\item $F$ is hereditarily $c_0$ and $F^*$ is hereditarily $\ell_1$.
\item $F^{**}$ is Grothendieck, but it is not a $\Lc_\infty$-space because it fails the Dunford--Pettis property. 
\item $F^{**}$ has the {surjective Dunford--Pettis property}: every surjective operator from $F^{**}$ onto a reflexive space maps weakly convergent sequences to convergent sequences. 
\end{enumerate}
\end{prop}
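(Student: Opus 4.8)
The plan is to take $F$ to be Leung's space from \cite[Examples~11 and~15]{Leung:88a}, a Schreier-type modification of $c_{00}$ whose norm formula makes the unit vector basis $(e_i)_{i=1}^\infty$ at once $1$-unconditional and shrinking, and to check the four clauses in turn. First I would record the description of the bidual forced by the shrinking property: $F^{**}$ is the space of coefficient sequences $(a_i)_{i=1}^\infty$ with $\sup_n\bigl\|\sum_{i\le n}a_ie_i\bigr\|_F<\infty$, and the crucial feature of the admissibility pattern is that this supremum can be finite only when $a_i\to0$ (a coefficient sequence with non-vanishing coordinates contains arbitrarily far-out admissible blocks of uncontrolled norm). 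Consequently $\langle x^{**},e^*_i\rangle\to0$ for every $x^{**}\in F^{**}$, i.e.\ $(e^*_i)_{i=1}^\infty$ is weakly null in $F^*$, hence also in $F^{***}$; and $(e_i)$, being shrinking, is weakly null in $F$ and---by a routine check with the canonical embeddings---in $F^{**}$ as well. Clause (1) then follows: the coefficient operator $Tx=(\langle x,e^*_i\rangle)_{i=1}^\infty$ maps $F$ into $c_0$, and it is weakly compact because $T^*$ sends $B_{\ell_1}$ into $\overline{\mathrm{conv}}\,\bigl(\{\pm e^*_i:i\in\N\}\cup\{0\}\bigr)$, which is weakly compact by the Krein--Smulian theorem; but $Te_i$ is the $i$-th unit vector of $c_0$, so $T$ does not take the weakly null sequence $(e_i)$ to a norm-null one, and $F$ fails the Dunford--Pettis property.

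For clause (2) I would run the standard block-basis analysis of Schreier-type norms. After a small perturbation, every infinite-dimensional subspace of $F$ contains a normalised block basis $(y_n)$ of $(e_i)$; $1$-unconditionality gives the lower estimate $\bigl\|\sum_na_ny_n\bigr\|_F\ge c\max_n|a_n|$, and the admissibility condition, once $(y_n)$ is replaced by a tail supported far enough out, gives the matching upper $c_0$-estimate, so that $(y_n)$ spans a copy of $c_0$; hence $F$ is hereditarily $c_0$. The transposed estimates for block bases of $(e^*_i)$ show that $F^*$ is $\ell_1$-saturated; in particular $F^*$ contains no copy of $c_0$, so $(e^*_i)$ is boundedly complete and $F^*$ is weakly sequentially complete by \cite[Theorem 1.c.7]{LT2}.

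Clause (3) I would split in two. First, repeating the argument of clause (1) one level up: the operator $x^{**}\mapsto(\langle x^{**},e^*_i\rangle)_{i=1}^\infty$ is well defined into $c_0$ (bidual coefficients vanish) and weakly compact, but does not send the weakly null sequence $(e_i)\subseteq F^{**}$ to a norm-null one, so $F^{**}$ fails the Dunford--Pettis property and therefore is not a $\Lc_\infty$-space. Second, $F^{**}$ is Grothendieck; here I would use the canonical splitting $F^{***}=F^*\oplus F^\perp$, with $F^\perp\cong(F^{**}/F)^*$, together with the structural fact---this is exactly where the quantitative Schreier estimates of \cite{Leung:88a} are used---that $F^{**}/F$ is reflexive. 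Granting this, take $(\phi_n)_{n=1}^\infty$ weak$^*$-null in $F^{***}$ and write $\phi_n=f_n+\psi_n$; testing against vectors of $F$ shows $(f_n)$ is weak$^*$-null in $F^*$, while passing to a subsequence and using reflexivity of $F^\perp$ gives a weak limit $\psi_n\rightharpoonup\psi$, so that $\langle u,\psi_n\rangle\to\langle u,\psi\rangle$ and hence $\langle u,f_n\rangle\to-\langle u,\psi\rangle$ for every $u\in F^{**}$; that is, $(f_n)$ is weakly Cauchy in $F^*$. Weak sequential completeness of $F^*$ yields $g\in F^*$ with $f_n\rightharpoonup g$ and $\langle u,g\rangle=-\langle u,\psi\rangle$ for all $u$, which forces the image of $g$ in $F^{***}$ to equal $-\psi\in F^\perp$; since $F^*$ and $F^\perp$ meet only in $0$, we get $g=0$ and $\psi=0$, and since weak convergence is preserved under bounded operators, $f_n\rightharpoonup0$ in $F^*$ and $\psi_n\rightharpoonup0$ in $F^\perp$ give $\phi_n\rightharpoonup0$ in $F^{***}$. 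The subsequence principle upgrades this to the original $(\phi_n)$, so every weak$^*$-null sequence in $F^{***}$ is weakly null and $F^{**}$ is Grothendieck.

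Finally, for clause (4) I would use once more that $F^*$ contains no infinite-dimensional reflexive subspace: given a surjection $q\colon F^{**}\to Z$ onto a reflexive space, $q^*(Z^*)$ is a reflexive subspace of $F^{***}=F^*\oplus F^\perp$, so up to a finite-dimensional correction it lies in $F^\perp\cong(F^{**}/F)^*$; thus $q$ essentially factors through the reflexive quotient $F^{**}/F$, and it remains to show that the quotient map $F^{**}\to F^{**}/F$ carries weakly null sequences to norm-null ones---equivalently, that a weakly null sequence in $F^{**}$ has vanishing distance to $F$. This last, quantitative, step is the technical heart of (4), and I would take it from \cite{Leung:88a}. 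The main obstacle throughout is thus concentrated in the two places where the combinatorics of the admissibility family really enters---the identification of $F^{**}$ as the \emph{vanishing} bounded coefficient sequences, and the reflexivity of $F^{**}/F$ with the attendant complete-continuity statement; the rest of the argument is soft.
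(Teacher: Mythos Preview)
The paper's own treatment of this proposition is minimal: it records the definition of the norm on $F$ and then refers wholesale to \cite{Leung:88a} for the verification of all four clauses. Your proposal is considerably more ambitious---you try to supply a soft framework into which Leung's hard estimates can be plugged---but the framework for clause (3) rests on a structural claim that is false.

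You assert that $F^{**}/F$ is reflexive (equivalently, that $F^\perp\subset F^{***}$ is reflexive), and you attribute this to Leung. It is not true. Since $F$ is hereditarily $c_0$, pick a closed subspace $Y\subset F$ isomorphic to $c_0$. Then $Y^{\perp\perp}\subset F^{**}$ is canonically isometric to $Y^{**}\cong\ell_\infty$, and $Y^{\perp\perp}\cap F=Y$ (an element of $F$ annihilated by $Y^\perp$ lies in $\overline{Y}=Y$), so the induced map $Y^{\perp\perp}/Y\to F^{**}/F$ is injective. Thus $F^{**}/F$ contains an isomorphic copy of $\ell_\infty/c_0$, which is certainly not reflexive. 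Your subsequence argument for (3) therefore breaks at the very first step, where you extract a weakly convergent subsequence of $(\psi_n)$ in $F^\perp$.

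The same error propagates to your treatment of (4): you reduce to the quotient map $F^{**}\to F^{**}/F$ being completely continuous and to $F^{**}/F$ being reflexive, but the latter fails. Independently, the claim that a reflexive subspace of $F^*\oplus F^\perp$ lies in $F^\perp$ ``up to a finite-dimensional correction'' needs justification: the projection of a reflexive subspace onto the $F^*$ summand is a bounded operator into an $\ell_1$-saturated space, but its range need not be finite-dimensional (nor even closed). Leung's actual route to (3) and (4) does not pass through reflexivity of $F^{**}/F$; the Grothendieck property and the surjective Dunford--Pettis property are obtained directly from the quantitative block estimates specific to his norm.
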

Let us briefly describe the construction of $F$. Given an integer $i\geqslant 0$, we say that a~subset $A$ of $\N$ is $i$-\emph{admissible}, whenever $\textrm{card\,}A=2^i$ and $\min A\geqslant 2^i$. For $1\leqslant p\leqslant\infty$, we denote by $\|\cdot\|_p$ the $\ell_p$-norm in the space $c_{00}$ of all finitely supported scalar sequences.
\newline\indent
Given $x=(a_i)_{i=1}^\infty\in c_{00}$ and an $i$-admissible set $A$, we set $q_A(x)=\|x\|_{\sqrt{i}}$ (with the convention $\|x\|_{\sqrt{0}}=\|x\|_\infty)$, and define
$$\|x\|_F =\sup\{q_A(x)\colon i=0,1,2, \ldots \text{ and } A\text{ is }i\text{-admissible}\}.$$

The space $F$ is the completion of $(c_{00}, \|\cdot\|_F)$. It is not difficult to check that the standard unit vector basis $(e_i)_{i=1}^\infty$ is a monotone unconditional basis of $F$. We refer to \cite{Leung:88a} for the proof of the properties of $F$ and its dual spaces.\medskip

A simple but interesting example is the following one. 
\begin{example}
The space $E = (\bigoplus_{n\in\mathbb N}\ell_2^n)_{\ell_\infty}$ (isometric to the bidual of  $(\bigoplus_{n\in\mathbb N}\ell_2^n)_{c_0}$) has the Grothendieck property, yet it is not a $\Lc_\infty$-space and fails the Dunford--Pettis property, because it contains a complemented copy of $\ell_2$.
\end{example}
Indeed, $E$ is a complemented subspace of the direct sum $\ell_\infty(\ell_2)$, which in turn is a quotient of $\ell_\infty(\ell_\infty)\equiv \ell_\infty$ so it is a Grothendieck space. In order to construct a projection $P$ onto $E$ with range isomorphic to $\ell_2$, we fix a non-principal ultrafilter $\U$ on $\N$. Then, for a~sequence $a=((a_{n,k})_{k=1}^n)_{n=1}^\infty\in E$, we denote $a_{\infty,k}=\lim_{n\to\U}a_{n,k}$, and define $Pa$ by $(Pa)_{n,k}=a_{\infty,k}$.

\chapter{Stability properties of Grothendieck spaces}\label{chap:stability}

Here we will show how to produce  new examples of Grothendieck spaces beginning with the ones we know.

\section{Subspaces of separable codimension}\label{sect:sep-codim}
The following result was proved in  \cite[Proposition 3.1]{GLR:21}.

\begin{prop}\label{X/M-sep}
Let $X$ be a Grothendieck space. If $M$ is a closed subspace of $X$ and $X/M$ is separable, then $M$ is a Grothendieck space.
\end{prop}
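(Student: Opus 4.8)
The plan is to show directly from the definition that $M$ is Grothendieck: I would take a weak$^*$-convergent sequence $(\phi_n)_{n=1}^\infty$ in $M^*$ and prove it converges weakly. Subtracting the weak$^*$ limit reduces us to a weak$^*$-null sequence, and by the standard subsequence argument it is enough to show that every weak$^*$-null sequence in $M^*$ has a weakly null subsequence. Observe in passing that $X/M$, being a quotient of the Grothendieck space $X$, is itself Grothendieck; but what the proof really exploits is that $X/M$ is \emph{separable}.

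First I would lift the functionals. Let $J\colon M\to X$ be the inclusion, so $J^*\colon X^*\to M^*$ is a metric surjection by the Hahn--Banach theorem; pick $f_n\in X^*$ with $J^*f_n=\phi_n$ and $\|f_n\|=\|\phi_n\|$, so that $(f_n)_{n=1}^\infty$ is bounded. The sequence $(f_n)_{n=1}^\infty$ need not be weak$^*$-convergent, and repairing this is where separability of $X/M$ is used. Fix a countable set $D\subset X$ whose image under the quotient map $q\colon X\to X/M$ is dense in $X/M$. A diagonal argument produces a subsequence of $(f_n)$ along which $\langle f_n,d\rangle$ converges for every $d\in D$. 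Since $\langle f_n,m\rangle=\langle\phi_n,m\rangle\to 0$ for each $m\in M$ and $\sup_n\|f_n\|<\infty$, for an arbitrary $x\in X$ one chooses $d\in D$ and $m\in M$ with $x-d-m$ of small norm and writes $\langle f_n,x\rangle=\langle f_n,d\rangle+\langle f_n,m\rangle+\langle f_n,x-d-m\rangle$; this shows $(\langle f_n,x\rangle)_n$ is Cauchy. Hence the subsequence is weak$^*$-convergent to some $f\in X^*$, and $\langle f,m\rangle=\lim_n\langle\phi_n,m\rangle=0$ for $m\in M$, i.e. $J^*f=0$. Thus $g_n:=f_n-f$ is a weak$^*$-null sequence in $X^*$ with $J^*g_n=\phi_n$.

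Now the Grothendieck property of $X$ finishes the job: $(g_n)_{n=1}^\infty$, being weak$^*$-null in $X^*$, is weakly null in $X^*$. To transport this to $M^*$, note that for every $\Phi\in M^{**}$ one has $\langle\Phi,\phi_n\rangle=\langle\Phi,J^*g_n\rangle=\langle J^{**}\Phi,g_n\rangle\to 0$, since $J^{**}\Phi\in X^{**}$. Therefore this subsequence of $(\phi_n)$ is weakly null, as required; applying this to each subsequence of the original weak$^*$-null sequence shows the whole sequence is weakly null, so $M$ is Grothendieck.

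The only delicate step is the one just highlighted: manufacturing, from a weak$^*$-convergent sequence in $M^*$, a weak$^*$-convergent subsequence of liftings in $X^*$ — this is precisely what the separability of $X/M$ buys, and it cannot be dispensed with, as the exact sequence $0\to c_0\to\ell_\infty\to\ell_\infty/c_0\to 0$ shows that a closed subspace of a Grothendieck space need not be Grothendieck. Everything else in the argument is formal.
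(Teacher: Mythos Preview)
Your proof is correct, and it proceeds along a genuinely different route from the paper's own argument. The paper uses the operator characterisation of the Grothendieck property (Theorem~\ref{Groth-sp}): given an operator $S\colon M\to c_0$, since $c_0$ is separably injective and $X/M$ is separable, $S$ extends to an operator $T\colon X\to c_0$ (this extension result is quoted from \cite{ACCGM:16}); $T$ is then weakly compact because $X$ is Grothendieck, hence so is $S$.

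Your approach instead works directly with the definition and is entirely self-contained: you lift a weak$^*$-null sequence in $M^*$ via Hahn--Banach, use the separability of $X/M$ in a diagonal argument to extract a weak$^*$-convergent subsequence of the lifts in $X^*$, apply the Grothendieck property of $X$ to upgrade this to weak convergence, and push down via $J^*$. This avoids invoking the (nontrivial) extension property of separably injective spaces, at the cost of a slightly longer argument; it is in spirit close to the `lifting results for sequences' technique of Lemma~\ref{liftingG} and Proposition~\ref{prop:3SP} in the paper, though those use the Grothendieck property of $X/M$ rather than its separability. Both proofs isolate the same crucial phenomenon---that separability of $X/M$ lets one pass from $M^*$ back up to $X^*$---but through different mechanisms.
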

\begin{proof}
Let $S\colon M\to c_0$ be an operator. Since $c_0$ is separably injective (\cite[Theorem 2.3]{ACCGM:16}) and the quotient $X/M$ is separable, $S$ admits an extension $T\colon X\to c_0$ (\cite[Proposition 2.5]{ACCGM:16}), which
is weakly compact by Theorem~\ref{Groth-sp}.
Then $S$ is weakly compact, and Theorem~\ref{Groth-sp} allows us to conclude that $M$ is Grothendieck.
\end{proof}

Proposition \ref{prop:Linfty-quot} provides further information about the examples captured by Proposition~\ref{X/M-sep}.

\begin{cor}\label{cor:not-l-infty}
Let $X$ be a Grothendieck $\mathcal{L}_\infty$-space. If $Y$ is a closed subspace of $X$ with $X/Y$ infinite-dimensional and separable, then $Y$ is Grothendieck, but it is not a~$\mathcal{L}_\infty$-space. In particular, $Y$ is not isomorphic to $X$.
\end{cor}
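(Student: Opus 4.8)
The plan is to derive this as a direct consequence of two facts already established in the excerpt: Proposition~\ref{X/M-sep} (subspaces of separable codimension in a Grothendieck space are Grothendieck) and Proposition~\ref{prop:Linfty-quot} (if $Y$ and $X$ are $\mathcal{L}_\infty$-spaces, then so is $X/Y$). First I would invoke Proposition~\ref{X/M-sep} with $M=Y$: since $X$ is Grothendieck and $X/Y$ is separable, $Y$ is a Grothendieck space. This settles the first assertion immediately.

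For the second assertion, I would argue by contradiction. Suppose $Y$ were a $\mathcal{L}_\infty$-space. Since $X$ is also a $\mathcal{L}_\infty$-space and $Y$ is a closed subspace of $X$, Proposition~\ref{prop:Linfty-quot} would force the quotient $X/Y$ to be a $\mathcal{L}_\infty$-space as well. But $X/Y$ is infinite-dimensional and separable, and an infinite-dimensional $\mathcal{L}_\infty$-space contains a copy of $c_0$ (indeed a separable infinite-dimensional $\mathcal{L}_\infty$-space has a complemented subspace isomorphic to $c_0$ — this is the content of the remark just after Proposition~\ref{quest:l-infty} that every infinite-dimensional Grothendieck $\mathcal{L}_\infty$-space is non-separable, combined with the fact that separable $\mathcal{L}_\infty$-spaces are genuinely infinite-dimensional Banach spaces with non-trivial structure). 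Cleaner still: a separable $\mathcal{L}_\infty$-space cannot be Grothendieck, since by the parenthetical remark after Proposition~\ref{prop:SchD1} a separable Banach space is Grothendieck if and only if it is reflexive, and no infinite-dimensional reflexive space is a $\mathcal{L}_\infty$-space (a reflexive $\mathcal{L}_\infty$-space would have reflexive bidual, but by Proposition~\ref{prop:bidual-inj} its bidual would be injective, hence isomorphic to some $\ell_\infty(\Gamma)$, which is non-reflexive unless finite-dimensional). So $X/Y$ being an infinite-dimensional separable $\mathcal{L}_\infty$-space is already absurd on its own, without even needing that it is Grothendieck. This contradiction shows $Y$ is not a $\mathcal{L}_\infty$-space.

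Finally, $Y$ is not isomorphic to $X$: $X$ is a $\mathcal{L}_\infty$-space (being a $\mathcal{L}_\infty$-space is an isomorphic invariant, as it is defined purely in terms of finite-dimensional subspaces up to Banach--Mazur distance and $\lambda$ absorbs the isomorphism constant), whereas $Y$ is not, so they cannot be isomorphic.

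The only step requiring a little care is the assertion that an infinite-dimensional separable $\mathcal{L}_\infty$-space is not a $\mathcal{L}_\infty$-space is vacuous — rather, the substantive point is simply that such a space exists in abundance (e.g.\ $c_0$, $C[0,1]$, or the Bourgain--Delbaen spaces of Example~\ref{ex:BD}), so "infinite-dimensional separable $\mathcal{L}_\infty$-space" is not a contradictory notion, and therefore the genuine obstruction comes from combining separability with the \emph{Grothendieck} property via the cited characterisation. Thus the cleanest route keeps the Grothendieck property of $X/Y$ in play: $X/Y$ is separable and, being a quotient of the Grothendieck space $X$, is itself Grothendieck by Proposition~\ref{basic}\eqref{simple1}; a separable Grothendieck space is reflexive; but if $Y$ were a $\mathcal{L}_\infty$-space then $X/Y$ would be an infinite-dimensional reflexive $\mathcal{L}_\infty$-space, contradicting Proposition~\ref{prop:bidual-inj} (its bidual, equal to itself, cannot be injective while being reflexive and infinite-dimensional). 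I expect no real difficulty here; the main thing is to pick one of these equivalent contradictions and state it crisply.
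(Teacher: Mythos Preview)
Your proposal is correct and, once you settle on the ``cleanest route'' in the final paragraph, it is essentially identical to the paper's proof: the paper simply says that if $Y$ were a $\mathcal{L}_\infty$-space then so would $X/Y$ (by Proposition~\ref{prop:Linfty-quot}), which is impossible because $X/Y$ is reflexive. The earlier detours through ``contains a copy of $c_0$'' are unnecessary and you rightly abandon them; trim the write-up to the last paragraph and you have exactly the paper's argument.
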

\begin{proof}
If $Y$ were a $\mathcal{L}_\infty$-space, then $X/Y$ would be too, but it is not since  it is reflexive.   
\end{proof}

Corollary~\ref{cor:not-l-infty} suggests the following problem.

\begin{quest}
Let $X$ be a Grothendieck space without complemented separable, infinite-di\-men\-sio\-nal subspaces and let $Y$ be a closed subspace of $X$ with $X/Y$ infinite-di\-men\-sio\-nal separable.
Is it possible for $Y$ to be  isomorphic to $X$?
\end{quest}

Let us invoke another consequence of Proposition \ref{X/M-sep} (\cite[Theorem 3.3]{GLR:21}):

\begin{cor}\label{cor:uncountable} There exists an uncountable family of pairwise non-isomorphic Grothendieck subspaces $M$ of $\ell_\infty$ with $\ell_\infty/M$ infinite-dimensional and separable.
\end{cor}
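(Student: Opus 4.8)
The plan is to realise the required subspaces as kernels of quotient maps from $\ell_\infty$ onto the spaces $\ell_p$ with $2\leqslant p<\infty$. First observe that Proposition~\ref{X/M-sep} already does almost all the work: since $\ell_\infty$ is Grothendieck, \emph{every} closed subspace $M\subseteq\ell_\infty$ with $\ell_\infty/M$ separable is automatically Grothendieck. Thus the task reduces to exhibiting an uncountable, pairwise non-isomorphic family of closed subspaces of $\ell_\infty$ having infinite-dimensional separable codimension.

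Second, I would verify that $\ell_p$ is a quotient of $\ell_\infty$ for every $p\in[2,\infty)$. Identifying $\ell_\infty\equiv L_\infty(0,1)$ and fixing a normalised sequence $(h_n)_{n=1}^\infty$ in $L_1(0,1)$ equivalent to the unit vector basis of $\ell_{p'}$, where $1/p+1/p'=1$ (so $1<p'\leqslant 2$; for instance normalised i.i.d.\ $p'$-stable variables, the case $p'=2$ being the Khintchine/Rademacher case), the operator $q_p\colon L_\infty(0,1)\to\ell_p$, $q_pf=\bigl(\int fh_n\bigr)_{n=1}^\infty$, is bounded, and its adjoint sends $(b_n)\in\ell_{p'}$ to $\sum_n b_nh_n\in L_1(0,1)\subseteq L_\infty(0,1)^*$, a map bounded below by the lower $\ell_{p'}$-estimate for $(h_n)$; hence $q_p$ is surjective. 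Set $M_p=\ker q_p$, so $\ell_\infty/M_p\cong\ell_p$ is infinite-dimensional and separable, and $M_p$ is Grothendieck. Since $\{\ell_p:2\leqslant p<\infty\}$ is uncountable, it remains only to separate the $M_p$ up to isomorphism.

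Third, and this is the crux, I would show $M_p\cong M_q$ forces $p=q$. Suppose $\Phi\colon M_p\to M_q$ is an isomorphism. Because $\ell_\infty$ is injective, $\Phi$ followed by the inclusion $M_q\hookrightarrow\ell_\infty$ extends to $A\colon\ell_\infty\to\ell_\infty$, and likewise $\Phi^{-1}$ yields $B\colon\ell_\infty\to\ell_\infty$. As $A(M_p)\subseteq M_q=\ker q_q$, the operator $q_qA$ annihilates $M_p=\ker q_p$, hence factors as $q_qA=Sq_p$ for a unique $S\colon\ell_p\to\ell_q$; symmetrically $q_pB=Rq_q$ for some $R\colon\ell_q\to\ell_p$. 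Since $BA$ restricts to the identity on $M_p$, we get $BA=\mathrm{id}_{\ell_\infty}+Cq_p$ for some $C\colon\ell_p\to\ell_\infty$, and composing with $q_p$ gives $RS=\mathrm{id}_{\ell_p}+q_pC$; symmetrically $SR=\mathrm{id}_{\ell_q}+q_qC'$. The perturbation $q_pC$ is compact: $q_p$ is weakly compact (because $\ell_\infty$ is Grothendieck and $\ell_p$ is separable, by Theorem~\ref{Groth-sp}), and $\ell_\infty$ has the Dunford--Pettis property, so $q_p$ is completely continuous; as $\ell_p$ is reflexive, $q_pC$ then maps bounded sequences to norm-convergent ones, and similarly for $q_qC'$. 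Now if $p\neq q$, Pitt's theorem makes the operator among $R,S$ going from the larger index to the smaller one compact, hence one of $RS$, $SR$ is compact; combined with $RS=\mathrm{id}_{\ell_p}+(\text{compact})$ and $SR=\mathrm{id}_{\ell_q}+(\text{compact})$ this forces $\mathrm{id}_{\ell_p}$ or $\mathrm{id}_{\ell_q}$ to be compact, which is absurd. Therefore $(M_p)_{2\leqslant p<\infty}$ is the desired uncountable family.

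The main obstacle is the third step, and within it the observation that the correction term $q_pC$ is compact; the remainder of that step is a routine diagram chase based on the injectivity of $\ell_\infty$, and it is precisely the Dunford--Pettis property of $\ell_\infty$ together with Pitt's theorem that upgrade the chase into a genuine non-isomorphism. A secondary point requiring care is the (known) fact used in the second step that $\ell_p$ is a quotient of $\ell_\infty$ exactly for $2\leqslant p<\infty$.
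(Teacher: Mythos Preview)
Your proof is correct and follows the same overall strategy as the paper: realise the subspaces as kernels of surjections from $\ell_\infty$ onto an uncountable family of mutually non-isomorphic separable reflexive spaces, invoke Proposition~\ref{X/M-sep} for the Grothendieck property, and then argue that an isomorphism between two such kernels forces the quotients to be isomorphic. The paper uses the subspaces $N_p\cong L_p\subset L_1$ ($1<p\leqslant 2$) and takes $M_p=N_p^\perp\subset L_\infty\cong\ell_\infty$, so that $\ell_\infty/M_p\cong L_{p^*}$; you instead take quotients onto $\ell_p$ via $p'$-stable sequences, which is an equally valid choice.

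The genuine difference is in the non-isomorphism step. The paper cites \cite[Theorem 2.f.12]{LT1} directly: since the quotients $\ell_\infty/M_p$ are reflexive, any isomorphism $M_p\to M_q$ extends to a Fredholm operator on $\ell_\infty$, and the induced operator between the quotients is again Fredholm, forcing $L_{p^*}\cong L_{q^*}$. Your argument is a self-contained substitute for this: you extend both $\Phi$ and $\Phi^{-1}$ by injectivity, read off induced operators $S,R$ between $\ell_p$ and $\ell_q$, and show that the defects $RS-\mathrm{id}$ and $SR-\mathrm{id}$ are compact by combining the Dunford--Pettis property of $\ell_\infty$ (so $q_p$ is completely continuous) with reflexivity of $\ell_p$; Pitt's theorem then finishes the job. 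This is more elementary and transparent in the $\ell_p$ setting---it avoids the black-box citation---while the paper's route is shorter and immediately applies to the $L_p$ quotients as well. Both approaches ultimately encode the same phenomenon: operators between the kernels lift to operators on $\ell_\infty$ that are invertible modulo ``small'' perturbations coming from the reflexive quotient.
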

\begin{proof}
For $1<p\leqslant 2$, the space  $L_1\equiv L_1[0,1]$ has a subspace $N_p$ isomorphic to $L_p$  \cite[Corollary 2.f.5]{LT1}. Then $N_p^\perp$ is a subspace of $L_\infty$ with $L_\infty/N_p^\perp$ isomorphic to $L_p^*$. Let $U\colon L_\infty\to\ell_\infty$ be an isomorphim and $M_p=U(N_p^\perp)$.
Then $\ell_\infty/M_p\cong L_p^*$ is infinite-dimensional and separable, hence $M_p$ is Grothendieck by Proposition \ref{X/M-sep}.

Let $1<p,q\leqslant 2$ and suppose that $T\colon M_p\to M_q$ is an isomorphism. Since $\ell_\infty/M_p$ and $\ell_\infty/M_q$ are reflexive, by \cite[Theorem 2.f.12]{LT1} there exists an extension $\hat T\colon \ell_\infty\to \ell_\infty$ of $T$ which is a Fredholm operator. Hence the operator $\hat T$ induces a Fredholm operator $S\colon \ell_\infty/M_p\to \ell_\infty/M_q$ implying that $L_p^*$ and $L_q^*$ are isomorphic (because either space is isomorphic to their subspaces of finite codimension); hence $p=q$.
\end{proof}

Can we improve Proposition \ref{X/M-sep} to assuming only that ${\rm dens}\,X / M < \mathfrak{p}$?

\begin{quest}\label{q:card-quot}
Let $X$ be a Grothendieck space. Suppose that $M$ is a closed subspace of $X$ such that ${\rm dens}\, X/M < \mathfrak{p}$. Is $M$ a Grothendieck space?
\end{quest}

Since separable quotients of Grothendieck spaces are reflexive, the following problem also arises.

\begin{quest}\label{q:reflex-quot}
Let $M$ be a closed subspace of a Grothendieck space $X$ such that $X/M$ is reflexive. Is $M$ a Grothendieck space?
%
\end{quest}

\noindent \emph{\textbf{Added in proof.}} Problems \ref{q:card-quot}
and \ref{q:reflex-quot}
have been recently solved affirmatively in \cite{martinezcervantez:2021}. In particular, even in the case when $X$ is Grothendieck and ${\rm dens}\, X/M < \mathfrak{s}$, where $\mathfrak{s}$ is a certain cardinal called the splitting number which satisfies  $\mathfrak{p}\leqslant \mathfrak{s}$, the subspace $M$ is Grothendieck. \medskip


The space $\ell_\infty$ contains isometric copies of every separable Banach space (in particular, reflexive), so the number of pair-wise non-isomorphic subspaces of $\ell_\infty$ that are Grothendieck is at least continuum. Among reflexive subspaces of $\ell_\infty$ this is also an upper bound as every reflexive subspace of $\ell_\infty$ is separable. 

\begin{quest}
What is the number of pairwise non-isomorphic Grothendieck subspaces of $\ell_\infty$;
can it be $2^{\mathfrak{c}}$?
\end{quest}

Dow, Gubbi, and Szyma\'nski constructed in \cite{Dowetal:1988} a family of  $2^{\mathfrak c}$ pairwise non-homeomor\-phic extremally disconnected, separable compact spaces that are topologically rigid. It is known that every separable extremally disconnected compact space $K$ embeds into $\beta \mathbb N$ (\cite[Corollary 3.2]{Woods:76}). As $K$ is separable, $C(K)$ embeds into $\ell_\infty$. Since the spaces are pairwise non-homeomorphic, by the Banach--Stone theorem, the corresponding spaces of continuous functions are pairwise non-isometric. Since each space $K$ is extremally disconnected, $C(K)$ is injective, hence Grothendieck. Thus, there are $2^{\mathfrak{c}}$ isometry types of Grothendieck subspaces of $\ell_\infty$. However, each $C(K)$ is in this case isomorphic to $\ell_\infty$, because as observed by Lindenstrauss \cite{Lindenstrauss:67}, infinite-dimensional complemented subspaces of $\ell_\infty$ are isomorphic to $\ell_\infty$.
\medskip

Sait\^{o} and Maitland Wright \cite[Corollary 20]{SaitoWright:2007} constructed $2^{\mathfrak{c}}$ sub-$C^*$-algebras of $\ell_\infty$ that are pairwise non-isomorphic as $C^*$-algebras, and each of these algebras is a monotone $\sigma$-complete quotient of the algebra of bounded Borel functions on $[0,1]$ by a suitable ideal. In particular, they are Grothendieck spaces (\emph{cf}.~Remark~\ref{rem:list} and Section~\ref{sect:c-star-tensor} for more details). By the Gelfand--Naimark theorem and the Banach--Stone theorem, they are not isometric as Banach spaces. However, we do not know whether they are non-isomorphic as Banach spaces.



\section{Non-trivial twisted sums}
Recall that the notion of twisted sum of two Banach spaces $Y$ and $Z$ was introduced in Section \ref{sect:3SP}.
By Proposition \ref{prop:3SP}, a twisted sum of Grothendieck spaces is Grothendieck. This fact will allow us to obtain further, perhaps exotic, examples of Grothendieck spaces. For this, we shall employ push-out and pull-back diagrams that are standard tools for building twisted sums from given spaces. 
We refer to \cite{CC:21} and  \cite{Castillo-Gonzalez:97} for information on these techniques. 

\begin{example}\label{ex-PB}
There exists a Banach space ${\rm PB}_1$ which is a non-trivial twisted sum of $\ell_2$ and $\ell_\infty$, yet it is not isomorphic to a direct sum of a Hilbert space and a $C(K)$ space.
\end{example}

\begin{proof}
We consider the Banach space $Z_2$ constructed by Kalton and Peck in \cite{KaltonP:79}, which is not isomorphic to a Hilbert space but it is a twisted sum thereof.
Thus we have an exact sequence
\begin{equation}
\begin{CD}\label{Z2-seq}
0@>>> \ell_2 @>j>> Z_2 @>q>>\ell_2@>>> 0.
\end{CD}
\end{equation}

Then we take a surjective operator $Q\colon\ell_\infty\to\ell_2$, and we can consider the pull-back diagram
\begin{equation}
\begin{CD}
0@>>> \ell_2 @>j_1>> {\rm PB}_1 @>q_1>> \ell_\infty @>>> 0\\
&&@| @VQ_1VV @VVQV\\
0@>>> \ell_2 @>j>> Z_2 @>q>>\ell_2@>>> 0
\end{CD}
\end{equation}
which is commutative and has both rows exact. In particular, $Q_1$ is a surjective operator because so is $Q$.

Every operator from $C(K)$ into its dual space  factors through a Hilbert space (\cite[Corollary 2.15 and Theorem 3.5]{DiestelJT:95}), hence the same applies to the direct sum $C(K)\oplus H$ of $C(K)$ and a~Hilbert space $H$. However, since $Z_2$ is isomorphic to its dual space (\cite[Theorem 6.1]{KaltonP:79}), we can consider the operator $Q_1^*Q_1\colon {\rm PB}_1\to {\rm PB}_1^*$ that does not factor through a Hilbert space. Hence ${\rm PB}_1$ is \emph{not} isomorphic to $C(K)\oplus H$. In particular, the upper exact sequence is non-trivial.
\end{proof}

Readers unacquainted with the pull-back diagrams may observe that $$
{\rm PB}_1=\{(z,y)\in Z_2\oplus_\infty \ell_\infty\colon qz=Qy\},
$$
which is a closed subspace of $Z_2\oplus_\infty \ell_\infty$, and the induced maps $Q_1, q_1$, and $j_1$ are simply defined by $Q_1(z,y)=z$, $q_1(z,y)=y$, and $j_1x=(x,0)$, respectively. \medskip

Next we present a more general construction of twisting a  Grothendieck $C(K)$-spaces with a Hilbert space.

\begin{example}
Let $K_2$ be an infinite G-space. Then there exists a Banach space ${\rm PB}_2$ which is a non-trivial twisted sum of $\ell_2$ and $C(K_2)$ and it is not isomorphic to a direct sum of a Hilbert space $H$ and a $C(K)$-space. Moreover, it may be arranged that ${\rm PB}_2$ does not contain isomorphic copies of $\ell_\infty$.
\end{example}
\begin{proof}
For the first part, since $C(K_2)$ contains a copy of $\ell_1$ by Corollary \ref{cor:l1}, it admits a quotient isomorphic to $\ell_2$. So we can repeat the argument given in Example \ref{ex-PB}.

For the second part, if we use as $C(K_2)$ the space given by Haydon in \cite{Haydon:81} (see Example \ref{ex:Haydon}), then ${\rm PB}_2$ does not contain copies of $\ell_\infty$. For this, it is enough to observe that containing no copies of $\ell_\infty$ is a three-space property \cite{Castillo-Gonzalez:97}.
\end{proof}

The following example of non-trivial twisted sum is based in Proposition \ref{X/M-sep}. 

\begin{example}\label{ex-PO}
Let $M$ and $N$ be closed subspaces of $\ell_\infty$ with $N\subset M$ such that both $M/N$ and $\ell_\infty/M$ are infinite-dimensional and separable. Then there exists a Banach space ${\rm PO}_1$ which is a non-trivial twisted sum of $M$ and $\ell_\infty/N$.
\end{example}
\begin{proof}
Note that, by Proposition \ref{X/M-sep}, both $M$ and $\ell_\infty/N$ are Grothendieck spaces.

We consider the exact sequence
\begin{equation}
\begin{CD}\label{C(Kh)-seq}
0@>>> N @>j>> \ell_\infty @>q>>\ell_\infty/N@>>> 0
\end{CD}
\end{equation}
and the embedding $J\colon N\to M$ and form the associated push-out diagram
\begin{equation}
\begin{CD}
0@>>> N @>j>> \ell_\infty @>q>> \ell_\infty/N @>>> 0\\
&& @VVJV @VJ_1VV @|\\
0@>>> M @>j_1>> {\rm PO}_1 @>q_1>> \ell_\infty/N@>>> 0
\end{CD}
\end{equation}
which is a commutative diagram in which both rows are exact.

To show that the lower exact sequence is non-trivial; \emph{i.e.}, that $j_1(M)$ is uncomplemented in ${\rm PO}_1$, it is enough to show that $J\colon N\to M$ does not admit an extension $\widehat J\colon \ell_\infty\to M$. See Lemma 20 of \cite[Appendix A]{ACCGM:16}. Indeed, since $\ell_\infty/N$ is reflexive, by \cite[Theorem 2.f.12]{LT1}, each extension of $J$ to an operator $T\colon \ell_\infty\to \ell_\infty$ is a Fredholm operator (has finite-dimensional kernel and finite-codimensional range). Hence, its range cannot be contained in $M$.\end{proof}

Readers unacquainted with the push-out diagrams may observe that
$$
{\rm PO}_1= (M\oplus_1 \ell_\infty)/W, \textrm{ where } W =\{(Jx,-jx)\in M \oplus_1 \ell_\infty\colon x\in N\};
$$
moreover denoting by $Q\colon M\oplus_1 \ell_\infty \to {\rm PO}_2$ the canonical quotient map, the induced maps $j_1, J_1$, and $q_1$ are defined by $j_1y =Q(y,0)$, $J_1z =Q(0,z)$, and $q_1Q(y,z)=qz$, respectively.

\begin{example}\label{ex-PO2}
There exists a Banach space ${\rm PO}_2$ which is a non-trivial twisted sum of Haydon's space $C(K_2)$ in Example \ref{ex:Haydon} and $\ell_\infty/c_0$.
\end{example}
\begin{proof} We consider the exact sequence
\begin{equation}
\begin{CD}\label{C(Kh)-seq2}
0@>>> c_0 @>j>> \ell_\infty @>q>>\ell_\infty/c_0@>>> 0
\end{CD}
\end{equation}
and the embedding $J\colon c_0\to C(K_2)$ and form the associated push-out diagram
\begin{equation}
\begin{CD}
0@>>> c_0 @>j>> \ell_\infty @>q>> \ell_\infty/c_0 @>>> 0\\
&& @VVJV @VJ_2VV @|\\
0@>>> C(K_2) @>j_2>> {\rm PO}_2 @>q_2>> \ell_\infty/c_0@>>> 0
\end{CD}
\end{equation}
To show that the lower exact sequence is non-trivial, it is enough to observe  that the embedding $J\colon c_0\to C(K_2)$ does not admit an extension $\widehat J\colon \ell_\infty\to C(K_2)$  (see Lemma 20 of \cite[Appendix A]{ACCGM:16}), and this follows from the fact that such an extension would fix a copy of $\ell_\infty$, because $J$ is not weakly compact \cite[Proposition 2.f.4]{LT1}, and this is not possible because $C(K_2)$ does not contain copies of  $\ell_\infty$.
\end{proof}
The fact that property $(V_\infty)$ fails the three-space property gives another non-trivial example of Grothendieck space. We refer to \cite[Proposition 6.3]{ACCGM:16} for the details of this construction.

\begin{example}\label{ex-noVinfty}
There exists a Banach space $Z$ failing property $(V_\infty)$ which is a non-trivial twisted sum of two spaces $\ell_\infty(\Gamma,\ell_2)$ and $\ell_\infty$ which have property $(V_\infty)$.
\end{example}
Indeed, there exists an exact sequence
\begin{equation}
\begin{CD}\label{noVinfty}
0@>>> \ell_\infty(\Gamma,\ell_2) @>j>> Z @>q>>\ell_\infty@>>> 0
\end{CD}
\end{equation}
such that $q$ is an isomorphism on no copy of $\ell_\infty$. Thus $Z$ fails property $(V_\infty)$.
\newline\indent
Note that $\ell_\infty(\Gamma,\ell_2)$ has property $(V_\infty)$ because it is a quotient of the space $\ell_\infty(\Gamma,\ell_\infty)$, which is isometrically isomorphic to $\ell_\infty(\Gamma\times\N)$;  moreover, $\ell_\infty(\Gamma, \ell_2)\oplus \ell_\infty \cong\ell_\infty(\Gamma,\ell_2)$  and property $(V_\infty)$ is stable under direct sums.

\section{Tensor products and spaces of bounded operators}
Here we study the possible inheritance of the Grothendieck property by Banach-space tensor products, which is a~rather rare phenomenon, and the Grothendieck property for $\mathscr{B}(X)$.

\subsection*{Projective tensor product} We denote by $X\projtp Y$ the projective tensor product of two Banach spaces $X$ and $Y$. We start with the following result ({\cite[Proposition 6]{GonzalezG:95}}):

\begin{prop}\label{prop:ptp}
If $X\projtp Y$ is Grothendieck, then both $X$ and $Y$ are Grothendieck, and one of them is reflexive.
\end{prop}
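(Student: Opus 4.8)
The plan is to exploit the fact that $X$ and $Y$ both sit inside $X\projtp Y$ as complemented subspaces---once suitable norm-one functionals are fixed---and that surjectivity behaves well under tensoring, so as to reduce everything to Proposition~\ref{basic} and Corollary~\ref{cor:l1}. First I would fix $y_0\in S_Y$ and $g_0\in S_{Y^*}$ with $\langle g_0, y_0\rangle = 1$. The maps $x\mapsto x\otimes y_0$ and $u\mapsto ({\rm id}_X\otimes g_0)(u)$ realise $X$ as a norm-one complemented subspace of $X\projtp Y$ (the composition is the identity on $X$, and one checks the projective norm estimates directly). By Proposition~\ref{basic}\eqref{simple1}, complemented subspaces of Grothendieck spaces are Grothendieck, so $X$ is Grothendieck; symmetrically $Y$ is Grothendieck. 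This is the easy half.

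For the reflexivity assertion, suppose for contradiction that neither $X$ nor $Y$ is reflexive. By Corollary~\ref{cor:l1}, each of $X$ and $Y$ then contains a (closed, infinite-dimensional) subspace isomorphic to $\ell_1$. The key point is that $\ell_1 \projtp \ell_1 \cong \ell_1$ (its projective tensor square is again $\ell_1$, since $\ell_1(\mathbb N)\projtp \ell_1(\mathbb N)\equiv \ell_1(\mathbb N\times\mathbb N)$), and more relevantly that $X\projtp Y$ will then have a complemented---or at least quotient---copy of $\ell_1\projtp\ell_1\cong \ell_1$; but a non-reflexive Grothendieck space must contain $\ell_1$, which by itself is not yet a contradiction. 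The sharper route is: $\ell_1$ has the metric approximation property, so a complemented copy of $\ell_1$ inside $X$ together with a complemented copy of $\ell_1$ inside $Y$ would yield a complemented copy of $\ell_1\projtp\ell_1\cong\ell_1$ in $X\projtp Y$ only if the copies of $\ell_1$ are complemented in $X$, $Y$---which need not hold. So instead I would use that $\ell_1$ is a quotient of $X$ and of $Y$: since $X$ (resp.\ $Y$) contains $\ell_1$ and is Grothendieck, in fact $X$ has a quotient isomorphic to $\ell_1$? No---Grothendieck spaces have \emph{no} quotient isomorphic to $c_0$, and containing $\ell_1$ does give a quotient onto $\ell_1$ only via the dual. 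Let me instead argue via weak sequential completeness of the dual: $(X\projtp Y)^* \equiv \mathscr{B}(X,Y^*)$, and one shows that if $X$ and $Y$ are both non-reflexive then this operator space fails to be weakly sequentially complete (it contains a copy of $c_0$ spanned by rank-one operators $x_n^*\otimes y_n^*$ coming from an $\ell_1$-basis in $X$ and a weak$^*$-null normalised sequence in $Y^*$, or symmetrically), contradicting Theorem~\ref{th:char-G} applied to the Grothendieck space $X\projtp Y$.

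The main obstacle is pinning down exactly the last contradiction cleanly. Concretely: by Corollary~\ref{cor:l1}, non-reflexivity of $X$ gives $\ell_1\hookrightarrow X$, hence $X^*$ has $L_1[0,1]$ (or at least $\ell_\infty$) as a quotient, so $X^{*}$ contains $c_0$; and likewise pick in $Y$ a copy of $\ell_1$ with biorthogonal weak$^*$-null normalised functionals $(y_n^*)\subset Y^*$. Define $T_n\in \mathscr{B}(X,Y^*)\equiv (X\projtp Y)^*$ by $T_n = x^*\otimes y_n^*$ for a fixed $x^*\in X^*$; then $(T_n)$ is weak$^*$-null in $(X\projtp Y)^*$ but, because $(y_n^*)$ spans a copy of $\ell_1$ inside $Y^*$ when restricted appropriately, $(T_n)$ spans a copy of $c_0$ or $\ell_1$ on which no convex block tends to zero in norm; by Proposition~\ref{prop:convblock} this contradicts that $X\projtp Y$ is Grothendieck. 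I expect the delicate step to be choosing the functionals so that the resulting operators genuinely span a non-Grothendieck-compatible subsequence---one must use the $\ell_1$-structure coming from \emph{both} factors simultaneously (that is precisely why a single non-reflexive factor is not enough, and where the hypothesis bites). Once that configuration is set up, the contradiction with the portmanteau Theorem~\ref{Groth-sp} or Theorem~\ref{th:char-G} is immediate.
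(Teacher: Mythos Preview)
Your first half is fine and matches the paper: $X$ and $Y$ sit as complemented subspaces of $X\projtp Y$, hence inherit the Grothendieck property.

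The second half has a genuine gap. You cycle through several ideas without completing any of them, and the final construction $T_n=x^*\otimes y_n^*$ cannot work. Indeed, you have already shown that $Y$ is Grothendieck; hence any weak$^*$-null sequence $(y_n^*)$ in $Y^*$ is weakly null and, by Proposition~\ref{prop:convblock}, admits convex blocks tending to zero in norm. Consequently $\|\sum_j\lambda_j T_{n_j}\|=\|x^*\|\,\|\sum_j\lambda_j y_{n_j}^*\|$ also has norm-null convex blocks, so no contradiction arises. Your remark that one must use the $\ell_1$-structure from \emph{both} factors is exactly right, but tensoring a fixed $x^*$ against a sequence in $Y^*$ uses only one factor.

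The paper's route is short and avoids all of this. From $\ell_1\hookrightarrow X$ and $\ell_1\hookrightarrow Y$ (Corollary~\ref{cor:l1}) it invokes the fact that a Banach space containing $\ell_1$ admits a quotient isomorphic to $\ell_2$ \cite[Corollary 4.16]{DiestelJT:95}. Surjections tensor to surjections for the projective norm \cite[Proposition 2.5]{Ryan:02}, so $X\projtp Y$ has a quotient isomorphic to $\ell_2\projtp\ell_2$. The latter is separable and non-reflexive (it is the trace class on $\ell_2$), hence not Grothendieck; this contradicts Proposition~\ref{basic}\eqref{simple1}. The missing ingredient in your attempt is precisely this passage to an $\ell_2$-quotient, which is what lets one exploit both copies of $\ell_1$ simultaneously via the functoriality of~$\projtp$ with respect to quotient maps.
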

\begin{proof}
The first part is clear, since $X\projtp Y$ is Grothendieck, and contains complemented copies of $X$ and $Y$.

Suppose that both $X$ and $Y$ are non-reflexive. By Corollary \ref{cor:l1}, both spaces contain $\ell_1$. Then both of them have a quotient isomorphic to $\ell_2$ \cite[Corollary 4.16]{DiestelJT:95}; hence $X\projtp Y$ has a quotient isomorphic to $\ell_2\projtp \ell_2$ \cite[Proposition 2.5]{Ryan:02}, which is separable and non-reflexive; thus $X\projtp Y$ is not Grothendieck.
\end{proof}
Proposition~\ref{prop:ptp} has a partial converse (\cite[Propositions 8 and 9]{GonzalezG:95}):
\begin{prop}
Suppose that $X$ is Grothendieck and $Y$ is reflexive.
\begin{enumerate}
\item If $\mathscr B(X,Y^*)=\mathscr K(X,Y^*)$, then $X\projtp Y$ is Grothendieck.
\item If $X\projtp Y$ is Grothendieck and $Y^*$ has the Bounded Compact Approximation property, then $\mathscr B(X,Y^*)=\mathscr K(X,Y^*)$.
\end{enumerate}
\end{prop}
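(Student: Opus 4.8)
The statement to prove is a two-part result relating the Grothendieck property of the projective tensor product $X \projtp Y$ (with $X$ Grothendieck and $Y$ reflexive) to the coincidence $\mathscr B(X,Y^*)=\mathscr K(X,Y^*)$. The key structural fact I would rely on throughout is the canonical isometric identification $(X\projtp Y)^* \equiv \mathscr B(X,Y^*) \equiv \mathscr B(Y,X^*)$, which is standard for projective tensor products. Under this identification, weak*-convergence of a bounded sequence of operators in $\mathscr B(X,Y^*)$ corresponds to convergence against elementary tensors $x\otimes y$, i.e.\ to $\langle T_n x, y\rangle \to 0$ for all $x\in X$, $y\in Y$. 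So the whole problem is about when weak*-null sequences of operators are weakly null in $\mathscr B(X,Y^*)$.

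\textbf{Part (1).} Assume $\mathscr B(X,Y^*)=\mathscr K(X,Y^*)$. I would take a weak*-null sequence $(T_n)_{n=1}^\infty$ in $(X\projtp Y)^* \equiv \mathscr B(X,Y^*)$; by hypothesis each $T_n$ is compact, and by uniform boundedness the sequence is bounded. The goal is to show $(T_n)$ is weakly null. First I would observe that, since $Y$ is reflexive, $Y^*$ is reflexive, so $\mathscr K(X,Y^*)=\mathscr K(Y,X^*)$ (compact operators between a space and a reflexive space; adjoint of a compact operator is compact). Now consider the operators $T_n^*\colon Y^{**}=Y\to X^*$; these are compact. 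The key step is to show that the sequence $(T_n)$, viewed inside $\mathscr K(X,Y^*)$, inherits weak convergence from the componentwise data. For this I would use that $X$ is Grothendieck: for each fixed $y\in Y$, the functionals $x\mapsto \langle T_n x, y\rangle = \langle T_n^* y, x\rangle$ form a weak*-null sequence in $X^*$, hence (Grothendieck property) a \emph{weakly} null sequence in $X^*$; that is, $T_n^* y \to 0$ weakly in $X^*$ for each $y\in Y$. So $(T_n^*)$ converges to $0$ in the weak operator topology as operators $Y\to X^*$. Since $Y$ is reflexive and all $T_n^*$ are compact, a compactness/Grothendieck-space argument (essentially the equivalence of WOT-convergence and weak convergence for compact operators when the domain is reflexive, via the identification $\mathscr K(Y,X^*)^*\cong Y\projtp X^{**}$ and Grothendieck-type reasoning, or directly: a relatively weakly compact, WOT-convergent sequence of operators converges weakly) yields that $(T_n^*)$, hence $(T_n)$, is weakly null in $\mathscr K = \mathscr B$. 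The cleanest route is: $\mathscr K(X,Y^*)$ with $Y^*$ reflexive is a subspace of a space on which WOT-null bounded sequences are weakly null precisely because $X$ is Grothendieck — one shows directly that $(X\projtp Y)$ is a WCG or similarly tractable predual here, or invokes Theorem~\ref{Groth-sp}: it suffices to show every operator $S\colon X\projtp Y\to c_0$ is weakly compact. I would therefore recast the argument: given $S\colon X\projtp Y\to c_0$, its ``slices'' give operators $X\to c_0$ tensored against $Y$; using $X$ Grothendieck and $\mathscr B(X,Y^*)=\mathscr K(X,Y^*)$ to control the $Y$-direction compactly, one factors $S$ through a reflexive space. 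This recasting is the approach I would actually write up.

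\textbf{Part (2).} Assume $X\projtp Y$ is Grothendieck and $Y^*$ has the Bounded Compact Approximation Property (BCAP); we must show $\mathscr B(X,Y^*)=\mathscr K(X,Y^*)$. Suppose not: there is a non-compact $T\colon X\to Y^*$. Since $Y^*$ is reflexive, non-compactness of $T$ means $T$ is not completely continuous in a suitable sense; more usefully, under BCAP of $Y^*$ we can write $T$ as an approximate limit of compact operators but with a ``gap'', and the failure of compactness produces, via a gliding-hump / Josefson–Nissenzweig-type construction, a bounded sequence $(x_n)$ in $X$ with $(Tx_n)$ having no norm-convergent subsequence but $Tx_n \to 0$ weakly (weak convergence because $Y^*$ is reflexive, after passing to a subsequence). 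Dually — and this is where BCAP enters essentially — one builds a weak*-null sequence in $(X\projtp Y)^* \equiv \mathscr B(X,Y^*)$, localized near $T$ using the bounded net of compact operators approximating the identity on $Y^*$, that fails to be weakly null, contradicting the Grothendieck property of $X\projtp Y$. Concretely, BCAP supplies finite-rank (or compact) operators $R_\alpha$ on $Y^*$ with $\sup\|R_\alpha\|<\infty$ and $R_\alpha\to I_{Y^*}$ uniformly on compacta; the operators $R_\alpha T - T$ detect the non-compactness, and one extracts a sequence witnessing failure of weak sequential convergence in the bidual-type space $(X\projtp Y)^{**}$.

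\textbf{Main obstacle.} The hard part is Part (2): turning ``$T$ non-compact'' into an \emph{explicit} weak*-null-but-not-weakly-null sequence in $\mathscr B(X,Y^*)$, and this is exactly where the BCAP hypothesis on $Y^*$ is indispensable — without it one cannot localize the obstruction to finite-dimensional pieces of $Y^*$ and the tensor-product duality breaks down. In Part (1) the subtlety is more bookkeeping: correctly handling the identification $(X\projtp Y)^*\equiv \mathscr B(X,Y^*)$ and checking that WOT-convergence upgrades to weak convergence, which is clean precisely because of reflexivity of $Y$ combined with $\mathscr B = \mathscr K$. I expect Part (1) to go through smoothly via Theorem~\ref{Groth-sp} (reducing to operators into $c_0$ and factoring through a reflexive space), while Part (2) will require the careful construction just outlined.
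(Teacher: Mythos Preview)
The paper does not prove this proposition: it is stated with a citation to \cite[Propositions 8 and 9]{GonzalezG:95} and no argument is given, so there is no proof in the paper to compare your proposal against.

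That said, let me comment on the substance of your sketch. For Part~(1), your structural setup is right: the dual identification $(X\projtp Y)^*\equiv\mathscr B(X,Y^*)$, and the observation that for each $y\in Y$ the sequence $(T_n^*y)$ is weak*-null in $X^*$, hence weakly null by the Grothendieck property of $X$. The step you yourself flag as delicate---upgrading this pointwise weak convergence to genuine weak convergence of $(T_n)$ in $\mathscr K(X,Y^*)$---is exactly where the content lies, and you offer several half-arguments without committing to one. The clean way through is to identify the bidual: under the hypotheses ($Y$ reflexive and $\mathscr B(X,Y^*)=\mathscr K(X,Y^*)$), elements of $(X\projtp Y)^{**}$ act on $T\in\mathscr K(X,Y^*)$ via (limits of finite sums of) pairings of the form $\langle x^{**},T^*y\rangle$ with $x^{**}\in X^{**}$, $y\in Y$, and then the pointwise weak convergence you established is precisely what is needed. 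You gesture at this but never pin it down; until you state and justify which description of $(X\projtp Y)^{**}$ you are using, the argument is incomplete.

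Part~(2) is too vague to count as a proof sketch. You correctly identify that the BCAP hypothesis is the engine, and that the goal is to manufacture a weak*-null sequence in $(X\projtp Y)^*$ that is not weakly null from a non-compact $T\colon X\to Y^*$. But the construction is the entire content of the result, and phrases like ``gliding-hump / Josefson--Nissenzweig-type construction'' and ``localized near $T$ using the bounded net of compact operators'' do not constitute one. Concretely: from BCAP you get a bounded sequence $(K_m)$ of compact operators on $Y^*$ with $K_m\to I_{Y^*}$ strongly; you need to explain how to combine these with $T$ and a sequence witnessing its non-compactness to produce operators $S_n\in\mathscr B(X,Y^*)$ that are weak*-null but bounded away from $0$ against some fixed $\Phi\in(X\projtp Y)^{**}$. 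That explicit construction, and the verification that it works, is missing.
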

%
In particular, we have the following result  (\cite[Corollary  10]{GonzalezG:95}):
\begin{cor}\label{cor:2<p}
Let $p\in [1,\infty]$. Then $\ell_\infty \projtp \ell_p$ is a Grothendieck space if and only if $2<p<\infty$.
\end{cor}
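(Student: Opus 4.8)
The plan is to derive Corollary~\ref{cor:2<p} from Proposition~\ref{prop:ptp} and its partial converse (the unnamed proposition immediately preceding), handling the two directions separately. First I would dispose of the easy case $p=\infty$ and $p=1$, then prove the forward implication (Grothendieck $\Rightarrow$ $2<p<\infty$) using Proposition~\ref{prop:ptp}, and finally the reverse implication ($2<p<\infty$ $\Rightarrow$ Grothendieck) using part~(1) of the partial converse together with a known result on operators into $\ell_q$.

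For the case analysis at the endpoints: if $p=1$, then $\ell_p=\ell_1$ is non-reflexive, and $\ell_\infty$ is non-reflexive; by Proposition~\ref{prop:ptp} a Grothendieck projective tensor product forces one factor to be reflexive, so $\ell_\infty\projtp\ell_1$ is not Grothendieck. If $p=\infty$, then $\ell_\infty\projtp\ell_\infty$ again has two non-reflexive factors, so the same argument applies; alternatively $\ell_\infty\projtp\ell_\infty$ contains a complemented copy of $\ell_1\projtp\ell_1$, hence of $\ell_1$, which already rules out the Grothendieck property since $\ell_1$ is a non-reflexive separable space and separable Grothendieck spaces are reflexive. In either case the statement holds vacuously at the endpoints because the right-hand side $2<p<\infty$ is false there.

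For the forward implication with $1<p<\infty$: suppose $\ell_\infty\projtp\ell_p$ is Grothendieck. By Proposition~\ref{prop:ptp} one of $\ell_\infty$, $\ell_p$ is reflexive; since $\ell_\infty$ is not reflexive, $\ell_p$ must be, which is automatic for $1<p<\infty$ and gives no information yet. The real constraint is that we must rule out $1<p\leqslant 2$. Here I would invoke the partial converse's part~(2), or argue directly: if $1<p\leqslant 2$, then $\ell_p$ embeds complementably in $L_1$-type considerations — more cleanly, $\ell_\infty$ has a quotient isomorphic to $\ell_2$ (via a surjection $\ell_\infty\to\ell_2$, e.g.\ built from a weak$^*$-null $\ell_2$-sequence in $\ell_1\subset\ell_\infty^*$, cf.\ the reasoning in Proposition~\ref{prop:NQc0}), and $\ell_p$ for $1<p\leqslant 2$ has a subspace isomorphic to $\ell_2$, hence a quotient isomorphic to $\ell_2$ as well; consequently $\ell_\infty\projtp\ell_p$ has a quotient isomorphic to $\ell_2\projtp\ell_2$ by \cite[Proposition 2.5]{Ryan:02}. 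Since $\ell_2\projtp\ell_2$ is separable and non-reflexive (it contains $\mathscr{N}(\ell_2)$, the trace-class operators, which is non-reflexive), Proposition~\ref{basic} shows $\ell_\infty\projtp\ell_p$ is not Grothendieck, a contradiction. Thus $2<p<\infty$.

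For the reverse implication, suppose $2<p<\infty$. Then $\ell_\infty$ is Grothendieck and $Y:=\ell_p$ is reflexive with $Y^*=\ell_q$ where $1/p+1/q=1$, so $1<q<2$. By part~(1) of the partial converse it suffices to show $\mathscr{B}(\ell_\infty,\ell_q)=\mathscr{K}(\ell_\infty,\ell_q)$; equivalently, every operator from $\ell_\infty$ into $\ell_q$ is compact. This is the main obstacle, and the key input is the classical fact (Pe\l czy\'nski; Rosenthal) that for $1\leqslant q<2$ every operator from $C(K)$ — in particular from $\ell_\infty=C(\beta\mathbb N)$ — into $\ell_q$ is compact. (One route: such an operator is weakly compact since $\ell_\infty$ is Grothendieck and $\ell_q$ is reflexive, hence by the Dunford--Pettis property of $\ell_\infty$ it maps weakly convergent sequences to norm-convergent ones; combined with the fact that $\ell_q$, being of type~$q<2$ is not... — more directly, an operator $\ell_\infty\to\ell_q$ that is weakly compact and acts on a space with the Dunford--Pettis property is completely continuous, and a completely continuous operator from $\ell_\infty$ into a reflexive space is compact.) Granting $\mathscr{B}(\ell_\infty,\ell_q)=\mathscr{K}(\ell_\infty,\ell_q)$, part~(1) yields that $\ell_\infty\projtp\ell_p$ is Grothendieck, completing the equivalence.
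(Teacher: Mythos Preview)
Your overall plan—combine Proposition~\ref{prop:ptp} with both parts of its partial converse—is exactly the route implicit in the paper (which simply cites \cite[Corollary~10]{GonzalezG:95}). However, two of the concrete steps you give are wrong.

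In the forward direction for $1<p<2$, you assert that $\ell_p$ has a subspace isomorphic to $\ell_2$, hence an $\ell_2$-quotient. This is false: for $p\neq 2$, every infinite-dimensional closed subspace of $\ell_p$ contains a further subspace isomorphic to $\ell_p$ and no copy of $\ell_2$; dually, $\ell_p$ has no quotient isomorphic to $\ell_2$ when $1<p<2$, since such a quotient would force $\ell_2$ to embed in $\ell_{p'}$ with $p'>2$. The repair is precisely the alternative you mention but do not carry out: apply part~(2) of the partial converse. Here $Y^*=\ell_q$ with $q=p'\geqslant 2$ has the Bounded Compact Approximation Property, and there is a non-compact operator $\ell_\infty\to\ell_q$ (compose a surjection $\ell_\infty\to\ell_2$ with the formal identity $\ell_2\to\ell_q$, bounded because $q\geqslant 2$ and not compact since its range contains the unit vectors of $\ell_q$). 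Hence $\mathscr B(\ell_\infty,\ell_q)\neq\mathscr K(\ell_\infty,\ell_q)$, and part~(2) shows $\ell_\infty\projtp\ell_p$ is not Grothendieck.

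In the backward direction your final claim, ``a completely continuous operator from $\ell_\infty$ into a reflexive space is compact'', is also false: the quotient map $\ell_\infty\to\ell_2$ is weakly compact, hence completely continuous by the Dunford--Pettis property, yet it is certainly not compact. The Dunford--Pettis property alone cannot yield $\mathscr B(\ell_\infty,\ell_q)=\mathscr K(\ell_\infty,\ell_q)$ for $1<q<2$; a different ingredient is needed. One standard route: by Grothendieck's theorem every operator from a $C(K)$-space into $L_1$ is $2$-summing, and since $\ell_q$ embeds isomorphically in $L_1$ for $1\leqslant q\leqslant 2$, every $T\colon\ell_\infty\to\ell_q$ is $2$-summing. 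The Pietsch factorisation then writes $T$ as $\ell_\infty\to L_2(\mu)\to\ell_q$, and the second factor is compact by Pitt's theorem because $q<2$. Granting this, part~(1) of the partial converse applies and finishes the proof.
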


The dual space of $X\projtp Y$ can be naturally identified with $\mathscr B(X,Y^*)$. The following result of Holub (\cite[Theorem 2]{Holub:73}) shows the limitations of Proposition \ref{prop:ptp} in producing non-reflexive Grothendieck spaces. 
\begin{prop} 
Suppose that $X$ and $Y$ are reflexive Banach spaces an one of them has the Aproximation property. Then $X\projtp Y$ is reflexive if and only if $\mathscr B(X,Y^*)=\mathscr K(X,Y^*)$. 
\end{prop}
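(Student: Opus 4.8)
The plan is to route everything through the canonical isometric identification $(X\projtp Y)^{*}\cong\mathscr{B}(X,Y^{*})$, which is one of the standard dualities for the projective tensor product; since a Banach space is reflexive exactly when its dual is, the assertion reduces to showing that \emph{$\mathscr{B}(X,Y^{*})$ is reflexive if and only if $\mathscr{B}(X,Y^{*})=\mathscr{K}(X,Y^{*})$.} Reflexivity of $X$ and $Y$, together with the approximation property of one of them, will enter precisely when passing between spaces of operators and tensor products.

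For the implication ``$X\projtp Y$ reflexive $\Rightarrow\mathscr{B}=\mathscr{K}$'': if $X\projtp Y$ is reflexive then $\mathscr{B}(X,Y^{*})$ is reflexive, hence so is its closed subspace $\mathscr{K}(X,Y^{*})$; in particular $\mathscr{K}(X,Y^{*})$ contains no isomorphic copy of $\ell_{\infty}$. Were $\mathscr{K}(X,Y^{*})$ a proper subspace of $\mathscr{B}(X,Y^{*})$, a theorem of Kalton on spaces of compact operators would force a copy of $\ell_{\infty}$ inside $\mathscr{B}(X,Y^{*})$ --- here one uses the approximation-property hypothesis to know that $\mathscr{K}(X,Y^{*})$ is the norm-closure of the finite-rank operators, which is what Kalton's dichotomy requires --- contradicting reflexivity. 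Hence $\mathscr{B}(X,Y^{*})=\mathscr{K}(X,Y^{*})$.

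For the converse, assume $\mathscr{B}(X,Y^{*})=\mathscr{K}(X,Y^{*})$. Using the approximation property one identifies $\mathscr{K}(X,Y^{*})$ isometrically with the injective tensor product $X^{*}\injtp Y^{*}$; dualising, and invoking that the reflexive spaces $X$ and $Y$ have the Radon--Nikod\'ym property, the dual $(X^{*}\injtp Y^{*})^{*}$ is the space of integral operators from $Y^{*}$ to $X$, which coincides with the nuclear operators $\mathscr{N}(Y^{*},X)$, and the latter is canonically isometric to $X\projtp Y$ (once more by the approximation property of $X$ or $Y$). Chaining the identifications,
\[
(X\projtp Y)^{**}\;\cong\;\mathscr{B}(X,Y^{*})^{*}\;=\;\mathscr{K}(X,Y^{*})^{*}\;\cong\;(X^{*}\injtp Y^{*})^{*}\;\cong\;\mathscr{N}(Y^{*},X)\;\cong\;X\projtp Y,
\]
and one checks that this composite isometry is precisely the canonical embedding of $X\projtp Y$ into its bidual; therefore $X\projtp Y$ is reflexive.

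The part that needs care is not any single step but the cumulative bookkeeping with the approximation property and the duality theory of tensor products: one must be sure that $\mathscr{K}(X,Y^{*})$ genuinely is the closure of the finite-rank operators (so that Kalton's theorem applies) and also equals $X^{*}\injtp Y^{*}$ with the correct norm, and that its dual is computed correctly as $X\projtp Y$ via the Radon--Nikod\'ym property --- it is the reflexivity of $X$ and $Y$ that legitimises each of these. The one substantial external ingredient is Kalton's theorem that $\mathscr{K}\neq\mathscr{B}$ forces a copy of $\ell_{\infty}$ in $\mathscr{B}$; everything else is a combination of standard tensor-product identities with the stability of reflexivity under passing to duals and to closed subspaces.
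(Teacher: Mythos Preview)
The paper does not prove this proposition; it merely quotes it as a result of Holub (1973). Your argument is essentially correct, and the converse direction is exactly the standard route.

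For the forward direction, however, you invoke Kalton's dichotomy, which is heavier than necessary (and, incidentally, anachronistic: Kalton's paper is 1974, Holub's is 1973). The very chain of identifications you assemble for the converse already gives $\mathscr{K}(X,Y^{*})^{*}\cong X\projtp Y$, and dualising once more yields $\mathscr{K}(X,Y^{*})^{**}\cong \mathscr{B}(X,Y^{*})$ with the canonical embedding being the inclusion. Hence if $X\projtp Y$ is reflexive, so is $\mathscr{B}(X,Y^{*})=\mathscr{K}(X,Y^{*})^{**}$; then $\mathscr{K}(X,Y^{*})$ is reflexive and therefore coincides with its bidual $\mathscr{B}(X,Y^{*})$. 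This uses nothing beyond the tensor-product dualities you already set up, and it makes the two directions perfectly symmetric.

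A small caveat if you prefer to keep the Kalton route: the exact hypotheses under which ``$\mathscr{K}\neq\mathscr{B}$ forces a copy of $\ell_\infty$ in $\mathscr{B}$'' typically involve more than the plain approximation property (some form of bounded or compact approximation property, or a shrinking approximating sequence), so you should state precisely which version you are invoking and check that it applies here.
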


\subsection*{Injective tensor product}\label{Injten} We  denote by $X\injtp Y$ the injective tensor product of two Banach spaces $X$ and $Y$. Our first result concerns the inheritance of the Grothendieck property by the injective tensor product (\cite[Theorem 4]{Bu-Ji:10}).

\begin{prop}\label{prop:itp1} Suppose that $X$ is a  Grothendieck space, $Y$ is reflexive, $X^{**}$ or $Y^{**}$ has the Approximation property, and $\mathscr B(X^*,Y)=\mathscr K(X^*,Y)$.
Then $X\injtp Y$ is Grothendieck.
\end{prop}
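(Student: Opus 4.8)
The plan is to reduce the Grothendieck property of $X\injtp Y$ to the characterisation in Theorem~\ref{Groth-sp}, namely that every operator $T\colon X\injtp Y\to c_0$ is weakly compact, or more conveniently to work through the three-space property or through known identifications of the dual of an injective tensor product. The starting point is the standard fact (see, e.g., Ryan's book) that when $Y$ is reflexive and one of $X^{**},Y^{**}$ has the Approximation property, one has an isometric (or at least isomorphic) identification of $(X\injtp Y)^*$ with a space of operators, and—crucially—under the hypothesis $\mathscr B(X^*,Y)=\mathscr K(X^*,Y)$ one can control the bidual $(X\injtp Y)^{**}$. First I would record the relevant duality: since $Y$ is reflexive, $X\injtp Y$ sits inside $\mathscr B(X^*,Y)$, and the Approximation-property hypothesis guarantees $X\injtp Y=\mathscr K(X^*,Y)$ isometrically (compact operators from $X^*$ to $Y$). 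The hypothesis $\mathscr B(X^*,Y)=\mathscr K(X^*,Y)$ then says $X\injtp Y=\mathscr B(X^*,Y)$, so the injective tensor product coincides with the \emph{full} operator space.

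Next I would exploit this identification to compute the relevant duals. With $Y$ reflexive, $\mathscr B(X^*,Y)\cong \mathscr B(Y^*,X^{**})=\mathscr B(Y,X^{**})$ (transposition), and since $Y$ is reflexive this is $(X^{**}\injtp Y)$ or a quotient/subspace closely related to $X^{**}\projtp Y^*$ via trace duality. The key step is then to show that the Grothendieck property of $X$ propagates: $X\injtp Y$ is, under these hypotheses, a complemented subspace of (or a space built by a three-space extension from) spaces manufactured out of $X$ and reflexive pieces, all of which are Grothendieck by Proposition~\ref{basic}, Proposition~\ref{prop:3SP}, and the fact that reflexive spaces are Grothendieck. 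Concretely, I would try to realise $X\injtp Y=\mathscr B(X^*,Y)$ as fitting into an exact sequence whose kernel and quotient are Grothendieck—for instance using $Y=\overline{\operatorname{span}}$ of a weakly compact generating set to reduce to the case $Y$ has a finite-dimensional Schauder decomposition, writing $\mathscr B(X^*,Y)$ as an $\ell_\infty$- or $c_0$-type sum over the blocks, and invoking Proposition~\ref{prop:unc-sum}-style stability of the Grothendieck property under such sums together with Proposition~\ref{prop:3SP}.

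The cleanest route, which I would attempt first, is: show that $\mathscr B(X^*,Y)\cong \mathscr B(Y^*,X^{**})$ is a dual space (indeed a complemented subspace of $(\,X^{**}\projtp Y^*)^*$-type objects) and then invoke Corollary~\ref{cor:dualV} or Proposition~\ref{prop:V-dual}—but this requires property $(V)$ of $X\injtp Y$, which we do not have for free, so instead I would go back to the direct argument via Theorem~\ref{Groth-sp}. Given an operator $T\colon X\injtp Y\to c_0$, I would compose with the canonical maps and use that $Y$ reflexive forces any operator into $c_0$ to factor in a controlled way; the weak compactness of $T$ should reduce to the weak compactness of the induced operators on $X$, which holds because $X$ is Grothendieck. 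The main obstacle I anticipate is precisely this reduction step: converting the hypothesis $\mathscr B(X^*,Y)=\mathscr K(X^*,Y)$ plus the Approximation-property assumption into a concrete structural description of $X\injtp Y$ that exhibits it as assembled (via complementation, three-space extensions, and reflexive/Grothendieck building blocks) from $X$ and reflexive spaces. Once that structural description is in hand, the Grothendieck conclusion follows from Proposition~\ref{prop:3SP} and Proposition~\ref{basic} together with the reflexivity of $Y$; without it, one must argue operator-by-operator, which is more delicate because $X\injtp Y$ need not have property $(V)$ and one cannot simply quote the $C(K)$-type machinery.
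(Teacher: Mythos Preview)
The paper does not actually prove this proposition: it is stated with a citation to \cite[Theorem 4]{Bu-Ji:10} and no argument is given. So there is no ``paper's own proof'' to compare against; the benchmark is the Bu--Ji--Xue argument.

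Your proposal is not a proof but a survey of possible strategies, none of which you carry through. Two concrete problems stand out. First, the identification you write down is not quite right: $X\injtp Y$ consists of the \emph{weak$^*$-to-norm continuous} compact operators from $X^*$ to $Y$, not all of $\mathscr K(X^*,Y)$; the hypothesis $\mathscr B(X^*,Y)=\mathscr K(X^*,Y)$ does \emph{not} by itself give $X\injtp Y=\mathscr B(X^*,Y)$, and your subsequent duality computations build on this mis-identification. Second, the ``structural description'' you say you need---realising $X\injtp Y$ via a three-space extension from $X$ and reflexive pieces, or as a sum over blocks of an FDD of $Y$---is precisely the content of the theorem, and you never produce it; you explicitly flag it as the main obstacle and then stop.

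The route actually taken in \cite{Bu-Ji:10} is a duality computation: under the Approximation-property hypothesis one identifies $(X\injtp Y)^*$ with $X^*\projtp Y^*$, whose dual is $\mathscr B(X^*,Y^{**})=\mathscr B(X^*,Y)$; the compactness hypothesis $\mathscr B(X^*,Y)=\mathscr K(X^*,Y)$ together with AP then yields $(X\injtp Y)^{**}\cong X^{**}\injtp Y$. With this in hand, a weak$^*$-null sequence in $(X\injtp Y)^*=X^*\projtp Y^*$ is tested against elementary tensors in $X^{**}\injtp Y$, and the Grothendieck property of $X$ (applied coordinatewise in the $Y$-direction, using reflexivity of $Y$) gives weak nullity. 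Your proposal gestures at this duality picture but never pins down the correct identifications or explains how the Grothendieck property of $X$ is actually invoked.
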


In view of Proposition \ref{prop:ptp}, the following problem was raised in \cite[p. 1156]{Bu-Ji:10}.

\begin{quest}
Suppose that  $X\injtp Y$ is Grothendieck. Is $X$ or  $Y$  reflexive?
\end{quest}

We have the following partial answer (\cite[Theorem 7]{Bu-Ji:10}).
\begin{prop}\label{prop:itp2}
Suppose that $Y$ is a reflexive space and has an unconditional FDD.
Then $X\injtp Y$ is Grothendieck if and only if $\mathscr B(X^*,Y)= \mathscr K(X^*,Y)$ and the space $X$ is Grothen\-dieck.
\end{prop}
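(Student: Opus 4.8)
The plan is to split the equivalence, deducing both easy halves from results already available and concentrating the work on a single point. For the sufficiency direction I would simply invoke Proposition~\ref{prop:itp1}: a Banach space with a finite-dimensional decomposition has the approximation property, so if $Y$ is reflexive with an unconditional FDD then $Y^{**}=Y$ has the AP; together with the hypotheses that $X$ is Grothendieck and $\mathscr B(X^*,Y)=\mathscr K(X^*,Y)$, Proposition~\ref{prop:itp1} yields at once that $X\injtp Y$ is Grothendieck.

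For the necessity direction there are two claims. That $X$ is Grothendieck is immediate: fixing $y_0\in S_Y$ and a norming $y_0^*\in S_{Y^*}$ with $\langle y_0^*,y_0\rangle=1$, the map $x\mapsto x\otimes y_0$ embeds $X$ isometrically onto the range of the norm-one idempotent $\mathrm{id}_X\otimes(y_0^*\otimes y_0)$ on $X\injtp Y$, so $X$ is a complemented subspace of the Grothendieck space $X\injtp Y$ and hence Grothendieck by Proposition~\ref{basic}. The substantive claim is $\mathscr B(X^*,Y)=\mathscr K(X^*,Y)$, which I would prove contrapositively. Note first that $(\mathrm{id}_X\otimes Q_n)_{n=1}^\infty$, with $Q_n$ the FDD projections of $Y$, is an unconditional Schauder decomposition of $X\injtp Y$ (the partial-sum projections are uniformly bounded and converge strongly to the identity, because elements of $X\injtp Y$ are compact operators $X^*\to Y$ on which $I-R_N$ tends to $0$ uniformly), so the block structure of $X\injtp Y$ is governed by that of $Y$. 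Now suppose $T\colon X^*\to Y$ is not compact; since $Y$ is reflexive, $T$ is weakly compact but not compact, and a gliding-hump argument against the FDD of $Y$ produces successive index blocks $I_k$, norm-one functionals $x_k^*\in X^*$, a semi-normalised block sequence $\hat z_k=Q_{I_k}Tx_k^*$, and norming functionals $y_k^*$ supported on the $I_k$. The functionals $\phi_k:=x_k^*\otimes y_k^*$ form a bounded, weak$^*$-null sequence in $(X\injtp Y)^*$, and the elements $x_k\otimes(\hat z_k/\|\hat z_k\|)$ of $B_{X\injtp Y}$ serve as (essentially) biorthogonal partners. One then shows that $(\phi_k)$ has, after passing to a subsequence, no weakly Cauchy subsequence, so that by Proposition~\ref{prop:NQc0} the space $X\injtp Y$ has a quotient isomorphic to $c_0$, contradicting Theorem~\ref{th:char-G}; equivalently, $u\mapsto(\langle u,\phi_k\rangle)_k$ is a non-weakly-compact operator $X\injtp Y\to c_0$, contradicting Theorem~\ref{Groth-sp}.

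The step I expect to be the main obstacle is precisely the last one: showing that $(\phi_k)$ has a subsequence equivalent to the unit vector basis of $\ell_1$, i.e.\ establishing a lower estimate $\|\sum_{k\in A}\varepsilon_k\phi_k\|_{(X\injtp Y)^*}\geqslant c\,|A|$. Such an estimate must be produced by testing against elements of $B_{X\injtp Y}$, that is, against norm-one operators $X^*\to Y$, and the injective tensor norm of the natural ``diagonal'' test element is the operator norm of a diagonal operator, whose size depends on how the geometry of $Y$ along the blocks $I_k$ is paired with the geometry of $X$ as seen through the functionals $x_k^*$ — a naive block-vector estimate inside $Y$ can grow with $|A|$. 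Keeping this under control is exactly where the unconditionality of the FDD (to move freely between blocks and to exploit the lattice-type estimates), the reflexivity of $Y$ (so that $T$ is weakly compact and the block sequence is extracted with norms bounded below), and the already-established fact that $X$ is Grothendieck (hence $X^*$ weakly sequentially complete and free of copies of $c_0$, which forces the relevant dual block behaviour to be ``$\ell_1$-like'') all enter; concretely, the block vectors $\hat z_k$ and the functionals $y_k^*$ have to be chosen compatibly with the $x_k^*$, and that bookkeeping is the technical heart of the argument.
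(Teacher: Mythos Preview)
The paper does not prove this proposition; it merely cites \cite[Theorem~7]{Bu-Ji:10}, so there is no in-paper argument to compare against.

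Your sufficiency direction (via Proposition~\ref{prop:itp1}) and the argument that $X$ is complemented in $X\injtp Y$, hence Grothendieck, are both correct. The gap is exactly where you place it: you never establish that $(\phi_k)$ fails to be weakly null, and the $\ell_1$ lower estimate you reach for is left unproved. In fact that estimate is harder than what is needed, and you are missing the one identification that makes the step immediate. Because $Y$ is reflexive with an FDD, $Y^*$ is reflexive (hence has the Radon--Nikodym property) and has an FDD (hence the approximation property); standard tensor-product duality then gives $(X\injtp Y)^*\equiv X^*\projtp Y^*$, and consequently $(X\injtp Y)^{**}\equiv(X^*\projtp Y^*)^*=\mathscr B(X^*,Y^{**})=\mathscr B(X^*,Y)$. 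Under this identification the non-compact operator $T$ is \emph{itself} an element of $(X\injtp Y)^{**}$, and its pairing with $\phi_k=x_k^*\otimes y_k^*$ is $\langle T,\phi_k\rangle=\langle y_k^*,Tx_k^*\rangle=\langle y_k^*,Q_{I_k}Tx_k^*\rangle=\langle y_k^*,\hat z_k\rangle\geqslant\delta>0$. Thus $(\phi_k)$ is weak*-null but not weakly null, contradicting the Grothendieck property directly --- no $\ell_1$ estimate, no Rosenthal dichotomy, no appeal to Proposition~\ref{prop:NQc0}. (A minor imprecision worth fixing: elements of $X\injtp Y$ correspond to the weak*-to-weak continuous approximable operators $X^*\to Y$, not to all of $\mathscr K(X^*,Y)$; this does not affect your strong-convergence claim for $\mathrm{id}_X\otimes R_n$, but the distinction matters elsewhere.)
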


It follows from the previous result that, for $1<p<\infty$,  $\ell_\infty\injtp\ell_p$ is not Grothendieck (see \cite[p. 1159]{Bu-Ji:10}). When $Y$ is a reflexive space with the Approximation Property, we have $\mathscr K(X,Y)=X^* \injtp Y$. Let us then record the following consequence (\cite[Corollary 8]{Bu-Ji:10}) of that fact.

\begin{cor}\label{cor:Bu} Suppose that $Y$ is a reflexive space with an unconditional FDD. Then $\mathscr K(X,Y)$ is Grothendieck if and only if $X^*$ is Grothendieck and $\mathscr B(X,Y)= \mathscr K(X,Y)$.
\end{cor}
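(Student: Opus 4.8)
The plan is to deduce this from Proposition~\ref{prop:itp2} together with the identification $\mathscr{K}(X,Y)=X^{*}\injtp Y$ recorded above, which is available because a reflexive space with an unconditional FDD has the approximation property. Applying Proposition~\ref{prop:itp2} with $X^{*}$ in the role of the first factor (the second factor $Y$ still satisfies the hypotheses of that proposition), one gets that $\mathscr{K}(X,Y)\cong X^{*}\injtp Y$ is a Grothendieck space if and only if $X^{*}$ is Grothendieck and $\mathscr{B}(X^{**},Y)=\mathscr{K}(X^{**},Y)$. Thus the whole matter reduces to showing that, for reflexive $Y$, the equality $\mathscr{B}(X^{**},Y)=\mathscr{K}(X^{**},Y)$ is equivalent to $\mathscr{B}(X,Y)=\mathscr{K}(X,Y)$.

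One implication is routine. If every operator $X^{**}\to Y$ is compact and $T\colon X\to Y$ is arbitrary, then $T^{**}\colon X^{**}\to Y^{**}=Y$ (which takes values in $Y$ because $Y$ is reflexive) is compact, hence so is its restriction $T=T^{**}|_{X}$; therefore $\mathscr{B}(X,Y)=\mathscr{K}(X,Y)$. For the bookkeeping around the converse, it is convenient to note that, $Y$ being reflexive, the assignment $S\mapsto S^{*}$ is an isometric bijection $\mathscr{B}(A,Y)\to\mathscr{B}(Y^{*},A^{*})$ for every Banach space $A$ (the inverse sends $R\colon Y^{*}\to A^{*}$ to the operator $A\to Y^{**}=Y$ determined by $\langle a,\cdot\,\rangle\mapsto R(\cdot)(a)$), and it preserves compactness by Schauder's theorem. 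Taking $A=X^{**}$ and using the canonical norm-one complemented decomposition $X^{***}=X^{*}\oplus X^{\perp}$, where $X^{\perp}\equiv (X^{**}/X)^{*}$ is the annihilator of $X$, one sees that $\mathscr{B}(X^{**},Y)=\mathscr{K}(X^{**},Y)$ holds precisely when \emph{both} $\mathscr{B}(X,Y)=\mathscr{K}(X,Y)$ and $\mathscr{B}(X^{**}/X,Y)=\mathscr{K}(X^{**}/X,Y)$ hold.

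Consequently, the substantive point --- and the step I expect to be the main obstacle --- is to show that, under the standing hypotheses ($Y$ reflexive with an unconditional FDD) together with $\mathscr{B}(X,Y)=\mathscr{K}(X,Y)$ and the Grothendieck property of $X^{*}$ (which, note, is present on both sides of the asserted equivalence and is exactly what separates this situation from a generic pair), every operator $X^{**}/X\to Y$ is automatically compact. Here is how I would attack it: given a non-compact $\bar S\colon X^{**}/X\to Y$, reflexivity of $Y^{*}$ and Rosenthal's $\ell_{1}$-theorem applied to the adjoint $\bar S^{*}\colon Y^{*}\to X^{\perp}$ produce a semi-normalised weakly null sequence in $X^{\perp}$ with semi-normalised image; combining this with the weak sequential completeness of $X^{**}$ (equivalent, by Theorem~\ref{th:char-G}, to $X^{*}$ being Grothendieck and having no quotient isomorphic to $c_{0}$) and with the hypothesis $\mathscr{B}(X,Y)=\mathscr{K}(X,Y)$ --- equivalently, via the adjoint identification of the previous paragraph, with the compactness of every operator $Y^{*}\to X^{*}$ --- one aims to contradict the existence of $\bar S$. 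Turning this outline into a clean contradiction, i.e.\ genuinely ruling out non-compact operators from $X^{**}/X$ into $Y$ by playing off the annihilator $X^{\perp}$ against $X^{*}$, is the technical heart of the proof; once it is in place the corollary follows by assembling the equivalences above.
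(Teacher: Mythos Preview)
Your strategy is exactly the one the paper suggests: identify $\mathscr{K}(X,Y)$ with $X^{*}\injtp Y$ and then invoke Proposition~\ref{prop:itp2}. You are also right that the direct substitution $X\leadsto X^{*}$ in Proposition~\ref{prop:itp2} yields the condition $\mathscr{B}(X^{**},Y)=\mathscr{K}(X^{**},Y)$ rather than $\mathscr{B}(X,Y)=\mathscr{K}(X,Y)$, and that one implication between these two conditions (the restriction direction) is trivial. The paper does not spell out how to pass from the latter to the former; it simply records the corollary from \cite[Corollary~8]{Bu-Ji:10}, so on this point you have been more careful than the exposition.

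However, your proposal is openly unfinished: you reduce the missing implication to showing that every operator $X^{**}/X\to Y$ is compact, sketch an attack via a semi-normalised weakly null sequence in $X^{\perp}\subset X^{***}$, and then stop. The difficulty is that the ingredients you list do not obviously combine. Weak sequential completeness of $X^{**}$ is a statement about sequences \emph{in} $X^{**}$, whereas the sequence you produce lives in $X^{\perp}\subset X^{***}$, a summand orthogonal to $X^{*}$; and the hypothesis $\mathscr{B}(Y^{*},X^{*})=\mathscr{K}(Y^{*},X^{*})$ controls maps into $X^{*}$, not into $X^{\perp}$. There is no evident mechanism by which a non-compact $Y^{*}\to X^{\perp}$ would be forced to interact with $X^{*}$ at all, so the ``contradiction'' you aim for has no target. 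In short: you have correctly isolated the gap between the corollary as stated and what a blind application of Proposition~\ref{prop:itp2} gives, but your plan for closing it is not a proof and, as written, does not look like it can be made into one without a genuinely new idea (or without going back to the proof of Proposition~\ref{prop:itp2} in \cite{Bu-Ji:10} and adapting it directly to $\mathscr{K}(X,Y)$ rather than substituting).
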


Let $1<p,q<\infty$. It follows from Corollary \ref{cor:Bu} that  $\ell_p^*\injtp\ell_q$ is Grothendieck if and only if $\mathscr{B}(\ell_p,\ell_q)= \mathscr{K}(\ell_p,\ell_q)$, but in this case $\ell_p^*\injtp\ell_q$ is  reflexive. We may then reiterate the following problem  (\cite[p. 1158]{Bu-Ji:10}).

\begin{quest}\label{Grothnonreflex}
Is there a pair of infinite-dimensional Banach spaces $X$ and $Y$ for which the tensor product $X\injtp Y$ is Grothendieck yet it is not reflexive?
\end{quest}

Khurana \cite[Theorem 2]{Khurana:78} proved that $C(K)\injtp X$ (which is isometric to $C(K, X)$, the space of $X$-valued continuous functions on $K$) is Grothendieck  only in two  cases:
\begin{itemize}
    \item $K$ is finite and $X$ is a Grothendieck space,
    \item $K$ is a G-space and $X$ is finite-dimensional.
\end{itemize}

Cembranos \cite{Cembranos:84} applied Khurana's ideas to prove that for every infinite compact Hausdorff space $K$ and every  infinite-dimensional Banach space $X$, the space $C(K,X)$ contains a~complemented copy of $c_0$,  from which Khurana's result follows immediately.

Thus, the answer to  Problem~\ref{Grothnonreflex} is negative when at least one of the spaces $X$ or $Y$ is of the form $C(K)$ for an infinite compact Hausdorff space. Indeed, if $X = C(K)$, then $X\injtp Y\equiv C(K, Y)$ contains a complemented copy of $c_0$. Actually, the key ingredient of Cembranos' proof is the fact that $C(K)$-spaces have property $(V)$. We can emulate that proof to establish the following partial result.

\begin{prop}\label{alacembranos}Suppose that $X$ is a non-reflexive space with a subspace isomorphic to $c_0$. If $X\injtp Y$ is a Grothendieck space, then $Y$ is finite-dimensional. \end{prop}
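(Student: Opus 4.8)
The plan is to mimic Cembranos' argument in the abstract setting, exploiting that a subspace isomorphic to $c_0$ inside a non-reflexive space $X$, together with property $(V)$-type behaviour available for $C(K)$-spaces, forces a complemented copy of $c_0$ in $X\injtp Y$ whenever $Y$ is infinite-dimensional. First I would fix a subspace $Z\subseteq X$ with $Z\cong c_0$, say with unit-vector basis $(z_n)_{n=1}^\infty$ and biorthogonal functionals $(z_n^*)_{n=1}^\infty\subset Z^*$ extended to $X^*$ by Hahn--Banach, so that $(z_n^*)_{n=1}^\infty$ is a bounded, weak$^*$-null sequence in $X^*$ that is \emph{not} weakly null (since $(z_n)$ spans a copy of $c_0$, which is not reflexive; indeed $(z_n^*)$ is equivalent to the $\ell_1$-basis in norm when tested against the $z_n$). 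The point is that this sequence in $X^*$ behaves like the evaluation functionals $\delta_{t_n}$ in $C(K)^*$ that drive Cembranos' construction.

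Next I would bring $Y$ into play. Assuming $Y$ infinite-dimensional, pick a normalised basic sequence $(y_n)_{n=1}^\infty$ in $Y$ with a bounded sequence of functionals $(y_n^*)_{n=1}^\infty\subset Y^*$ that is biorthogonal to it (available by the Hahn--Banach / Auerbach-type construction on the closed span). Then define elementary tensors $u_n = z_n\otimes y_n\in X\injtp Y$; these form a bounded sequence, and I would show that $(u_n)$ spans an isomorphic copy of $c_0$ inside $X\injtp Y$ using the defining norm of the injective tensor product, namely $\|\sum a_n z_n\otimes y_n\|_\varepsilon = \sup\{\,\|\sum a_n \langle \phi, z_n\rangle\, y_n\|\colon \phi\in B_{X^*}\}$ combined with the $c_0$-estimate for $(z_n)$; the upper $c_0$-bound is immediate, and the lower bound follows by testing against suitable $\phi$ built from finitely many $z_n^*$. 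To get that this copy of $c_0$ is \emph{complemented}, I would exhibit a projection of the form $P(w) = \sum_{n=1}^\infty \langle w, z_n^*\otimes y_n^*\rangle\, u_n$, where $z_n^*\otimes y_n^*$ is viewed as an element of $(X\injtp Y)^* = \mathscr B(X^*,Y^{**})$ (or $X^*\projtp Y^*$ up to the usual identification); boundedness of $P$ is where the weak$^*$-nullity of $(z_n^*)$ enters, via a Dieudonné--Grothendieck / disjointness argument replacing the pairwise disjoint supports $U_n$ used in the $C(K)$ case. Once $X\injtp Y$ has a complemented copy of $c_0$, it is not Grothendieck by Proposition~\ref{basic}\eqref{simple1} (a Grothendieck space has no quotient, hence no complemented subspace, isomorphic to $c_0$), contradicting the hypothesis; therefore $Y$ must be finite-dimensional.

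The main obstacle I anticipate is the boundedness of the projection $P$: in the genuine $C(K)$-case one has honest pairwise disjoint open sets $U_n$ and can localise functions, whereas here the "disjointness" is only the weak$^*$-nullity of $(z_n^*)$ together with the $c_0$-structure of $(z_n)$, so the estimate $\|P(w)\|\leqslant C\|w\|$ needs a careful gliding-hump argument: given $w\in X\injtp Y$ of norm $1$, only finitely many coordinates $\langle w, z_n^*\otimes y_n^*\rangle$ can be large, and the tail must be controlled using that $w$, as an operator $X^*\to Y$, maps the weak$^*$-null sequence $(z_n^*)$ into a norm-null (or at least $c_0$-summable) sequence when $w$ is compact-like — precisely the role played by property $(V)$ and the Dunford--Pettis-type arguments of Cembranos. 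I would handle this by first treating $w$ a finite-rank tensor (where the bound is elementary and the constant depends only on the basis constants of $(z_n)$ and $(y_n)$) and then passing to the general case by density, checking that the bound is uniform. The remaining steps — extracting the basic sequences, the $c_0$-equivalence estimates — are routine.
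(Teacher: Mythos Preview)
Your outline has a genuine gap at the very first step: the Hahn--Banach extensions $(z_n^*)$ of the $c_0$-coordinate functionals are \emph{not} weak$^*$-null in $X^*$ in general. Take $X=\ell_\infty$, $Z=c_0$; then $z_n^* = e_n^*\in\ell_1\subset\ell_\infty^*$ and $\langle e_n^*,\mathds{1}\rangle = 1$ for all $n$. They are bounded, but that is all. Since your entire projection argument (well-definedness of $P$, the fact that the coefficient sequence $(\langle w, z_n^*\otimes y_n^*\rangle)_n$ lies in $c_0$, and the uniform bound) rests on this weak$^*$-nullity, the construction collapses already on elementary tensors $w=x\otimes y$ with $x\notin Z$.

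The paper fixes this by putting the weak$^*$-nullity on the other factor: since $Y$ is infinite-dimensional, the Josefson--Nissenzweig theorem yields a normalised weak$^*$-null sequence $(y_n^*)$ in $Y^*$, and then a lemma of Johnson--Rosenthal produces a bounded sequence $(y_n)$ in $Y$ with $\langle y_m^*,y_n\rangle=\delta_{m,n}$. With this in hand, one does not need a projection at all. The map $T\colon X\injtp Y\to\ell_\infty$, $T\xi = (\langle z_n^*\otimes y_n^*,\xi\rangle)_{n=1}^\infty$, is bounded (since both sequences of functionals are bounded), lands in $c_0$ (because on elementary tensors $|\langle z_n^*,x\rangle\,\langle y_n^*,y\rangle|\leqslant M\|x\|\,|\langle y_n^*,y\rangle|\to 0$, and these tensors are dense), and satisfies $T(z_n\otimes y_n)=e_n$; hence $T$ fixes a copy of $c_0$ and is not weakly compact, contradicting the Grothendieck property of $X\injtp Y$. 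This is both shorter and avoids the delicate ``boundedness of $P$'' issue you flagged.
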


\begin{proof}
Suppose that $Y$ is infinite-dimensional and $X$ contains a~sequence $(e_n)_{n=1}^\infty$ which is $M$-equivalent to the standard basis of $c_0$ for some $M > 0$. Let $(f_n)_{n=1}^\infty$ be the sequence of fixed Hahn--Banach extensions of the coordinate functionals associated to $(e_n)_{n=1}^\infty$ on the closed linear span $X_0$ of this sequence.

Since $Y$ is infinite-dimensional, by the Josefson--Nissenzweig theorem (see \cite[Chapter XII]{Diestel:84}), there is a sequence $(y_n^*)_{n=1}^\infty$ in the unit sphere of $Y^*$ that converges to 0 in the weak* topology, and by \cite[Remark III.1]{JohnsonR:72} we can find a bounded sequence $(y_n)_{n=1}^\infty$ in $Y$ such that $\langle y^*_n, y_k\rangle = \delta_{n,k}$ ($n,k\in \mathbb N$). 
Then the map $T\colon X\injtp Y\to \ell_\infty$ given by
$$T\xi = ( \langle (f_n \otimes y_n^*), \xi\rangle)_{n=1}^\infty\qquad (\xi \in X\injtp Y) $$
is a bounded linear operator. Moreover, $T$ takes values already in $c_0$ because
$$|\langle f_n, x\rangle\cdot \langle y, y_n^*\rangle | \leqslant M\cdot \|x\|\cdot |\langle y, y_n^*\rangle|\qquad (x\in X, y\in Y)$$
for some constant $M$ and the sequence $(y_n^*)_{n=1}^\infty$ is weak*-null.
The operator $T$ is not unconditionally converging because $T(e_n\otimes y_n)(k) = \delta_{n,k}$ ($n,k\in \mathbb N$), and this is a contradiction, because $X\injtp Y$ is Grothendieck, so every operator from that space into $c_0$ is weakly compact, hence by the Orlicz--Pettis theorem, unconditionally converging.
\end{proof}

Let us observe that a positive answer to  Problem~\ref{Grothc_0} implies positive answer to Problem~\ref{Grothnonreflex}. Indeed, $X$ is isomorphic to a~complemented subspace of $X\injtp Y$, so as such it is a~Grothendieck space. Being non-reflexive, it would contain a copy of $c_0$, hence by Proposition~\ref{alacembranos}, $Y$ must be finite-dimensional.\medskip

We close this section by noting that the aforestated results concerning preservation of the Grothendieck property by injective/projective tensor products resonate in the theory of Banach lattices and their tensor products (positive injective, Fremlin, Wittstock tensor products, etc.). Defining these notions formally is beyond the scope of the present paper---instead we refer directly to the relevant papers: \cite{Bu:20,JiCraddockBu:2010, LiBu:2017,ZhangGuLi:2020}.

\subsection*{$C^*$-tensor products}\label{sect:c-star-tensor}
The second-named author \cite{Kania:15}, inspired by Cembranos' result \cite{Cembranos:84}, proved that an in\-fi\-ni\-te-di\-men\-sio\-nal $C^*$-algebra $A$, which is a Grothendieck space cannot be decomposed as a~$C^*$-tensor product $B_1\otimes_\gamma B_2$ of two infinite-dimensional $C^*$-algebras $B_1, B_2$ (\cite[Theorem 1.1]{Kania:15}).

\begin{prop}
Let $A$ be a $C^*$-algebra, and let  $B_1\otimes_\gamma B_2$ be a $C^*$-tensor product of two infinite-dimensional $C^*$-algebras. If $A\cong B_1\otimes_\gamma B_2$, then $A$ is not Grothendieck.
\end{prop}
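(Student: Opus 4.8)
The plan is to mimic the Cembranos-type argument of Proposition~\ref{alacembranos}: I will produce a bounded operator from $B_1\otimes_\gamma B_2$ onto $c_0$ that admits a bounded right inverse, hence is not weakly compact; by the equivalence \eqref{Groth-sp:1}$\Leftrightarrow$\eqref{Groth-sp:2} of Theorem~\ref{Groth-sp} this forces $B_1\otimes_\gamma B_2$, and therefore the isomorphic space $A$, not to be Grothendieck.

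First I exploit that $B_1$ is infinite-dimensional: it contains a sequence $(a_n)_{n=1}^\infty$ of pairwise orthogonal, positive, norm-one elements (a standard fact; e.g.\ apply functional calculus to a self-adjoint element with infinite spectrum). These elements commute, so $C:=C^*(\{a_n\colon n\in\N\})$ is a commutative $C^*$-algebra, isometrically $C\equiv C_0(L)$ for some locally compact Hausdorff space $L$, and under the Gelfand transform the $a_n$ become norm-one functions $\widehat a_n$ with pairwise disjoint cozero sets. Picking for each $n$ a point of the cozero set of $a_n$ where $\widehat a_n$ is close to its norm and normalising, I get a bounded sequence $(\phi_n)_{n=1}^\infty$ in $C^*$ with $\langle\phi_n,a_m\rangle=\delta_{nm}$; extend each $\phi_n$ to $B_1^*$ by Hahn--Banach, keeping $\sup_n\|\phi_n\|<\infty$. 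Next I exploit that $B_2$ is infinite-dimensional: by the Josefson--Nissenzweig theorem there is a weak${}^*$-null sequence $(\psi_n)_{n=1}^\infty$ in the unit sphere of $B_2^*$, and then by \cite[Remark III.1]{JohnsonR:72}, passing to a subsequence, there is a bounded sequence $(b_n)_{n=1}^\infty$ in $B_2$ with $\langle\psi_n,b_m\rangle=\delta_{nm}$; note $\|b_n\|\geqslant1$.

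Now define $T\colon B_1\otimes_\gamma B_2\to\ell_\infty$ by $T\xi=(\langle\phi_n\otimes\psi_n,\xi\rangle)_{n=1}^\infty$. Since every $C^*$-cross-norm dominates the injective Banach-space tensor norm, each $\phi_n\otimes\psi_n$ is a bounded functional on $B_1\otimes_\gamma B_2$ with $\|\phi_n\otimes\psi_n\|\leqslant\|\phi_n\|\|\psi_n\|$, so $T$ is bounded. In fact $T$ takes values in $c_0$: on an elementary tensor $\langle\phi_n\otimes\psi_n,x\otimes y\rangle=\langle\phi_n,x\rangle\langle\psi_n,y\rangle\to0$ because $(\phi_n)$ is bounded and $(\psi_n)$ is weak${}^*$-null, and the general case follows by density of the algebraic tensor product together with the uniform bound on $\|\phi_n\otimes\psi_n\|$. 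Conversely, define $S\colon c_0\to B_1\otimes_\gamma B_2$ by $Se_n=a_n\otimes b_n$. The restriction of $\gamma$ to the $*$-subalgebra $C\odot B_2$ is a $C^*$-norm on the tensor product of the nuclear algebra $C$ with $B_2$, hence the unique one, and $C\otimes_{\min}B_2\equiv C_0(L,B_2)$; since the $\widehat a_n$ have disjoint cozero sets and $\|\widehat a_n\|_\infty=1$, this gives $\gamma\big(\sum_{n}\lambda_n\,a_n\otimes b_n\big)=\sup_n|\lambda_n|\,\|b_n\|$ for finitely supported scalars, so $S$ is an isomorphic embedding onto a copy of $c_0$. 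Finally $TSe_n=T(a_n\otimes b_n)=(\langle\phi_k,a_n\rangle\langle\psi_k,b_n\rangle)_{k}=e_n$, i.e.\ $TS=\mathrm{id}_{c_0}$. Hence $T$ is surjective with a bounded right inverse and cannot be weakly compact (otherwise $\mathrm{id}_{c_0}=TS$ would be), so $B_1\otimes_\gamma B_2$ is not Grothendieck; equivalently, $P:=ST$ is a projection onto a complemented copy of $c_0$, and one may instead invoke property $(V)$ of $C^*$-algebras together with Proposition~\ref{Groth-V-c0}. Since $A\cong B_1\otimes_\gamma B_2$ and the Grothendieck property is an isomorphic invariant, $A$ is not Grothendieck.

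I expect the main obstacle to be the handling of the $C^*$-tensor norm in the two norm computations: that $\gamma$ dominates the injective tensor norm (needed for $T$ to be well defined and bounded) and that its restriction to $C\odot B_2$ is the unique $C^*$-norm, hence the sup-norm on $C_0(L,B_2)$ (needed to estimate $S$ and to see that $S$ maps onto a copy of $c_0$); the combinatorics of the disjointly supported $a_n$ is then routine. Everything else — existence of the orthogonal positive sequence, Josefson--Nissenzweig, Johnson--Rosenthal, and the concluding contradiction via Theorem~\ref{Groth-sp} — is off-the-shelf.
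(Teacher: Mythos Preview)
Your proof is correct and follows the same Cembranos-type strategy as the paper's sketch. The only noteworthy technical difference is in how the arbitrary $C^*$-cross-norm $\gamma$ is handled: the paper first passes to the quotient $B_1\otimes_\gamma B_2\to B_1\otimes_{\min} B_2$ (using that $*$-homomorphisms between $C^*$-algebras have closed range, so that the Grothendieck property would descend), and then invokes Proposition~\ref{alacembranos} once one observes that $B_1$ contains a copy of $c_0$ and that $\otimes_{\min}$ dominates the injective Banach tensor norm; you instead remain in $B_1\otimes_\gamma B_2$ throughout and use that the restriction of $\gamma$ to $C\odot B_2$ is a $C^*$-norm, hence the unique one by nuclearity of the commutative subalgebra $C$, to compute $\|\sum_n\lambda_n\,a_n\otimes b_n\|_\gamma$ exactly. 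Both routes rely on the same ingredients (orthogonal positive elements, Josefson--Nissenzweig, biorthogonality \`a la Johnson--Rosenthal, and the fact that every $C^*$-cross-norm dominates $\varepsilon$); your version is slightly more self-contained and yields the complemented copy of $c_0$ directly in $B_1\otimes_\gamma B_2$, while the paper's reduces neatly to a black-box application of Proposition~\ref{alacembranos}.
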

\begin{proof}[Sketch of the proof] 
There is a minimal $C^*$-tensor product $\otimes_{\min}$, which for commutative $C^*$-algebras coincides with the injective tensor product of Banach spaces. 

The *-homomorphisms between $C^*$-algebras have always closed range, so there is always a surjective homomorphism $B_1\otimes_\gamma B_2\to B_1\otimes_{\min} B_2$. In particular, if $B_1\otimes_\gamma B_2$ is Grothendieck, then so is $B_1\otimes_{\min} B_2$. The minimal tensor product `respects' subalgebras, and the injective tensor product of Banach spaces `respects' closed subspaces. However, every infinite-dimensional $C^*$-algebra contains a subspace isomorphic to $c_0$ (every infinite-dimensional $C^*$-algebra contains a self-adjoint element $a$ with infinite spectrum (\cite[Ex. 4.6.12]{KadisonRingrose:1983}, so by the spectral theorem, $C^*(\sigma(a))\subset A$ is an infinite-dimensional commutative sub-$C^*$-algebra), hence Proposition~\ref{alacembranos} applies.\end{proof}
In particular, $\mathscr B(\ell_2)$ and the Calkin algebra $\mathscr B(\ell_2) / \mathscr K(\ell_2)$ do not admit such decompositions. It is still not known whether there exists a reasonable Banach-space cross-norm $\gamma$ and two Banach spaces $B_1, B_2$ such that $\mathscr{B}(\ell_2)$ is isomorphic as a Banach space to the tensor product $B_1 \otimes_\gamma B_2$.

\subsection*{Spaces of bounded operators} The space $\mathscr B(E)$ of all bounded operators on $E$ contains $E^*\injtp E$ as a closed subspace, that can be identified with the approximable operators, and also contains complemented copies of $E$ and $E^*$. So the next problem is a~special case of Problem \ref{XandX*-Gr}.

\begin{quest}
Suppose that $\mathscr B(E)$ is Grothendieck. Is $E$ reflexive?
\end{quest}

The converse implication fails:

\begin{example}
For $1<p<\infty$, the spaces $E=(\bigoplus_{n\in\mathbb N}\ell_1^n)_{\ell_p}$, $E = T$, the Tsirelson space, or $E=B_p$, the $p$\textsuperscript{th} Bearnstein space are reflexive, yet $B(E)$ is not a~Grothendieck space. We refer to \cite{BKL:19, Kania:13} for details. 
\end{example}

\begin{quest}\label{Q:superreflexive}
Suppose that $E$ is super-reflexive. Is $\mathscr B(E)$ Grothendieck?

What happens for $E=\ell_p$, $1<p<\infty$, $p\neq 2$?
\end{quest}

As Banach spaces, $\mathscr{B}(\ell_p)$ and $\mathscr{B}(L_p)$ are isomorphic; in fact, for every separable, infinite-dimensional 
$\mathscr{L}_p$-space $E$,
$$
\mathscr{B}(\ell_p) \cong \mathscr{B}(E) \cong \big(\bigoplus_{n=1}^\infty \mathscr{B}(\ell_p^n)\big)_{\ell_\infty};
$$
see \cite[Section 2]{AriasFarmer:96}. For $p=2$, this result is attributed to Lindenstrauss and Haagerup (see  \cite{ChristensenSinclair:1989}), and the proof relies on the Pe{\l}czy\'nski decomposition method. This implies that the Grothendieck property of $\mathscr{B}(\ell_p)$ and that of of $\mathscr{B}(L_p)$ are equivalent.
\medskip

We observe that it is an open problem whether $\mathscr{B}(E)$ may be reflexive for an infinite-dimensional (reflexive) space $E$. It is known that $\mathscr{B}(E)$ is non-reflexive when $E$ has the Bounded Approximation Property, because in this case $\mathscr{K}(E)^{**}\equiv \mathscr{B}(E)$ with the inclusion $\mathscr{K}(E)\to \mathscr{B}(E)$ being the canonical embedding. For this reason, should  Problem~\ref{Q:superreflexive} have positive answer, there would be no super-reflexive analogue of the Argyros--Haydon space \cite{ArgyrosHaydon:2011}, that is, there would be no super-reflexive space $E$ with a~Schauder basis on which every operator is of the form $\lambda I_E+S$, where $\lambda$ is a scalar, $I_E$ is the identity operator on $E$, and $S\in \mathscr{K}(E)$.

\section{Direct sums and spaces of vector-valued functions}
Bombal \cite{Bombal:89} noticed that for $1<p<\infty$, if $X$ is a Grothendieck space, then so is $\ell_p(X)$. Let us state a more general result that involves direct sums with respect to more general unconditional bases. 

\begin{prop}\label{prop:unc-sum} Let $E$ be a reflexive Banach space with a 1-unconditional basis $(e_\gamma)_{\gamma\in \Gamma}$, and let $X_\gamma$ $(\gamma\in \Gamma)$ be a family of Banach spaces. Then the following conditions are equivalent:

\begin{enumerate}
    \item\label{sum1} each space $X_\gamma$ $(\gamma\in \Gamma)$ is Grothendieck;
    \item\label{sum2} $(\bigoplus_{\gamma\in \Gamma} X_\gamma)_E$ is Grothendieck.
\end{enumerate}\end{prop}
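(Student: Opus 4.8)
The plan is to prove both implications via the characterisation in Theorem~\ref{Groth-sp}, using the duality $\Lambda_E$ recorded in Section~\ref{sect:direct-sums}. Write $X = (\bigoplus_{\gamma\in\Gamma} X_\gamma)_E$; since $E$ is reflexive, its $1$-unconditional basis is both shrinking and boundedly complete, so $X^* \equiv (\bigoplus_{\gamma\in\Gamma} X_\gamma^*)_{E^*}$ isometrically via $\Lambda_E$.

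\textbf{The easy direction} $\eqref{sum2}\Rightarrow\eqref{sum1}$: each $X_\gamma$ is isometric to a $1$-complemented subspace of $X$ (via the coordinate inclusion and the natural projection onto the $\gamma$-th coordinate, which are contractive because the basis is $1$-unconditional). Hence $\eqref{sum1}$ follows from Proposition~\ref{basic}\eqref{simple1}.

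\textbf{The main direction} $\eqref{sum1}\Rightarrow\eqref{sum2}$: assume every $X_\gamma$ is Grothendieck; I would verify clause \eqref{Groth-sp:2} of Theorem~\ref{Groth-sp}, i.e. that every operator $T\colon X\to c_0$ is weakly compact. An operator $T\colon X\to c_0$ corresponds to a weak$^*$-null sequence $(\phi_n)_{n=1}^\infty$ in $X^* = (\bigoplus_{\gamma} X_\gamma^*)_{E^*}$ with $Tx = (\langle x,\phi_n\rangle)_{n=1}^\infty$; weak compactness of $T$ is equivalent to $(\phi_n)$ being weakly null (indeed weakly convergent to $0$, since it is weak$^*$-null). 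Write $\phi_n = (\phi_{n,\gamma})_{\gamma\in\Gamma}$. For each fixed $\gamma$, the restriction of $T$ to the $\gamma$-coordinate copy of $X_\gamma$ is an operator $X_\gamma\to c_0$ determined by the weak$^*$-null sequence $(\phi_{n,\gamma})_n$ in $X_\gamma^*$; since $X_\gamma$ is Grothendieck, $(\phi_{n,\gamma})_n$ is weakly null in $X_\gamma^*$. The task is to upgrade this ``coordinatewise weak nullity'' to weak nullity of $(\phi_n)$ in the $E^*$-sum. Here I would exploit that $E$ (equivalently $E^*$) is reflexive: a bounded sequence $(\phi_n)$ in $X^*$ is weakly null iff for every $\psi = (\psi_\gamma)_\gamma \in X^{**} = (\bigoplus_\gamma X_\gamma^{**})_{E^{**}} = (\bigoplus_\gamma X_\gamma^{**})_E$ one has $\sum_\gamma \langle\phi_{n,\gamma},\psi_\gamma\rangle \to 0$. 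Given $\varepsilon>0$, the summability of $\sum_\gamma \|\psi_\gamma\|\,e_\gamma$ in $E$ together with the uniform bound $\sup_n\|\phi_n\|\leqslant C$ and (crucially) the unconditionality of the basis lets one choose a finite set $F\subseteq\Gamma$ so that the tail $\sum_{\gamma\notin F}\langle\phi_{n,\gamma},\psi_\gamma\rangle$ is uniformly $<\varepsilon$ in $n$; on the finite set $F$ one uses the coordinatewise weak convergence $\langle\phi_{n,\gamma},\psi_\gamma\rangle\to 0$ to make the remaining finite sum small for large $n$. This yields $\langle\phi_n,\psi\rangle\to 0$, hence weak nullity, hence weak compactness of $T$.

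\textbf{Main obstacle.} The delicate point is the uniform control of the tail. A priori one only knows $\sup_n\|\phi_n\|_{E^*}\leqslant C$, i.e. $\sup_n\|(\|\phi_{n,\gamma}\|)_\gamma\|_{E^*}\leqslant C$; one must convert this into $\big\|\sum_{\gamma\notin F}\langle\phi_{n,\gamma},\psi_\gamma\rangle\big\|$ being small uniformly in $n$. This is where $1$-unconditionality of $(e_\gamma)$ is used: for the tail, $\big|\sum_{\gamma\notin F}\langle\phi_{n,\gamma},\psi_\gamma\rangle\big| \leqslant \sum_{\gamma\notin F}\|\phi_{n,\gamma}\|\,\|\psi_\gamma\| \leqslant \big\|(\|\phi_{n,\gamma}\|)_{\gamma\notin F}\big\|_{E^*}\cdot\big\|(\|\psi_\gamma\|)_{\gamma\notin F}\big\|_{E}$ by Hölder duality in the $E^*$--$E$ pairing, and $1$-unconditionality gives $\big\|(\|\phi_{n,\gamma}\|)_{\gamma\notin F}\big\|_{E^*}\leqslant\|\phi_n\|_{E^*}\leqslant C$ uniformly, while $\big\|(\|\psi_\gamma\|)_{\gamma\notin F}\big\|_E\to 0$ as $F\uparrow\Gamma$ because $\sum_\gamma\|\psi_\gamma\|e_\gamma\in E$ converges unconditionally. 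With the tail tamed, the finite part is routine. I expect the write-up to require care only in (a) invoking the correct identification $X^{**}\equiv(\bigoplus X_\gamma^{**})_E$ from Section~\ref{sect:direct-sums}, valid since the basis is boundedly complete, and (b) making sure the Grothendieck hypothesis is applied to the individual $X_\gamma$ in the form ``weak$^*$-null $\Rightarrow$ weakly null in $X_\gamma^*$''.
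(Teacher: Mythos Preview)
Your proposal is correct and follows essentially the same approach as the paper: both arguments identify $X^{**}\equiv(\bigoplus_\gamma X_\gamma^{**})_E$, use the coordinatewise Grothendieck hypothesis to get $\langle\phi_{n,\gamma},\psi_\gamma\rangle\to 0$ for each $\gamma$, and then pass from finitely supported $\psi$ to arbitrary $\psi$ by a density/uniform-boundedness argument. The only difference is cosmetic: the paper handles the last step by invoking density of $c_{00}(\Gamma,X^{**})$ together with the uniform boundedness principle, whereas you spell out the underlying $\varepsilon$-tail estimate via the $E$--$E^*$ H\"older pairing and $1$-unconditionality.
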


\begin{proof} Let us denote $X = (\bigoplus_{\gamma\in \Gamma} X_\gamma)_E$. The implication \eqref{sum2} $\Rightarrow$ \eqref{sum1} is clear as each space $X_\gamma$ is isomorphic to a complemented subspace of $X$. For the converse implication, let us fix a~weak*-null sequence $(f^n)_{n=1}^\infty$ in $X^*$. Then 
$$\lim_{n\to\infty}\big\langle (x_\gamma)_{\gamma\in \Gamma}, \Lambda_E (f^n_\gamma)_{\gamma\in \Gamma}\big\rangle = \sum_{\gamma\in \Gamma} \langle x_\gamma, f^n_\gamma\rangle = 0\qquad \big((x_\gamma)_{\gamma\in \Gamma}\in X), $$
where $\Lambda_E$ is the map defined in Section \ref{sect:direct-sums}. 

Thus, $\lim_{n\to\infty}\langle x_\gamma, f^n_\gamma\rangle = 0 $ for each $\gamma\in \Gamma$ and all $x_\gamma\in X_\Gamma$. As $X_\gamma$ is Grothendieck, $\lim_{n\to\infty}\langle x^{**}_\gamma, f^n_\gamma\rangle = 0 $ for each $\gamma\in \Gamma$ and all $x^{**}_\gamma\in X_\Gamma^{**}$. As $E$ is reflexive, the basis $(e_\gamma)_{\gamma\in \Gamma}$ is both shrinking and boundedly complete, hence $(\bigoplus_{\gamma\in \Gamma} X_\gamma)_E$ is naturally isometrically isomorphic to $(\bigoplus_{\gamma\in \Gamma} X_\gamma^{**})_E$.

The susbspace $c_{00}(\Gamma, X^{**})$  of all finitely supported $X^{**}$-valued tuples is norm-dense in $(\bigoplus_{\gamma\in \Gamma} X_\gamma^{**})_E$. 
So we may restrict ourselves to the elements $G\in c_{00}(\Gamma, X^{**})$ for testing the Grothendieck property. Indeed, it follows from the above arguments that for such $G$ we have $\lim_{n\to\infty}\langle G, (f^n_\gamma)_{\gamma\in \Gamma}\rangle = 0 $. Then, by the uniform boundedness principle applied to $(f^n_\gamma)_{\gamma\in \Gamma}$ viewed as an element of $(\bigoplus_{\gamma\in \Gamma} X_\gamma^{***})_{E^*}$, we conclude the result.
\end{proof}

A Banach space $X$ contains {uniformly complemented copies of} $\ell_1^n$ ($n\in \mathbb N$) if and only if $X$ has non-trivial type (\cite[Theorem 13.3]{DiestelJT:95}).
We have already seen in Remark~\ref{nonsums} a~special case of the following result.

\begin{prop}
If $(\bigoplus_{n\in\mathbb N} X_n)_{\ell_\infty}$ is Grothendieck, then there is no $\lambda>0$ such that each space $X_n$ contains a $\lambda$-complemented copy of $\ell_1^n$.
\end{prop}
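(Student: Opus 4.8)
The plan is to argue by contradiction. Interpret the negated conclusion as follows: there is $\lambda>0$ such that for every $n$ there are a subspace $E_n\subseteq X_n$, an isomorphism $u_n\colon\ell_1^n\to E_n$, and a projection $P_n\colon X_n\to X_n$ with range $E_n$, satisfying $\|P_n\|\leqslant\lambda$ and $\|u_n\|\,\|u_n^{-1}\|\leqslant\lambda$; rescaling $u_n$ we may assume $\|u_n\|=1$ and $\|u_n^{-1}\|\leqslant\lambda$. Form the coordinatewise maps $\widetilde P=(P_n)_n$ and $\widetilde u=(u_n)_n$ on the $\ell_\infty$-sums. One checks at once that $\widetilde P$ is a projection of $X:=(\bigoplus_{n\in\mathbb N}X_n)_{\ell_\infty}$ with $\|\widetilde P\|\leqslant\lambda$ and range $(\bigoplus_n E_n)_{\ell_\infty}$, while $\widetilde u\colon(\bigoplus_n\ell_1^n)_{\ell_\infty}\to(\bigoplus_n E_n)_{\ell_\infty}$ is an isomorphism with $\|\widetilde u\|\leqslant 1$ and $\|\widetilde u^{-1}\|\leqslant\lambda$. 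Thus $X$ has a complemented subspace isomorphic to $(\bigoplus_n\ell_1^n)_{\ell_\infty}$. Since the Grothendieck property passes to complemented subspaces (Proposition \ref{basic}\eqref{simple1}) and is an isomorphic invariant, it suffices to show that $(\bigoplus_n\ell_1^n)_{\ell_\infty}$ is \emph{not} Grothendieck; we do this by exhibiting inside it a $1$-complemented isometric copy of $\ell_1$, which is impossible inside a Grothendieck space because $\ell_1$ is separable and non-reflexive (Proposition \ref{prop:densityp}; alternatively Corollary \ref{cor:l1}).

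For the remaining claim, write a generic element of $Y:=(\bigoplus_n\ell_1^n)_{\ell_\infty}$ as $\xi=(\xi_n)_n$ with $\xi_n\in\ell_1^n$, and let $(e^{(n)}_k)_{k=1}^n$, $(e^{*(n)}_k)_{k=1}^n$ be the unit vector basis of $\ell_1^n$ and its biorthogonal functionals. Put $\xi^{(k)}=(\xi^{(k)}_n)_n$ with $\xi^{(k)}_n=e^{(n)}_k$ for $n\geqslant k$ and $\xi^{(k)}_n=0$ otherwise; a direct computation gives $\bigl\|\sum_k a_k\xi^{(k)}\bigr\|=\sum_k|a_k|$ for every finitely supported scalar sequence $(a_k)$, so $(\xi^{(k)})_k$ is isometrically equivalent to the unit vector basis of $\ell_1$. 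Fix a free ultrafilter $\mathscr U$ on $\mathbb N$ and set $c_k(\xi)=\lim_{n\to\mathscr U}\langle e^{*(n)}_k,\xi_n\rangle$; since $|\langle e^{*(n)}_k,\xi_n\rangle|\leqslant\|\xi_n\|\leqslant\|\xi\|$ for $n\geqslant k$, each $c_k$ is a well-defined norm-one functional on $Y$. The key point is that taking $n\to\mathscr U$ in the finite inequalities $\sum_{k=1}^N|\langle e^{*(n)}_k,\xi_n\rangle|\leqslant\|\xi_n\|\leqslant\|\xi\|$ (valid for $n\geqslant N$) and then letting $N\to\infty$ yields $\sum_{k=1}^\infty|c_k(\xi)|\leqslant\|\xi\|$. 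Hence $P\xi:=\sum_k c_k(\xi)\,\xi^{(k)}$ defines a bounded operator $P\colon Y\to Y$ with $\|P\|\leqslant 1$ whose range lies in $Z:=\overline{\span}\{\xi^{(k)}\colon k\in\mathbb N\}$. Using $c_j(\xi^{(k)})=\delta_{jk}$ together with the continuity of each $c_j$ one gets $P\xi=\xi$ for $\xi\in Z$, so $P$ is a norm-one projection of $Y$ onto $Z\cong\ell_1$, which is the desired contradiction.

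The only genuinely substantive step is the inequality $\sum_k|c_k(\xi)|\leqslant\|\xi\|$ — equivalently, that the coordinatewise ultrafilter limits of an $\ell_\infty$-bounded family of $\ell_1^n$-vectors land back in $\ell_1$ — and even this is immediate from the truncation argument indicated above; everything else is routine bookkeeping with $\ell_\infty$-sums and projections. (Incidentally, this reproves and slightly sharpens the fact recorded in Remark \ref{nonsums} that $(\bigoplus_n\ell_1^n)_{\ell_\infty}$ contains a complemented copy of $\ell_1$.)
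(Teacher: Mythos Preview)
Your proof is correct and follows the same overall strategy as the paper: under the negated hypothesis one finds a complemented copy of $\ell_1$ inside $(\bigoplus_n X_n)_{\ell_\infty}$, which is impossible in a Grothendieck space. The paper's proof is a single sentence that simply cites \cite[p.~303]{Johnson:84} for the key fact that $(\bigoplus_n \ell_1^n)_{\ell_\infty}$ contains a complemented copy of $\ell_1$; you instead reduce explicitly to that space via coordinatewise maps and then give a self-contained construction of the projection using ultrafilter limits of the coordinate functionals. So the route is the same, but your argument is more explicit: it unpacks the reference and shows exactly how the projection onto $\ell_1$ arises (your inequality $\sum_k |c_k(\xi)|\leqslant\|\xi\|$ is the crux, and your truncation/ultrafilter step handles it cleanly). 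The payoff of your version is that the proof is complete on the page; the paper's version is shorter but relies on the reader tracking down Johnson's construction.
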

\begin{proof}
If each $X_n$ contains a $\lambda$-complemented copy of $\ell_1^n$, then $(\bigoplus_{n\in\mathbb N} X_n)_{\ell_\infty}$ contains a~complemented copy of $\ell_1$ \cite[p.~303]{Johnson:84}.
\end{proof}

In \cite[Theorem 3]{Leung:88}, Leung proved that if $E$ is a countably order-complete Banach lattice satisfying certain technical conditions, then $\ell_\infty(\Gamma, E)$ is a Grothendieck space. As a~consequence, he derived the following fact (\cite[Theorem 6]{Leung:88}):

\begin{example}
If $\phi$ is an Orlicz function such that the Orlicz space $L^\phi(\mu)$ has weakly sequentially complete dual space, then $\ell_\infty(\Gamma, L^\phi(\mu))$ is Grothendieck.
\end{example}

The special case of $\ell_\infty(\Gamma, L^p(\mu))$ was proved in \cite{Rabiger:85}. \medskip

A cardinal number $\lambda$ is \emph{real-valued}, whenever there exists an atomless probability measure on the power-set of $\lambda$. The existence of such a cardinal number cannot be proved in ZFC as it implies that ZFC is consistent. Assuming that a real-valued cardinal number exists, we denote by $\mathfrak{m}_r$ the smallest real-valued measurable cardinal. 

Leung and R\"abiger \cite[Theorem p.~55]{LeungR:90} obtained the following result.

\begin{theorem}
Let $\Gamma$ be a set with $|\Gamma|< \mathfrak{m}_r$ and let $(E_\gamma)_{\gamma\in\Gamma}$ be a family of Banach spaces. Suppose that $E = (\bigoplus_{\gamma\in \Gamma} E_\gamma)_{\ell_\infty(\Gamma)}$ has property $(V)$. Then $E$ is a Grothendieck space if and only if each space $E_\gamma$ is Grothendieck.

In particular, $E$ is Grothendieck, whenever
\begin{enumerate}
    \item\label{lr1} each space $E_\gamma$ $(\gamma\in\Gamma)$ is a Grothendieck Lindenstrauss space; or
    \item\label{lr2} each space $E_\gamma$ $(\gamma\in\Gamma)$  is a Grothendieck $C^*$-algebra.
\end{enumerate}
Moreover, if no real-valued cardinal exists, the result holds for any index set $\Gamma$.
\end{theorem}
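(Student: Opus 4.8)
The plan is to reduce the general statement to the finite-support case via a measurable-cardinal argument, and then handle the finite-support case using the characterisation of Grothendieck spaces through operators into $c_0$ (Theorem~\ref{Groth-sp}) together with the portmanteau behaviour of property $(V)$. First I would note that the implication ``$E$ Grothendieck $\Rightarrow$ each $E_\gamma$ Grothendieck'' is immediate, since each $E_\gamma$ is a $1$-complemented subspace of $E$ and the Grothendieck property passes to complemented subspaces (Proposition~\ref{basic}). So the content is the converse, and the role of the cardinality hypothesis $|\Gamma| < \mathfrak{m}_r$ is to force weak*-null sequences of functionals on $E = (\bigoplus_{\gamma\in\Gamma} E_\gamma)_{\ell_\infty(\Gamma)}$ to be ``essentially supported'' on a countable set of coordinates, after which the argument mirrors the reflexive-sum case of Proposition~\ref{prop:unc-sum}.

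The key steps, in order, would be: (i) assume $E$ has property $(V)$ and each $E_\gamma$ is Grothendieck, and invoke Proposition~\ref{Groth-V-c0}, so that it suffices to show $E$ contains no complemented copy of $c_0$; equivalently (by Proposition~\ref{prop:NQc0} and Theorem~\ref{th:char-G}, using that property $(V)$ already gives $E^*$ weakly sequentially complete via \cite[Propositions 3.3.D and 3.3.F]{HarmandWW:93}) it suffices to show $E$ has no quotient isomorphic to $c_0$. (ii) Suppose $q\colon E \to c_0$ is a quotient map; then $q^*$ carries the unit vector basis of $\ell_1$ to a bounded, weak*-null sequence $(f^n)_{n=1}^\infty$ in $E^*$ equivalent to the $\ell_1$-basis. (iii) Identify each $f^n$ with a family $(f^n_\gamma)_{\gamma\in\Gamma}$ with $f^n_\gamma \in E_\gamma^*$ and $\sum_\gamma \|f^n_\gamma\| \le \|f^n\|$. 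For each $n$ the set $\{\gamma : f^n_\gamma \ne 0\}$ is countable, so $S = \bigcup_n \{\gamma : f^n_\gamma \ne 0\}$ is countable, and the whole sequence lives in the annihilator of the coordinates outside $S$; that is, it comes from a weak*-null $\ell_1$-sequence on $(\bigoplus_{\gamma\in S} E_\gamma)_{\ell_\infty(S)}$. Here is where the hypothesis enters: when no real-valued measurable cardinal below $|\Gamma|$ exists, there is no nontrivial atomless measure on $\Gamma$, which is exactly what is needed to rule out ``diffuse'' functionals on $\ell_\infty(\Gamma)$ and hence guarantee this reduction to a countable sub-sum (this is the Leung--R\"abiger device; see \cite{LeungR:90} and compare the role of $\mathfrak{m}_r$ in measure-theoretic selection arguments). (iv) Now $S$ is countable, so $(\bigoplus_{\gamma\in S} E_\gamma)_{\ell_\infty(S)}$ is an $\ell_\infty$-sum of countably many Grothendieck spaces; pass coordinatewise using the Grothendieck property of each $E_\gamma$ to see that $(f^n)$, tested against elements of the bidual $(\bigoplus_{\gamma\in S} E_\gamma^{**})_{\ell_\infty(S)}$ restricted to finitely-supported tuples, tends to $0$, and then use the property $(V)$ of the sum (which by hypothesis we are granting) to finish — more precisely, a weak*-null $\ell_1$-sequence and property $(V)$ are incompatible, since property $(V)$ forces the relevant operator to be weakly compact. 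This contradiction shows $E$ has no quotient onto $c_0$, hence is Grothendieck.

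For the ``in particular'' clauses, I would simply recall that Grothendieck Lindenstrauss spaces and Grothendieck $C^*$-algebras are covered because in both cases the $\ell_\infty(\Gamma)$-sum has property $(V)$: for $C^*$-algebras, $(\bigoplus_{\gamma\in\Gamma} E_\gamma)_{\ell_\infty(\Gamma)}$ is again a $C^*$-algebra (the $\ell_\infty$-direct product of $C^*$-algebras), so it has property $(V)$ by Pfitzner's theorem (Theorem~\ref{thm:pf}); for Lindenstrauss spaces one uses that the $\ell_\infty$-sum of $\mathcal{L}_{\infty,\lambda}$-spaces is an $\mathcal{L}_\infty$-space, which combined with the structural hypotheses yields property $(V)$. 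The last sentence of the statement is just the observation that if no real-valued measurable cardinal exists at all then step (iii) goes through for every $\Gamma$, so the cardinality restriction is vacuous.

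The main obstacle I anticipate is step (iii): making rigorous the claim that, absent a real-valued measurable cardinal below $|\Gamma|$, every bounded weak*-null sequence in $\big((\bigoplus_{\gamma\in\Gamma} E_\gamma)_{\ell_\infty(\Gamma)}\big)^*$ can be taken (after passing to a subsequence and discarding a norm-small error) to be supported on a fixed countable set of coordinates. This is the genuinely set-theoretic heart of the Leung--R\"abiger theorem and cannot be finessed; one has to connect weak*-convergence on the $\ell_\infty(\Gamma)$-sum with the nonexistence of atomless measures on $\mathscr{P}(\Gamma)$, presumably by extracting from a badly-spread sequence a finitely-additive diffuse measure on $\Gamma$ and then invoking the definition of $\mathfrak{m}_r$. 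Everything after that reduction is routine and parallels the proof of Proposition~\ref{prop:unc-sum}.
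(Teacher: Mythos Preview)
The paper does not itself prove the main equivalence; it attributes it to \cite{LeungR:90} and only explains the two ``in particular'' clauses by checking that the $\ell_\infty(\Gamma)$-sum inherits property~$(V)$ in those cases. Your treatment of the easy direction and of clauses~(1)--(2) matches this. Your overall reduction via Proposition~\ref{Groth-V-c0} to showing that $E$ contains no complemented copy of $c_0$ is also the right frame, and is indeed the content of the Leung--R\"abiger paper.

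The gap is in steps~(iii)--(iv). In~(iii) you claim that, because each $f^n$ restricts to a countably-supported family $(f^n_\gamma)_\gamma$ with $\sum_\gamma\|f^n_\gamma\|\leqslant\|f^n\|$, the sequence ``comes from'' a weak*-null sequence on $(\bigoplus_{\gamma\in S}E_\gamma)_{\ell_\infty(S)}$. This is false: the dual of an $\ell_\infty(\Gamma)$-sum is strictly larger than the $\ell_1(\Gamma)$-sum of the duals (already $\ell_\infty^*\ne\ell_1$), and the restriction map $f\mapsto(f|_{E_\gamma})_\gamma$ has a large kernel consisting of the ``singular'' functionals that vanish on the $c_0$-sum. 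So knowing the coordinate restrictions are supported on a countable $S$ tells you nothing about the singular part of $f^n$, and you cannot pass to the sub-sum over $S$. You partly sense this in your final paragraph, but it contradicts what you wrote in step~(iii). Step~(iv) is then circular: even for countable $S$, showing that $(\bigoplus_{\gamma\in S}E_\gamma)_{\ell_\infty(S)}$ has no weak*-null $\ell_1$-sequence in its dual is the very theorem in the countable case, and the density argument you borrow from Proposition~\ref{prop:unc-sum} fails because $(\bigoplus_\gamma E_\gamma^{**})_{\ell_\infty}$ is not the bidual of the $\ell_\infty$-sum and finitely supported tuples are not dense in either.

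The actual mechanism, which you correctly locate in your last paragraph, runs the other way: from a putative complemented copy of $c_0$ one builds, out of the norms of the $f^n$ restricted to sub-sums over subsets $A\subset\Gamma$, a nontrivial finitely additive measure on $\mathscr{P}(\Gamma)$; the Grothendieck property of each $E_\gamma$ forces this measure to vanish on singletons, and then $|\Gamma|<\mathfrak m_r$ gives the contradiction. The point is not to reduce to countable support but to show that the obstruction \emph{is} a diffuse measure.
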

Clause \eqref{lr1} is \cite[Corollary 2]{LeungR:90}, whereas clause \eqref{lr2} follows from the fact that $E$ is naturally a $C^*$-algebra when so is each $E_\gamma$ ($\gamma\in \Gamma$) and Pfitzner's theorem asserting that $C^*$-algebras have property $(V)$ (Theorem~\ref{thm:pf}). 

Let us reiterate the question asked by Kucher (\cite[Conjecture 1]{Kucher:99}).
\begin{quest}
Suppose that $E$ is super-reflexive. Is $\ell_\infty(E)$ a Grothendieck space?
\end{quest}

The case of spaces of vector-valued function on atomless measure spaces appears to be more restrictive.

\begin{prop} \label{prop:vvalued}
Suppose that  $\mu$ is an atomless measure.
\begin{enumerate}
\item If $1<p<\infty$ and the Bochner space $L_p(\mu,X)$ is Grothendieck, then the space $X$ is reflexive.
\item If $L_\infty(\mu,X)$ is Grothendieck, then $X$ has non-trivial type and it is reflexive.
\end{enumerate}
\end{prop}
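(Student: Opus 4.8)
The plan is to exploit the fact that an atomless measure space carries arbitrarily long sequences of disjointly supported, normalised, positive functions, so that $L_p(\mu,X)$ (resp.\ $L_\infty(\mu,X)$) contains a canonical complemented copy of $\ell_p(X)$ (resp.\ $\ell_\infty(X)$), and then reduce the problem to structural facts already available. For clause (1): since $\mu$ is atomless, partition a set of finite positive measure into measurable pieces $(A_n)_{n=1}^\infty$ of positive measure and let $\chi_n = \mu(A_n)^{-1/p}\mathds{1}_{A_n}$. The map sending $(x_n)_{n=1}^\infty \in \ell_p(X)$ to $\sum_n \chi_n \otimes x_n$ is an isomorphic embedding of $\ell_p(X)$ onto a complemented subspace of $L_p(\mu,X)$ (the projection is conditional-expectation-type averaging over the $A_n$'s). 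Hence $\ell_p(X)$ is a~Grothendieck space by Proposition~\ref{basic}\eqref{simple1}. Now I invoke the separable-quotient phenomenon: were $X$ non-reflexive, then by Corollary~\ref{cor:l1} $X$ contains a copy of $\ell_1$, hence $X$ has a~quotient isomorphic to $\ell_2$ (via \cite[Corollary 4.16]{DiestelJT:95}, as already used in the proof of Proposition~\ref{prop:ptp}); applying this quotient map coordinatewise gives a~surjection of $\ell_p(X)$ onto $\ell_p(\ell_2)$ ($1<p<\infty$), which is separable and non-reflexive, contradicting the fact that separable Grothendieck spaces are reflexive. Therefore $X$ is reflexive.

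For clause (2): the same disjointification (now with $\chi_n = \mathds{1}_{A_n}$) exhibits $\ell_\infty(X)$, or at least $c_0(X)$, as a~complemented subspace of $L_\infty(\mu,X)$, so $\ell_\infty(X)$ is Grothendieck, and in particular $c_0(X)$ is Grothendieck. But $c_0(X)$ contains a~complemented copy of $c_0$ whenever $X\neq\{0\}$: indeed, fixing a~norm-one $x_0\in X$, the span of $(\mathbf e_n\otimes x_0)_{n=1}^\infty$ is isometric to $c_0$ and norm-complemented via a~norm-one functional on $X$ attaining its norm at $x_0$. A~Grothendieck space cannot contain a~complemented copy of $c_0$ (such a copy yields a~quotient onto $c_0$, contradicting Theorem~\ref{Groth-sp}); alternatively one argues directly that $c_0(X)$ can only be Grothendieck if it is finite-dimensional. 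Hence $L_\infty(\mu,X)$ Grothendieck already forces us to work with $\ell_\infty(X)$ proper rather than $c_0(X)$, so one must instead use that $\ell_\infty(X)$ is Grothendieck: it has a~quotient isomorphic to $\ell_\infty/c_0(X)$-type spaces and, crucially, if $X$ contained uniformly complemented $\ell_1^n$'s, then $\ell_\infty(X)=(\bigoplus_n X)_{\ell_\infty}$ would contain a~complemented copy of $\ell_1$ by \cite[p.~303]{Johnson:84} (precisely as in Remark~\ref{nonsums} and the proposition following it), which is impossible in a~Grothendieck space since $\ell_1$ is not Grothendieck and not complemented in one with weakly sequentially complete dual failing\ldots\ more cleanly: $\ell_1$ is separable and non-reflexive, so it is not Grothendieck, hence cannot be complemented in the Grothendieck space $\ell_\infty(X)$. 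Thus $X$ contains no uniformly complemented $\ell_1^n$'s, i.e.\ $X$ has non-trivial type by \cite[Theorem~13.3]{DiestelJT:95}. Having non-trivial type, $X$ is in particular $B$-convex, hence contains no copy of $\ell_1$; by Corollary~\ref{cor:l1}, every non-reflexive Grothendieck space contains $\ell_1$, so it suffices to show $X$ itself is Grothendieck and conclude reflexivity — and $X$ is a~complemented subspace of $\ell_\infty(X)$, hence Grothendieck by Proposition~\ref{basic}\eqref{simple1}. Since it contains no $\ell_1$, Corollary~\ref{cor:l1} forces $X$ reflexive.

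I expect the main obstacle to be clause (2), specifically extracting a~complemented $\ell_1$ inside $(\bigoplus_n X)_{\ell_\infty}$ from the hypothesis of uniformly complemented $\ell_1^n$'s: one needs the precise statement from \cite[p.~303]{Johnson:84} that block-diagonal uniformly complemented copies of $\ell_1^n$ inside an $\ell_\infty$-sum sum up to a~complemented $\ell_1$, and one must be careful that in $L_\infty(\mu,X)$ the disjointification only gives $\ell_\infty(X)$ up to the subspace of sequences converging to $0$ on the $A_n$'s — but since that subspace $c_0(X)$ is complemented in $\ell_\infty(X)$ which is complemented in $L_\infty(\mu,X)$, both $\ell_\infty(X)$ and $c_0(X)$ inherit the Grothendieck property, and it is the former that carries the $\ell_1$-obstruction while the latter immediately rules out a~trivial $X$ via its complemented $c_0$. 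The rest — the disjointification embeddings and their complementations, and the invocation of Corollary~\ref{cor:l1} — is routine. A~subtle point worth stating carefully is why $L_\infty(\mu,X)$ Grothendieck is not already vacuous or contradictory for $X\neq\{0\}$ reflexive: it is consistent, witnessed e.g.\ by $X=\ell_2$ and $\mu$ suitably small, since then $L_\infty(\mu,\ell_2)$ embeds complementably in $\ell_\infty(\ell_2)=\ell_\infty$-type Grothendieck spaces; I would add a~remark to that effect, paralleling the examples in the text.
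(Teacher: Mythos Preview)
Your clause~(2) argument is essentially correct and, for the reflexivity part, actually takes a cleaner route than the paper: once you know $\ell_\infty(X)$ is complemented in $L_\infty(\mu,X)$ and hence Grothendieck, you correctly extract non-trivial type from \cite[p.~303]{Johnson:84} (exactly as the paper does), and then you observe that $X$ is Grothendieck (complemented in $\ell_\infty(X)$) and contains no copy of $\ell_1$ (non-trivial type), so Corollary~\ref{cor:l1} forces reflexivity. The paper instead obtains reflexivity by the same Rademacher argument it uses for $1<p<\infty$; your route avoids that entirely in the $p=\infty$ case. (The digression about $c_0(X)$ is a distraction---the disjointification yields a complemented $\ell_\infty(X)$, not $c_0(X)$, so you should simply drop that passage.)

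Clause~(1), however, has a genuine gap. Your reduction to $\ell_p(X)$ is correct, but the contradiction fails: for $1<p<\infty$ the space $\ell_p(\ell_2)$ is \emph{reflexive}, not non-reflexive as you assert. You have confused this with the situation in Proposition~\ref{prop:ptp}, where the non-reflexive target was the \emph{projective tensor product} $\ell_2\projtp\ell_2$, a very different object. In fact your disjoint-pieces approach cannot work in principle: by Proposition~\ref{prop:unc-sum}, $\ell_p(X)$ is Grothendieck \emph{if and only if} $X$ is Grothendieck, so passing from $L_p(\mu,X)$ to the complemented $\ell_p(X)$ throws away exactly the extra information coming from the atomless structure of $\mu$. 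The paper's proof exploits that structure via Rademacher functions: with $(f_n)\subset X^*$ equivalent to the $\ell_1$-basis (available since $X$ is non-reflexive Grothendieck), the functionals $\varphi_n=r_n\otimes f_n\in L_q(\mu,X^*)\subset L_p(\mu,X)^*$ are weak*-null (the $r_n$ are weakly null in $L_q$) yet equivalent to the $\ell_1$-basis (since $|r_n(t)|=1$ pointwise), directly contradicting the Grothendieck property of $L_p(\mu,X)$. This Rademacher idea is the missing ingredient in your clause~(1); the disjoint-support functions you chose cannot play the same role because their supports do not overlap.
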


\begin{proof}
We give the proof when $\mu$ is the Lebesgue measure on the unit interval, which is essentially valid in the general case.

Let $1<p\leqslant \infty$. If $X$ is a non-reflexive Grothendieck space, then $X^*$ contains a~sequence $(f_n)_{n=1}^\infty$ equivalent to the unit vector basis of $\ell_1$ (Corollary~\ref{cor:l1}). Let $(r_n)_{n=1}^\infty$ be a~sequence of Rademacher functions and $1/p+1/q=1$ ($q=1$ for $p=\infty$). We consider the simple functions $\varphi_n(t)=r_n(t)f_n$ in $L_q(\mu,X^*)$, regarded naturally a subspace of $L_p(\mu,X)^*$. It is not difficult to check that $(\varphi_n)_{n=1}^\infty$ is a weak$^*$ null sequence in $L_p(\mu,X)^*$ equivalent to the unit vector basis of $\ell_1$ (see the proof of \cite[Theorem 1]{Diaz:95}), hence $L_p(\mu,X)$ is not Grothendieck.

In the case $p=\infty$, observe that $L_\infty(\mu,X)$ contains a complemented copy of $\ell_\infty(X)$. Having non-trivial type is equivalent to non-containment of uniformly complemented copies of $\ell_1^n$ ($n\in \mathbb N$). Thus $X$ must have non-trivial type when $L_\infty(\mu,X)$ is Grothendieck (\emph{cf}. Remark~\ref{nonsums}).
\end{proof}

The former clause of Proposition~\ref{prop:vvalued} is due to D\'{\i}az \cite{Diaz:95}, whereas the latter one may be found in \cite[Theorem 13.3]{DiestelJT:95}.

\section{Ultrapowers and ultraproducts}

A Banach space $E$ is \emph{super-reflexive}, whenever every Banach space that is finitely representable in $E$ is reflexive.
It is well known that a Banach space $E$ is super-reflexive if and only if every ultrapower of $E$ with respect to a countably incomplete ultrafilter is reflexive \cite[Proposition 6.4]{Heinrich:80}.
This fact provides non-trivial examples of ultrapowers that are Grothendieck spaces. 

\begin{quest}
 Can an ultrapower of a reflexive space be Grothendieck without being reflexive?
 \end{quest}

By Proposition \ref{prop:scriptLinfty}, there exist ultraproducts of finite dimensional spaces which are Grothendieck non-reflexive spaces: just take $\{\ell_\infty^n\colon n\in\N\}$.\medskip

If $\{X_\gamma\colon \gamma\in \Gamma \}$ is a family of Banach spaces such that $(\bigoplus_{\gamma\in \Gamma} X_\gamma)_{\ell_\infty}$ is Grothendieck and $\U$ is an ultrafilter on $\Gamma$, then the ultraproduct $[X_\gamma]_\U$ is Grothendieck being a quotient of $(\bigoplus_{\gamma\in \Gamma} X_\gamma)_{\ell_\infty(\Gamma)}$. \medskip

Let us revisit further examples of Banach spaces whose ultraproducts with respect to countably incomplete ultrafilters are Grothendieck spaces (every non-principal ultrafilter on a countably infinite set is automatically countably incomplete). Observe that, by the Principle of Local Reflexivity,  the bidual $X^{**}$ of a Banach space $X$ is 1-complemented in some ultrapower of $X$, so $X^{**}$ has  the Grothendieck property when that ultrapower has it (\emph{cf}.~Problem~\ref{bidual-Gr}).

\begin{prop}\label{prop:scriptLinfty} Let $\U$ be a countably incomplete ultrafilter over a set $\Gamma$ and let $X_\gamma$ $(\gamma\in \Gamma)$ be Banach spaces.
\begin{enumerate}
 \item\label{c-star-1} If $X_\gamma$ are $C^*$-algebras $(\gamma\in \Gamma)$, then $[X_\gamma]_\U$ is a~Grothendieck space.  \item\label{c-star-2} If the ultraproduct $[X_\gamma]_\U$ is a $\widetilde{\mathcal{OL}}_{\infty}$-space, then it is a~Grothendieck space.
 \item\label{c-star-3} If the ultraproduct $[X_\gamma]_\U$ has property $(V)$, then it is a~Grothendieck space.
\end{enumerate}
\end{prop}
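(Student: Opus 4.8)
The plan is to prove all three clauses by one and the same reduction. Write $Z:=[X_\gamma]_\U$. I would first establish that, since $\U$ is countably incomplete, $Z$ is \emph{complemented in its bidual}, hence --- being a complemented subspace of the dual space $Z^{**}$ --- falls under the scope of Proposition~\ref{prop:V-dual}. Consequently, the moment one knows that $Z$ has property~$(V)$ it follows that $Z$ has property~$(V_\infty)$, and in particular that $Z$ is Grothendieck. Thus the argument decomposes into a ``dual-space'' statement about countably incomplete ultraproducts and a verification of property~$(V)$ in each of the three cases.

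For the dual-space statement, fix --- using countable incompleteness --- a decreasing sequence $\Gamma=\Gamma_0\supseteq\Gamma_1\supseteq\Gamma_2\supseteq\cdots$ of members of $\U$ with $\bigcap_{n}\Gamma_n=\emptyset$, and for $\gamma\in\Gamma$ put $\nu(\gamma):=\max\{n:\gamma\in\Gamma_n\}$, which is finite and satisfies $\{\gamma:\nu(\gamma)\geqslant n\}=\Gamma_n\in\U$. The point is that this diagonal gadget upgrades the \emph{local} complementation of $Z$ in $Z^{**}$ supplied by the Principle of Local Reflexivity into an honest norm-one linear projection $\pi\colon Z^{**}\to Z$: given $z^{**}\in Z^{**}$ one selects, via local reflexivity, elements $z_n\in Z$ of norm close to $\|z^{**}\|$ agreeing with $z^{**}$ on larger and larger finite subsets of $Z^*$, represents $z_n=[(x^n_\gamma)_\gamma]$, and sets $\pi(z^{**}):=[(x^{\nu(\gamma)}_\gamma)_\gamma]$. (Equivalently, one is checking that countably incomplete ultraproducts are \emph{ultrasummands}.) I expect this to be the principal obstacle: the witnesses $z_n$ must be chosen coherently enough that $\pi$ is well defined and linear, and countable incompleteness is indispensable here --- without it the conclusion is false, witness $c_0$ (the ultrapower of itself along a principal ultrafilter), which is not complemented in $c_0^{**}=\ell_\infty$.

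Granting this, the three clauses follow quickly. Clause~\eqref{c-star-3} is immediate, since $Z$ has property~$(V)$ by hypothesis and Proposition~\ref{prop:V-dual} applies. For clause~\eqref{c-star-1}, recall that the class of $C^*$-algebras is stable under ultraproducts --- the $C^*$-identity $\|a^*a\|=\|a\|^2$ and submultiplicativity of the norm pass to $\U$-limits --- so $Z$ is a $C^*$-algebra and hence has property~$(V)$ by Pfitzner's theorem (Theorem~\ref{thm:pf}); now clause~\eqref{c-star-3} applies. For clause~\eqref{c-star-2}, I would use that the class of $\widetilde{\mathcal{OL}}_{\infty}$-spaces is stable under second duals, so that $Z^{**}$ is a \emph{dual} $\widetilde{\mathcal{OL}}_{\infty}$-space; appealing to the structure theory of such spaces, $Z^{**}$ has property~$(V)$ (for instance, it is, up to isomorphism, a von Neumann algebra, whence Theorem~\ref{thm:pf} applies). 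As $Z$ is complemented in $Z^{**}$, it is a quotient of $Z^{**}$, so $Z$ inherits property~$(V)$ (property~$(V)$ being stable under quotients), and clause~\eqref{c-star-3} applies once more --- alternatively, one concludes directly that $Z$ is Grothendieck from Proposition~\ref{basic}, since a von Neumann algebra $Z^{**}$ is Grothendieck.
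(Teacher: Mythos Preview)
Your reduction hinges on the assertion that a countably incomplete ultraproduct $Z=[X_\gamma]_\U$ is complemented in its bidual, and this is where the proposal breaks down. The sketch you give does not produce a \emph{linear} map: the approximants $z_n$ supplied by local reflexivity depend on non-canonical choices (the finite subsets of $Z^*$ on which agreement is demanded, and the local-reflexivity operators themselves), and nothing forces $\pi(z_1^{**}+z_2^{**})=\pi(z_1^{**})+\pi(z_2^{**})$. The diagonal gadget $\nu(\gamma)$ only packages a given \emph{sequence} of elements of $Z$ into a single element of $Z$; it does not repair additivity on the $Z^{**}$ side, where the choices are made. You flag this yourself as the principal obstacle, and indeed it is fatal. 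The conclusion is in fact doubtful: take each $X_\gamma$ to be a fixed $\mathcal L_\infty$-space; then $Z$ is an $\mathcal L_\infty$-space, so $Z^{**}$ is injective (Proposition~\ref{prop:bidual-inj}), and a complemented subspace of an injective space is itself injective --- yet countably incomplete ultrapowers of $\mathcal L_\infty$-spaces are merely universally separably injective, a strictly weaker property (see \cite{ACCGM:13,ACCGM:16}).

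The paper bypasses this entirely by using a weaker, directly available fact: countably incomplete ultraproducts contain no complemented copy of $c_0$ \cite[Proposition~3.3]{ACCGM:13}. Clauses~\eqref{c-star-1} and~\eqref{c-star-3} then follow at once from Proposition~\ref{Groth-V-c0} (property~$(V)$ plus no complemented $c_0$ gives Grothendieck), with Theorem~\ref{thm:pf} supplying property~$(V)$ in the $C^*$-case. For clause~\eqref{c-star-2} the paper does not touch $Z^{**}$ at all: it uses separable determination (Proposition~\ref{basic2}) together with a diagonal argument --- exactly your gadget $\nu(\gamma)$, but applied to the local $\widetilde{\mathcal{OL}}_{\infty}$-data rather than to local reflexivity --- to show that every separable subspace of $Z$ sits inside a subspace at Banach--Mazur distance at most $\lambda$ from an ultraproduct of finite-dimensional $C^*$-algebras, which is Grothendieck by clause~\eqref{c-star-1}. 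Your route for~\eqref{c-star-2} carries a further independent gap: the claim that a dual $\widetilde{\mathcal{OL}}_\infty$-space is, up to isomorphism, a von Neumann algebra (or even has property~$(V)$) is not established in the literature cited and would itself be a substantial structural theorem.
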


\begin{proof}
Clause~\eqref{c-star-1} has been already noticed in \cite[Proposition 1.2(ii)]{Kania:15}. It follows from the conjunction of Theorem~\ref{thm:pf} (ultraproducts of $C^*$-algebras are naturally $C^*$-algebras) and the fact that ultraproducts of Banach spaces over countably incomplete ultrafilters do not contain complemented copies of $c_0$ (\cite[Proposition 3.3]{ACCGM:13}). The conclusion then follows from Proposition~\ref{Groth-V-c0}.
\smallskip

As for \eqref{c-star-2}, by  Proposition~\ref{basic2}, it is enough to show that every separable subspace of the ultraproduct is contained in a Grothendieck subspace. The proof closely emulates that of \cite[Proposition 3.2]{ACCGM:13}.

Suppose that the ultraproduct $[X_\gamma]_{\U }$ is a $\widetilde{\mathcal{OL}}_{\infty, \lambda+\varepsilon}$-space for some $\lambda\geqslant 1$ and all $\varepsilon >0$,  and let $W$ be a separable subspace of $[X_\gamma]_{\U }$. Let $D$ be a countable linearly dense, linearly independent subset of $W$. For each $d\in D$, let $(d_\gamma)_{\gamma\in \Gamma}\in (\bigoplus_{\gamma\in \Gamma} X_\gamma)_{\ell_\infty(\Gamma)}$ be a~representative of $d$. We write $D$ as a strictly increasing union of finite sets $D_n$ and set $W_n = {\rm span}\, D_n$. Thus, there is a finite-dimensional subspace $F_n$ of $[X_\gamma]_{\U }$ that contains $W_n$ and a $C^*$-algebra $A_n$ such that $d_{{\rm BM}}(F_n, A_n) \leqslant \lambda + \tfrac{1}{n}$.

We fix a basis $B_n$ of $F_n$ that contains the set $D_n$. For $n\in\mathbb N$ and $b\in B_n$, we choose a~representative $(b_\gamma)_{\gamma\in \Gamma}\in (\bigoplus_{\gamma\in \Gamma} X_\gamma)_{\ell_\infty(\Gamma)}$, however we insist that $(b_\gamma)_{\gamma\in \Gamma} = (d_\gamma)_{\gamma\in \Gamma}$ as long as $b\in D$. We then define $(F_n)_\gamma$ as  ${\rm span}\,\{b_\gamma\colon b\in B_n\}\subset X_\gamma$.

As the ultrafilter $\U $ is countably incomplete, we may find a sequence of sets $I_n$ all in $\mathscr{U}$ whose intersection is empty. For each $n$ set
$$ \tilde{J}_n = \Big\{\gamma\in \Gamma\colon d_{{\rm BM}}((F_n)_\gamma, A) \leqslant \lambda + \tfrac{2}{n}\text{ for some }C^*\text{-algebra }A\Big\}\cap I_n$$
and $J_n = \tilde{J}_1\cap \ldots \cap \tilde{J}_n$ ($n\in \mathbb N$). We have $J_n\in \U $ for all $n$. Now,
$$S\subseteq \big[F_{\sup\{k\in \mathbb N\colon \gamma\in J_k\}}\big]_{\U }$$
and the latter space has the Banach--Mazur distance at most $\lambda$ to the ultraproduct of $C^*$-algebras, which is a Grothendieck space itself.

For \eqref{c-star-3}, note that the ultraproduct $[X_\gamma]_\U$ contains no complemented copy of $c_0$ by \cite[Proposition 3.3]{ACCGM:13}.
\end{proof}

Contreras and D\'\i az \cite[Section 3]{Contreras:99} proved that all the ultrapowers of the disk algebra $A$ and $H^\infty$ have property $(V)$.  Since  ultrapowers of Banach spaces over countably incomplete ultrafilters do not contain complemented copies of $c_0$ (\cite[Proposition 3.3]{ACCGM:13}), the following result is a consequence of  Proposition \ref{Groth-V-c0}. 
\begin{prop}
All ultrapowers of $A$ and $H^\infty$ over countably incomplete ultrafilters  have the Grothendieck property.
\end{prop}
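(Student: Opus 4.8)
The plan is to combine two ingredients that are already at our disposal: the result of Contreras and D\'\i az stating that every ultrapower of $A$ (the disc algebra) and of $H^\infty$ over a countably incomplete ultrafilter has property $(V)$, and Proposition~\ref{Groth-V-c0}, which says that a Banach space with property $(V)$ is Grothendieck precisely when it contains no complemented copy of $c_0$. So the entire argument reduces to verifying that the relevant ultrapowers contain no complemented copy of $c_0$, and for this we invoke \cite[Proposition 3.3]{ACCGM:13}, which asserts that an ultraproduct of Banach spaces over a countably incomplete ultrafilter never contains a complemented copy of $c_0$. Putting the three facts together yields the conclusion immediately.

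In more detail, I would proceed as follows. First, fix a countably incomplete ultrafilter $\U$ and let $X$ denote either an ultrapower of $A$ or an ultrapower of $H^\infty$ along $\U$. By the theorem of Contreras and D\'\i az \cite[Section 3]{Contreras:99}, $X$ has property $(V)$. Second, since $X$ is an ultraproduct over a countably incomplete ultrafilter, \cite[Proposition 3.3]{ACCGM:13} guarantees that $X$ contains no complemented copy of $c_0$. Third, apply Proposition~\ref{Groth-V-c0}: a space with property $(V)$ is Grothendieck if and only if it has no complemented copy of $c_0$; since the latter condition holds, $X$ is Grothendieck. This is exactly the assertion of the proposition, for both $A$ and $H^\infty$.

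There is essentially no obstacle here, since every component is quoted from the literature or from earlier in the paper; the only point deserving a word of care is that the Contreras--D\'\i az result is stated for ultrapowers (all factors equal), which is precisely the setting of the proposition, so no generalisation of their statement is needed. One could remark, as the surrounding text does, that the argument is formally parallel to the proof of Proposition~\ref{prop:scriptLinfty}\eqref{c-star-3}, where property $(V)$ of the ultraproduct was the hypothesis; here that hypothesis is supplied by the Contreras--D\'\i az theorem rather than assumed. Thus the proof is a three-line deduction, and I would present it as such.
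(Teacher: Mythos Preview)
Your proposal is correct and matches the paper's own argument essentially verbatim: the paper also combines the Contreras--D\'\i az result that ultrapowers of $A$ and $H^\infty$ have property $(V)$ with \cite[Proposition 3.3]{ACCGM:13} (no complemented $c_0$ in ultraproducts over countably incomplete ultrafilters) and then applies Proposition~\ref{Groth-V-c0}.
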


Ultrapowers can be applied to obtain non-separable reflexive quotients of $\ell_\infty$. Indeed, since $\ell_q$ is a quotient of $\ell_\infty$ for $2\leqslant q<\infty$ (see the proof of Corollary \ref{cor:uncountable}), $\ell_\infty(\ell_q)$ is a quotient of $\ell_\infty\equiv \ell_\infty(\ell_\infty)$. Therefore, if $\U$ is a non-principal ultrafilter on $\N$, then the ultrapower $[\ell_p]_\U$ is a non-separable $L_p(\mu)$-space 
\cite{Heinrich:80} and a quotient of $\ell_\infty$. Thus, Problem \ref{q:reflex-quot} admits the following special case.  



\chapter{Miscellanea}\label{chap:miscellanea}

Here we briefly describe some additional results that could be worth to know for people interested in the Grothendieck property, but we have chosen not to treat in detail.

\section{Grothendieck operators}
We say that an operator $T\colon X\to Y$ is \emph{Grothendieck} if the adjoint operator $T^*$ takes weak$^*$-convergent sequences in $Y^*$ to weak-convergent sequences in $X^*$. Obviously, $X$ is Grothendieck if and only if the identity on $X$ is a~Grothendieck operator.

Pietsch \cite[3.2.6]{PietschBook:80} proved that an operator $T\in \mathscr{B}(X,Y)$ is Grothendieck if and only if for every $S\in\mathscr{B}(Y,Z)$ with separable range, the product $ST$ is weakly compact. Note also that the class of Grothendieck operators is an operator ideal in the sense of \cite{PietschBook:80}, which is surjective ($S$ surjective operator and $ST$ Grothendieck implies $T$ Grothendieck) and closed (the norm-limit of a convergent sequence of Grothendieck operators is Grothendieck). This is a consequence of the fact that the class of weakly compact operators shares these properties. \medskip

A subset $C$ of a Banach space $Y$ is said to be a \emph{Grothendieck subset} if $T(C)$ is relatively weakly compact for every operator $T\colon X\to c_0$. It is not difficult to prove the following result (see, \emph{e.g.}, \cite[Proposition 1]{Ghenciu:2017}).
\begin{prop}
An operator $T\colon X\to Y$ is Grothendieck if and only if it takes $B_X$ into a Grothendieck subset of $Y$.
\end{prop}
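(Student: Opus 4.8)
The plan is to prove both implications directly from the definitions, using the characterisation of Grothendieck operators via compositions with operators into $c_0$ and the elementary fact (Theorem~\ref{Groth-sp}, clauses \eqref{Groth-sp:1}--\eqref{Groth-sp:2}) that an operator is Grothendieck precisely when it sends weakly$^*$-null sequences to weakly-null sequences in the dual.

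First I would recall the key observation: an operator $S\colon X\to c_0$ corresponds to a weakly$^*$-null sequence $(g_n)_{n=1}^\infty$ in $X^*$ via $Sx=(\langle x,g_n\rangle)_{n=1}^\infty$, and $S$ is weakly compact if and only if $(g_n)_{n=1}^\infty$ is relatively weakly compact, equivalently weakly null (being already weakly$^*$-null). This is the same description already used to prove \eqref{Groth-sp:1}$\Leftrightarrow$\eqref{Groth-sp:2} in Theorem~\ref{Groth-sp}.

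For the forward implication, suppose $T\colon X\to Y$ is Grothendieck and let $S\colon Y\to c_0$ be any operator, given by a weakly$^*$-null sequence $(g_n)_{n=1}^\infty$ in $Y^*$. Then $ST$ corresponds to $(T^*g_n)_{n=1}^\infty$; since $T$ is Grothendieck, $T^*$ takes the weakly$^*$-null sequence $(g_n)_{n=1}^\infty$ to a weakly-null sequence $(T^*g_n)_{n=1}^\infty$ in $X^*$, so $ST$ is weakly compact. As $S$ was arbitrary, $T(B_X)$ is a Grothendieck subset of $Y$. For the converse, suppose $T(B_X)$ is a Grothendieck subset of $Y$ and let $(g_n)_{n=1}^\infty$ be a weakly$^*$-null sequence in $Y^*$; the associated operator $S\colon Y\to c_0$ then maps $T(B_X)$ into a relatively weakly compact set, i.e.\ $ST$ is weakly compact, so $(T^*g_n)_{n=1}^\infty=(S T)^\ast$-image of the unit vectors is weakly null in $X^*$. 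Since such sequences $(g_n)$ were arbitrary, $T^*$ sends weakly$^*$-null sequences to weakly-null sequences, hence $T$ is Grothendieck.

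This argument is entirely routine; there is no real obstacle, only bookkeeping. The one point requiring a line of care is the passage between ``$S$ weakly compact'' and ``$(T^*g_n)$ relatively weakly compact'': one uses that $S$ weakly compact implies $S^{**}$ has range in $c_0$, equivalently $S^*(B_{\ell_1})$ is relatively weakly compact in $Y^*$, which contains (the closure of) $\{g_n\colon n\in\N\}$; combined with the sequence being weakly$^*$-null this forces weak nullity. I would present this compactly, citing Theorem~\ref{Groth-sp} for the $c_0$-operator/weak$^*$-null-sequence dictionary, so the whole proof occupies only a few lines.
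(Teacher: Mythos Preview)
Your argument is correct and is exactly the routine verification the paper has in mind; the paper itself does not supply a proof but merely cites \cite[Proposition 1]{Ghenciu:2017}, remarking that the result ``is not difficult to prove''. The only blemish is in your final explanatory paragraph, where you write ``$S$ weakly compact'' and ``$S^*(B_{\ell_1})\ldots$ contains $\{g_n\}$'' when you mean $ST$ weakly compact and $(ST)^*(B_{\ell_1})\subset X^*$ contains $\{T^*g_n\}$; with that notational slip corrected, the justification is complete.
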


We do not know whether the ideal of Grothendieck operators has the factorisation property.
\begin{quest}
Does every Grothendieck operator factorise through a Banach space with the Grothendieck property?   
\end{quest}

Doma\'nski, Lindstr\"om, and Schl\"uchterman proved that if $T$ is a Grothendieck operator and $S$ is a compact operator, then the tensor-product operator $T\injtp S$ defined on the injective tensor product of the domains of the respective operators is still Grothendieck. Since the injective tensor product of two Grothendieck spaces need not be Grothendieck (for example, $\ell_2 \injtp \ell_2$ contains a complemented copy of $c_0$), tensoring the identity operators on such spaces provides counterexamples to tensorial stability of the ideal of Grothendieck operators.\smallskip

In \cite{GonzalezG:00}, \emph{Grothendieck holomorphic functions} between complex spaces $E$ and $F$ where defined as those holomorphic functions $f\colon E\to F$ for which each $x\in E$ has a neighbourhood $V_x$ such that $f(V_x)$ is a Grothendieck subset, and they where characterised in terms of factorisation as follows:
\begin{prop} \emph{(\cite[Theorem 6]{GonzalezG:00})}
A holomorphic function $f\colon E\to F$ is Grothendieck if and only if there exists a Banach space $G$, a holomorphic function $g\colon E\to G$, and a~Grothendieck operator $T\colon G\to F$ such that $f=T\circ g$.
\end{prop}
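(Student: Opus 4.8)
The plan is to prove the two implications separately: sufficiency is routine, while necessity calls for a Davis--Figiel--Johnson--Pe\l{}czy\'nski-type interpolation argument adapted to holomorphic maps. For sufficiency, suppose $f=T\circ g$ with $g\colon E\to G$ holomorphic and $T\colon G\to F$ a Grothendieck operator. Every holomorphic map between Banach spaces is locally bounded, so each $x\in E$ has a neighbourhood $V_x$ with $g(V_x)\subseteq M_xB_G$ for some $M_x>0$. By the preceding proposition, $T(B_G)$ is a Grothendieck subset of $F$; since Grothendieck subsets are stable under scalar multiplication, $f(V_x)=T(g(V_x))\subseteq M_xT(B_G)$ is a Grothendieck subset, so $f$ is Grothendieck holomorphic.

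For necessity, assume $f\colon E\to F$ is Grothendieck holomorphic. As $f$ is locally bounded, choose for each $x$ a neighbourhood $V_x$ with $f(V_x)$ a Grothendieck subset of $F$ (hence norm-bounded), and extract a countable subfamily $\{V_n\}_{n=1}^\infty$ still covering $E$; this is where separability of $E$ is used, and in general one first reduces to separable subspaces. Put $\lambda_n=1+\sup_{u\in V_n}\|f(u)\|<\infty$ and let $W$ be the closed absolutely convex hull in $F$ of $\bigcup_{n=1}^\infty 2^{-n}\lambda_n^{-1}f(V_n)$, a bounded, closed, absolutely convex subset of $F$. Then $W$ is a Grothendieck subset: for an operator $S\colon F\to c_0$, the set $2^{-n}\lambda_n^{-1}S(f(V_n))$ is relatively weakly compact and of norm at most $2^{-n}\|S\|$, so a sequence drawn from $\bigcup_n 2^{-n}\lambda_n^{-1}S(f(V_n))$ either visits one of these sets infinitely often --- and then has a weakly convergent subsequence --- or is eventually norm-null; by the Eberlein--\v{S}mulian theorem the union is relatively weakly compact, and by Krein's theorem so is its closed absolutely convex hull, which contains $S(W)$.

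Now let $G$ be the linear span of $W$ in $F$ with the Minkowski gauge $\|\cdot\|_W$ of $W$ as norm; since $W$ is bounded, closed and absolutely convex, $(G,\|\cdot\|_W)$ is a Banach space with closed unit ball $W$, and the inclusion $T:=j\colon G\hookrightarrow F$ is a bounded operator with $j(B_G)=W$, hence a Grothendieck operator by the preceding proposition. Because $\bigcup_n V_n=E$ and $f(V_n)\subseteq 2^n\lambda_n W\subseteq G$, the map $g:=j^{-1}\circ f\colon E\to G$ is well defined, with $\|g(u)\|_W\leq 2^n\lambda_n$ for $u\in V_n$. The crux is to show that $g$ is holomorphic. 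Fix $x\in E$, take $n$ with $x\in V_n$ and $r>0$ with $\{u:\|u-x\|<r\}\subseteq V_n$, and write the Taylor expansion $f(x+h)=\sum_{k\geq0}P_k(h)$, valid for $\|h\|<r$, with $P_k\in\mathcal P(^kE,F)$ given by the Cauchy integral $P_k(h)=\frac1{2\pi}\int_0^{2\pi}e^{-ik\theta}f(x+e^{i\theta}h)\,d\theta$. For $\|h\|<r$ the integrand lies in $f(V_n)\subseteq 2^n\lambda_n W$, and $W$ is closed and absolutely convex, so $P_k(h)\in 2^n\lambda_n W$; hence each $P_k$ is $G$-valued and, by homogeneity, $\|P_k\|_{\mathcal P(^kE,G)}\leq 2^n\lambda_n r^{-k}$. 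Thus $\sum_k P_k$ converges in $(G,\|\cdot\|_W)$ on $\{\|h\|<r\}$, and composing with the injective map $j$ identifies its sum with $g(x+\cdot)$; so $g$ is holomorphic near $x$, hence everywhere, and $f=j\circ g=T\circ g$.

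The main obstacle is precisely this last step --- making the interpolation space $G=F_W$ simultaneously large enough to contain $f(E)$ and small enough that $f$ remains holomorphic into it. It works because of the double role of the closed absolutely convex set $W$: closedness and convexity force the Cauchy-integral Taylor coefficients of $f$ to stay inside $W$ (hence inside $G$), while boundedness supplies the Cauchy estimates in the $\|\cdot\|_W$-norm that make the Taylor series converge in $G$. A subsidiary point is the passage to a countable subcover ensuring that $W$ is bounded and a Grothendieck subset, which is automatic when $E$ is separable.
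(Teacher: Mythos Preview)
The paper does not supply its own proof of this proposition; it merely cites \cite[Theorem 6]{GonzalezG:00}. So there is nothing to compare against, and I assess your argument on its own merits.

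The sufficiency direction is fine. For necessity, however, there is a genuine gap. Your construction of $W$ (and hence of $G$) depends on extracting a \emph{countable} subcover $\{V_n\}$ from the family $\{V_x:x\in E\}$. For a metric space this is possible exactly when the space is Lindel\"of, which for a Banach space is equivalent to separability; the statement carries no such hypothesis. You acknowledge the issue (``this is where separability of $E$ is used, and in general one first reduces to separable subspaces''), but the reduction is not carried out and does not work as stated: if for each separable $E_0\subset E$ one obtains a factorisation $f|_{E_0}=T_0\circ g_0$ through a Grothendieck operator $T_0\colon G_{E_0}\to F$, there is no evident way to amalgamate the varying intermediate spaces $G_{E_0}$ into a single Banach space $G$ and a single Grothendieck operator $T\colon G\to F$ through which $f$ factors globally. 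Countability is used twice in your argument --- once to make $W$ bounded via the weights $2^{-n}\lambda_n^{-1}$, and once, crucially, in the Eberlein--\v{S}mulian step showing that $S\bigl(\bigcup_n 2^{-n}\lambda_n^{-1} f(V_n)\bigr)$ is relatively weakly compact for every $S\colon F\to c_0$ --- and neither step survives an uncountable index set (an uncountable union of Grothendieck sets need not be Grothendieck).

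Apart from this gap, the proof for separable $E$ is correct: completeness of $(G,\|\cdot\|_W)$ follows from $W$ being closed, bounded and absolutely convex in $F$; the Cauchy-integral argument forcing the Taylor polynomials $P_k$ to land in $2^n\lambda_n W$, together with the resulting Cauchy estimates in the $\|\cdot\|_W$-norm, is exactly what is needed to show $g$ is holomorphic into $G$. To treat general $E$ you would need either a different construction of $W$ that avoids the countable cover, or a linearisation-and-gluing argument in the spirit of Mujica's $G^\infty(U)$; neither is provided here.
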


\section{Quantification of the Grothendieck property} 
Kruli\v{s}ov\'a (n\'ee Bendov\'a) introduced in \cite{Bendova:2014} a quantitative version of the Grothendieck property using the following two measures $\delta_{{\rm w}}$ and $\delta_{{\rm w}^*}$ of `non-weak-Cauchyness' and `non-weak*-Cauchyness', respectively, defined for bounded sequences $(f_n)_{n=1}^\infty$ in the dual of a~Banach space $X$:
\begin{itemize}
    \item $\delta_{{\rm w}}( (f_n)_{n=1}^\infty ) = \sup\limits_{x^{**}\in B_{X^{**}}} \inf\limits_{n\in\mathbb N} \sup\limits_{k,l \geqslant n} |\langle x^{**}, f_k - f_l \rangle|,$
    \item $\delta_{{\rm w}^*}( (f_n)_{n=1}^\infty ) = \sup\limits_{x\in B_{X}} \inf\limits_{n\in\mathbb N} \sup\limits_{k,l \geqslant n} |\langle f_k - f_l, x \rangle|.$    
\end{itemize}

\begin{definition}Let $\lambda \geqslant 1$. A Banach space $X$ is a $\lambda$-\emph{Grothendieck space}, whenever for every for every bounded sequence $(f_n)_{n=1}^\infty$ in $X^*$ we have
$$\delta_{{\rm w}}( (f_n)_{n=1}^\infty )\leqslant \lambda \cdot \delta_{{\rm w}^*}( (f_n)_{n=1}^\infty ). $$
\end{definition}

Kruli\v{s}ov\'a proved (\cite[Theorem 4.1]{Bendova:2014}) that $\ell_\infty(\Gamma)$ is a 1-Grothendieck space for any set $\Gamma$ (hence every 1-injective space is 1-Grothendieck too), however not every Grothendieck space is $\lambda$-Grothendieck for some $\lambda \geqslant 1$ (\cite[Theorem 2]{Bendova:2014}); this was obtained by forming an $\ell_2$-sum of $\lambda_n$-Grothendieck spaces with $\lambda_n\to \infty$ as $n\to \infty$. Lechner extended in \cite{Lechner:17} the results about the 1-Grothendieck property of 1-injective spaces by proving that for every subsequentially complete Boolean algebra $\mathscr{A}$ (see Section~\ref{sect:ck}) the space $C({\rm St}\, \mathscr{A})$ is 1-Grothendieck.\smallskip

Moreover, Kruli\v{s}ov\'a introduced in \cite{Krulisova:2017} a quantitative version of property $(V)$ as follows: given  $\lambda\geqslant 1$, a Banach space has \emph{quantitative property $(V)$} (with constant $\lambda$), whenever for every Banach space $Y$ and every operator $T\colon X\to Y$, one has $\gamma(T)\leqslant \lambda\cdot {\rm uc}(T)$, where
\begin{itemize}
  \item $\gamma(T)$ is the measure relative weak non-compactness of $T$:
  $$\gamma(T) = \sup\big\{ |\lim\limits_{n\to \infty} \lim\limits_{m\to \infty} \langle f_m, x_n\rangle - \lim\limits_{m\to \infty} \lim\limits_{n\to \infty} \langle f_m, x_n\rangle|\colon x_n\in T(B_X)\, (n\in \mathbb N) $$ $$\text{ and } (f_m)_{m=1}^\infty\text{ is a sequence in }B_{X^*}\text{ so that both limits exist}\big\}$$
  \item ${\rm uc}(T)$ measures how far is the operator $T$ from being unconditionally converging:
 $${\rm uc}(T) =\sup\big\{{\rm ca} ((\sum_{i=1}^n Tx_i )_{n=1}^\infty \colon (x_n)_{n=1}^\infty \subset X, \sup_{f\in B_{X^*}} \sum_{n=1}^\infty |\langle f, x_n \rangle | \leqslant 1 \big\},$$
where
 $${\rm ca}((y_n)_{n=1}^\infty) = \inf\limits_{n\in\mathbb N} \sup\limits_{k,l\geqslant n} \|y_k - y_l\|\qquad \big((y_n)_{n=1}^\infty \subset Y\big).$$
\end{itemize}
Using these notions, Kruli\v{s}ov\'a refined Pfitzner's theorem (Theorem~\ref{thm:pf}) by showing that $C^*$-algebras have the quantitative property $(V)$ (\cite[Theorem 4.1]{Krulisova:2017}). Moreover, she proved the following counterpart of Proposition~\ref{prop:V-dual} (\cite[Theorem 5.1]{Krulisova:2017}).
\begin{prop}
Every dual space with the quantative property $(V)$ is a $\lambda$-Grothendieck space for some $\lambda \geqslant 1$.
\end{prop}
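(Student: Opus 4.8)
The plan is to test the $\lambda$-Grothendieck inequality directly on an arbitrary bounded sequence and to transfer it to the quantitative property $(V)$ through a single, canonically associated operator. Write $Z=Y^*$ for the given dual space and fix a bounded sequence $(f_n)_{n=1}^\infty\subseteq B_{Z^*}$; by homogeneity it suffices to bound $\delta_{{\rm w}}((f_n))$ by a fixed multiple of $\delta_{{\rm w}^*}((f_n))$. Consider the operator $T\colon Z\to\ell_\infty$ defined by $Tz=(\langle f_n,z\rangle)_{n=1}^\infty$, which is bounded with $\|T\|\leqslant 1$. The whole argument consists of sandwiching the operator-theoretic moduli between the two sequence-theoretic ones, namely establishing
\[ \delta_{{\rm w}}((f_n))\leqslant C_1\,\gamma(T)\quad\text{and}\quad {\rm uc}(T)\leqslant C_2\,\delta_{{\rm w}^*}((f_n)) \]
for absolute constants $C_1,C_2$, and then feeding the inequality $\gamma(T)\leqslant\lambda\,{\rm uc}(T)$ supplied by the quantitative property $(V)$ into the chain, which will yield that $Z$ is $(C_1C_2\lambda)$-Grothendieck.

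For the first inequality I would use the coordinate functionals $e_m^*\in B_{\ell_\infty^*}$ as admissible test elements in the double-limit expression for $\gamma(T)$, so that $\langle e_m^*,Tz\rangle=\langle f_m,z\rangle$. Given $x^{**}\in B_{Z^{**}}$ nearly realising $\delta_{{\rm w}}((f_n))$ as the oscillation $\limsup_n\langle x^{**},f_n\rangle-\liminf_n\langle x^{**},f_n\rangle$, Goldstine's theorem together with the countability of $\{f_n\}$ produces a sequence $(z_k)\subseteq B_Z$ with $\lim_k\langle f_m,z_k\rangle=\langle x^{**},f_m\rangle$ for every $m$; splitting the index $m$ along subsequences realising the $\limsup$ and the $\liminf$ makes the two iterated limits of $\langle f_m,z_k\rangle$ differ by essentially the oscillation. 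This is precisely a quantitative Eberlein--\v Smulian estimate, and it gives $\delta_{{\rm w}}((f_n))\leqslant 2\gamma(T)$.

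The decisive step is the second inequality, and it is where the hypothesis that $Z$ is a dual space enters. I would argue by contraposition through a gliding-hump construction: if ${\rm uc}(T)>\eta$, there is a weakly unconditionally Cauchy series $\sum_i z_i$ in $B_Z$ of unit wuc-norm whose partial sums $T(\sum_{i\leqslant N}z_i)$ fail to be norm-Cauchy, and writing $a_{n,i}=\langle f_n,z_i\rangle$ one extracts successive index blocks $B_1<B_2<\cdots$ and indices $n_1<n_2<\cdots$ with $\sum_{i\in B_k}|a_{n_k,i}|\geqslant\eta/2$ while the off-diagonal masses are negligible. Here the dual-space structure is essential: because $\sup_{y\in B_Y}\sum_i|\langle z_i,y\rangle|\leqslant 1$, every signed series $\sum_i\varepsilon_i z_i$ converges in the weak* topology of $Z=Y^*$ to an element of $B_Z$. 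Choosing the signs on $B_k$ to agree with ${\rm sign}\,a_{n_k,i}$ and alternating them across consecutive blocks produces a single $u\in B_Z$ for which $\langle f_{n_k},u\rangle$ oscillates with amplitude at least $\eta/3$, whence $\delta_{{\rm w}^*}((f_n))\geqslant\eta/3$. This is the quantitative incarnation of the classical fact that dual spaces contain no complemented copy of $c_0$, which underlies the qualitative statement (Corollary~\ref{cor:dualV}).

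Assembling the pieces, the quantitative property $(V)$ gives $\gamma(T)\leqslant\lambda\,{\rm uc}(T)$, so that $\delta_{{\rm w}}((f_n))\leqslant 2\gamma(T)\leqslant 2\lambda\,{\rm uc}(T)\leqslant 6\lambda\,\delta_{{\rm w}^*}((f_n))$, and $Z$ is $6\lambda$-Grothendieck. I expect the main obstacle to lie in the gliding-hump step: extracting the blocks with controlled off-diagonal interaction and verifying that the alternating weak*-convergent sign-sum genuinely forces oscillation of $(\langle f_{n_k},u\rangle)_k$ requires care, and the bookkeeping of constants in both the Eberlein--\v Smulian estimate and the hump construction must be tracked to produce an honest numerical value of the final constant.
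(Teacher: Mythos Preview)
The paper does not supply its own proof of this statement; it merely records the result with a citation to \cite[Theorem 5.1]{Krulisova:2017}. Your overall strategy---associate to $(f_n)$ the operator $T\colon Z\to\ell_\infty$, $Tz=(\langle f_n,z\rangle)_n$, and sandwich $\gamma(T)$ and ${\rm uc}(T)$ between the two sequence moduli---is the natural quantification of the qualitative argument behind Corollary~\ref{cor:dualV}, and the first inequality $\delta_{\rm w}((f_n))\leqslant C_1\gamma(T)$ is a standard quantitative Eberlein--\v Smulian estimate.

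The second inequality, however, has a genuine gap. You produce $u\in B_Z$ as the $\sigma(Y^*,Y)$-limit of the signed partial sums $S_N=\sum_{i\leqslant N}\varepsilon_i z_i$ and then need $\langle f_{n_k},u\rangle=\sum_i\varepsilon_i a_{n_k,i}$ (or something close to it) in order to exhibit oscillation. But the functionals $f_n$ live in $Z^*=Y^{**}$, and an element of $Y^{**}$ is $\sigma(Y^*,Y)$-continuous on $Y^*$ precisely when it lies in the canonical image of $Y$. For a general $f_n\in Y^{**}$ there is \emph{no} relation between $\langle f_n,u\rangle$ and $\lim_N\langle f_n,S_N\rangle$. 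What your construction does yield is that $(S_N)$ is weakly Cauchy in $Z$ (since $\sum_i|\langle h,z_i\rangle|\leqslant 1$ for every $h\in B_{Z^*}$, the wuc norm being taken over $B_{Z^*}$), so its $\sigma(Z^{**},Z^*)$-limit $\tilde u\in B_{Z^{**}}$ satisfies $\langle\tilde u,f_{n_k}\rangle=\sum_i\varepsilon_i a_{n_k,i}$---but $\tilde u$ is a witness for $\delta_{\rm w}$, not for $\delta_{{\rm w}^*}$, which is the wrong side of the chain. The dual-space hypothesis buys you convergence of $(S_N)$ with respect to the \emph{predual} pairing $\langle Y^*,Y\rangle$, whereas $\delta_{{\rm w}^*}((f_n))$ tests $(f_n)\subseteq Y^{**}$ against elements of $Y^*$ via the full pairing $\langle Y^{**},Y^*\rangle$; the two do not match. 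Replacing the weak*-limit by a finite partial sum $S_M$ does not help either, because $S_M$ depends on $M$ while $\delta_{{\rm w}^*}$ demands a single test vector working for all large indices. Closing this step requires a genuinely different mechanism---for instance a more careful diagonal selection producing one finite sum in $B_Z$ that simultaneously separates infinitely many pairs $f_{n_k},f_{n_{k+1}}$---and your sketch does not supply it.
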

An alternative approach to quantify the Grothendieck property was taken in \cite{ChenKaniaRuan:2021}, where the following quantity $G(X)$ was defined for any Banach space $X$:
$$G(X)=\sup_{\substack{(f_{n})_{n=1}^\infty \subseteq B_{X^{*}} \\ \operatorname{weak}^{*}\textrm{null}}}\inf_{(g_{n})_{n=1}^\infty \in \operatorname{cbs}((f_{n})_{n=1}^\infty )}\limsup_{n\to \infty}\|g_{n}\|.$$
Here $\operatorname{cbs}((f_{n})_{n=1}^\infty )$ stands for the family of all convex block subsequences of $(f_{n})_{n=1}^\infty$.)

One can use Proposition~\ref{prop:convblock} to prove that $X$ is a Grothendieck space if and only if $G(X) = 0$ (\cite[Theorem 4.5]{ChenKaniaRuan:2021}). It is easy to see that $G(c_0) = 1$.

\section{Positive Grothendieck property}
A Banach lattice (or more generally, an ordered Banach space) $E$ has the \emph{positive Grothendieck property}, if every positive weakly* convergent sequence in $E^*$ is weakly convergent. 

As in the case of the  Grothendieck property, a Banach lattice $E$ has the positive Groth\-endieck property if and only if every positive operator $T\colon E\to c_0$ is weakly compact. Moreover, the positive Grothendieck property is preserved by positive surjective operators. \smallskip

The space $c_0$ is a paradigm example of a Banach lattice failing the positive Grothen\-dieck property.
The positive Grothendieck property is much weaker than the usual one as for example $c$, the space of convergent sequences (in which $c_0$ has codimension one) has the positive Grothendieck property because every positive functional on $c$ attains its norm at $\mathds{1}_{\mathbb N}\in c$.

Wnuk (\cite[Proposition 2.12]{Wnuk:2013}) proved the following characterisation of Banach lattices with the positive Grothendieck property.
\begin{prop}
For a Banach lattice $E$, the following assertions are equivalent:
\begin{enumerate}
\item $E$ has the positive Grothendieck property;
\item for every non-reflexive Banach lattice $F$ with order-continuous norm there is no positive surjective operator $T\colon E\to F$;
\item There is no positive surjective operator $T\colon E\to c_0$.
\end{enumerate}
\end{prop}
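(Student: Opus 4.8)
The plan is to prove the cycle $(1)\Rightarrow(2)\Rightarrow(3)\Rightarrow(1)$, relying on the two facts recorded just before the statement: a Banach lattice $E$ has the positive Grothendieck property if and only if every positive operator $E\to c_0$ is weakly compact, and this property is inherited by positive quotients.

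First, $(2)\Rightarrow(3)$ is immediate: $c_0$ is itself a non-reflexive Banach lattice with order-continuous norm (its order intervals $[0,x]$ are even norm-compact), so $(3)$ is the instance $F=c_0$ of $(2)$. For $(1)\Rightarrow(2)$, suppose some positive $T\colon E\to F$ is surjective with $F$ non-reflexive and order-continuous; then $F$ inherits the positive Grothendieck property, so it suffices to prove the lemma that \emph{a non-reflexive Banach lattice $F$ with order-continuous norm fails the positive Grothendieck property}. Recalling that a Banach lattice is reflexive precisely when both it and its dual are KB-spaces, and that a Banach lattice fails to be a KB-space exactly when it contains a disjoint positive normalised sequence spanning a sublattice lattice-isomorphic to $c_0$, I would split into two cases. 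If $F$ is not a KB-space, pick such a $c_0$-sublattice $(u_n)\subset F_+$ and disjoint biorthogonal functionals $(\psi_n)\subset F^*_+$ supported on the principal bands generated by the $u_n$; since $F$ has order-continuous norm, bounded disjoint positive sequences in $F^*$ are weak$^*$-null, so $(\psi_n)$ is weak$^*$-null, whereas it is not weakly null because transporting the constant function $\mathbf 1$ in the bidual of the $c_0$-sublattice into $F^{**}$ via the adjoint of the restriction map yields $\Xi\in F^{**}$ with $\langle\Xi,\psi_n\rangle=1$ for all $n$. If instead $F^*$ is not a KB-space, take a disjoint positive normalised sequence $(\phi_n)\subset F^*_+$ spanning a sublattice $W\cong c_0$; the partial sums $\sigma_N=\phi_1+\dots+\phi_N$ lie in $W$, their coordinates against any fixed $y\in F$ form an element of $W^*\cong\ell_1$ and are therefore summable, so $(\sigma_N)$ is weak$^*$-convergent to some $\sigma_\infty\in F^*$ with $\sigma_\infty\geqslant\sigma_N$, yet $(\sigma_N)$ is not weakly convergent (its weak limit would have to lie in $W$, but the only candidate is $\mathbf 1\notin W$); hence $g_N:=\sigma_\infty-\sigma_N\in F^*_+$ is weak$^*$-null but not weakly null. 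Either way $F^*$ carries a positive weak$^*$-null sequence that is not weakly null, proving the lemma.

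It remains to prove $(3)\Rightarrow(1)$, which is the heart of the matter; I would argue contrapositively and try to turn a failure of the positive Grothendieck property into a positive surjection onto $c_0$. Failure provides a bounded positive weak$^*$-null sequence $(f_n)\subset E^*$ that is not weakly (hence not relatively weakly) convergent. Applying Rosenthal's $\ell_1$-theorem inside $E^*$, one is in one of two situations. If some subsequence of $(f_n)$ is equivalent to the unit vector basis of $\ell_1$, then it is a positive, weak$^*$-null, $\ell_1$-dominating sequence, and $x\mapsto(\langle f_{n_k},x\rangle)_k$ is the required positive surjection $E\to c_0$. Otherwise $(f_n)$ has a weakly Cauchy, non-weakly-convergent subsequence, and one must still extract from the surrounding lattice structure of $E^*$ a positive, weak$^*$-null, $\ell_1$-dominating sequence: here I would disjointify the non-relatively-weakly-compact positive set $\{f_n\}$, use the bidual functional witnessing the failure of weak compactness, and perform a sliding-hump extraction against a bounded positive sequence in $E$ approximating that functional.

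The main obstacle is precisely this last step. The data one naturally obtains — disjoint chunks in the dual lattice and order-bounded partial sums — deliver only $c_0$-type lower estimates of the form $\|\sum_k a_kf_k\|\geqslant\delta\max_k|a_k|$, whereas surjectivity onto $c_0$ requires the $\ell_1$-type estimate $\|\sum_k a_kf_k\|\geqslant\delta\sum_k|a_k|$; closing this gap forces one to exploit the full lattice structure of $E^*$ together with its bidual rather than disjointification alone. Symmetrically, case~(a) of the lemma amounts to showing that a $c_0$-sublattice of $F$ is positively complemented, which succeeds under order-continuity exactly because the biorthogonal functionals may be chosen disjoint and weak$^*$-null, so the difficulty there is genuinely absent.
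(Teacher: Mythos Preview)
The paper does not prove this proposition; it only records the statement with a citation to Wnuk, so there is no argument in the paper against which to compare yours. On its own merits: your implications $(2)\Rightarrow(3)$ and $(1)\Rightarrow(2)$ are correct, and the Banach-lattice facts you invoke (notably that disjoint bounded sequences in $F^*_+$ are weak*-null whenever $F$ has order-continuous norm, and the reflexivity criterion via KB-spaces) are standard.

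The genuine gap is exactly where you place it, in $(3)\Rightarrow(1)$. When the positive, weak*-null, non-weakly-null sequence $(f_n)\subset E^*_+$ turns out to be weakly Cauchy, Rosenthal's dichotomy yields no $\ell_1$-subsequence, and your proposed ``disjointify plus sliding hump'' only produces $c_0$-type lower bounds, not the $\ell_1$-estimate needed for surjectivity onto $c_0$. This case is not vacuous: for $E=\ell_1$ the sequence $f_n=\mathbf 1_{[n,\infty)}\in\ell_\infty=E^*$ is positive, weak*-null, not weakly null (any Banach limit gives value $1$ on every $f_n$), and weakly Cauchy, hence has \emph{no} $\ell_1$-subsequence. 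Yet $\ell_1$ does admit a positive surjection onto $c_0$: enumerate the finite subsets of $\mathbb N$ as $(S_k)_{k\geqslant1}$ and set $g_n=(\mathbf 1_{S_k}(n))_k\in\ell_\infty_+$; then $(g_n)$ is weak*-null and satisfies $\|\sum_n a_n g_n\|_\infty=\sup_{S\text{ finite}}\big|\sum_{n\in S}a_n\big|\geqslant\tfrac12\sum_n|a_n|$, so the associated operator $x\mapsto(\langle g_n,x\rangle)_n$ is a positive surjection $\ell_1\to c_0$. The moral is that the required positive weak*-null $\ell_1$-sequence in $E^*$ need not be extractable from the given $(f_n)$ at all---one must manufacture a \emph{different} sequence using the lattice structure of $E$ and $E^*$ more substantially than your outline indicates. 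Closing this gap is precisely the content of Wnuk's argument, and your proposal does not supply it.
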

K\"uhn proved that for Archimedean ordered Banach spaces (for example, Banach lattices with order unit) with countable Riesz interpolation property the Grothendieck and positive Grothendieck properties are equivalent (\cite[1.~Proposition]{Kuhn:80}; see also \cite[Theorem 5.3.13]{Meyer-Nieberg:91}).\smallskip

Koszmider and Shelah (\cite[Lemma 2.2]{KoszmiderShelah:13}) obtained a handy condition characterising Stone spaces $K$ for which $C(K)$ has the positive Grothendieck property.
\begin{prop}
Let $\mathscr{A}$ be a Boolean algebra. Then the space $C({\rm St}\,\mathscr{A})$ has the positive Grothen\-dieck property if and only if given an antichain
$\{A_n\colon n\in\mathbb N\}$  in $\mathscr{A}$, $\varepsilon > 0$, and a bounded sequence $(\mu_n)_{n=1}^\infty$ of bounded, finitely additive signed measures on $\mathscr{A}$ with $|\mu_n(A_n)| > \varepsilon$,
there exists $A\in \mathscr{A}$ such that the scalar sequence $(\mu_n(A))_{n=1}^\infty$ fails to converge.
\end{prop}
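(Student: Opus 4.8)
The statement to be proved characterises, in purely combinatorial terms, when $C(\mathrm{St}\,\mathscr A)$ has the positive Grothendieck property; recall that by the remarks preceding the statement, this is equivalent to saying that every positive operator $T\colon C(\mathrm{St}\,\mathscr A)\to c_0$ is weakly compact, equivalently that every \emph{positive} weak$^*$-null sequence in $C(\mathrm{St}\,\mathscr A)^*=M(\mathrm{St}\,\mathscr A)$ is weakly null. Via the canonical isomorphism between $M(\mathrm{St}\,\mathscr A)$ and the space of bounded, finitely additive signed measures on $\mathscr A$ (Stone duality applied to measures), a positive weak$^*$-null sequence on $\mathrm{St}\,\mathscr A$ corresponds to a bounded sequence $(\mu_n)_{n=1}^\infty$ of positive finitely additive measures on $\mathscr A$ with $\mu_n(A)\to 0$ for every $A\in\mathscr A$ (testing against indicator functions $\mathds 1_A$, which are continuous on $\mathrm{St}\,\mathscr A$ and norm-dense in their span, suffices by boundedness). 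So the plan is to prove the contrapositive of each direction: $C(\mathrm{St}\,\mathscr A)$ \emph{fails} the positive Grothendieck property iff there is an antichain $\{A_n\}$, an $\varepsilon>0$, and a bounded sequence of finitely additive signed measures $(\mu_n)$ with $|\mu_n(A_n)|>\varepsilon$ such that no single $A\in\mathscr A$ makes $(\mu_n(A))_n$ divergent.

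\textbf{The ``only if'' direction (failure of positive Grothendieck $\Rightarrow$ combinatorial obstruction).} Suppose $C(\mathrm{St}\,\mathscr A)$ fails the positive Grothendieck property. Then there is a bounded positive sequence $(\nu_n)$ in $M(\mathrm{St}\,\mathscr A)$ that is weak$^*$-null but has no weakly convergent subsequence. First I would invoke the Eberlein--\v{S}mulian theorem together with the Dieudonn\'e--Grothendieck characterisation of relatively weakly compact sets of measures (\cite[Theorem~14, Chapter~VII]{Diestel:84}), exactly as in the proof of the proposition on complemented copies of $c_0$ earlier in the paper: since $(\nu_n)$ has no weakly convergent subsequence, one extracts a subsequence, pairwise disjoint clopen sets $U_n\subseteq\mathrm{St}\,\mathscr A$, and $\delta>0$ with $|\nu_n(U_n)|>\delta$; passing to a further subsequence and using weak$^*$-nullity one may also arrange the ``purified'' estimate. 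Each clopen $U_n$ is $\widehat{A_n}$ for a unique $A_n\in\mathscr A$ (Stone duality), and pairwise disjointness of the $U_n$ means $\{A_n\}$ is an antichain in $\mathscr A$. Setting $\mu_n$ to be the finitely additive measure on $\mathscr A$ corresponding to $\nu_n$ (so $\mu_n(A)=\nu_n(\widehat A)$), we get $|\mu_n(A_n)|>\delta=:\varepsilon$ and boundedness. Finally, because $(\nu_n)$ is weak$^*$-null, for every $A\in\mathscr A$ we have $\mu_n(A)=\nu_n(\widehat A)=\langle \nu_n,\mathds 1_{\widehat A}\rangle\to 0$, so the scalar sequence $(\mu_n(A))_n$ converges (to $0$) for every $A$ — i.e., there is \emph{no} $A\in\mathscr A$ witnessing divergence. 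This is precisely the negation of the right-hand side of the proposition.

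\textbf{The ``if'' direction (combinatorial obstruction $\Rightarrow$ failure).} Conversely, assume there is an antichain $\{A_n\}$, $\varepsilon>0$, and a bounded sequence $(\mu_n)$ of finitely additive signed measures with $|\mu_n(A_n)|>\varepsilon$ such that $(\mu_n(A))_n$ converges for every $A\in\mathscr A$. The only subtle point is that the $\mu_n$ are a priori \emph{signed}, whereas the positive Grothendieck property concerns \emph{positive} sequences; I would handle this by passing to total variations or, more cleanly, by the standard device of splitting each $\mu_n$ into its Jordan decomposition $\mu_n=\mu_n^+-\mu_n^-$ and noting that at least one of $(\mu_n^+)$, $(\mu_n^-)$ must, along a subsequence, still register mass $\gtrsim\varepsilon$ on the relevant sets while still converging coordinatewise — here one uses boundedness and a diagonal extraction so that both $\mu_n^\pm(A)$ converge for each $A$. (If the convergence of $\mu_n(A)$ does not automatically give convergence of $\mu_n^\pm(A)$, one restricts to a subsequence along which it does, using separability-type compactness of the scalar sequences or an ultrafilter; this is the point that needs the most care, and I expect it to be \textbf{the main obstacle}.) Having produced a bounded \emph{positive} sequence $(\lambda_n)$ of finitely additive measures with $\lambda_n(A)$ convergent for every $A\in\mathscr A$ yet $|\lambda_n(A_n)|\geqslant\varepsilon/2$ along an antichain, I transport back via Stone duality: $\lambda_n$ corresponds to a positive $\nu_n\in M(\mathrm{St}\,\mathscr A)$, and coordinatewise convergence on the generating set $\{\mathds 1_{\widehat A}\colon A\in\mathscr A\}$, combined with the uniform bound and density of $\mathrm{span}\{\mathds 1_{\widehat A}\}$ in $C(\mathrm{St}\,\mathscr A)$, shows $(\nu_n)$ is weak$^*$-convergent (after subtracting the limit, weak$^*$-null). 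But the disjoint clopen sets $\widehat{A_n}$ carry uniformly non-negligible mass, so $(\nu_n)$ cannot be weakly convergent: it fails the Dieudonn\'e--Grothendieck criterion for relative weak compactness, hence by Eberlein--\v{S}mulian has no weakly convergent subsequence. Thus $C(\mathrm{St}\,\mathscr A)$ fails the positive Grothendieck property. Combining the two contrapositives completes the proof.
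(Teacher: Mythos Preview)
The paper does not actually prove this proposition; it merely records it as \cite[Lemma 2.2]{KoszmiderShelah:13}. So there is no ``paper's proof'' to compare against, and your attempt must be judged on its own merits.

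A small point first: your labels are swapped. ``$C(\mathrm{St}\,\mathscr A)$ has the positive Grothendieck property \emph{only if} RHS'' is the implication PG $\Rightarrow$ RHS, whose contrapositive is NOT RHS $\Rightarrow$ NOT PG --- which is what you prove in your second block, not your first. This is cosmetic.

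Your first block (NOT PG $\Rightarrow$ NOT RHS) is essentially correct: starting from a positive weak$^*$-convergent sequence that is not weakly convergent, the Dieudonn\'e--Grothendieck criterion produces the antichain, and weak$^*$-convergence gives convergence of $(\mu_n(A))_n$ for every $A$. (You phrase this for weak$^*$-\emph{null} sequences; since the positive Grothendieck property is stated for weak$^*$-\emph{convergent} sequences, and subtracting the positive limit destroys positivity, you should run the argument without first reducing to the null case --- but this changes nothing substantive.)

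Your second block (NOT RHS $\Rightarrow$ NOT PG) has a genuine gap, precisely where you flag it. From signed $(\mu_n)$ you need a \emph{positive} weak$^*$-convergent sequence that is not weakly convergent. Passing to $\mu_n^+$ (or $\mu_n^-$) preserves ``not relatively weakly compact'' (since relative weak compactness is stable under sums), so the Dieudonn\'e--Grothendieck obstruction survives; the problem is weak$^*$-convergence. You propose to extract a subsequence along which $\mu_n^\pm(A)$ converges for every $A\in\mathscr A$, via diagonalisation or an ultrafilter. Neither works: a diagonal argument handles only countably many $A$, while $\mathscr A$ is typically uncountable; an ultrafilter limit $\lim_{\mathcal U}\mu_n^+(A)$ exists for each $A$ but does not yield a single subsequence that converges pointwise on all of $\mathscr A$. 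Since $B_{C(\mathrm{St}\,\mathscr A)^*}$ need not be weak$^*$-sequentially compact, there is no soft way to force such a subsequence to exist. You have correctly identified the obstacle but not overcome it; completing this direction requires an additional idea (for which one should consult the original Koszmider--Shelah argument) that is not present in your outline.
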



\section{{$C_0$}-semigroups of operators on Grothendieck spaces} 

In the present section, we discuss results that 
may be considered extensions of Proposition \ref{prop:SchD2}.

\begin{definition}
A Banach space $X$  has the \emph{Lotz property}, whenever every $C_0$-semigroup of operators on $X$ is uniformly continuous. 
\end{definition}
The following result is due to Lotz \cite{Lotz:85}.
\begin{prop}\label{prop:lotz}
Let $X$ be a Grothendieck space with the Dunford--Pettis property. Then $X$ has Lotz property.
\end{prop}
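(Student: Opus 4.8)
The plan is to prove that the generator $A$ of a given $C_0$-semigroup $T(\cdot)$ on $X$ is bounded, which is equivalent to uniform continuity of $T(\cdot)$.

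\emph{First step: the adjoint semigroup is strongly continuous.} For each $x^*\in X^*$ the orbit $t\mapsto T(t)^*x^*$ is weak$^*$-continuous, since $\langle T(t)^*x^*,x\rangle=\langle x^*,T(t)x\rangle$ varies continuously with $t$ for every $x\in X$, and it is bounded on every compact time interval because $\|T(t)^*\|=\|T(t)\|$ and $C_0$-semigroups are locally bounded. Using that $X$ is Grothendieck, along any sequence $t_n\to t$ the bounded, weak$^*$-convergent sequence $(T(t_n)^*x^*)_{n=1}^\infty$ is in fact weakly convergent; hence each orbit $t\mapsto T(t)^*x^*$ is weakly continuous. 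Since a semigroup of operators on a Banach space whose orbit maps are weakly continuous is automatically strongly continuous (a classical fact; the orbits are separably valued and weakly measurable, hence strongly measurable by Pettis' theorem, and a semigroup with strongly measurable orbits is a $C_0$-semigroup), $T(\cdot)^*$ is a $C_0$-semigroup on $X^*$, generated by $A^*$. In particular $D(A^*)$ is norm-dense in $X^*$ and the Ces\`aro averages $V_t^*=\tfrac1t\int_0^t T(s)^*\,{\rm d}s$ converge to $I_{X^*}$ in the strong operator topology as $t\to 0^+$. Note that only the Grothendieck property has been used so far.

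\emph{Second step: a reduction.} Put $V_t=\tfrac1t\int_0^t T(s)\,{\rm d}s\in\mathscr{B}(X)$. Each $V_t$ maps $X$ into $D(A)$, satisfies $AV_t=\tfrac1t(T(t)-I)$ on all of $X$, and $V_t\to I_X$ strongly as $t\to 0^+$ because $T(\cdot)$ is strongly continuous; by the first step also $V_t^*\to I_{X^*}$ strongly. If we can produce $t_0>0$ with $\|V_{t_0}-I\|<1$, then $V_{t_0}$ is invertible in $\mathscr{B}(X)$; since $V_{t_0}$ takes values in $D(A)$ this forces $D(A)=X$, and the closed graph theorem together with the identity above yields $A=\tfrac1{t_0}(T(t_0)-I)V_{t_0}^{-1}\in\mathscr{B}(X)$, as desired. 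Thus everything reduces to upgrading the strong convergence $V_t\to I$ to convergence in operator norm.

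\emph{Third step, the crux: strong-to-uniform convergence via the Dunford--Pettis property.} This is where the Dunford--Pettis property enters and where I expect the main difficulty to lie. The decisive feature is that we have strong convergence $V_t\to I$ \emph{simultaneously} with strong convergence of the adjoints $V_t^*\to I$, a state of affairs unavailable for an arbitrary $C_0$-semigroup. Arguing by contradiction, if $\|V_{t_n}-I\|\not\to 0$ along some $t_n\downarrow 0$, one selects $x_n\in B_X$ and $f_n\in B_{X^*}$ with $\langle V_{t_n}^*f_n-f_n,\,x_n\rangle\geqslant\delta>0$, and the task is to extract from these data a weakly null sequence which, when fed through a suitable weakly compact operator built from $T(\cdot)$, must have norm-null image by the Dunford--Pettis property (equivalently, by Dean's product characterisation recalled just before Proposition~\ref{prop:SchD2}), thereby contradicting the lower bound $\delta$. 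Morally this is the same rigidity mechanism as in Proposition~\ref{prop:SchD2}: a Grothendieck space with the Dunford--Pettis property cannot carry the "spread-out" approximate structure that strong-but-not-uniform convergence of $V_t$ would entail. Identifying precisely which operator is weakly compact and how the Dunford--Pettis property is invoked is the technical heart of Lotz's argument, and carrying it out carefully is the main obstacle.
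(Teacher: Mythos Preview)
The paper offers no proof of this proposition; it merely attributes the result to Lotz and cites \cite{Lotz:85}, so there is no argument in the paper to compare against.

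Your Steps~1 and~2 are correct and are in fact Lotz's own reduction. The gap is Step~3, which you explicitly leave unfinished (``carrying it out carefully is the main obstacle''). The missing piece is concrete. Write $S_t=V_t-I$; by Steps~1 and~2 both $S_t\to0$ and $S_t^*\to0$ in the strong operator topology. For any bounded sequences $(x_n)\subset B_X$ and $(f_n)\subset B_{X^*}$, the sequence $(S_{t_n}x_n)$ is weakly null in $X$ (since $|\langle f,S_{t_n}x_n\rangle|\leqslant\|S_{t_n}^*f\|\to0$ for each fixed $f$, using Step~1), and $(S_{t_n}^*f_n)$ is weak$^*$-null in $X^*$ (since $|\langle S_{t_n}^*f_n,x\rangle|\leqslant\|S_{t_n}x\|\to0$ for each fixed $x$), hence weakly null by the Grothendieck property. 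The sequential form of the Dunford--Pettis property then forces $\langle S_{t_n}^*f_n,\,S_{t_n}x_n\rangle\to0$, which yields $\|S_t^{2}\|\to0$---\emph{not} $\|S_t\|\to0$. This is nonetheless enough: once $\|S_{t_0}^{2}\|<1$, the operator $I-S_{t_0}^{2}=V_{t_0}(2I-V_{t_0})$ is invertible, the two factors commute, hence $V_{t_0}$ is invertible, and your Step~2 finishes the job. Your Step~3 gestures at the Dunford--Pettis property but contains neither the key pairing of $S_{t_n}x_n$ with $S_{t_n}^*f_n$ (which only controls $S_t^{2}$) nor the factorisation trick recovering invertibility of $V_{t_0}$ from $\|S_{t_0}^{2}\|<1$; without these two ingredients the argument does not close.
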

Therefore, Grothendieck $C(K)$-spaces have the Lotz property. In \cite{vanNeerven:92}, van Neerven proved a partial converse to the above theorem for Banach lattices.

\begin{prop}\emph{(\cite[Theorem 2]{vanNeerven:92})}
Let $E$ be a Banach lattice with a quasi-interior point. Then the following assertions are equivalent:
\begin{enumerate}
\item $E$ has the Lotz property,
\item $E$ is a Grothendieck space with the Dunford--Pettis property,
\item $E$ is isomorphic to a $C(K)$-space with the Grothendieck property.
\end{enumerate}
\end{prop}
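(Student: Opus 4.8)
The plan is to prove the cyclic chain $(3)\Rightarrow(2)\Rightarrow(1)\Rightarrow(3)$; the first two links are immediate and all the weight lies in the last. For $(3)\Rightarrow(2)$: if $E$ is isomorphic to a $C(K)$-space which is Grothendieck, then $E$ is Grothendieck by assumption and has the Dunford--Pettis property because $C(K)$-spaces do (Grothendieck's theorem) and that property is an isomorphic invariant. The implication $(2)\Rightarrow(1)$ is exactly Proposition~\ref{prop:lotz} (Lotz's theorem).

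For $(1)\Rightarrow(3)$ I would first show that the Lotz property forces $E$ to be Grothendieck and to have the Dunford--Pettis property, i.e.\ $(1)\Rightarrow(2)$; in both sub-steps one argues by contraposition, exhibiting a strongly continuous but not uniformly continuous $C_0$-semigroup. Suppose $E$ is not Grothendieck. Since $E$ is trivially a complemented subspace of the Banach lattice $E$, R\"abiger's characterisation (quoted after Theorem~\ref{th:char-G}) gives a quotient of $E$ isomorphic to $c_0$, and the lattice structure and the quasi-interior point allow one to replace this by a complemented copy $M\cong c_0$ inside $E$ (spanned by a positive disjoint sequence dominated by the quasi-interior point and complemented by a band-type projection; see Meyer-Nieberg~\cite{Meyer-Nieberg:91} and \cite{Rabiger:85}). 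Writing $P$ for the associated projection and transporting the diagonal semigroup $\operatorname{diag}(e^{-nt})_n$ on $c_0$ to $M$, call it $S(\cdot)$, the map $T(t)=S(t)P+(I-P)$ is a $C_0$-semigroup on $E$ --- the semigroup law holds because $S(t)$ maps $M$ into $M$ and $P$ restricts to the identity on $M$, and strong continuity because each orbit lies in $M\cong c_0$, on which $\operatorname{diag}(e^{-nt})$ is strongly continuous --- yet it is not uniformly continuous, since $T(t)-I=(S(t)-I_M)P$ has norm at least a fixed positive multiple of $\|\operatorname{diag}(e^{-nt})-I\|_{\mathscr B(c_0)}=\sup_n|e^{-nt}-1|=1$ for every $t>0$. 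This contradicts $(1)$, so $E$ is Grothendieck.

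Next, if $E$ failed the Dunford--Pettis property there would be a weakly null sequence $(x_n)$ in $E$ and a weakly (hence weak$^*$) null sequence $(f_n)$ in $E^*$ with $\langle f_n,x_n\rangle=1$; a disjointification and perturbation argument --- available precisely because $E$ is a Banach lattice with a quasi-interior point and $E^*$, being weakly sequentially complete, can be renormed to have order-continuous norm --- lets one pass to subsequences and assume that $(x_n)$ is equivalent to the unit vector basis of $c_0$, that $(f_n)$ is bounded and weak$^*$-null, and that $\langle f_n,x_m\rangle=\delta_{n,m}$. Then $T(t)=I+\sum_n(e^{int}-1)\langle f_n,\cdot\rangle x_n$ is a bounded $C_0$-group on $E$ (norm-convergence and boundedness of the series from the upper $\ell_\infty$-estimate for $(x_n)$ and boundedness of $(f_n)$, strong continuity from $(\langle f_n,x\rangle)_n\in c_0$, the group law from biorthogonality) which is not uniformly continuous because $\|T(t)-I\|\geqslant\sup_n|e^{int}-1|$, a quantity that does not tend to $0$ as $t\to0^+$. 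Again this contradicts $(1)$, so $(2)$ holds. Finally, a Banach lattice with a quasi-interior point which is Grothendieck and has the Dunford--Pettis property is isomorphic to a $C(K)$-space by the structure theorem of Lotz~\cite{Lotz:85} (equivalently: the Grothendieck property makes $E^*$ a weakly sequentially complete dual Banach lattice which, having the Dunford--Pettis property, is an AL-space, whence $E$ is an AM-space with a quasi-interior point, hence Banach-space isomorphic to a $C(K)$-space); since $E$ is Grothendieck, $K$ is a G-space, which is $(3)$.

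The main obstacle is the passage, in the Dunford--Pettis step of $(1)\Rightarrow(2)$, from an arbitrary failure of the Dunford--Pettis property to a disjoint biorthogonal system $(x_n),(f_n)$ carrying exactly the $c_0$-type norm estimates needed to make $T(t)=I+\sum(e^{int}-1)\langle f_n,\cdot\rangle x_n$ a bounded, strongly- but not uniformly-continuous group: this is where the lattice structure and the quasi-interior point are genuinely used --- the quasi-interior point supplies the ambient AM-structure behind the upper $\ell_\infty$-estimate, and order continuity of the renormed $E^*$ makes disjoint sequences of functionals behave well --- and checking the group law, the convergence of the defining series and the strong continuity after the perturbations requires care. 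A secondary obstacle, should one prefer not to invoke Lotz's structure theorem, is to prove directly that a Grothendieck Banach lattice with a quasi-interior point and the Dunford--Pettis property is isomorphic to a $C(K)$-space, and (more minorly) to justify the lattice fact that a non-Grothendieck Banach lattice with a quasi-interior point contains a \emph{complemented} copy of $c_0$.
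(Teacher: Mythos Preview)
The paper does not supply its own proof of this proposition; it merely cites van Neerven's result, so there is nothing to compare against directly, and I will assess your argument on its merits.

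Your implications $(3)\Rightarrow(2)\Rightarrow(1)$ are fine. The serious problems lie in $(1)\Rightarrow(2)$ and in the step $(2)\Rightarrow(3)$. In the Dunford--Pettis half of $(1)\Rightarrow(2)$ you assert that, after disjointification and perturbation, the weakly null sequence $(x_n)$ can be taken equivalent to the unit vector basis of $c_0$. This is false in general: take $E=\ell_2$ with its natural order, which is a Grothendieck Banach lattice with quasi-interior point $(1/n)_n$ and which fails the Dunford--Pettis property. Any normalised weakly null sequence in $\ell_2$ has, after passing to a subsequence, a basic sequence equivalent to the $\ell_2$-basis, never to the $c_0$-basis; no amount of disjointification changes that. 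Your series $T(t)=I+\sum_n(e^{int}-1)\langle f_n,\cdot\rangle x_n$ may still define a $C_0$-group in concrete cases (it does in $\ell_2$, essentially as a diagonal unitary on a complemented subspace), but the boundedness and convergence argument you give rests entirely on the $c_0$-estimate and therefore does not stand. Van Neerven's actual route here is different: he exploits the representation of $E$ (via the quasi-interior point $u$) as a dense ideal in a $C(K)$-space and builds multiplication semigroups, rather than rank-one perturbation series.

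Your $(2)\Rightarrow(3)$ is also gapped: the parenthetical argument claims that $E^*$ has the Dunford--Pettis property, but $E$ having DPP does \emph{not} imply $E^*$ has DPP (the implication goes the other way), so the conclusion that $E^*$ is an AL-space is unsupported. If you wish to black-box a structure theorem here you must cite one that actually says ``Grothendieck $+$ DPP $+$ quasi-interior point $\Rightarrow$ AM-space''; Lotz~\cite{Lotz:85} is primarily about semigroups, and the lattice-structural statement you need is closer to results in R\"abiger~\cite{Rabiger:85} or Meyer-Nieberg~\cite{Meyer-Nieberg:91}, and requires a genuine argument.
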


In \cite[Examples 13 and 15]{Leung:88a} (see also Proposition \ref{prop:ex-Leung}), Leung constructs  a Banach space $E$ with an unconditional basis---hence $E^{**}$ is a Banach lattice---such that $E^{**}$ has the Lotz property \cite[Corollary 11]{Leung:88a} and it is Grothendieck, but fails the Dunford--Pettis property.  

Atalla proved that for a contractive operator $T\colon E\to E$ on a Grothendieck space $E$, the sequence of Ces\`aro means $\big(\tfrac{1}{n} \sum_{k=1}^n T^k\big)_{n=1}^\infty$ converges strongly if and only if the norm closure and the weak* closure of the range of $I_{E^*}-T^*$ coincide (\cite[Theorem 2.2]{Atalla:1976}) and noted that the hypothesis of the Grothendieck property of $E$ cannot be dropped (\cite[Examples 2.3]{Atalla:1976}) as witnessed by certain contractive Markov operators on $C[0,1]$. Shaw (\cite[Theorem 2]{Shaw:1983}) extended and improved this result in the setting of locally integrable semigroups of operators on Grothendieck spaces.
\begin{prop}\label{prop:shaw}
Let $E$ be a Grothendieck space and let $T(\cdot)$ be a locally integrable semigroup of operators on $E$. Set $S(t)x = \int_0^t T(s)x\,{\rm d}s$ $(x\in E, t > 0)$. Then $T(\cdot)$ is strongly ergodic, which means that
\begin{itemize}
    \item $\lim\limits_{t\to\infty} \tfrac{1}{t}S(t)x$ for all $x\in E$,
\end{itemize}
if and only if the following conditions are satisfied:
\begin{itemize}
\item $\limsup\limits_{t\to\infty} \tfrac{1}{t} \|S(t)\| <\infty$.
\item $\lim\limits_{t\to\infty} \tfrac{1}{t}T(t)S(u)x = 0$ for all $x\in E$ and $u>0$.
\item the norm closure and the weak* closure of 
$${\rm span} \bigcup_{t> 0} (T(t)^* - I_{E^*})(E^*)$$
coincide.
\end{itemize}

If $T(\cdot)$ is a $C_0$-semigroup with infinitesimal generator $A$, then the final condition may be replaced by  coincidence of the norm and the weak*-closures of the range of $A^*$.
\end{prop}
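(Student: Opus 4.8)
The plan is to reduce the theorem to Atalla's scalar-means criterion (\cite[Theorem 2.2]{Atalla:1976}), used in the form valid for Grothendieck spaces, by an Abelian/Tauberian bridge between the continuous-parameter averages $\tfrac1t S(t)$ and a suitable discrete resolvent. First I would fix the technical setup: since $T(\cdot)$ is locally integrable, the operators $S(t)x=\int_0^t T(s)x\,\mathrm ds$ are well defined and jointly continuous in $(t,x)$, and the resolvent-type averages $R(\lambda)x=\lambda\int_0^\infty e^{-\lambda s}S(s)x\,\mathrm ds$ (equivalently $\lambda^{-1}$ times the Laplace transform of $T(\cdot)$, when it converges) exist for $\lambda>0$ under the growth bound $\limsup_{t\to\infty}\tfrac1t\|S(t)\|<\infty$. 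The first block of the argument is the equivalence of that growth bound with boundedness of $\{\tfrac1t S(t): t\ge 1\}$ in $\mathscr B(E)$, which is immediate, together with the uniform boundedness principle to pass between pointwise and norm bounds.

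The heart of the proof is the following dichotomy, carried out exactly as in Atalla's discrete case but with integrals replacing Cesàro sums. Assume the three bullet conditions. The growth bound gives boundedness of the family $A_t:=\tfrac1t S(t)$; the second bullet, $\tfrac1t T(t)S(u)x\to 0$, is precisely the Tauberian "smallness of the increments" condition that lets one replace strong convergence of $A_t x$ by strong convergence of $A_t y$ for $y$ in the closure of the range of $S(u)$, and more importantly it shows $(I-A_t)$-type remainders are asymptotically negligible on $\bigcup_{u>0}T(u)S(u)(E)$. Next I would observe that $\operatorname{span}\bigcup_{t>0}(T(t)^*-I_{E^*})(E^*)$ is weak\textsuperscript{*}-dense in the annihilator of the fixed space $\{x\in E: T(t)x=x \ \forall t\}$, so the third bullet (norm closure $=$ weak\textsuperscript{*} closure of that span) is where the Grothendieck property of $E$ enters: a weak\textsuperscript{*}-convergent sequence built from these increments is weakly convergent, which upgrades a weak\textsuperscript{*}-approximation of the relevant functionals to a norm approximation, and this is exactly what is needed to conclude that $A_t x$ is norm-Cauchy for every $x\in E$ (splitting $x$ according to the ergodic decomposition $E=\operatorname{Fix}\oplus \overline{\operatorname{span}}\bigcup_{t>0}(I-T(t))(E)$, using Lemma-style density arguments as in Proposition~\ref{prop:SchD1}). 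Conversely, if $T(\cdot)$ is strongly ergodic then the limit operator $P=\lim_t A_t$ is a bounded projection onto $\operatorname{Fix}$ along $\overline{\operatorname{span}}\bigcup_{t>0}(I-T(t))(E)$; the three bullets then read off: the first from uniform boundedness of $A_t$, the second from $T(t)P=P$ and $A_t T(t)S(u)x=A_tS(u)x-A_t S(u-\text{shift})\to 0$, and the third because $P^*$ projects onto $\operatorname{Fix}^{\,\text{in }E^*}$ along the norm closure of the increment-span, forcing that norm closure to coincide with its weak\textsuperscript{*} closure (its codimension being computed by $P^*$, which is weak\textsuperscript{*}-continuous here thanks to Grothendieck). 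Finally, for the $C_0$-semigroup case, the identity $(T(t)-I)x=\int_0^t AT(s)x\,\mathrm ds=\int_0^t T(s)Ax\,\mathrm ds$ on $D(A)$ shows $\bigcup_{t>0}(T(t)^*-I)(E^*)$ and $A^*(E^*)$ have the same norm- and weak\textsuperscript{*}-closures, so the last condition reduces to the stated one about $\overline{\operatorname{ran}A^*}$.

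The main obstacle I expect is the continuous-parameter Tauberian step: passing cleanly between $\tfrac1t S(t)$, the Laplace averages $R(\lambda)$, and the discrete iterates of some fixed $R(\lambda_0)$, while keeping track of the error terms controlled by the second bullet, so that Atalla's discrete theorem (whose only role is to supply the Grothendieck-specific implication "weak\textsuperscript{*}-closure $=$ norm-closure of a range $\Rightarrow$ strong mean convergence") can be invoked verbatim. A secondary subtlety is verifying that, in the merely locally integrable (not $C_0$) setting, $S(u)(E)$ is large enough for the ergodic decomposition of $E$ to be available; this is where the hypothesis $\lim_t \tfrac1t T(t)S(u)x=0$ does double duty, and one must be careful that the decomposition $E=\operatorname{Fix}\oplus\overline{\operatorname{span}}\bigcup_{t>0}(I-T(t))(E)$ is genuinely a topological direct sum and not merely algebraic — this again follows from boundedness of $A_t$ plus a standard closed-range/mean-ergodic argument, but it should be stated explicitly.
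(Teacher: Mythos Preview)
The paper does not actually supply a proof of this proposition; it is quoted verbatim from Shaw \cite[Theorem 2]{Shaw:1983} and left without argument. So there is no ``paper's own proof'' to compare against, and your proposal must be judged on its own terms.

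Your overall architecture---ergodic decomposition plus an upgrade from weak\textsuperscript{*} to norm via the Grothendieck property---is the right shape, but the execution takes an unnecessary detour. The reduction to Atalla's discrete theorem through Laplace averages $R(\lambda)$ and an Abelian/Tauberian bridge is not needed, and you yourself flag this passage as the main obstacle. Shaw's argument (and the natural one here) is direct: under the first two bullets one shows that $A_t x := \tfrac1t S(t)x$ converges for every $x$ in $\mathrm{Fix}(T) + \mathrm{span}\bigcup_{u>0}(I-T(u))(E)$, and boundedness extends this to the closure; the whole issue is then to prove that this subspace is dense, which is precisely the content of the third bullet. The Grothendieck property enters at one clean point: the weak\textsuperscript{*}-closure of $M:=\mathrm{span}\bigcup_{t>0}(T(t)^*-I)(E^*)$ is identified with the annihilator of $\mathrm{Fix}(T)$, and the third bullet collapses it to the norm closure. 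No resolvents, no Tauberian remainder estimates.

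There is also a genuine slip in your converse direction. You write that ``$P^*$ is weak\textsuperscript{*}-continuous here thanks to Grothendieck''; but $P^*$ is weak\textsuperscript{*}-continuous for every bounded $P$, so this cannot be where the Grothendieck hypothesis is used. The correct mechanism is: strong convergence $A_t\to P$ gives weak\textsuperscript{*} convergence $A_t^*\to P^*$; the Grothendieck property upgrades this to \emph{weak} convergence; then for $g\in\ker P^*$ one has $g-A_t^*g\to g$ weakly, and since $g-A_t^*g=\tfrac1t\int_0^t (I-T(s)^*)g\,\mathrm ds$ lies in the closed convex hull of $M$, Mazur's theorem puts $g$ in $\overline M^{\,\|\cdot\|}$. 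That is the step that yields the third bullet from strong ergodicity, and your sketch obscures it. I would drop the Laplace-transform machinery entirely and rewrite both directions along these lines.
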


The final condition in Proposition~\ref{prop:shaw} is certainly redundant when $E$ is reflexive and indeed the result recovers Masani's result in this setting \cite{Masani:1976}.

\section{Locally convex Grothendieck spaces} The definition of a Grothendieck space naturally extends to topological vector spaces, however it is perhaps more natural in the context of (Hausdorff) locally convex spaces as they have non-trivial (continuous) bidual spaces.

Given a topological vector space $E$, we denote by $E^{*}$ the (continuous) dual space of $E$, and by $E^{**}$ the bidual of $E$; that is, the dual of $(E^*,\beta(E^*,E))$, where $\beta(E^*,E)$ is the strong topology.

\begin{definition} A (locally convex) topological vector space $E$ is \emph{Grothendieck} whenever the $\sigma(E^*,E)$--sequential and $\sigma(E^*, E^{**})$-sequential convergences coincide in equicontinuous subsets of $E^*$.\end{definition}

Research concerning locally convex Grothendieck  spaces has been conducted since 1980s in parallel to the Banach-space framework; we only highlight the most basic properties of Grothendieck spaces in this context and some results related to spaces of vector-valued continous functions on topological spaces that are counterparts of the results for $C(K,E)$-spaces presented in Section~\ref{sect:ck}. \smallskip

As observed by Freniche (\cite[Proposition 2.3]{Freniche:1985}), locally convex Grothendieck spaces have the following stability properties:
\begin{itemize}
\item A locally convex space $E$ is Grothendieck if and only if so is every dense linear subspace $F\subset E$.
\item Let $T\colon E\to F$ be a continuous linear operator such that for every bounded subset $B$ of $F$ there is a bounded subset $C$ of $E$ so that $B\subseteq \overline{T(C)}$. If $E$ is Grothendieck, then so is $F$.
\item If $E$ is an inductive limit (in the category of locally convex spaces) of a sequence $(E_n)_{n=1}^\infty$ of Grothendieck spaces, and if every bounded subset of $E$ is contained in some $E_n$, then $E$ is Grothendieck.
\end{itemize}

In \cite{Freniche:1985}, the definition of a G-space was extended to arbitrary topological spaces: a~topological space $X$ is a \emph{G-space}, whenever for every compact subset $K\subset X$ the Banach space $C(K)$ is Grothendieck. 

For a locally convex space $E$, $C(X, E)$ (the space of $E$-valued continuous functions on $X$ endowed with the compact-open topology) is Grothendieck if and only if for every compact subset $K\subset X$ the space $C(K,E)$ is Grothendieck (\cite[Theorem 2.4]{Freniche:1985}). Moreover, if $X$ is a G-space and $E$ is a strict inductive limit of Fr\'echet--Montel spaces, then $C(X, E)$ is Grothendieck. Additionally, Khurana proved that for a~compact G-space $K$ and a~Montel space $E$ the space $C(K,X)$ is Grothendieck \cite{Khurana:1991}. 

For completely regular spaces $X$ containing infinite compact subsets, further characterisations of Grothendieck $C(X,E)$-spaces have been obtained in \cite{KhuranaVielma:1993}. Further variations of this result were obtained in \cite{DomanskiDrewnowski:92}, where it was proved that for a completely regular space $E$ containing an infinite compact subset and a non-Montel Fr\'echet space, the space $C(X,E)$ contains a complemented copy of $c_0$.\smallskip

Valdivia proved that Corollary~\ref{cor:l1} extends to Fr\'echet spaces (\cite[Theorem 1]{Valdivia:1993}). 

\begin{prop}
Let $E$ be a Fr\'echet space. If $E$ is a non-reflexive Grothendieck space, then it contains an isomorphic copy of $\ell_1$.
\end{prop}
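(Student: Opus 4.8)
The plan is to reduce the Fréchet-space statement to the Banach-space result, Corollary~\ref{cor:l1}, by exploiting the standard representation of a Fréchet space as a (reduced) projective limit of Banach spaces together with an argument that locates the failure of reflexivity in one of these Banach quotients. First I would fix an increasing sequence $(p_k)_{k=1}^\infty$ of seminorms generating the topology of $E$, let $E_k$ denote the local Banach space obtained by completing $E/p_k^{-1}(0)$ with respect to $p_k$, and write $\pi_k\colon E\to E_k$ for the canonical (dense-range) maps and $\varrho_{k}\colon E_{k+1}\to E_k$ for the linking maps, so that $E=\varprojlim E_k$. Since $E$ is non-reflexive while reflexivity of a Fréchet space is equivalent to every bounded set being relatively weakly compact, there is a bounded sequence $(x_n)_{n=1}^\infty$ in $E$ with no weakly convergent subsequence. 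The Eberlein--Šmulian theorem then says that $(x_n)_{n=1}^\infty$, viewed in the completion, has no weakly Cauchy subsequence, and a diagonal argument should allow one to pass to a subsequence for which $(\pi_k x_n)_{n=1}^\infty$ fails to be weakly Cauchy in $E_k$ for \emph{some} fixed $k$ — this is where the hypothesis that $E$ is Grothendieck must enter, to promote ``weak* non-Cauchy behaviour'' detected on the dual side into genuine non-weak-precompactness localised at a single step.

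The next step is to verify that this distinguished local Banach space $E_k$ (or rather its relevant subspace/quotient) inherits the Grothendieck property from $E$; more precisely, I would argue that the closed subspace $X_k$ of $E_k$ generated by the bounded non-weakly-precompact sequence is, up to the usual dual-side translation, a non-reflexive Grothendieck Banach space. The cleanest route is probably via the dual: the strong dual $E^*_\beta$ of a Fréchet space is an $(LB)$-space which is the inductive limit of the Banach spaces $E_k^*$, and the Grothendieck property of $E$ in the sense of the definition given in the excerpt (coincidence of $\sigma(E^*,E)$- and $\sigma(E^*,E^{**})$-sequential convergence on equicontinuous sets) restricts, on each equicontinuous piece $B_{E_k^*}\subseteq E^*$, to exactly the Banach-space Grothendieck property of $E_k$. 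Once one has a non-reflexive Grothendieck Banach space $Y$ sitting inside $E$ as (the completion of the range of a composition $\pi_k\circ(\text{inclusion})$) — or, dually, a non-reflexive Grothendieck Banach quotient of $E_\beta^*$ — Corollary~\ref{cor:l1} hands us a copy of $\ell_1$ in $Y$, and it remains only to pull this copy of $\ell_1$ back into $E$ itself.

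The final pull-back step requires a little care: the maps $\pi_k$ are not injective in general, so a copy of $\ell_1$ in a local Banach space $E_k$ need not lift to a copy of $\ell_1$ in $E$. The standard device is to first observe that the $\ell_1$-sequence we found is the image $\pi_k(z_n)$ of a bounded sequence $(z_n)_{n=1}^\infty$ in $E$ (using boundedness and density of the range), then note that since $\|\pi_k z_n\|_{E_k}$ is bounded below and $(\pi_k z_n)$ spans an $\ell_1$, the norm $p_k$ already separates the $z_n$ enough that $(z_n)_{n=1}^\infty$ itself is $\ell_1$-like with respect to $p_k$ — the continuity estimate $p_k\le (\text{const})p_m$ for $m\ge k$ gives the upper $\ell_1$ estimate automatically, and the lower $\ell_1$ estimate is inherited from $E_k$ because $p_k(\sum a_n z_n)=\|\sum a_n \pi_k z_n\|_{E_k}\ge c\sum|a_n|$. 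Thus $\overline{\operatorname{span}}\{z_n\}$ is isomorphic to $\ell_1$ inside $E$. I expect the main obstacle to be the localisation step in the first paragraph: making precise, via the Grothendieck hypothesis and a careful dual-side argument, that the non-reflexivity of $E$ must already be visible — in Grothendieck (i.e. weak* = weak sequentially) form — at one fixed Banach step $E_k$ rather than only ``in the limit''. Everything downstream of that is routine application of Corollary~\ref{cor:l1} and elementary lifting.
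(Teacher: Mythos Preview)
The paper does not give its own proof of this proposition; it merely records it as Valdivia's extension of Corollary~\ref{cor:l1} and cites \cite[Theorem~1]{Valdivia:1993}. So there is no ``paper proof'' to compare against, and your proposal has to be assessed on its own.

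Your plan is a natural reduction, and you are right that the crux is the localisation step. However, the sketch you give for it does not work as stated. You claim that the Grothendieck property of $E$ ``restricts, on each equicontinuous piece $B_{E_k^*}\subseteq E^*$, to exactly the Banach-space Grothendieck property of~$E_k$''. Identifying $B_{E_k^*}$ with the polar $U_k^\circ\subset E^*$ via $\pi_k^*$, it is true that $\sigma(E_k^*,E_k)$ and $\sigma(E^*,E)$ agree there (density of $\pi_k(E)$ in $E_k$). The problem is the other topology: $\sigma(E^*,E^{**})$-convergence on $U_k^\circ$ matches $\sigma(E_k^*,E_k^{**})$-convergence on $B_{E_k^*}$ only if every $\varphi\in E_k^{**}$ arises as $\pi_k^{**}\Phi$ for some $\Phi\in E^{**}$, i.e.\ only if $\pi_k^{**}\colon E^{**}\to E_k^{**}$ is surjective. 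In general the inclusion $E_k^*\hookrightarrow (E^*,\beta(E^*,E))$ is continuous but not a topological embedding (the inductive limit need not be strict), so a norm-continuous functional on $E_k^*$ need not extend to a $\beta$-continuous functional on $E^*$. Thus the Grothendieck property of $E$ does not automatically force each $E_k$ to be Grothendieck, and your reduction to Corollary~\ref{cor:l1} applied to $E_k$ is not justified. You flag this step as ``the main obstacle'', but what you offer is precisely the naive argument that fails.

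There is a second, smaller issue in the pull-back. If you obtain an $\ell_1$-basis $(y_n)$ in $E_k$ abstractly from Corollary~\ref{cor:l1}, it need not lie in $\pi_k(E)$, and approximating it by $\pi_k(z_n)$ gives $(z_n)\subset E$ with $p_k(z_n)$ controlled but no control on $p_m(z_n)$ for $m>k$; so $(z_n)$ need not be bounded in $E$. This problem disappears if instead you apply Rosenthal's theorem in $E_k$ directly to the image $(\pi_k x_n)$ of your original bounded sequence --- then the lifted $\ell_1$-subsequence is a subsequence of $(x_n)$ and is automatically bounded in $E$. But that route still requires you to show that for some $k$ the sequence $(\pi_k x_n)$ has no weakly Cauchy subsequence in $E_k$, and a diagonal argument shows that if this fails for every $k$, then $(x_n)$ has a weakly Cauchy subsequence in $E$; you would then need to explain how the Grothendieck hypothesis on $E^*$ rules that out. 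This is exactly the step you leave open.
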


In the literature concerning Fr\'echet spaces, the  Grothendieck property is often considered in tandem with the Dunford--Pettis property; spaces satisfying both properties are termed \emph{GDP spaces}. Their systematic treatment may be found, \emph{e.g.}, in \cite{AlbaneseBonetRicker:2009,AlbaneseMangino:2011, BonetRicker:2007}; for non-Fr\'echet spaces see, \emph{e.g.}, \cite{GabriyelyanKakol:2020}.

\chapter{List of open problems}\label{chap:problems}

We conclude the paper by reiterating the accumulated open problems in the form of a~concise list. The numbering below corresponds to the numbering of Problems used earlier in the paper. Problems marked with ${}^\dagger$ have been either solved or their solutions have been announced during the preparation of this manuscript.\medskip

\begin{enumerate}

\item Does there exist an internal characterisation of Grothendieck spaces?
\smallskip

\item What class of Banach spaces $Y$ is characterised by the equality  $\mathscr{B}(X,Y)= \mathscr{W}(X,Y)$ for every Grothendieck space $X$?\smallskip

\item Does a non-reflexive Grothendieck space contain a copy of $c_0$?
\smallskip

\item Do Grothendieck spaces have property $(V)$?
\smallskip

\item
Do dual spaces with the Grothendieck property have property $(V)$?
\smallskip

\item Suppose that $X$ and $X^*$ are  Grothendieck. Is $X$ reflexive?
\smallskip

\item Let $X$ be a Grothendieck space. Is $X^{**}$ Grothendieck?
\smallskip

\item ${}^\dagger$ Let $X$ be a Grothendieck space. Is ${\rm At}(X)$ a weak$^*$-$G_\delta$ subset of $X^*$?
\smallskip


\item Characterise filters $\mathscr{F}$ for which $c_{0,\mathscr{F}}$ is a Grothendieck space.
\smallskip

\item
Is there an intrinsic characterisation of G-spaces? More precisely, can G-spaces be characterised topologically?
\smallskip

\item Let $\mathscr{A}$ be a Boolean algebra whose Stone space is a G-space. Does there exist a~Boolean subalgebra $\mathscr{B}\subset \mathscr{A}$ whose Stone space $K$ fails to be a G-space yet every weakly* convergent sequence of purely atomic measures on $K$ converges weakly?
\smallskip

\item
Characterise Banach spaces $E$ for which $E_w$ is Grothendieck. Is $E_w$ Grothen\-dieck when so is $E$?
\smallskip

\item Let $A$ be a $C^*$-algebra. Is $A_w$ a Grothendieck space?
\smallskip

\item
Let $E$ be a  $\mathcal{L}_\infty$-space which is Grothendieck. Is $E_w$ a  Grothendieck space?
\smallskip

\item
Is every $\mathcal{L}_\infty$-space without infinite-dimensional separable complemented subspace a Grothendieck space?
\smallskip

\item
Let $X$ be a Grothendieck $\mathcal{L}_\infty$-space. Does $X$ have property $(V)$?
\smallskip

\item
Suppose that a Banach space $X$ is  $\lambda$-separably injective for some $\lambda<2$. Is $X$ Grothendieck?
\smallskip

\item Let $n\geqslant 2$. Set $E_n = C^1([0,1]^n)$. Is $E_n^{**}$ a Grothendieck space?
\smallskip

\item Can we replace `the countable interpolation property' by `the countable monotone interpolation property' in Proposition \ref{prop:cones}? What happens in the case $E$ is a~$C^*$-algebra?
\smallskip

\item  Find a characterisation of dual Grothendieck spaces.
\smallskip

\item Let $X$ be a Grothendieck space without complemented separable, infinite-di\-men\-sio\-nal subspaces and let $Y$ be a closed subspace of $X$ with $X/Y$ infinite-di\-men\-sio\-nal separable.
Is it possible for $Y$ to be  isomorphic to $X$?
\smallskip

\item ${}^\dagger$ Let $X$ be a Grothendieck space. Suppose that $M$ is a closed subspace of $X$ such that ${\rm dens}\, X/M < \mathfrak{p}$. Is $M$ a Grothendieck space?
\smallskip

\item ${}^\dagger$ Let $M$ be a closed subspace of a Grothendieck space $X$ such that $X/M$ is reflexive. Is $M$ a Grothendieck space? 
\smallskip

\item
What is the number of pairwise non-isomorphic Grothendieck subspaces of $\ell_\infty$;
can it be $2^{\mathfrak{c}}$?
\smallskip

\item
Suppose that  $X\injtp Y$ is Grothendieck. Is $X$ or  $Y$  reflexive?
\smallskip

\item
Is there a pair of infinite-dimensional Banach spaces $X$ and $Y$ for which the tensor product $X\injtp Y$ is Grothendieck yet it is not reflexive?
\smallskip

\item Suppose that $\mathscr B(E)$ is Grothendieck. Is $E$ reflexive?
\smallskip

\item Suppose that $E$ is super-reflexive. Is $\mathscr B(E)$ Grothendieck? What happens when $E=\ell_p$, $1<p<\infty$, $p\neq 2$?
\smallskip

\item Suppose that $E$ is super-reflexive. Is $\ell_\infty(E)$ a Grothendieck space?\smallskip

\item  Can an ultrapower of a reflexive space be Grothendieck without being reflexive?
\smallskip



\item Does every Grothendieck operator factorise through a Banach space with the Grothendieck property?   

\end{enumerate}

\subsection*{Acknowledgements} We  thank the referee for a  careful reading of the manuscript and for providing an interesting solution to Problem~\ref{pr:8}.

\end{document}